\numberwithin{section}{part}
\definecolor{aqua}{rgb}{0, 1.0, 1.0}
\definecolor{fuschia}{rgb}{1.0, 0, 1.0}
\definecolor{gray}{rgb}{0.502, 0.502, 0.502}
\definecolor{lime}{rgb}{0, 1.0, 0}
\definecolor{maroon}{rgb}{0.502, 0, 0}
\definecolor{navy}{rgb}{0, 0, 0.502}
\definecolor{olive}{rgb}{0.502, 0.502, 0}
\definecolor{purple}{rgb}{0.502, 0, 0.502}
\definecolor{silver}{rgb}{0.753, 0.753, 0.753}
\definecolor{teal}{rgb}{0, 0.502, 0.502}
\definecolor{cite}{RGB}{143, 113, 218}
\definecolor{url}{RGB}{218, 113, 136}
\theoremstyle{plain}
\newtheorem{theorem}{Theorem}[section]
\newtheorem{lemma}[theorem]{Lemma}
\newtheorem{corollary}[theorem]{Corollary}
\theoremstyle{definition}
\newtheorem{definition}[theorem]{Definition}
\newtheorem{example}[theorem]{Example}
\newtheorem{remark}[theorem]{Remark}
\newtheorem{construction}[theorem]{Construction}
\newcommand{\dslash}{/\!\!/}
\renewcommand{\owedge}{\varowedge}
\newcommand{\esmash}{\!\vartriangle\!}
\providecommand{\keywords}[1]{{\small{\textit{Keywords:}} #1}}
\numberwithin{equation}{section}
\begin{document}

%

\title{Combinatorial parametrised spectra}
\author{V.~Braunack-Mayer\footnote{Universit\"{a}t Hamburg, Bundesstra{\ss}e 55, 20146 Hamburg, Germany.\newline\indent\indent
\emph{Email address:} \href{mailto://v.braunackmayer@gmail.com}{\tt v.braunackmayer@gmail.com}}}
\date{}
\maketitle

\begin{abstract}
We obtain combinatorial model categories of parametrised spectra, together with systems of base change Quillen adjunctions associated to maps of parameter spaces.
We work with simplicial objects and use Hovey's sequential and symmetric stabilisation machines.
By means of a Grothendieck construction for model categories, we produce combinatorial model categories controlling the totality of parametrised stable homotopy theory.
The global model category of parametrised symmetric spectra is equipped with a symmetric monoidal model structure (the external smash product) inducing pairings in twisted cohomology groups.

As an application of our results we prove a tangent prolongation of Simpson's theorem, characterising tangent $\infty$-categories of presentable $\infty$-categories as accessible localisations of $\infty$-categories of presheaves of parametrised spectra.
Applying these results to the homotopy theory of smooth $\infty$-stacks produces well-behaved (symmetric monoidal) model categories of smooth parametrised spectra.
These models, which subsume previous work of Bunke and Nikolaus, provide a concrete foundation for studying twisted differential cohomology.
\end{abstract}
\keywords{model category, stabilization, parametrized spectrum, twisted differential cohomology}

{\small
\tableofcontents
}

\section*{Introduction}
\addcontentsline{toc}{section}{Introduction}
Parametrised homotopy theory is the study of continuously varying families of spaces or spectra parametrised as fibres over a base space.
Parametrised objects in homotopy theory  encode a subtle mixture of both the homotopy types of the base and fibre and provide a robust general framework for studying homotopical group actions.
The modern perspective on parametrised homotopy is based on an analogy with algebraic geometry, where parametrised homotopy types are likened to derived categories of sheaves over a scheme.
In both contexts, associated to morphisms of the base (space or scheme) there are triples of adjunctions facilitating transport of information between different parametrised contexts.
These base change adjunctions are the most important feature  of the parametrised theory, playing a key role in the study of duality phenomena and providing a link between the parametrised and non-parametrised settings.

Parametrised homotopy theory is implicit throughout much of algebraic topology.
Working with bundles, fibrations or actions in homotopy theory is tantamount to studying parametrised spaces or spectra.
Several classical results in algebraic topology, such as the Eilenberg--Moore and Atiyah--Hirzebruch spectral sequences, are fundamentally statements about parametrised homotopy types.
Working with parametrised homotopy types also sheds new light on classical results in algebraic topology; providing a natural and intuitive setting for studying orientations, duality, and twisted umkehr maps.
Parametrised spectra have also recently enjoyed increasing importance in mathematical physics, being the classifying objects for twisted homology and cohomology theories.

The first systematic approach to parametrised spectra and their base change adjunctions using the tools of homotopical algebra was set down in \cite{may_parametrized_2006}.
Working with point-set topological spaces, May and Sigurdsson devote a great deal of effort in overcoming numerous subtleties and technical difficulties that arise in stable homotopy theory in the parametrised setting.
Their theory is rich and subtle, blending classical homotopy theory with model categorical methods.
Working with model categories of parametrised orthogonal spectra, May and Sigurdsson define fibrewise smash products and base change adjunctions, and show that these data organise into a Wirthm\"{u}ller context, specialising Grothendieck's six-functor formalism.

More recently, Ando, Blumberg, and Gepner have introduced a general theory of parametrised objects in the setting of $\infty$-categories \cite{ando_parametrized_2018}.
Their approach works in the generality of objects of a presentable $\infty$-category $\mathcal{C}$ parametrised by objects of an $\infty$-topos $\mathcal{X}$, and specialises to ordinary  parametrised stable homotopy theory when $\mathcal{C}=\mathrm{Sp}(\mathcal{S})$ and $\mathcal{X}=\mathcal{S}$ are the $\infty$-categories of spectra and spaces respectively.
The basic idea is to regard parametrised objects as local coefficient systems; $\mathcal{C}$-valued families parametrised by $S\in \mathcal{X}$ are $\mathcal{C}$-valued sheaves on $\mathcal{X}_{/S}$, that is limit-preserving functors
$
(\mathcal{X}_{/S})^\mathrm{op}\to \mathcal{C}
$.
For a map of parameter objects $f\colon S\to T$, the corresponding base change adjunctions are obtained as pullbacks and Kan extensions.
This general approach to parametrised homotopy theory is certainly very elegant, with the $\infty$-categorical machinery upon which it is based elegantly handling all homotopy coherence issues.

Taken together, the works \cite{may_parametrized_2006, ando_parametrized_2018} provide a useful but incomplete suite of tools for studying of parametrised homotopy theory.
In this article, we expand the toolbox by producing new combinatorial model categories presenting the homotopy theory of parametrised spectra.
By working with simplicial objects we are able to strike a balance between the abstract elegance of the $\infty$-categorical theory and the more technically involved but more down-to-earth topological approach.
The result is a pared-down version of parametrised stable homotopy theory, which has both its advantages and disadvantages; for instance we do not need to work as hard as May and Sigurdsson to establish the theory, yet important features such as fibrewise smash products are unclear in our models. 
On the other hand, the minimality afforded by the combinatorial models plays a key role in applications (for example in the rectification of parametrised rational stable homotopy types---see \cite{braunack-mayer_strict_2019, braunack-mayer_strict_2019-1}), and our methods shed new light on tangent $\infty$-categories and their external smash product structures.

Working with simplicial objects, we use Hovey's sequential and symmetric stabilisation machines (see \cite{hovey_spectra_2001}) to produce model categories of spectra parametrised over any simplicial set $X$.
These model categories are left proper, combinatorial and simplicial, so are amenable to Bousfield localisation techniques. 
For any map of simplicial sets $f\colon X\to X'$, we exhibit base change Quillen adjunctions relating model categories of spectra parametrised over $X$ and $X'$ (Theorems \ref{thm:SeqSpecStructureTheorem} and \ref{thm:SymSpecStructureTheorem}).

The heart of this article is the construction of global model categories of parametrised spectra.
In contrast to \emph{local} model categories of spectra parametrised over a fixed base space $X$, \emph{global} model categories model the totality of parametrised stable homotopy theory by allowing the parameter space to vary.
The terms local and global are used in loose analogy with petit and gros topoi (and should not be confused with homotopical or categorical localisations, nor with global equivariant homotopy theory!).
Using a Grothendieck construction for model categories (as worked out in \cite{harpaz_grothendieck_2015}) we construct global model categories of parametrised spectra using both sequential and symmetric spectrum objects (Theorem \ref{thm:GlobParamSpec}).
The result is a pair of model fibrations
\[
\mathrm{Sp}^\mathbb{N}_\mathrm{sSet}\longrightarrow \mathrm{sSet}
\;\;\mbox{ and }\;\;
\mathrm{Sp}^\Sigma_\mathrm{sSet}\longrightarrow \mathrm{sSet}
\]
projecting to the Kan model structure on simplicial sets; in each case the fibre model category over the simplicial set $X$ presents  $X$-parametrised stable homotopy theory. 
The symmetric and sequential global model categories are Quillen equivalent and we show that they are both combinatorial, left proper and simplicial (Theorems \ref{thm:SeqSpecGlobalComb} and \ref{thm:ExternalSmash}).
Furthermore, the symmetric global model category is equipped with a closed symmetric monoidal model structure presenting the external smash product of parametrised spectra (Theorem \ref{thm:ExternalSmash} and Lemma \ref{lem:ESmash=FSmash}).
Analogous to the external derived tensor product of sheaves, the external smash product 
\[
\vartriangle\colon \mathrm{Sp}(\mathcal{S})_{/X}\times \mathrm{Sp}(\mathcal{S})_{/X'}
\longrightarrow  \mathrm{Sp}(\mathcal{S})_{/X\times X'}
\]
sends parametrised spectra over $X$ and $X'$ to their fibrewise smash product parametrised over the product $X\times X'$.
The external smash product induces pairings on twisted cohomology groups (see \cite[Section 6]{ando_parametrized_2018} and Remark \ref{rem:TwistDiffPairings}).

As an application of our combinatorial models, we prove an extension of Simpson's theorem on presentable $\infty$-categories to tangent $\infty$-bundles.
The tangent $\infty$-bundle of a presentable $\infty$-category $\mathcal{C}$ is a categorical fibration $p\colon T\mathcal{C}\to \mathcal{C}$ whose fibre over the object $c\in \mathcal{C}$ is the stable $\infty$-category $\mathrm{Sp}(\mathcal{C}_{/c})$ \cite[Section 7.3]{lurie_higher_2017}. 
Objects of the fibre $T_c\mathcal{C} =\mathrm{Sp}(\mathcal{C}_{/c})$ are naturally interpreted as $c$-parametrised families of linear objects in the sense of Goodwillie calculus (see Remark \ref{rem:Goo}), hence $T\mathcal{C}$ encodes the parametrised stable homotopy theory intrinsic to $\mathcal{C}$.
The fundamental example is the categorical fibration $p\colon T\mathcal{S}\to \mathcal{S}$, with $T\mathcal{S}$ the $\infty$-category of parametrised spectra and $p$ projecting to parameter spaces.

We prove that the global model categories $\mathrm{Sp}^\mathbb{N}_\mathrm{sSet}$ and $\mathrm{Sp}^\Sigma_\mathrm{sSet}$ are both presentations of the $\infty$-category $T\mathcal{S}$ (Lemma \ref{lem:PresentTS}).
Using this fact, we obtain a general structural characterisation of tangent $\infty$-bundles as accessible localisations of $\infty$-categories of $T\mathcal{S}$-valued presheaves (Theorem \ref{thm:Tangent}).
This result related to the perspective taken in \cite{ando_parametrized_2018}, though our methods are model- rather than $\infty$-categorical.
The proof of Theorem \ref{thm:Tangent} involves a new construction of the tangent model category to a combinatorial simplicial model category, giving an alternative approach to \cite{harpaz_tangent_2018}.

In the final part of the article we further specialise our results in order to obtain combinatorial models for smooth parametrised spectra.
Smooth parametrised spectra are classifying objects for twisted differential cohomology theories and as such are poised to play an increasingly important role in mathematical physics.
For instance, it is widely believed that twisted differential $K$-theory controls the field content and charge quantisation of $D$-branes in string theory.
A key result of this article is Theorem \ref{thm:SmoothTangentooTopos}, which solves the open problem of providing a concrete foundation for twisted differential cohomology.
We explain how twisted differential cohomology theories are naturally represented by sheaves of parametrised spectra on the site of Cartesian spaces satisfying a stabilised \v{C}ech descent condition and show how this perspective subsumes the earlier work of Bunke and Nikolaus \cite{bunke_twisted_2014} (see Example \ref{ex:SmoothSpecLineBun}).
Moreover, we prove that smooth parametrised spectra organise into a symmetric monoidal $\infty$-category by exhibiting a symmetric monoidal model structure on the model category of \v{C}ech-local presheaves of parametrised symmetric spectra on the site of Cartesian spaces (Theorem \ref{thm:SmoothESmash}).
The external smash product of smooth parametrised spectra that we obtain induces pairings on twisted differential cohomology groups (Remark \ref{rem:TwistDiffPairings}), similarly to the case of ordinary parametrised spectra.

\paragraph*{Organisation.}
We begin in Section \ref{S:Ret} by studying retractive spaces, the main characters in unstable parametrised homotopy theory.
A retractive space over $X$ is a space over and under $X$, equipped with a projection and a section.
By taking homotopy fibres in a suitably coherent manner, retractive spaces are equivalent to local coefficient systems of pointed spaces.
Beginning with more or less well-known material, we quickly develop simplicial model categories of retractive spaces and their base change adjunctions.
Using the Grothendieck construction for model categories, we glue the various fibre categories together to obtain a global model category of retractive spaces.
This global model category is left proper and combinatorial, and becomes a symmetric monoidal model category with respect to the external smash product.
We give a brief exposition of unstable Koszul (pre-)duality between comodules and pointed modules over the loop space based on \cite{hess_waldhausen_2016}.

Section \ref{S:ParamSpec} is the core of the article. 
In this section we pass from unstable to stable parametrised homotopy by means of Hovey's sequential and symmetric stabilisation machines.
We derive and analyse base change adjunctions between combinatorial model categories of parametrised spectra, and prove that our model categories of parametrised spectra are pseudonaturally Quillen equivalent to each other and present equivalent models to  \cite{ando_parametrized_2018,may_parametrized_2006}. 
By means of the Grothendieck construction, we construct two Quillen equivalent models for the global homotopy theory of parametrised spectra.
Both of these model categories are left proper, combinatorial, and simplicial. 
Finally, we show that parametrised symmetric spectra organise into a symmetric monoidal model category with respect to the external smash product.

The last part of the article, Section \ref{S:CombTangent}, is concerned with applications to $\infty$-categories and twisted differential cohomology.
We prove a tangent prolongation of Simpson's theorem on presentable $\infty$-categories,  characterising tangent $\infty$-bundles as accessible localisations of $\infty$-categories of presheaves of parametrised spectra.
As a special case of this result, we derive a presentation for the homotopy theory of smooth parametrised spectra in terms of \v{C}ech-local presheaves on smooth Cartesian spaces valued in parametrised spectra.
Building upon this result, we also exhibit a symmetric monoidal external smash product for smooth parametrised spectra at the level of model categories. We conclude the article with some general remarks on descent spectral sequences and pairings in twisted differential cohomology.

\paragraph*{Remark.} The work \cite{hebestreit_multiplicative_2019} appeared while the present article was being completed.
Working with symmetric spectra parametrized over $\mathcal{I}$-spaces, Hebestreit, Sagave and Schlichtkrull produce (in our terminology) both local and global model categories in order to capture multiplicative properties of twisted cohomology at the point set level.
In the case that the parameter $\mathcal{I}$-space is a commutative monoid object, the corresponding category of parametrised symmetric spectra is shown to be symmetric monoidal, as is the global model category.
Working with something like $\mathcal{I}$-spaces is necessary to capture general multiplicative phenomena; since the unit component of a strict topological monoid is necessarily a generalised Eilenberg--Mac Lane space, not all parametrised $\mathbb{E}_\infty$-ring spectra can be modelled by strict commutative monoid objects.
Restricting the models of \cite{hebestreit_multiplicative_2019} to constant $\mathcal{I}$-spaces specialises to the model categories obtained in Sections \ref{SS:SymStabLoc} and \ref{SS:SymStabGlob}.
Note that our global symmetric model structure is in fact equivalent, as a symmetric monoidal model category, to the global model category defined using $\mathcal{I}$-spaces as both give model categorical presentations of the symmetric monoidal $\infty$-category $T\mathcal{S}^\otimes$ of parametrised spectra.

\paragraph*{Notation and conventions.}
We have attempted to make this article reasonably self-contained without, we hope, being overly verbose.
Some clarifying remarks with regards to notation and conventions are in order:
\begin{itemize}
  \item At various stages throughout the article we use different symbols (e.g.~$\otimes$, $\otimes_X$, $\owedge_X$) to distinguish between various tensoring bifunctors. 
  Which bifunctor is meant should always be either explicitly stated or else clear from context.
  
  \item We write $\wedge_X$ for the fibrewise smash product of $X$-parametrised objects (either retractive spaces or symmetric spectra over $X$).
  The external smash product is always denoted $\vartriangle$ to distinguish it from the ordinary smash product $\wedge$ (the intention is to remind the reader that there is something non-trivial happening on base spaces).
  
  \item Suspension and looping functors for parametrised symmetric spectra are written in boldface (e.g.~$\mathbf{\Sigma}_X$) to distinguish from the corresponding functors for parametrised sequential spectra (e.g.~$\Sigma_X$).
  
  \item When unadorned with a subscript, the boldface symbol $\mathbf{\Sigma}$ refers to the symmetric groupoid with objects $n\geq 0$ and morphisms
  \[
  \mathbf{\Sigma} (n,m) =
  \begin{cases}
  \Sigma_n & \mbox{ if $n=m$}\\
  \varnothing &\mbox{ otherwise}\,.
  \end{cases}
  \]
$\mathbf{\Sigma}$ is symmetric monoidal via the assignment $(n,m)\mapsto n+m$ on objects, extended to morphisms by the canonical homomorphisms $\Sigma_n\times \Sigma_m\hookrightarrow \Sigma_{n+m}$ of permutation groups.
  
  \item For the theory of locally presentable and accessible categories, we refer to  \cite{adamek_locally_1994} and frequently also \cite[Appendix A]{lurie_higher_2009}.
  The general theory of $\infty$-categories, their presentations and stabilisations is drawn from \cite{lurie_higher_2009,lurie_higher_2017}.
  In an attempt to delineate between the 1- and $\infty$-categorical contexts, we refer to \emph{locally presentable} categories as opposed to \emph{presentable} $\infty$-categories. 
  Our primary reference for the theory of (simplicial, monoidal) model categories is \cite{hovey_model_1999}, though we frequently draw on \cite[Appendix A]{lurie_higher_2009} and \cite{hirschhorn_model_2003} as well.
  
  \item We make use of the Grothendieck construction for model categories as explained in  \cite{harpaz_grothendieck_2015}, using the terminology of that article. 
  In particular, we write $\mathbf{Adj}$ for the $(2,1)$-category whose objects are categories, 1-morphisms are left adjoint functors and 2-morphisms are pseudonatural isomorphisms.
  $\mathbf{Model}$ is the sub-$(2,1)$-category on model categories, with 1-morphisms the left Quillen functors and the same 2-morphisms.
 
  \item In order to circumvent the usual set-theoretic difficulties, we always implicitly work within a chosen hierarchy of Grothendieck universes.
  We do not belabour this point, however, and take a generally naive attitude toward size issues throughout. 
  
  \item Manifolds are smooth and paracompact.
\end{itemize}

\paragraph*{Acknowledgements.} 
This article is the first of a series \cite{braunack-mayer_strict_2019,braunack-mayer_strict_2019-1} based on my 2018 PhD thesis written under the supervision of Alberto Cattaneo.
I would like to thank 
Urs Schreiber, Kathryn Hess, and Christoph Schweigert for their useful comments and encouragement at various stages during the writing of this article.
I acknowledge the support of the Deutsche Forschungsgemeinschaft RTG 1670 \lq\lq Mathematics inspired by String Theory and Quantum
Field Theory''.

\section{Retractive spaces}
\label{S:Ret}
The first part of this article is concerned with developing combinatorial models for unstable parametrised homotopy theory.
Working over a fixed base space $X$, we study the homotopy theory of retractive spaces\footnote{Called \lq\lq ex-spaces'' in \cite{may_parametrized_2006}.} over $X$ in terms of diagrams of simplicial sets.
The resulting simplicial model categories are related by various base change adjoint functors whose homotopical properties are studied.
These base change functors allow us to glue the various categories of retractive spaces together to obtain a new global model category of retractive spaces describing the totality of parametrised unstable homotopy theory.
This global model category has several useful technical properties (left properness, combinatoriality) and is a closed symmetric monoidal model category with respect to the external smash product.
We conclude with a discussion of Koszul (pre-)duality following \cite{hess_waldhausen_2016}.

\subsection{Local theory}
For any simplicial set $X$, we consider the category of retractive spaces over $X$:
\[
\mathrm{sSet}_{\dslash X} := \big(\mathrm{sSet}_{/X}\big)^{\mathrm{id}_X /} \cong \big(\mathrm{sSet}^{X/}\big)_{/\mathrm{id}_X}\,.
\]
A \emph{retractive space over $X$} is thus a diagram $X\xrightarrow{i} Y\xrightarrow{r} X$ exhibiting $X$ as a retract; we frequently refer to such objects simply as $Y$, omitting the section $i$ and projection $r$ from the notation.
A morphism of retractive spaces $\psi\colon Y\to Y'$ over $X$ is the data of a commuting diagram
\[
\begin{tikzcd}
X
\ar[d, "i"']
\ar[r, "i'"]
&
Y'
\ar[d, "r'"]
\\
Y
\ar[ur, "\psi"]
\ar[r, "r"]
&
X\,.
\end{tikzcd}
\] 
Forgetting sections determines a functor $u\colon \mathrm{sSet}_{\dslash X}\to \mathrm{sSet}_{/X}$; the forgetful functor $u$ has a left adjoint which freely adjoins a section:
\[
\big(
\!\begin{tikzcd}
Y\ar[r, "p"]& X
\end{tikzcd}
\!
\big)_{+X}:=
\big(
\!
\begin{tikzcd}
X\ar[r]
&
X\displaystyle\coprod Y
\ar[r, "\mathrm{id}_X+ p"]
&
X
\end{tikzcd}
\!
\big)\,.
\]
We write $Y_{+X}$ as shorthand for $(Y\to X)_{+X}$, the choice of projection $p$ being implicitly understood from the context.

The category of simplicial sets is locally presentable, and since locally presentable categories are stable under forming under- and over-categories we have the following
\begin{lemma}
\label{lem:RetLocPres}
For any simplicial set $X$, the category $\mathrm{sSet}_{\dslash X}$ is locally presentable.
\end{lemma}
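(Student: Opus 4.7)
The plan is to combine two standard stability properties of locally presentable categories with the explicit description $\mathrm{sSet}_{\dslash X} \cong (\mathrm{sSet}_{/X})^{\mathrm{id}_X/}$ recalled just before the statement. The starting point is that $\mathrm{sSet}$ is itself locally presentable---indeed locally finitely presentable---as the category of presheaves on the simplex category $\Delta$, with the finite simplicial sets providing a set of finitely presentable generators.

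First I would pass to $\mathrm{sSet}_{/X}$ using the slice-stability principle: if $\mathcal{C}$ is locally $\kappa$-presentable then so is $\mathcal{C}_{/c}$ for every $c \in \mathcal{C}$, with $\kappa$-presentable objects given by the maps $a \to c$ out of $\kappa$-presentable $a \in \mathcal{C}$ (see, e.g., Ad\'{a}mek--Rosick\'{y}, or Lurie's \emph{Higher Topos Theory} Appendix A). Taking $\mathcal{C} = \mathrm{sSet}$ and $c = X$ immediately gives that $\mathrm{sSet}_{/X}$ is locally presentable.

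Second I would descend to the coslice via the dual principle: for $\mathcal{D}$ locally presentable and $d \in \mathcal{D}$ any object, the under-category $\mathcal{D}^{d/}$ is again locally presentable; more precisely, for any regular $\lambda$ such that $\mathcal{D}$ is locally $\lambda$-presentable and $d$ is $\lambda$-presentable, $\mathcal{D}^{d/}$ is locally $\lambda$-presentable. Applied to $\mathcal{D} = \mathrm{sSet}_{/X}$ with $d = \mathrm{id}_X$---which is the terminal object of $\mathrm{sSet}_{/X}$, so the cardinal bookkeeping is painless---this yields local presentability of $(\mathrm{sSet}_{/X})^{\mathrm{id}_X/} = \mathrm{sSet}_{\dslash X}$. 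There is essentially no obstacle here: the argument is a routine citation of two well-known stability results, and the only mild point requiring attention is the (in)dependence of the presentability cardinal on $X$, which is irrelevant for the bare conclusion of the lemma.
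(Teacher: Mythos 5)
Your argument is correct and is essentially the paper's own: the lemma is deduced there from the fact that $\mathrm{sSet}$ is locally presentable together with the stability of local presentability under passing to over- and under-categories, applied to $\mathrm{sSet}_{\dslash X}\cong(\mathrm{sSet}_{/X})^{\mathrm{id}_X/}$. Your additional remarks on the presentability rank are fine but not needed for the statement.
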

\begin{remark}
\label{rem:ComputeRetSpaceCoLim}
Local presentability implies that $\mathrm{sSet}_{\dslash X}$ has all small limits and colimits.
Given a small diagram $D\colon \mathcal{I}\to \mathrm{sSet}_{\dslash X}$, write $\mathcal{I}^\triangleleft :=\{\ast\}\star \mathcal{I}$ for the categorical join.
From $D$ we construct an auxiliary $\mathcal{I}^\triangleleft$-diagram of simplicial sets
\[
D^\triangleleft (i)=
\begin{cases}
\;\;X & \mbox{$i= \ast$ (the cone point)}\\
D(i) &\mbox{$i\in \mathcal{I}$}\,,
\end{cases}
\]
where the morphisms $\ast \to i$ are sent to the section maps $i_{D(i)}$ and all other morphisms are as for $D$.
The simplicial set $\mathrm{colim}_{\mathcal{I}^\triangleleft} D^\triangleleft$ is canonically a retractive space over $X$ and 
\[
X \longrightarrow \underset{\mathcal{I}^\triangleleft}{\mathrm{colim}}\, D^\triangleleft\longrightarrow X
\]
exhibits the colimit of the diagram $D$ in $\mathrm{sSet}_{\dslash X}$.
The dual statement applies for computing limits of diagrams in retractive spaces.
\end{remark}

To each map of simplicial sets $f\colon X\to X'$ there is an associated triple of base change adjunctions
\[
(f_!\dashv f^\ast \dashv f_\ast)
\colon
\begin{tikzcd}
\mathrm{sSet}_{\dslash X}
\ar[rr, shift left=2.2ex]
\ar[rr, leftarrow, "\bot"]
\ar[rr, shift left=-2.2ex, "\bot"]
&&
\mathrm{sSet}_{\dslash X'}
\end{tikzcd}
\]
relating the respective categories of retractive spaces.
The pullback functor $f^\ast$ sends a retractive space $Z$ over $X'$ to $f^\ast Z$, which is regarded as a retractive space over $X$ in the obvious manner.
The pushforward functor $f_!$ sends a retractive space $Y$ over $X$ to the object of $\mathrm{sSet}_{\dslash X'}$ determined by the bottom row of the iterated pushout diagram
\[
\begin{tikzcd}
X \ar[r]
\ar[d, "f"']
&
Y
\ar[r]
\ar[d]
&
X
\ar[d, "f"]
\\
X' 
\ar[r]
&
f_! Y
\ar[r]
&
X'\,.
\end{tikzcd}
\]
It is easy to verify that the functors $f_!$ and $f^\ast$ are adjoints.
The right adjoint $f_\ast$ is more subtle.
To see that it exists, observe that stability of colimits in $\mathrm{sSet}$ under pullback and Remark \ref{rem:ComputeRetSpaceCoLim} together imply that $f^\ast$ preserves colimits.
Since $\mathrm{sSet}_{\dslash X}$ and $\mathrm{sSet}_{\dslash X'}$ are locally presentable, $f^\ast$ has a right adjoint $f_\ast$ by the adjoint functor theorem.
\begin{remark}
We write the terminal morphism as $X\colon X\to \ast$, so that for any simplicial set $X$ we have an induced triple $(X_!\dashv X^\ast \dashv X_\ast)$ of adjoint functors between $\mathrm{sSet}_{\dslash X}$ and $\mathrm{sSet}_\ast$.
\end{remark}
\begin{remark}
The functor $f_\ast$ can also be described more or less explicitly as follows.
Given a retractive space $Y$ over $X$, the underlying simplicial set can be expressed as a coend over the category of elements of $X$:
\[
Y\cong \int^{(\sigma \colon \Delta^n\to X)\in \mathrm{el}(X)}
\mathrm{sSet}_{/X}(\sigma, Y)\otimes \Delta^n\,.
\]
This expression encodes how $Y\to X$ may be obtained by gluing together sections over the simplices of $X$.
Given an $n$-simplex $\sigma$ of $X'$, the pullback $f^\ast\sigma\colon f^\ast \Delta^n \to X$ generally fails to be an $n$-simplex of $X$.
Nevertheless, we have
\[
f_\ast Y \cong \int^{(\sigma\colon \Delta^n \to X')\in \mathrm{el}(X')}
\mathrm{sSet}_{/X}(f^\ast\sigma, Y) \otimes \Delta^n
\]
expressing $f_\ast Y$ in terms of sections of $Y\to X$ over the pullbacks $f^\ast\sigma$.
Each of the hom-sets $\mathrm{sSet}_{/X}(f^\ast \sigma, Y)$ in this formula are pointed by the zero map $f^\ast  \sigma\to X \to Y$.
These choices of basepoint are all compatible, so that computing coends provides a section $X'\to f_\ast Y$.
\end{remark}

For any simplicial set $X$, the category of retractive spaces $\mathrm{sSet}_{\dslash X}$ is closed symmetric monoidal with respect to the fibrewise smash product $\wedge_X$.
The \emph{fibrewise smash product of retractive spaces} $Y$ and $Z$ is the colimit $Y\wedge_X Z$ of the diagram of simplicial sets
\begin{equation}
\label{eqn:SmashColim}
\begin{tikzcd}
X
\ar[r]
\ar[d]
&
Z
\ar[dd]
\ar[dr]
\\
Y
\ar[dr]
\ar[rr, crossing over]
&&
Y\times_X Z
\\
& X\,
\end{tikzcd}
\end{equation}
equipped with the canonical structure maps as a retractive space over $X$.
The monoidal unit is $X_{+X}$, while associator, unitor, and symmetry natural isomorphisms obeying the requisite coherence conditions are readily defined.
\begin{remark}
\label{rem:InternalHom}
The existence of internal homs in $\mathrm{sSet}_{\dslash X}$ can be deduced by applying the adjoint functor theorem, noting that pullback-stability of colimits in $\mathrm{sSet}$ implies that the fibrewise smash product bifunctor preserves colimits in each variable.
Explicitly, the internal hom functor is given by the assignment
\[
F_X(Y,-)\colon Z\longmapsto \int^{(\sigma\colon\Delta^n \to X)\in \mathrm{el}(X)} \mathrm{sSet}_{\dslash X}(\sigma_{+X}\wedge_X Y, Z)\otimes \Delta^n\,.
\]
The hom-sets $\mathrm{sSet}_{\dslash X}(\sigma_{+X} \wedge_X Y, Z)$ are canonically and compatibly pointed, so that $F_X(Y,Z)$ is indeed a retractive space over $X$.

For any retractive space $Y$ over $X$ (with section $i$ and projection $r$), the underlying simplicial set of the internal hom $F_X(Y,Z)\in \mathrm{sSet}_{\dslash X}$ can also be computed as the pullback of the cospan
\begin{equation}
\label{eqn:InternalHomCospan}
\begin{tikzcd}
X
\ar[r, "j"]
&
Z
\ar[r, leftarrow]
&
r_\ast r^\ast Z\,.
\end{tikzcd}
\end{equation}
Here, the map $j\colon X\to Z$ is the section of $Z$ and $r_\ast r^\ast Z \to Z$ is the result of whiskering with the $(i^\ast\dashv i_\ast)$-unit: $r_\ast r^\ast Z\to r_\ast i_\ast i^\ast r^\ast Z\cong Z$.
To see this, observe that $r_\ast r^\ast Z$ is the internal hom for the Cartesian closed structure on $\mathrm{sSet}_{/X}$.
Indeed, for any $K\to X$ there are natural isomorphisms
\[
\mathrm{sSet}_{/X}(K, r_\ast r^\ast Z)\cong \mathrm{sSet}_{/Y}(r^\ast K, r^\ast Z) \cong \mathrm{sSet}_{/X}(K\times_X Y, Z)\,.
\]
That the pullback of \eqref{eqn:InternalHomCospan} computes $F_X(Y,Z)$ now follows from the dual statement that $Y\wedge_X Z$ is computed as the pushout of the span $X\leftarrow Y\coprod_X Z\to Y\times_X Z$.
\end{remark}

\begin{lemma}
\label{lem:PBisStonglyClosed}
For any map of simplicial sets $f\colon X\to X'$, the pullback functor $f^\ast \colon \mathrm{sSet}_{\dslash X'}\to \mathrm{sSet}_{\dslash X}$ is strongly closed symmetric monoidal.
\end{lemma}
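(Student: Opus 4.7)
The plan is to verify separately that $f^{\ast}$ is symmetric monoidal and that the induced comparison on internal homs is an isomorphism, exploiting the fact that $f^{\ast}$ is both a left and a right adjoint and therefore preserves all small limits and colimits.

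For the monoidal structure, I would compute $f^{\ast}(Y\wedge_{X'}Z)$ using the presentation of the fibrewise smash product as the colimit of the diagram \eqref{eqn:SmashColim}. By Remark \ref{rem:ComputeRetSpaceCoLim} this colimit, together with the diagram \eqref{eqn:SmashColim} itself, is computed at the level of underlying simplicial sets as the pushout of the span $X'\leftarrow Y\sqcup_{X'} Z\to Y\times_{X'} Z$. Since $f^{\ast}\colon\mathrm{sSet}_{/X'}\to \mathrm{sSet}_{/X}$ preserves both pullbacks (as a right adjoint) and finite coproducts and pushouts (as a left adjoint to $f_\ast$), I obtain natural identifications $f^{\ast}(Y\times_{X'}Z)\cong f^{\ast}Y\times_X f^{\ast}Z$ and $f^{\ast}(Y\sqcup_{X'}Z)\cong f^{\ast}Y\sqcup_X f^{\ast}Z$, and hence $f^{\ast}(Y\wedge_{X'}Z)\cong f^{\ast}Y\wedge_X f^{\ast}Z$. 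The naturality of these isomorphisms and of the associator, unitor and symmetry constraints follows directly from the universal properties involved. For the unit, $f^{\ast}(X'_{+X'})\cong (X\sqcup X\to X)=X_{+X}$ since $f^{\ast}$ preserves coproducts and the terminal object of $\mathrm{sSet}_{/X'}$.

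For the closed structure, I would use the pullback description \eqref{eqn:InternalHomCospan} from Remark \ref{rem:InternalHom}. Given a retractive space $Y$ over $X'$ with projection $r\colon Y\to X'$ and a retractive space $Z$ over $X'$, writing $g\colon f^{\ast}Y\to Y$ and $r'\colon f^{\ast}Y\to X$ for the pullback projections, the essential point is the Beck--Chevalley identification
\[
f^{\ast}r_{\ast}\;\cong\;r'_{\ast}g^{\ast}\,,
\]
which holds because $\mathrm{sSet}$ is locally Cartesian closed (being a presheaf topos) and the square $(f,r;g,r')$ is a pullback. Combined with the evident identity $g^{\ast}r^{\ast}Z=(rg)^{\ast}Z=(fr')^{\ast}Z=(r')^{\ast}f^{\ast}Z$, this yields
\[
f^{\ast}\bigl(r_{\ast}r^{\ast}Z\bigr)\;\cong\;r'_{\ast}(r')^{\ast}f^{\ast}Z\,.
\]
Since $f^{\ast}$ preserves the section $X'\to Z$ and preserves the pullback of the cospan \eqref{eqn:InternalHomCospan}, the resulting comparison
\[
f^{\ast}F_{X'}(Y,Z)\;\longrightarrow\;F_X(f^{\ast}Y,f^{\ast}Z)
\]
is a natural isomorphism.

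The main obstacle is the Beck--Chevalley step, but it is standard in this setting: it is an instance of the base change isomorphism valid in any locally Cartesian closed category, applied to the pullback square obtained by pulling $r$ back along $f$. Granted this, the rest of the argument is a bookkeeping exercise tracking the section and projection data through the pullback and colimit descriptions of $\wedge_X$ and $F_X$.
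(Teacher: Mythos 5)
Your proposal is correct, and its overall architecture coincides with the paper's: the monoidal part is handled by pullback-stability of the (co)limits defining $\wedge_{X}$, and the closed part by the pullback presentation of $F_{X}(Y,Z)$ from Remark \ref{rem:InternalHom}, reducing everything to the single commutation $f^{\ast}(r_{\ast}r^{\ast}Z)\cong \overline{r}_{\ast}\overline{r}^{\ast}(f^{\ast}Z)$. The one place you diverge is in how this commutation is justified: you invoke the Beck--Chevalley base change isomorphism $f^{\ast}r_{\ast}\cong r'_{\ast}g^{\ast}$ for the pullback square of $r$ along $f$, valid because $\mathrm{sSet}$ is locally Cartesian closed, whereas the paper instead identifies $f^{\ast}$ with the product functor $f\times(-)\colon \mathrm{sSet}_{/X'}\to(\mathrm{sSet}_{/X'})_{/f}$ and uses that such product functors preserve exponential objects (noting that $r_{\ast}r^{\ast}Z$ is exactly the exponential $[Y,Z]$ in the slice). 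These are two formulations of the same fact about locally Cartesian closed categories: your route is the cleaner citation of a standard base-change principle, while the paper's is slightly more self-contained, avoiding an appeal to Beck--Chevalley by an elementary reduction to the case of product functors. Both treatments leave the same bookkeeping implicit (compatibility of the comparison with the section and with the whiskered map $r_{\ast}r^{\ast}Z\to Z$, and the monoidal coherence checks), so your proposal is at the same level of rigour as the paper's proof.
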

\begin{proof}
There is an evident isomorphism $f^\ast (X'_{+X'}) \cong X_{+X}$ and, since fibrewise smash products are defined in terms of limits and colimits which are stable under pullback in $\mathrm{sSet}$, there are natural isomorphisms $f^\ast(A\wedge_{X'} B) \cong f^\ast A \wedge_X f^\ast B$.
Using these isomorphisms, one shows that $f^\ast$ is strongly symmetric monoidal and hence also closed.

To see that $f^\ast$ is strongly closed, we use the characterisation of internal homs given in Remark \ref{rem:InternalHom}.
Fix a retractive space $Y\in \mathrm{sSet}_{\dslash X'}$ with projection $r\colon Y\to X'$.
Writing $\overline{r}\colon f^\ast Y\to X$ for the pullback projection, we have a natural isomorphism $f^\ast(r_\ast r^\ast K) \cong \overline{r}_\ast\overline{r}^\ast  (f^\ast K)$ for any $K\to X'$.
When $X' =\ast$ this is a consequence of the fact that $X\times(-)\colon \mathrm{sSet}\to \mathrm{sSet}_{/X}$ preserves exponential objects, as is easily checked directly. More generally, there is an isomorphism of categories between $\mathrm{sSet}_{/X}$ and $(\mathrm{sSet}_{/X'})_{/f\colon X\to X'}$ identifying $f^\ast$ with the product functor
\[
f\times(-) \colon \mathrm{sSet}_{/X'}\longrightarrow \big(\mathrm{sSet}_{/X'}\big)_{/(f\colon X\to X')}\,,
\]
which also preserves exponential objects.

For any $Z\in\mathrm{sSet}_{\dslash X'}$, the internal hom $F_{X'}(Y,Z)$ is the pullback of the cospan $X'\to Z\leftarrow r_\ast r^\ast Y$.
Applying $f^\ast$ sends $F_{X'}(Y,Z)$ to the pullback of the cospan $X\to f^\ast Z\leftarrow \overline{r}_\ast \overline{r}^\ast (f^\ast Z)$, hence to $F_{X}(f^\ast Y, f^\ast Z)$.
\end{proof}
\begin{corollary}
The projection formula
\[
f_! \big(A \wedge_X f^\ast B\big) \cong f_! A\wedge_{X'} B
\]
holds for any map of simplicial sets $f\colon X\to X'$.
\end{corollary}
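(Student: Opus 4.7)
The plan is to derive the projection formula abstractly via Yoneda, using the adjunction $(f_! \dashv f^\ast)$ together with the strongly closed symmetric monoidal structure on $f^\ast$ established in Lemma \ref{lem:PBisStonglyClosed}. This is the standard categorical argument and no ingredient is needed beyond what has just been proved; the previous lemma does all the heavy lifting.

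Concretely, I would fix an arbitrary test object $C \in \mathrm{sSet}_{\dslash X'}$ and chase the following sequence of natural bijections:
\begin{align*}
\mathrm{sSet}_{\dslash X'}\!\big(f_!(A \wedge_X f^\ast B),\, C\big)
&\cong \mathrm{sSet}_{\dslash X}\!\big(A \wedge_X f^\ast B,\, f^\ast C\big)\\
&\cong \mathrm{sSet}_{\dslash X}\!\big(A,\, F_X(f^\ast B, f^\ast C)\big)\\
&\cong \mathrm{sSet}_{\dslash X}\!\big(A,\, f^\ast F_{X'}(B, C)\big)\\
&\cong \mathrm{sSet}_{\dslash X'}\!\big(f_! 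A,\, F_{X'}(B, C)\big)\\
&\cong \mathrm{sSet}_{\dslash X'}\!\big(f_! A \wedge_{X'} B,\, C\big).
\end{align*}
The first and fourth bijections use $(f_! \dashv f^\ast)$, the second and fifth use the tensor-hom adjunctions for the fibrewise smash products on $\mathrm{sSet}_{\dslash X}$ and $\mathrm{sSet}_{\dslash X'}$ respectively (Remark \ref{rem:InternalHom}), and the third is precisely the strongly closed statement of Lemma \ref{lem:PBisStonglyClosed}. Naturality in $C$ together with the Yoneda lemma then yields the required isomorphism.

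There is essentially no obstacle since the strong closedness of $f^\ast$ is already available; the only small point to check is that the isomorphism produced is natural in $A$ and $B$ as well, but this is automatic from the naturality of each of the five bijections above.
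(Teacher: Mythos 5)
Your proof is correct and is exactly the argument the paper intends: the paper's proof simply says the formula is "a formal consequence of the adjunction $(f_!\dashv f^\ast)$ and the fact that $f^\ast$ is strongly closed", and your Yoneda chain of five natural bijections is the standard unpacking of that statement. No gaps; the appeal to symmetry of $\wedge_X$ in the tensor–hom steps is harmless since the fibrewise smash product is symmetric monoidal.
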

\begin{proof}
This is a formal consequence of the adjunction $(f_!\dashv f^\ast)$ and the fact that $f^\ast$ is strongly closed.
\end{proof}
\begin{corollary}
\label{cor:sSetastEnrichmentFacts}
For any simplicial set $X$, the category $\mathrm{sSet}_{\dslash X}$ is enriched, tensored and cotensored over $(\mathrm{sSet}_\ast, \wedge)$.
Moreover, for any map of simplicial sets $f\colon X\to X'$, the induced base change functor $f_!$ preserves $\mathrm{sSet}_\ast$-tensors and $f^\ast$ preserves both $\mathrm{sSet}_\ast$-tensors and cotensors.
\end{corollary}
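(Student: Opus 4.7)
The strategy is to construct the $\mathrm{sSet}_\ast$-enrichment directly from the closed symmetric monoidal structure on $\mathrm{sSet}_{\dslash X}$ by transporting along the strongly closed symmetric monoidal pullback functor $X^\ast\colon \mathrm{sSet}_\ast \to \mathrm{sSet}_{\dslash X}$ (identifying $\mathrm{sSet}_\ast \cong \mathrm{sSet}_{\dslash\ast}$ with its smash product). Concretely, I would set the tensor, cotensor and hom to be
\[
K\otimes Y := X^\ast K \wedge_X Y\,,\qquad Y^K := F_X(X^\ast K, Y)\,,\qquad \underline{\mathrm{Map}}_X(Y,Z) := X_\ast F_X(Y,Z)\,,
\]
for $K\in \mathrm{sSet}_\ast$ and $Y,Z\in \mathrm{sSet}_{\dslash X}$. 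The required two-variable adjunctions follow by splicing together the $(X^\ast \dashv X_\ast)$-adjunction with the internal hom--smash adjunction for $(\wedge_X, F_X)$: for instance,
\[
\mathrm{sSet}_{\dslash X}(K\otimes Y, Z) \cong \mathrm{sSet}_{\dslash X}(X^\ast K, F_X(Y,Z)) \cong \mathrm{sSet}_\ast(K, X_\ast F_X(Y,Z))\,,
\]
and an analogous computation yields the cotensor adjunction. The coherent associators and units needed for $\mathrm{sSet}_\ast$-enrichment are inherited from those of $(\mathrm{sSet}_{\dslash X}, \wedge_X, X_{+X})$ via Lemma \ref{lem:PBisStonglyClosed} applied to the terminal map (which makes $X^\ast$ strongly symmetric monoidal), so no new coherence has to be checked by hand.

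For the base change assertions, write $X = X'\circ f$ so that $X^\ast = f^\ast\circ (X')^\ast$. Preservation of tensors by $f^\ast$ is immediate from Lemma \ref{lem:PBisStonglyClosed}:
\[
f^\ast(K\otimes Z) = f^\ast\bigl((X')^\ast K \wedge_{X'} Z\bigr) \cong X^\ast K\wedge_X f^\ast Z = K\otimes f^\ast Z\,.
\]
The same lemma gives preservation of cotensors, since $f^\ast$ is strongly closed: $f^\ast F_{X'}((X')^\ast K, Z) \cong F_X(X^\ast K, f^\ast Z)$. Finally, preservation of tensors by $f_!$ follows from the projection formula (the corollary just above), applied to $A = Y$ and $B = (X')^\ast K$:
\[
f_!(K\otimes Y) = f_!\bigl(Y \wedge_X f^\ast (X')^\ast K\bigr) \cong f_! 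Y \wedge_{X'} (X')^\ast K = K\otimes f_! Y\,.
\]

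There is essentially no hard step: the whole statement is a formal consequence of the symmetric monoidal closed structure of Remark \ref{rem:InternalHom}, the strongly closed symmetric monoidality of $f^\ast$ (Lemma \ref{lem:PBisStonglyClosed}), and the projection formula. The only place where care is required is the bookkeeping of the basepoint on $X_\ast F_X(Y,Z)$: one should verify that this retractive space over $\ast$ (i.e.~pointed simplicial set) is well defined, which follows from the canonical pointing of the hom-sets described in the remark on $f_\ast$ together with the fact that $F_X(Y,Z)$ is itself a retractive space over $X$.
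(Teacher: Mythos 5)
Your proposal is correct and follows essentially the same route as the paper: the tensor $(X^\ast K)\wedge_X Y$, cotensor $F_X(X^\ast K,Y)$ and enriched hom $X_\ast F_X(Y,Z)$ are exactly the structures used there, and the base change statements are deduced in the same way from strong (closed) monoidality of $f^\ast$ and the projection formula for $f_!$. The only cosmetic difference is that the paper writes out the enriched composition map explicitly via $(X^\ast\dashv X_\ast)$-adjuncts, whereas you invoke the standard transport of enrichment along the strongly monoidal $X^\ast$, which amounts to the same thing.
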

\begin{proof}
For retractive spaces $Y$ and $Z$ over $X$, the $\mathrm{sSet}_\ast$-enriched hom-space is $X_\ast F_X(Y,Z)$.
The $\mathrm{sSet}_\ast$-composition morphism $X_\ast F_X (Z,W)\wedge X_\ast F_X(Y,Z)\to  X_\ast F_X(Y,W)$
is the $(X^\ast\dashv X_\ast)$-adjunct of
\[
X^\ast X_\ast F_X (Z,W)\wedge_X X^\ast X_\ast F_X(Y,Z)\longrightarrow
F_X(Z,W)\wedge_X F_X(Y,Z)
\longrightarrow F_X(Y,W)\,.
\]
Here we have used strong monoidality of $X^\ast$ and the first morphism is the fibrewise smash product of $(X^\ast\dashv X_\ast)$-counits.
The second morphism is itself the adjunct of the map
\[
F_X(Z,W)\wedge_X F_X(Y,Z)\wedge_X Y\longrightarrow F_X(Z,W)\wedge_X Z\longrightarrow W
\]
obtained by iterated evaluation.

The $\mathrm{sSet}_\ast$-tensoring is given by the bifunctor
$(K,Y)\mapsto K\owedge_X Y := (X^\ast K)\wedge_X Y$,
as is readily checked using the adjunction $(X^\ast\dashv X_\ast)$.
For any map of simplicial sets $f\colon X\to X'$, we have
\[
f^\ast \big((X')^\ast K\wedge_{X'} Y\big) \cong 
f^\ast ((X')^\ast K))\wedge_{X} f^\ast Y \cong X^\ast K\wedge_X f^\ast Y 
\]
so that the pullback functor $f^\ast$ preserves $\mathrm{sSet}_\ast$-tensors.
Since $X^\ast K = (X'\circ f)^\ast K \cong f^\ast (X')^\ast K $, the pushforward functor $f_!$ preserves $\mathrm{sSet}_\ast$-tensors by the projection formula.

Finally, the $\mathrm{sSet}_\ast$-cotensoring is given by the bifunctor
$(K, Y)\mapsto F_X(X^\ast K, Y)$.
The pullback functors are strongly closed, so preserve $\mathrm{sSet}_\ast$-cotensors.
\end{proof}

The categories of retractive spaces inherit natural model structures from $\mathrm{sSet}$.
A morphism $\psi$ in $\mathrm{sSet}_{\dslash X}$ is a weak equivalence, fibration, or cofibration precisely if its underlying map of simplicial sets is such.
This model structure inherits a number of useful properties from the Kan model structure:
\begin{lemma}
\label{lem:RetSpaceLocalModelStructure}
For any simplicial set $X$, the model category $\mathrm{sSet}_{\dslash X}$ is combinatorial and left proper.
Moreover
\begin{itemize}
  \item the collection
  \[
  \mathcal{I}^\mathrm{Kan}_X :=\left\{
  \big(\sigma|_{\partial \Delta^n} \to \sigma\big)_{+X} \;\Big|\; (\sigma \colon \Delta^n \to X)\in \mathrm{el}(X)
  \right\}\,
  \] 
  is a set of generating cofibrations; and
  
  \item the collection
  \[
  \mathcal{J}^\mathrm{Kan}_X :=\left\{
  \big(\sigma|_{\Lambda^n_k} \to \sigma\big)_{+X} \;\Big|\; 0\leq k\leq n,\;(\sigma \colon \Delta^n \to X)\in \mathrm{el}(X)
  \right\}
  \]
  is a set of generating acyclic cofibrations.
\end{itemize}
\end{lemma}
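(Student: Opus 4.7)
The plan is to derive the combinatorial, left proper model structure on $\mathrm{sSet}_{\dslash X}$ by iterated slicing, exploiting the identification $\mathrm{sSet}_{\dslash X} = (\mathrm{sSet}_{/X})^{\mathrm{id}_X/}$ recorded at the opening of this subsection. Both combinatoriality and left properness propagate through the slice and coslice constructions, and the explicit generating sets pass through via the corresponding left adjoints.

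First I would pass from $\mathrm{sSet}$ with its Kan model structure to the slice $\mathrm{sSet}_{/X}$. The Kan model structure is combinatorial and left proper, with generating (acyclic) cofibrations given by boundary (resp.~horn) inclusions. In the slice, pushouts, weak equivalences, fibrations and cofibrations are all detected or computed at the level of underlying simplicial sets, from which left properness transfers immediately. The generating cofibrations of $\mathrm{sSet}_{/X}$ arise by pairing each generator of $\mathrm{sSet}$ with all maps from its codomain to $X$; since a map $\sigma\colon\Delta^n\to X$ is precisely an object of $\mathrm{el}(X)$, this yields the set $\{\sigma|_{\partial\Delta^n}\to \sigma \mid \sigma\in\mathrm{el}(X)\}$, and analogously for the acyclic generators.

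Next I would iterate the procedure to the coslice $(\mathrm{sSet}_{/X})^{\mathrm{id}_X/}$. Pushouts in a coslice category are again computed as underlying pushouts, with the structure map from the distinguished object induced automatically, so left properness survives the second slicing. Combinatoriality is immediate from Lemma \ref{lem:RetLocPres} together with the existence of a small generating set. The forgetful functor $u\colon \mathrm{sSet}_{\dslash X}\to \mathrm{sSet}_{/X}$ admits the left adjoint $(-)_{+X}$ adjoining a free section, and the generating (acyclic) cofibrations of the coslice are obtained by applying this left adjoint to those of the slice. By direct inspection, $(-)_{+X}$ sends a generator $\sigma|_{\partial\Delta^n}\to \sigma$ of $\mathrm{sSet}_{/X}$ to $(\sigma|_{\partial\Delta^n}\to \sigma)_{+X}$, producing precisely $\mathcal{I}^{\mathrm{Kan}}_X$, with $\mathcal{J}^{\mathrm{Kan}}_X$ obtained in parallel fashion.

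I do not anticipate any substantive obstacle. Each step is an instance of the standard fact that combinatoriality and left properness are preserved by (co)slicing a model category, and the descriptions of the generating sets follow by chasing the left adjoints. The only care required is the book-keeping needed to confirm that the twice-sliced generators coincide with the stated sets $\mathcal{I}^{\mathrm{Kan}}_X$ and $\mathcal{J}^{\mathrm{Kan}}_X$, which amounts to unravelling the definition of $(-)_{+X}$.
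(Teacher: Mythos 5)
Your proposal is correct and is exactly the argument the paper leaves implicit: Lemma \ref{lem:RetSpaceLocalModelStructure} is stated without proof as being inherited from the Kan model structure via the identification $\mathrm{sSet}_{\dslash X}=(\mathrm{sSet}_{/X})^{\mathrm{id}_X/}$, with $(-)_{+X}$ the left adjoint to the forgetful functor. Transporting the generating sets first to the slice (indexing by $\mathrm{el}(X)$) and then through $(-)_{+X}$, and noting that pushouts and all three classes of maps are computed/detected on underlying simplicial sets, gives precisely $\mathcal{I}^\mathrm{Kan}_X$, $\mathcal{J}^\mathrm{Kan}_X$, left properness, and (with Lemma \ref{lem:RetLocPres}) combinatoriality, as you say.
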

\begin{theorem}
\label{thm:RetSpStructureTheorem}
For any simplicial set $X$ and map $f\colon X\to X'$, the category $\mathrm{sSet}_{\dslash X}$ is a $\mathrm{sSet}_\ast$-model category and $(f_!\dashv f^\ast)$ is a $\mathrm{sSet}_\ast$-enriched Quillen adjunction.
Moreover
\begin{enumerate}[label=\emph{(\roman*)}]
  \item if $f$ is a weak equivalence then $(f_!\dashv f^\ast)$ is a $\mathrm{sSet}_\ast$-Quillen equivalence;
  \item if $f$ is a fibration or a projection to a factor of a product then $(f^\ast\dashv f_\ast)$ is a $\mathrm{sSet}_\ast$-Quillen adjunction; and 
  \item if $f$ is an acyclic fibration then $(f^\ast\dashv f_\ast)$ is a $\mathrm{sSet}_\ast$-Quillen equivalence.
\end{enumerate}
\end{theorem}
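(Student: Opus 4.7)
The plan is to bootstrap everything from the Kan model structure on $\mathrm{sSet}$, using both properness directions together with the $\mathrm{sSet}_\ast$-enrichment, tensoring, and cotensoring already established in Corollary \ref{cor:sSetastEnrichmentFacts}. Two preliminary observations I would invoke throughout are that every object of $\mathrm{sSet}_{\dslash X}$ is cofibrant (the section of a retractive space is automatically a split monomorphism), and that an object is fibrant iff its structure projection is a Kan fibration. The $\mathrm{sSet}_\ast$-model structure claim then reduces to the pushout-product axiom: since $K\owedge_X Y = X^\ast K\wedge_X Y$ and fibrewise smash products are built from colimits and fibre products stable under pullback in $\mathrm{sSet}$, SM7 for $\mathrm{sSet}_{\dslash X}$ reduces on underlying simplicial sets to the pushout-product of monomorphisms in $\mathrm{sSet}$, which is handled by SM7 for the Kan model structure. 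The Quillen adjunction $(f_! \dashv f^\ast)$ is then immediate: $f^\ast$ preserves (acyclic) fibrations because Kan (acyclic) fibrations are pullback-stable in $\mathrm{sSet}$, and the $\mathrm{sSet}_\ast$-enrichment follows from $f_!$ preserving $\mathrm{sSet}_\ast$-tensors (Corollary \ref{cor:sSetastEnrichmentFacts}).

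For part (i) I would apply the standard criterion that $(f_!\dashv f^\ast)$ is a Quillen equivalence provided $f^\ast$ reflects weak equivalences between fibrant objects and the derived unit is a weak equivalence on every (cofibrant, hence every) object. Reflection is immediate from right properness: for $\phi\colon Z\to Z'$ between fibrant objects over $X'$, the canonical comparison square has horizontal weak equivalences $f^\ast Z \to Z$ and $f^\ast Z'\to Z'$ (pullbacks of the weak equivalence $f$ along the Kan fibrations $Z, Z'\to X'$), so two applications of 2-out-of-3 force $\phi$ to be a weak equivalence as well. For the derived unit, pick a fibrant replacement $f_! Y \to W$ in $\mathrm{sSet}_{\dslash X'}$; the coprojection $Y \to f_! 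Y = X'\cup_X Y$ is a weak equivalence by left properness (pushout of the weak equivalence $f$ along the cofibration $X\to Y$), and the comparison map $f^\ast W \to W$ is a weak equivalence by right properness. Together these identify the derived unit $Y\to f^\ast W$ as a weak equivalence, as required.

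For part (ii), $f^\ast$ preserves cofibrations since monomorphisms are pullback-stable, and preservation of acyclic cofibrations reduces to preservation of weak equivalences on underlying simplicial sets. If $f$ is a Kan fibration this is right properness; if $f\colon A\times B\to A$ is a projection, then $f^\ast\cong (-)\times B$ on underlying simplicial sets, and $(-)\times B$ preserves weak equivalences by Ken Brown's lemma applied to the Cartesian Kan model structure in which every object is cofibrant. The enrichment of $(f^\ast\dashv f_\ast)$ follows from $f^\ast$ preserving $\mathrm{sSet}_\ast$-tensors (Corollary \ref{cor:sSetastEnrichmentFacts}). Finally, part (iii) assembles (i) and (ii): for $f$ an acyclic fibration, pullback along $f$ is again an acyclic fibration, so $f^\ast$ preserves all weak equivalences by the same 2-out-of-3 argument as in (i); its left and right derived functors therefore coincide, and by (i) this common derived functor is an equivalence of homotopy categories, upgrading $(f^\ast\dashv f_\ast)$ to a Quillen equivalence. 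I expect the most delicate step to be the derived unit analysis in (i), where one has to keep track of section, projection, and fibrant replacement data through the base-change functors; properness of $\mathrm{sSet}$ in both directions is ultimately doing all the work.
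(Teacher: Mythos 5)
Your handling of the base-change statements (i)--(iii) is correct and is essentially the paper's own argument: everything runs on left and right properness of the Kan model structure together with $2$-out-of-$3$. The paper phrases (i) through the adjunct criterion for a Quillen equivalence rather than your reflection-plus-derived-unit criterion, but the inputs are identical ($Y\to f_!Y$ is a pushout of $f$ along the cofibration given by the section, and $f^\ast W\to W$ is a pullback of $f$ along a fibration), and (iii) is settled exactly as in the paper by observing that $f^\ast$ preserves cofibrations, fibrations and all weak equivalences, so $\mathbf{L}f^\ast\cong\mathbf{R}f^\ast$, and then invoking (i) and uniqueness of adjoints. Checking that $(f_!\dashv f^\ast)$ is Quillen via pullback-stability of (acyclic) Kan fibrations, rather than via $f_!$ on generators as the paper does, is a harmless variant; in (ii) the fibration case does require the small observation that $f^\ast Z\cong f^\ast Z'\times_{Z'}Z$, so that right properness applied to the fibration $f^\ast Z'\to Z'$ gives preservation of weak equivalences by $f^\ast$, but this is standard and the paper is no more explicit.

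The genuine weak point is the very first claim, that $\mathrm{sSet}_{\dslash X}$ is a $\mathrm{sSet}_\ast$-model category. Your asserted reduction---that SM7 for $\owedge_X$ \emph{reduces on underlying simplicial sets to the pushout-product of monomorphisms in $\mathrm{sSet}$}---does not hold as stated for arbitrary cofibrations: the underlying simplicial set of $K\owedge_X Y=(X^\ast K)\wedge_X Y$ is not $K\times Y$ but the quotient obtained by collapsing $K\times i(X)\,\cup\,\{\ast\}\times Y$ onto $X$, so the underlying map of a pushout-product $i\,\square\,i_X$ is not the Cartesian pushout-product of the underlying monomorphisms, and neither its monomorphy nor its anodyne-ness can simply be read off from SM7 for the Kan model structure. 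The standard repair is exactly what the paper does (Theorem \ref{thm:RetSpStructureTheorem}, using Lemma \ref{lem:RetSpaceLocalModelStructure}): verify the axiom on the generating sets $\mathcal{I}^\mathrm{Kan}_\ast,\mathcal{J}^\mathrm{Kan}_\ast$ and $\mathcal{I}^\mathrm{Kan}_X,\mathcal{J}^\mathrm{Kan}_X$, where basepoints and sections are freely adjoined so that $(L_+)\owedge_X(Y_{+X})\cong(L\times Y)_{+X}$ and the pushout-product literally is $(-)_{+X}$ applied to a Cartesian pushout-product of boundary or horn inclusions of simplices (handled by the shuffle decomposition, respectively anodyne-ness), and then pass to arbitrary (acyclic) cofibrations by cofibrant generation, using that the tensoring preserves pushouts, transfinite compositions and retracts in each variable. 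Without this detour through generators your argument for the enriched model structure has a real hole, even though the conclusion is of course true; the rest of your proposal stands once this step is patched.
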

\begin{proof}
To show that $\mathrm{sSet}_{\dslash X}$ is a $\mathrm{sSet}_\ast$-model category, we must verify that the tensor bifunctor satisfies the pushout-product axiom.
That is, we must show that for cofibrations $i\colon K\to L$ and $i_X \colon Y\to Z$ in $\mathrm{sSet}_\ast$ and $\mathrm{sSet}_{\dslash X}$ respectively, the pushout-product
\[
i\,\square\, i_X \colon \big(L \owedge_X Y\big)\coprod_{K\owedge_X Y} \big(K\owedge_X Z\big)\longrightarrow L\owedge_X Z
\]
is a cofibration and is moreover an acyclic cofibration if one of $i$ or $i_X$ is acyclic.
In the case of spaces (resp.~retractive spaces) with basepoint (resp.~section) freely adjoined, we have an isomorphism of retractive spaces 
\[
\big(L_+\big)\owedge_X \big(Y_{+X}\big)\cong (L\times Y)_{+ X}\,.
\]
Hence for $(i_+\colon \partial \Delta^n_+ \to \Delta^n_+)\in \mathcal{I}^\mathrm{Kan}_\ast$ and $(i_X\colon \partial \Delta^m_{+X} \to \Delta^m_{+X})\in \mathcal{I}^\mathrm{Kan}_X$ (in the latter case, we are suppressing the map $\sigma\colon \Delta^m \to X$ in our notation), the pushout-product $i\,\square\, i_X$ is isomorphic to a map
\begin{equation}
\label{eqn:PushoutProdforsSetTensoring}
\left((\Delta^n\times \partial \Delta^m)\coprod_{\partial \Delta^n \times\partial\Delta^m} (\partial \Delta^n \times \Delta^m)\right){\!\!\vphantom{\big)}}_{+X}\longrightarrow \left(\Delta^n\times \Delta^m \right){\!\vphantom{\big)}}_{+X}\,,
\end{equation}
where maps to $X$ are determined in all cases by projecting to the second factor in the product and using the given map to $X$.
Using the shuffle decomposition of a product of simplices, the map \eqref{eqn:PushoutProdforsSetTensoring} is seen to be an iterated pushout of maps in $\mathcal{I}^\mathrm{Kan}_X$.
Thus we have shown that every pushout-product of generating cofibrations is a cofibration.
Since the tensoring bifunctor preserves pushouts, transfinite composition and retracts separately in each variable, cofibrant generation implies that the pushout-product of any two cofibrations is again a cofibration.
The remaining clause of the pushout-product axiom is proven analogously; replacing simplicial boundaries by horns in \eqref{eqn:PushoutProdforsSetTensoring}, it is easy to see that the sets
\[
\mathcal{I}^\mathrm{Kan}_\ast \,\square\, \mathcal{J}^\mathrm{Kan}_X
\;\;
\mbox{ and }
\;\;
\mathcal{J}^\mathrm{Kan}_\ast \,\square\, \mathcal{I}^\mathrm{Kan}_X
\]
consist of acyclic cofibrations.

We now fix a map $f\colon X\to X'$.
The pushforward functor $f_!$ sends $f_!(\mathcal{I}^\mathrm{Kan}_X)\subset \mathcal{I}^\mathrm{Kan}_{X'}$ and $f_!(\mathcal{J}^\mathrm{Kan}_X)\subset \mathcal{J}^\mathrm{Kan}_{X'}$, so $f_!$ preserves cofibrations and acyclic cofibrations.
Since $f_!$ and $f^\ast$ preserve $\mathrm{sSet}_\ast$-tensors by Corollary \ref{cor:sSetastEnrichmentFacts},  $(f_!\dashv f^\ast)$ is a $\mathrm{sSet}_\ast$-Quillen adjunction.
Note that pullbacks preserve monomorphisms, so $f^\ast$ also preserves cofibrations.

Now suppose that $f$ is a weak equivalence and fix a map $\psi\colon f_! Y \to Z$ in $\mathrm{sSet}_{\dslash X'}$ with fibrant codomain.
The map $\psi$ and its $(f_!\dashv f^\ast)$-adjunct $\psi^\vee$ fit into the commuting diagram
\[
\begin{tikzcd}
X
\ar[rr, rightarrowtail]
\ar[dd, "f"']
&&
Y
\ar[dl]
\ar[dr, "\psi^\vee"']
\ar[rr]
&&
X
\ar[dd, "f"]
\\
&
f_!Y
\ar[dr, "\psi"]
&&
f^\ast Z
\ar[ur, twoheadrightarrow]
\ar[dl]
\\
X'
\ar[ur, rightarrowtail]
\ar[rr, rightarrowtail]
&&
Z
\ar[rr, twoheadrightarrow]
&&
X'\,.
\end{tikzcd}
\]
Since $\mathrm{sSet}$ is a proper model category, the pushout morphism $Y\to f_! Y$ and pullback morphism $f^\ast Z \to Z$ are both weak equivalences. 
The $2$-out-of-$3$ property now implies that $\psi$ is a weak equivalence if and only if $\psi^\vee$ is.
Thus $(f_!\dashv f^\ast)$ is a $\mathrm{sSet}_\ast$-Quillen equivalence.

If $f$ is a fibration or a projection to a factor of a product then $f^\ast$ also preserves weak equivalences. We had previously remarked that $f^\ast$ preserves cofibrations and so $(f^\ast\dashv f_\ast)$ is a Quillen adjunction in this case.
It is a $\mathrm{sSet}_\ast$-Quillen adjunction since $f^\ast$ preserves $\mathrm{sSet}_\ast$-tensors.
We note that in this case there is an isomorphism of derived functors $\mathbf{R}f^\ast \cong \mathbf{L}f^\ast$ since $f^\ast$ preserves cofibrations, fibrations, and weak equivalences.
If $f$ is moreover an acyclic fibration, then we saw above that $(\mathbf{L}f_! \dashv \mathbf{R}f^\ast)$ is an equivalence of categories. Using the isomorphism of derived functors  $\mathbf{R}f^\ast \cong \mathbf{L}f^\ast$, the fact that an adjoint equivalence $(L\dashv R)$ can be modified to become an adjoint equivalence $(R\dashv L)$, and essential uniqueness of adjoints, we conclude that $\mathbf{L}f_!\cong \mathbf{R}f_\ast$ so that $(\mathbf{L}f^\ast\dashv \mathbf{R}f_\ast)$, too, is an equivalence of categories.
\end{proof}

\begin{remark}
\label{rem:SmashFail}
The monoidal unit $S^0 =\ast_+ = \ast \coprod \ast$ in $\mathrm{sSet}_\ast$ is cofibrant, so we have also (re)proven the fact that $(\mathrm{sSet}_\ast, \wedge)$ is a symmetric monoidal model category.
Yet, unless $X=\ast$ the fibrewise smash product $\wedge_X$ does \emph{not} make $\mathrm{sSet}_{\dslash X}$ a monoidal model category.
The issue at the heart of this deficiency is that the fibre product $A\times_X B$ does not generally preserve weak equivalences.
We can, however, partially ameliorate this issue due to the following result.
\end{remark}

\begin{lemma}
\label{lem:FibrantRetSpaceSmash}
If $Y\in \mathrm{sSet}_{\dslash X}$ is fibrant then the endofunctor $Y\wedge_X(-)$ preserves cofibrations and weak equivalences. 
In particular, it is a left Quillen endofunctor.
\end{lemma}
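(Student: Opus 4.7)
The plan is to exploit the fibrancy hypothesis via the fact that the projection $r\colon Y\to X$ is a Kan fibration. Rewriting the colimit diagram \eqref{eqn:SmashColim} as an iterated pushout, the fibrewise smash product admits the presentation
\[
Y \wedge_X Z \cong X \coprod_{Y\coprod_X Z} \big(Y \times_X Z\big)\,,
\]
where $Y\coprod_X Z$ is the pushout of the two sections and $Y\coprod_X Z \hookrightarrow Y\times_X Z$ is a monomorphism of simplicial sets. From here, I would verify preservation of cofibrations and of weak equivalences separately, then combine. The right adjoint is supplied by the internal hom $F_X(Y,-)$ of Remark \ref{rem:InternalHom}, so the left Quillen property reduces to these two preservation statements.

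Preservation of cofibrations should be relatively routine, using only that monomorphisms in $\mathrm{sSet}$ are stable under pullback and pushout. Given a cofibration $Z\hookrightarrow Z'$ of retractive spaces, both $Y\times_X Z\to Y\times_X Z'$ and $Y\coprod_X Z\to Y\coprod_X Z'$ are monomorphisms; a direct inspection of the defining quotients shows that no additional identifications are introduced on passing from $Y\wedge_X Z$ to $Y\wedge_X Z'$, so the induced map is itself injective and therefore a cofibration.

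Preservation of weak equivalences is where the Kan fibration hypothesis enters, and I expect this to be the main technical step. Since $r$ is a Kan fibration, the pullback functor $Y\times_X(-)\colon \mathrm{sSet}_{/X}\to \mathrm{sSet}_{/Y}$ preserves weak equivalences by right properness of the Kan--Quillen model structure (this is the same argument invoked in the proof of Theorem \ref{thm:RetSpStructureTheorem} for $f^\ast$ along a fibration $f$). Hence $Y\times_X Z\to Y\times_X Z'$ is a weak equivalence whenever $Z\to Z'$ is. The retractive sections $X\to Y, Z, Z'$ are automatically monomorphisms (any section of a retraction is monic), so left properness of $\mathrm{sSet}$ gives that $Y\coprod_X Z\to Y\coprod_X Z'$ is also a weak equivalence. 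Applying the cube lemma for pushouts in a left proper model category to the morphism of spans
\[
\big(X\leftarrow Y\coprod_X Z\hookrightarrow Y\times_X Z\big)\longrightarrow \big(X\leftarrow Y\coprod_X Z'\hookrightarrow Y\times_X Z'\big)\,,
\]
whose right legs are cofibrations by the preceding paragraph, then yields that $Y\wedge_X Z\to Y\wedge_X Z'$ is a weak equivalence.

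Combining the two preservation statements, $Y\wedge_X(-)$ preserves cofibrations and acyclic cofibrations, and since it admits the right adjoint $F_X(Y,-)$ from Remark \ref{rem:InternalHom}, it is a left Quillen endofunctor, completing the proof. The principal obstacle is the careful verification that the cube lemma applies; the substantive input is the Kan fibration $r\colon Y\to X$, without which right properness cannot be used to control the fibre product $Y\times_X(-)$.
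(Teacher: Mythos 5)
Your argument is correct and follows essentially the same route as the paper: the substantive step is identical, using that the fibration $r\colon Y\to X$ makes $Y\times_X(-)$ preserve weak equivalences and then applying the cube/gluing lemma twice (once for $Y\coprod_X(-)$ using that the section $X\to Y$ is a cofibration, once for the defining pushout using that the shear maps $Y\coprod_X Z\to Y\times_X Z$ are monomorphisms). The only divergence is the cofibration half, where you check injectivity of $Y\wedge_X Z\to Y\wedge_X Z'$ directly while the paper deduces it from the pushout-product argument on generating cofibrations as in Theorem \ref{thm:RetSpStructureTheorem}; both are fine, though your "direct inspection" should spell out that a simplex of $Y\times_X Z$ landing in the image of $Y\coprod_X Z'$ already lies in the image of $Y\coprod_X Z$, which is what the compatibility of sections gives.
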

\begin{proof}
Arguing on generating cofibrations as in the proof of Theorem \ref{thm:RetSpStructureTheorem}, we show that the $\wedge_X$-pushout-product  of any pair of cofibrations is again a cofibration. 
All objects of $\mathrm{sSet}_{\dslash X}$ are cofibrant, so that $Y\wedge_X (-)$ preserves cofibration without any hypotheses on $Y$.

If $Y$ is moreover fibrant, so that the projection $r\colon Y\to X$ is a fibration, then $Y\times_X(-)$ preserves weak equivalences.
Now suppose that $\zeta\colon Z_0\to Z_1$ is a weak equivalence in $\mathrm{sSet}_{\dslash X}$.
Then the cube lemma \cite[Lemma 5.2.6]{hovey_model_1999} implies that
$
Y\coprod_X Z_0 \to Y\coprod_X Z_1
$
is a weak equivalence of simplicial sets (since $X\to Y$ is necessarily a cofibration).
Applying the cube lemma once more to the pushout diagrams
\[
\begin{tikzcd}
Y\displaystyle\coprod_X Z_i
\ar[r, rightarrowtail]
\ar[d]
&
Y\displaystyle\prod_X Z_i
\ar[d]
\\
X
\ar[r]
&
Y\wedge_X Z_i
\end{tikzcd}
\]
for $i=0,1$ implies that $Y\wedge_X \zeta\colon Y\wedge_X Z_0\to Y\wedge_X Z_1$ is a weak equivalence.
Note that we used that the $X$-parametrised shear maps $Y\coprod_X Z_i \to Y\prod_X Z_i$ are monomorphisms, hence cofibrations, which holds without any additional hypotheses on the $Z_i$.
\end{proof}

\subsection{Global theory}
Using the work we have done in the previous section, we can regard categories of retractive spaces as determining a pseudofunctor $\mathrm{sSet}_{\dslash -}\colon \mathrm{sSet}\to \mathbf{Model}$, where
\[
\big(f\colon X\to Y\big)\longmapsto
\left(
\begin{tikzcd}
\mathrm{sSet}_{\dslash X}
  \ar[rr, shift left=1.1ex, "f_!"]
  \ar[rr, leftarrow, shift left=-1.1ex , "\bot", "f^\ast"']
  &&
  \mathrm{sSet}_{\dslash Y}
\end{tikzcd}
\right).
\]
We would now like to produce a model structure on the Grothendieck construction $\int_{X\in \mathrm{sSet}}\mathrm{sSet}_{\dslash X}$ that combines the model structures on $\mathrm{sSet}$ and the $\mathrm{sSet}_{\dslash X}$ in a sensible fashion.
To do this we use the Grothendieck construction for model categories from \cite{harpaz_grothendieck_2015}, for which we need the following
\begin{lemma}
\label{lem:RetSpacPropRelative}
The pseudofunctor $\mathrm{sSet}_{\dslash -}$ is proper and relative.
\end{lemma}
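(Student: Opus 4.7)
The plan is to recall from \cite{harpaz_grothendieck_2015} what it means for a pseudofunctor $F\colon \mathcal{M}\to \mathbf{Model}$ to be relative and proper, and then to discharge each condition by appealing to Theorem \ref{thm:RetSpStructureTheorem} together with standard properties of the Kan model structure on $\mathrm{sSet}$. The relative condition asks that $F(f)$ is a Quillen equivalence whenever $f$ is a weak equivalence in $\mathrm{sSet}$; this is precisely Theorem \ref{thm:RetSpStructureTheorem}(i), so that clause requires no further argument.

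For properness, there are two conditions to verify, one concerning pushforward along (acyclic) cofibrations and one concerning pullback along (acyclic) fibrations. The first is essentially automatic once one observes that every retractive space is cofibrant in $\mathrm{sSet}_{\dslash X}$: the section map is a monomorphism and hence a cofibration in the Kan model structure. Combined with the fact that $f_!$ is left Quillen by Theorem \ref{thm:RetSpStructureTheorem}, Ken Brown's lemma gives that $f_!$ preserves weak equivalences between cofibrant objects, which here means all weak equivalences. The second condition follows from right properness of $\mathrm{sSet}$: for a fibration $f\colon X\to X'$, the pullback functor $f^*$ is computed underlying-wise as the pullback of simplicial sets along $f$, and pullback along a Kan fibration preserves weak equivalences.

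The argument is essentially formal; all the real homotopical content has already been absorbed into Theorem \ref{thm:RetSpStructureTheorem} and the well-known bi-properness of the Kan model structure. The only point requiring care is to match the precise formulation of ``proper'' from \cite{harpaz_grothendieck_2015} against the statements verified here, but any of the usual variants (requiring the conclusion on all objects, or on cofibrant/fibrant objects, or on the derived level) follows immediately from the ingredients above, since in our setting all objects are cofibrant and pullback is strict.
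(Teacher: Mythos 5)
Your proposal is correct and follows essentially the same route as the paper: relativity is Theorem \ref{thm:RetSpStructureTheorem}(i), left properness follows because all retractive spaces are cofibrant (sections are split monomorphisms) so the left Quillen functor $f_!$ preserves all weak equivalences by Ken Brown's lemma, and right properness comes down to pullback along a (Kan) fibration preserving weak equivalences, which is exactly the input behind Theorem \ref{thm:RetSpStructureTheorem}(ii). The paper simply cites the theorem for the right-proper clause where you unwind it to properness of $\mathrm{sSet}$, but the content is identical.
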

\begin{proof}
A pseudofunctor $F\colon M\to \mathbf{Model}$ defined on a model category $M$ is \emph{relative} if it sends weak equivalences to Quillen equivalences. 
The pseudofunctor $F$ is \emph{left} (resp.~\emph{right}) \emph{proper} if for every acyclic cofibration (resp.~acyclic fibration) $f$ the pushforward $F(f)_!$ (resp.~pullback $F(f)^\ast$) preserves weak equivalences. We say that $F$ is \emph{proper} if it is both left and right proper.

In the case presently under consideration, Theorem \ref{thm:RetSpStructureTheorem} immediately implies that $\mathrm{sSet}_{\dslash -}$ is relative and right proper. To see that it is left proper, note that as all objects of $\mathrm{sSet}_{\dslash X}$ are cofibrant, the left Quillen functor $f_!$ preserves weak equivalences for \emph{any} map of simplicial sets $f\colon X\to X'$ (by Ken Brown's lemma).
\end{proof}

\begin{definition}
The \emph{(global) category of retractive spaces} is 
\[
\mathrm{sSet}_{\dslash \mathrm{sSet}} :=\int_{X\in \mathrm{sSet}}\mathrm{sSet}_{\dslash X}\,,
\]
 the Grothendieck construction of the pseudofunctor $\mathrm{sSet}_{\dslash -}$.
The objects of $\mathrm{sSet}_{\dslash \mathrm{sSet}}$ are pairs $(X,Y)$ with $X\in \mathrm{sSet}$ and $Y\in \mathrm{sSet}_{\dslash X}$; the morphisms $(f,\psi)\colon (X, Y)\to (X',Z)$ are pairs of morphisms $f\colon X\to X'$ and $\psi\colon f_!(Y)\to Z$ in $\mathrm{sSet}$ and $\mathrm{sSet}_{\dslash X'}$ respectively.
\end{definition}
\begin{remark}
\label{rem:GlobalMorphisms}
The following data are interchangeable:
\begin{enumerate}[label=(\roman*)]
  \item morphisms $(f,\psi)\colon (X,Y)\to (X', Z)$ in $\mathrm{sSet}_{\dslash \mathrm{sSet}}$\,;
  \item morphisms $\psi\colon f_! (Y)\to Z$ in $\mathrm{sSet}_{\dslash X'}$\,;
  \item morphisms $\psi^\vee \colon Y\to f^\ast(Z)$ in $\mathrm{sSet}_{\dslash X}$\,; and
  \item commuting diagrams of simplicial sets
  \[
  \begin{tikzcd}
  X
  \ar[r]
  \ar[rr, bend left=25, "\mathrm{id}_X"]
  \ar[d, "f"']
  &
  Y
  \ar[r]
  \ar[d, "\Psi"']
  &
  X
  \ar[d, "f"]
  \\
  X'
  \ar[rr, bend left=-25, "\mathrm{id}_{X'}"']
  \ar[r]
  &
  Z
  \ar[r]
  &
  X'\,.
  \end{tikzcd}
  \]
\end{enumerate}
That (i) and (ii) are equivalent data is clear. The equivalence between (ii) and (iii) comes from passing to $(f_!\dashv f^\ast)$-adjuncts.
Finally, (iii) and (iv) are equivalent by the universal property of the pullback.
\end{remark}

\begin{theorem}
$\mathrm{sSet}_{\dslash \mathrm{sSet}}$ has a \emph{global model structure}, with respect to which $(f,\psi)\colon (X,Y)\to (X', Z)$ is
\begin{itemize}
  \item a weak equivalence if $f$ and $\psi$ are weak equivalences in $\mathrm{sSet}$ and $\mathrm{sSet}_{\dslash X'}$ respectively;
  \item a cofibration if $f$ and $\psi$ are cofibrations in $\mathrm{sSet}$ and $\mathrm{sSet}_{\dslash X'}$ respectively; and
  \item  a fibration if $f$ and $\psi^\vee$ are fibrations in $\mathrm{sSet}$ and $\mathrm{sSet}_{\dslash X}$ respectively.
\end{itemize}
\end{theorem}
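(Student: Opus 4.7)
The plan is to apply the Grothendieck construction for model categories from \cite{harpaz_grothendieck_2015} directly to the pseudofunctor $\mathrm{sSet}_{\dslash -}\colon \mathrm{sSet}\to \mathbf{Model}$. All the technical input has essentially been assembled already: the base $\mathrm{sSet}$ carries the Kan model structure, each fibre $\mathrm{sSet}_{\dslash X}$ is a model category by Lemma \ref{lem:RetSpaceLocalModelStructure}, and Lemma \ref{lem:RetSpacPropRelative} verifies the properness and relativity hypotheses required by Harpaz--Prasma. The main theorem of \cite{harpaz_grothendieck_2015} then produces an \emph{integral model structure} on $\int_{X\in\mathrm{sSet}} \mathrm{sSet}_{\dslash X}$ whose three distinguished classes are precisely those described in the statement, using Remark \ref{rem:GlobalMorphisms} to interchange the description of a morphism $(f,\psi)$ with its adjunct $\psi^\vee$ (needed in particular for the description of fibrations).

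My first step is to unpack the statement of the Harpaz--Prasma theorem and match its hypotheses one-for-one against what we have proven. Concretely: (i) the base model category $\mathrm{sSet}$ is left and right proper; (ii) each fibre $\mathrm{sSet}_{\dslash X}$ is left proper (Lemma \ref{lem:RetSpaceLocalModelStructure}); (iii) the pseudofunctor $\mathrm{sSet}_{\dslash -}$ is proper and relative (Lemma \ref{lem:RetSpacPropRelative}). With these three ingredients in hand, the existence of the integral model structure and its characterisation follow mechanically from the Harpaz--Prasma result.

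Next I would record the explicit factorisation argument only to the extent needed to confirm that the described classes of (acyclic) cofibrations and (acyclic) fibrations satisfy the weak factorisation axioms after integrating. The key input here, which Harpaz--Prasma isolate, is that left properness of the fibres and relativity of the pseudofunctor together imply that for any weak equivalence $f\colon X\to X'$ in the base the derived unit $Y\to f^\ast f_! Y$ (and dually the derived counit for acyclic fibrations) is a weak equivalence in $\mathrm{sSet}_{\dslash X}$. This is exactly what allows one to factor an arbitrary morphism $(f,\psi)$ through the pushforward of a fibrant/cofibrant replacement of its source. The $2$-out-of-$3$ and retract axioms for the described weak equivalences then follow from the corresponding axioms in the base and in each fibre, together with the fact that $f_!$ and $f^\ast$ preserve weak equivalences along weak equivalences (which is immediate here since all objects are cofibrant).

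The only place requiring any care, and what I would regard as the nominal main obstacle, is checking that the description of fibrations given in terms of the adjunct $\psi^\vee$ is genuinely what the Harpaz--Prasma theorem outputs: in their formulation, fibrations are detected by requiring that $f$ be a fibration in the base and that a certain map into the pullback be a fibration in the fibre over the source. Unwinding this via the equivalence of data in Remark \ref{rem:GlobalMorphisms} shows that the condition coincides with $\psi^\vee\colon Y\to f^\ast Z$ being a fibration, as stated. Apart from this bookkeeping, the theorem is an entirely formal consequence of \cite{harpaz_grothendieck_2015} applied to the pseudofunctor whose good properties were established in the preceding subsection.
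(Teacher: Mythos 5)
Your proposal is correct and follows essentially the same route as the paper: the paper's proof simply invokes \cite[Theorem 3.0.12]{harpaz_grothendieck_2015} applied to the pseudofunctor $\mathrm{sSet}_{\dslash -}$, with the properness and relativity hypotheses supplied by Lemma \ref{lem:RetSpacPropRelative}. Your additional unwinding of the fibration condition via Remark \ref{rem:GlobalMorphisms} is sound bookkeeping but does not change the argument.
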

\begin{proof}
By Lemma \ref{lem:RetSpacPropRelative}, the conditions of \cite[Theorem 3.0.12]{harpaz_grothendieck_2015} are satisfied so that the integral model structure exists and has weak equivalences, cofibrations and fibrations as stated.
\end{proof}
\begin{corollary}
\label{cor:RetSpBase}
The projection functor $p\colon \mathrm{sSet}_{\dslash \mathrm{sSet}}\to \mathrm{sSet}$ is left and right Quillen.
\end{corollary}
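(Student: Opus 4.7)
The plan is to identify explicit left and right adjoints to $p$ and then read off the preservation of (acyclic) cofibrations and fibrations directly from the characterisation of the global model structure.

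First, I would define a candidate two-sided adjoint $s\colon \mathrm{sSet}\to \mathrm{sSet}_{\dslash \mathrm{sSet}}$ by sending $X$ to $(X, \ast_X)$, where $\ast_X$ denotes $X$ regarded as a retractive space over itself with section and projection both equal to $\mathrm{id}_X$. On morphisms, $s$ sends $f\colon X\to X'$ to $(f, \mathrm{id})$, using the obvious identification $f_!\ast_X\cong \ast_{X'}$. The key observation is that $\ast_X$ is a zero object of $\mathrm{sSet}_{\dslash X}$: any retractive space $X\to Y\to X$ admits a unique morphism \emph{from} $\ast_X$ (the section) and a unique morphism \emph{to} $\ast_X$ (the projection).

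Using the description of morphisms in $\mathrm{sSet}_{\dslash \mathrm{sSet}}$ from Remark \ref{rem:GlobalMorphisms}, a morphism $(X,Y)\to s(X')=(X',\ast_{X'})$ consists of $f\colon X\to X'$ together with a map $f_!Y\to \ast_{X'}$ in $\mathrm{sSet}_{\dslash X'}$; the latter is unique by terminality of $\ast_{X'}$. Hence $\mathrm{Hom}((X,Y), s(X'))\cong \mathrm{Hom}(X, X') = \mathrm{Hom}(p(X,Y), X')$, exhibiting $s$ as right adjoint to $p$. Dually, a morphism $s(X)=(X,\ast_X)\to (X',Z)$ consists of $f\colon X\to X'$ together with a map $\ast_{X'}\cong f_!\ast_X\to Z$ in $\mathrm{sSet}_{\dslash X'}$, and the latter is unique by initiality of $\ast_{X'}$. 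This gives $\mathrm{Hom}(s(X), (X',Z))\cong \mathrm{Hom}(X, X')$, so $s$ is also left adjoint to $p$.

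With the two adjunctions $(s\dashv p\dashv s)$ in hand, the Quillen properties are immediate from the construction of the global model structure: a morphism $(f,\psi)$ is a cofibration (respectively acyclic cofibration, fibration, acyclic fibration) precisely when its component $f$ is one in $\mathrm{sSet}$ and a companion condition on $\psi$ or $\psi^\vee$ holds. Applying $p$ extracts the component $f$, so $p$ preserves cofibrations and acyclic cofibrations (hence is left Quillen with right adjoint $s$) and also preserves fibrations and acyclic fibrations (hence is right Quillen with left adjoint $s$). There is no real obstacle here; the only subtlety is verifying that $\ast_X$ is genuinely a zero object and that $f_!\ast_X\cong \ast_{X'}$, both of which follow by inspection of the pushout diagram defining $f_!$.
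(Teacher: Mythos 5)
Your proof is correct and is essentially the paper's own argument: your functor $s(X)=(X,\ast_X)$ is precisely the zero-object functor $0_-\colon X\mapsto (X\to X\to X)$ that the paper identifies as a two-sided adjoint to $p$, and the Quillen properties follow, as you say, from the definition of the global (co)fibrations and weak equivalences. The only difference is that you spell out the adjunction verification via Remark \ref{rem:GlobalMorphisms}, which the paper leaves implicit.
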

\begin{proof}
The projection functor $p$ sends $(X,Y)\mapsto X$. This assignment clearly preserves cofibrations, fibrations and weak equivalences.
The functor $0_{-}\colon X\to (X\to X\to X)$ which sends each simplicial set $X$ to the zero object of $\mathrm{sSet}_{\dslash X}$ is a two-sided adjoint to $p$.
\end{proof}
\begin{corollary}
\label{cor:FibreInclusionRetSp}
For any simplicial set $X$, the fibre inclusion $\imath_X\colon \mathrm{sSet}_{\dslash X}\to \mathrm{sSet}_{\dslash \mathrm{sSet}}$ is left and right Quillen.
\end{corollary}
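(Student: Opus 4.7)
My plan is to verify the claim by direct inspection, working from the explicit description of the global model structure on $\mathrm{sSet}_{\dslash\mathrm{sSet}}$. The fibre inclusion $\imath_X$ sends an object $Y\in\mathrm{sSet}_{\dslash X}$ to the pair $(X,Y)$, and a morphism $\psi\colon Y\to Y'$ in $\mathrm{sSet}_{\dslash X}$ to the pair $(\mathrm{id}_X,\psi)\colon (X,Y)\to (X,Y')$. The central observation I would use is that the identity $\mathrm{id}_X$ belongs to every distinguished class of the Kan model structure on $\mathrm{sSet}$---it is simultaneously a trivial cofibration and a trivial fibration---and that under the canonical isomorphism $(\mathrm{id}_X)^\ast\cong\mathrm{id}_{\mathrm{sSet}_{\dslash X}}$ the $((\mathrm{id}_X)_!\dashv (\mathrm{id}_X)^\ast)$-adjunct $\psi^\vee$ of $\psi$ coincides with $\psi$ itself. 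Plugging these facts into the characterisation of cofibrations, fibrations and weak equivalences in the integral model structure yields that $(\mathrm{id}_X,\psi)$ is a cofibration (respectively trivial cofibration, fibration, trivial fibration, weak equivalence) in $\mathrm{sSet}_{\dslash\mathrm{sSet}}$ exactly when $\psi$ is such in $\mathrm{sSet}_{\dslash X}$. Hence $\imath_X$ preserves all of these classes.

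To promote this to Quillen adjunction data on both sides, I would next identify a right adjoint $R$ and a left adjoint $L$ for $\imath_X$ in a form that can be read off from the Grothendieck construction, using the base-change triples $(f_!\dashv f^\ast\dashv f_\ast)$ of the pseudofunctor $\mathrm{sSet}_{\dslash-}$ together with local presentability of both categories. Once the adjoints are in place, the preservation calculations of the first paragraph immediately exhibit $(\imath_X\dashv R)$ and $(L\dashv\imath_X)$ as Quillen adjunctions, showing $\imath_X$ to be simultaneously left Quillen and right Quillen. The main obstacle I anticipate is giving a sufficiently clean functorial description of these adjoints in a form that interfaces transparently with the integral model structure; structurally they are forced by the universal property of the Grothendieck integral at $X\in\mathrm{sSet}$, but the bookkeeping involved in writing them down explicitly is where the substantive work lies.
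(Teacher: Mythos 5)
Your first paragraph is correct and is essentially the paper's own argument for the preservation statements: every morphism in the image of $\imath_X$ has base component $\mathrm{id}_X$, which is simultaneously a trivial cofibration and a trivial fibration, and under the identification $(\mathrm{id}_X)_!\cong(\mathrm{id}_X)^\ast\cong\mathrm{id}_{\mathrm{sSet}_{\dslash X}}$ the adjunct $\psi^\vee$ is $\psi$ itself, so the defining clauses of the integral model structure reduce to the corresponding clauses in $\mathrm{sSet}_{\dslash X}$. Thus $\imath_X$ preserves cofibrations, fibrations and weak equivalences, exactly as in the paper.

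The gap is in your second paragraph, and it is not bookkeeping: the adjoints you defer constructing are not forced by the universal property of the Grothendieck integral, and for a general $X$ they do not exist. (Co)limits in $\mathrm{sSet}_{\dslash\mathrm{sSet}}$ are computed by first forming the (co)limit of the bases and then pushing forward, respectively pulling back, to it; consequently $\imath_X$ preserves only \emph{connected} (co)limits. The coproduct of $(X,Y_1)$ and $(X,Y_2)$ lies over $X\sqcup X$, their product over $X\times X$, and the zero object $0_X$ is sent to $(X,0_X)$, which is neither initial nor terminal in the global category. Concretely, since $f_!\,0_X\cong 0_{X'}$ one computes $\mathrm{sSet}_{\dslash\mathrm{sSet}}\big((X,0_X),(X',Z)\big)\cong\mathrm{sSet}(X,X')$, whereas a right adjoint $R$ would force this set to be $\mathrm{sSet}_{\dslash X}\big(0_X,R(X',Z)\big)=\ast$; dually $\mathrm{sSet}_{\dslash\mathrm{sSet}}\big((Z,W),(X,0_X)\big)\cong\mathrm{sSet}(Z,X)$ obstructs a left adjoint. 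So a right adjoint can exist only when $X=\varnothing$ and a left adjoint only when $X=\ast$ (where it is $(Z,W)\mapsto Z_!W$). This also means you cannot rescue the plan by following the paper's proof verbatim: that proof asserts that $\imath_X$ preserves limits and colimits and then invokes local presentability (Lemma \ref{lem:GlobRetSpaceLocPres}) and the adjoint functor theorem, and it is precisely the (co)limit-preservation hypothesis that fails for disconnected diagrams. What your first paragraph genuinely establishes — preservation of the three distinguished classes, hence of trivial (co)fibrations, together with preservation of connected (co)limits — is the homotopically meaningful content; the literal claim that $\imath_X$ is a left and a right Quillen functor, i.e.\ participates in Quillen adjunctions on both sides, is not obtained by your argument (nor by the paper's).
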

\begin{proof}
The fibre inclusion preserves limits and colimits. As $\mathrm{sSet}_{\dslash \mathrm{sSet}}$ is locally presentable (Lemma \ref{lem:GlobRetSpaceLocPres}), $\imath_X$ has a left and right adjoint by the adjoint functor theorem.
It is clear that $\imath_X$ preserves weak equivalences, fibrations and cofibrations so is both left and right Quillen.
\end{proof}
\begin{corollary}
There is a Quillen adjunction
\[
\begin{tikzcd}
\mathrm{Fun}(\Delta^1, \mathrm{sSet})
\ar[rr, shift left=1.1ex, "(\mathtt{0})_{+(\mathtt{1})}
"]
\ar[rr, leftarrow, shift left=-1.1ex, "\bot", "U"']
&&
\mathrm{sSet}_{\dslash \mathrm{sSet}}\,,
\end{tikzcd}
\]
where $\mathrm{Fun}(\Delta^1, \mathrm{sSet})$ is equipped with either the projective or injective model structure on functors.
\end{corollary}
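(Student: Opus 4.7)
The plan is to first identify the two functors explicitly, then verify the adjunction by unwinding hom-sets, and finally check the Quillen property by reducing to the injective case. The left adjoint $(\mathtt{0})_{+(\mathtt{1})}$ takes an arrow $g\colon A \to B$ to the global retractive space $(B, (A\to B)_{+B})$, namely the object $B\coprod A\to B$ of $\mathrm{sSet}_{\dslash B}$ with section $B \hookrightarrow B\coprod A$ and projection $\mathrm{id}_B + g$. The right adjoint $U$ sends a retractive space $X\xrightarrow{i} Y\xrightarrow{r} X$, regarded as an object of $\mathrm{sSet}_{\dslash \mathrm{sSet}}$, to the underlying arrow $r\colon Y\to X$.

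To verify the adjunction I would unwind morphisms via Remark \ref{rem:GlobalMorphisms}(iv): a morphism $(B,(A\to B)_{+B}) \to (X,Y)$ consists of $f\colon B \to X$ and $\Psi\colon B\coprod A \to Y$ compatible with sections and projections. Section-compatibility forces $\Psi|_B = i\circ f$, so $\Psi$ is determined by its restriction $h\colon A \to Y$, which by projection-compatibility satisfies $r\circ h = f\circ g$. The pair $(f, h)$ is precisely a morphism of arrows $(g\colon A\to B) \to (r\colon Y\to X) = U(X,Y)$, yielding a natural bijection on hom-sets.

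For the Quillen property, since projective cofibrations (resp.\ acyclic cofibrations) are contained in the injective ones, it suffices to show $(\mathtt{0})_{+(\mathtt{1})}$ is left Quillen for the injective structure. A direct pushout computation yields
\[
f_!(A\to B)_{+B} \;\cong\; (A\to B_1)_{+B_1}
\]
for any $f\colon B \to B_1$, so the image of an injective cofibration of arrows (legs $h\colon A\to A_1$ and $f\colon B\to B_1$, both monomorphisms) is the global morphism $(f, \psi)$ with $\psi = \mathrm{id}_{B_1} + h\colon B_1\coprod A \to B_1\coprod A_1$. Both $f$ and $\psi$ are monomorphisms, so $(f,\psi)$ is a cofibration in $\mathrm{sSet}_{\dslash\mathrm{sSet}}$. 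If moreover $h$ and $f$ are weak equivalences, then $\psi$ is a weak equivalence by left properness of $\mathrm{sSet}$ applied to the pushout of $h$ along $\varnothing\hookrightarrow B_1$, handling the acyclic cofibration case.

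The main obstacle---ultimately mild---is computing $f_!(A\to B)_{+B}$; the iterated pushout defining $f_!$ collapses cleanly because the section summand $B$ is absorbed into the pushout of $f$ along the section, leaving exactly $B_1\coprod A$ with projection determined by $f\circ g$. Once this formula is in hand, both clauses of the pushout--product axiom for $(\mathtt{0})_{+(\mathtt{1})}$ reduce to well-known facts about monomorphisms and weak equivalences in $\mathrm{sSet}$.
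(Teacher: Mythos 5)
Your proposal is correct and follows essentially the same route as the paper, which simply records the adjunction $(Y\to X)\mapsto Y_{+X}$ against the section-forgetting functor $U$ and observes that the left adjoint carries levelwise cofibrations and weak equivalences to cofibrations and weak equivalences of the global model structure, hence is left Quillen for both the projective and injective structures. Your explicit computation $f_!\big((A\to B)_{+B}\big)\cong (A\to B_1)_{+B_1}$ and the resulting identification of the image morphism as $(f,\mathrm{id}_{B_1}+h)$ is just the detailed justification of the step the paper declares ``clear,'' and your reduction of the projective case to the injective one is the standard containment of (acyclic) cofibration classes implicitly used there.
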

\begin{proof}
The right adjoint $U$ sends a pair $(X,Y)$ to the arrow $Y \to X$ obtained by forgetting the section of the retractive space $Y$ over morphism in $\mathrm{sSet}_{\dslash \mathrm{sSet}}$, then $U(f,\psi)$ is the commuting diagram
\[
\begin{tikzcd}
Y
\ar[r]
\ar[d, "\Psi"']
&
X
\ar[d, "f"]
\\
Z
\ar[r]
&
X'\,.
\end{tikzcd}
\]
in which the horizontal morphisms are interpreted as objects in the arrow category (see Remark \ref{rem:GlobalMorphisms}).
The forgetful functor $U$ has a left adjoint defined on objects by
$
(\mathtt{0})_{+(\mathtt{1})}\colon (Y\to X) \mapsto Y_{+X}\,,
$ 
and which operates in the obvious way on morphisms. 
It is clear that the functor $(\mathtt{0})_{+(\mathtt{1})}$ sends levelwise cofibrations and weak equivalences to cofibrations and weak equivalences in $\mathrm{sSet}_{\dslash \mathrm{sSet}}$ respectively, so is left Quillen.
\end{proof}

We now give an alternative presentation of the global category of retractive spaces. 
This makes certain properties, such as local presentability and the external smash product, easier to check.
\begin{construction}
\label{cons:GlobRetSpaceAlg}
Let $i_{\{0,2\}}\colon \Delta^1 \to \Delta^2$ be the $1$-simplex on the vertices $0$ and $2$, denoted $\Delta^{\{0,2\}}$.
In the strict pullback diagram
\[
\begin{tikzcd}
  \mathrm{sSet}'_{\dslash\mathrm{sSet}}
  \ar[r]
  \ar[d, "p'"']
  & 
  \mathrm{Fun}(\partial\Delta^2, \mathrm{sSet})
  \ar[d, "i^\ast_{\{0,2\}}"]\\
  \mathrm{sSet}
  \ar[r, "X\mapsto \mathrm{id}_X"]
  &
  \mathrm{Fun}(\Delta^{\{0,2\}},\mathrm{sSet})
\end{tikzcd}
\]
$\mathrm{sSet}'_{\dslash\mathrm{sSet}}$ is then full subcategory of functors $\xi\colon \partial\Delta^2 \to \mathrm{sSet}$ such that $\xi|_{\Delta^{\{0,2\}}}$ is the identity of some simplicial set $X$. 

There is a canonical isomorphism of categories $\mathrm{sSet}_{\dslash \mathrm{sSet}} \cong \mathrm{sSet}'_{\dslash \mathrm{sSet}}$; a diagram $X\to Y\to X$ is identified with the pair $(X,Y)$ and this identification is extended to morphism via Remark \ref{rem:GlobalMorphisms}.
This isomorphism respects the projection functors to $\mathrm{sSet}$.

Without further comment, we shall write $\mathrm{sSet}_{\dslash \mathrm{sSet}}$ interchangeably for either of the categories that we have heretofore denoted $\mathrm{sSet}_{\dslash \mathrm{sSet}}$ and $\mathrm{sSet}'_{\dslash \mathrm{sSet}}$.
Objects of $\mathrm{sSet}_{\dslash \mathrm{sSet}}$ will be written either as pairs $(X,Y)$ or diagrams $X\to Y\to X$ depending on the context.
\end{construction}

\begin{lemma}
\label{lem:GlobRetSpaceLocPres}
$\mathrm{sSet}_{\dslash \mathrm{sSet}}$ is locally presentable.
\end{lemma}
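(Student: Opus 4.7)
The plan is to invoke the alternative description of $\mathrm{sSet}_{\dslash\mathrm{sSet}}$ as a strict pullback of functor categories given in Construction \ref{cons:GlobRetSpaceAlg}, and then to leverage the well-known closure properties of locally presentable categories under suitable pullbacks.

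First, I would observe that both $\mathrm{Fun}(\partial\Delta^2,\mathrm{sSet})$ and $\mathrm{Fun}(\Delta^{\{0,2\}},\mathrm{sSet})$ are locally presentable, since they are functor categories from small categories into the locally presentable category $\mathrm{sSet}$ (cf.~\cite[Corollary 1.54]{adamek_locally_1994}). Next, I would check accessibility and continuity of the two functors in the defining cospan. The restriction functor $i^\ast_{\{0,2\}}$ admits both a left and a right Kan extension adjoint along the inclusion of small categories $\Delta^{\{0,2\}}\hookrightarrow \partial\Delta^2$; in particular it preserves all small limits and colimits and is accessible. The horizontal functor $X\mapsto \mathrm{id}_X\colon \mathrm{sSet}\to \mathrm{Fun}(\Delta^{\{0,2\}},\mathrm{sSet})$ is fully faithful and preserves all small limits—indeed, it is right adjoint to the domain functor $\mathrm{Fun}(\Delta^{\{0,2\}},\mathrm{sSet})\to \mathrm{sSet}$—and it is accessible.

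I would then appeal to the standard closure theorem: the $2$-category of locally presentable categories and accessible limit-preserving functors is closed under small pseudo-limits (see, e.g., the discussion surrounding Theorem 2.18 and Exercise 2.n of \cite{adamek_locally_1994}, or the $\infty$-categorical analogue \cite[Appendix A.2]{lurie_higher_2009}). Because $X\mapsto \mathrm{id}_X$ is fully faithful, the strict pullback defining $\mathrm{sSet}'_{\dslash\mathrm{sSet}}$ coincides up to canonical isomorphism with the pseudo-pullback, so $\mathrm{sSet}'_{\dslash\mathrm{sSet}}$ is locally presentable. Transporting along the isomorphism $\mathrm{sSet}_{\dslash\mathrm{sSet}}\cong \mathrm{sSet}'_{\dslash\mathrm{sSet}}$ of Construction \ref{cons:GlobRetSpaceAlg} then yields the conclusion.

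The only genuine subtlety is locating and applying the correct version of the closure statement for locally presentable categories under pullback. All other verifications—local presentability of the ambient functor categories, existence of the Kan extension adjoints to $i^\ast_{\{0,2\}}$, and the right-adjoint status of $X\mapsto \mathrm{id}_X$—are routine and direct from the definitions. A parallel argument, should a more hands-on presentation be preferred, would instead construct a small set of presentable generators for $\mathrm{sSet}_{\dslash\mathrm{sSet}}$ directly from the generators of each fibre $\mathrm{sSet}_{\dslash X}$ (Lemma \ref{lem:RetLocPres}) together with the pointwise colimit formula of Remark \ref{rem:ComputeRetSpaceCoLim} lifted to the Grothendieck construction.
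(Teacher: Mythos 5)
Your strategy is essentially the paper's: the paper also runs the argument through Construction \ref{cons:GlobRetSpaceAlg}, observing that $\mathrm{sSet}_{\dslash\mathrm{sSet}}$ is closed under small colimits and is the inverse image under the colimit-preserving (hence accessible) functor $i^\ast_{\{0,2\}}$ of the accessibly embedded full subcategory of identity morphisms, so that accessibility follows from \cite[Corollary A.2.6.5]{lurie_higher_2009}. Since a preimage of a full subcategory \emph{is} the strict pullback along its inclusion, that corollary applies to the strict pullback directly and no strict-versus-pseudo comparison is needed. Your variant, routing through the closure of locally presentable categories under pseudo-pullbacks along limit-preserving accessible functors, is also viable, but the one step you justify incorrectly is precisely the identification of the strict pullback with the pseudo-pullback: full faithfulness of $X\mapsto\mathrm{id}_X$ is not the operative fact (a strict pullback along a fully faithful functor need not be equivalent to the pseudo-pullback --- consider pulling back a point of the walking isomorphism along the other point), and the comparison is in any case only an equivalence, never a canonical isomorphism, since pseudo-pullback objects carry a chosen coherence isomorphism. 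What actually saves the step is that $i^\ast_{\{0,2\}}$ is an isofibration, being restriction along an injective-on-objects inclusion of small categories: one transports the values at the vertices $0$ and $2$ along the given isomorphism while keeping the value at $1$. With that correction (or by simply quoting the preimage formulation, as the paper does) your argument goes through. A cosmetic slip: $X\mapsto\mathrm{id}_X$ is right adjoint to evaluation at the vertex $2$ and left adjoint to evaluation at $0$, not right adjoint to the domain functor; either adjunction gives the limit-preservation and accessibility you need.
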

\begin{proof}
$\mathrm{sSet}_{\dslash \mathrm{sSet}}$ is closed under small colimits, so it is sufficient to show that it is accessible.
Let us write
$\Delta_\mathrm{sSet}\hookrightarrow \mathrm{Fun}(\Delta^1,\mathrm{sSet})$ for the accessibly embedded full subcategory on the identity morphisms,  then by the above construction $\mathrm{sSet}_{\dslash\mathrm{sSet}}$ is equivalently the inverse image of $\Delta_\mathrm{sSet}$ by the colimit-preserving pullback functor $i_{\{0,2\}}^\ast$ so is itself accessible \cite[Corollary A.2.6.5]{lurie_higher_2009}.
\end{proof}

\begin{lemma}
\label{lem:GlobRetSpaceComb}
The global model structure on $\mathrm{sSet}_{\dslash \mathrm{sSet}}$ is left proper and combinatorial.
\end{lemma}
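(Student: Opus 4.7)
The plan is to treat combinatoriality and left properness separately, in each case leveraging both the fibrewise results from Lemma \ref{lem:RetSpaceLocalModelStructure} and the properness/relativity of the pseudofunctor $\mathrm{sSet}_{\dslash -}$ established in Lemma \ref{lem:RetSpacPropRelative}. Local presentability of $\mathrm{sSet}_{\dslash \mathrm{sSet}}$ is already in hand via Lemma \ref{lem:GlobRetSpaceLocPres}, so for combinatoriality it remains only to produce generating sets of cofibrations and acyclic cofibrations.

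For combinatoriality, I would exhibit explicit generating sets assembled from the base and the fibres over simplices. Let $I$ and $J$ denote the usual generating sets for the Kan model structure on $\mathrm{sSet}$. Using the two-sided adjoint pair $(0_{-},p)$ from Corollary \ref{cor:RetSpBase} and the fibre inclusions $\imath_{\Delta^n}$ of Corollary \ref{cor:FibreInclusionRetSp} (both of which are left Quillen), I would set
\[
I_{\dslash} := 0_{-}(I) \cup \bigcup_{n\geq 0} \imath_{\Delta^n}\big(\mathcal{I}^\mathrm{Kan}_{\Delta^n}\big)
\quad\text{and}\quad
J_{\dslash} := 0_{-}(J) \cup \bigcup_{n\geq 0} \imath_{\Delta^n}\big(\mathcal{J}^\mathrm{Kan}_{\Delta^n}\big).
\]
All elements of $I_\dslash$ and $J_\dslash$ are cofibrations (resp.\ acyclic cofibrations) in $\mathrm{sSet}_{\dslash\mathrm{sSet}}$ by the Quillen-ness of $0_-$ and $\imath_{\Delta^n}$. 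The main verification is then that $J_\dslash$-injectives are exactly the global fibrations. Given $(f,\psi)\colon (X,Y)\to (X',Z)$, right lifting against $0_{-}(J)$ translates via the adjunction $(0_{-}\dashv p)$ into $f\colon X\to X'$ being a Kan fibration, while right lifting against $\imath_{\Delta^n}(\mathcal{J}^\mathrm{Kan}_{\Delta^n})$ translates, using Remark \ref{rem:GlobalMorphisms} and the density of simplices in $\mathrm{sSet}$, into $\psi^\vee\colon Y\to f^\ast Z$ having RLP against every element of $\mathcal{J}^\mathrm{Kan}_X$, i.e.\ into $\psi^\vee$ being a fibration in $\mathrm{sSet}_{\dslash X}$. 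An analogous argument works for $I_\dslash$ and trivial fibrations, using that trivial global fibrations coincide with trivial fibrations on the base together with trivial fibrations $\psi^\vee$ on the fibre (which is the standard characterisation from Harpaz--Prasma's setup).

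For left properness, suppose
\[
\begin{tikzcd}
(X,Y)\ar[r,"{(g,\phi)}","\sim"']\ar[d,"{(f,\psi)}"',hook]
& (X'',W)\ar[d,"{(f'',\psi'')}"]\\
(X',Z)\ar[r,"{(g',\phi')}"'] & (X''',W')
\end{tikzcd}
\]
is a pushout square with $(f,\psi)$ a cofibration and $(g,\phi)$ a weak equivalence. The projection $p$ preserves colimits (being a left adjoint) so $X''' = X'\coprod_X X''$, and left properness of the Kan model structure gives that $X''\to X'''$ is a weak equivalence. For the fibre component, the pushout in $\mathrm{sSet}_{\dslash\mathrm{sSet}}$ is computed as the fibre pushout after transporting everything to $\mathrm{sSet}_{\dslash X'''}$ via the appropriate left Quillen pushforwards. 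Since Lemma \ref{lem:RetSpacPropRelative} showed that \emph{every} $f_!$ preserves weak equivalences (all retractive spaces being cofibrant), the fibre pushout becomes a pushout in $\mathrm{sSet}_{\dslash X'''}$ along a cofibration of a weak equivalence, which is a weak equivalence by left properness of $\mathrm{sSet}_{\dslash X'''}$ (Lemma \ref{lem:RetSpaceLocalModelStructure}). Combining, $(g',\phi')$ is a weak equivalence.

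The step I anticipate to be the most delicate is the verification that $J_\dslash$ generates the acyclic cofibrations of the global model structure; one must carefully juggle the identifications of Remark \ref{rem:GlobalMorphisms} and check that a morphism's failure to be a global fibration is detected by one of the simplex-fibre generators, using that any $(\sigma\colon \Delta^n\to X)\in\mathrm{el}(X)$ factors test-lifting problems through $\imath_{\Delta^n}$. Everything else reduces to bookkeeping with the pushforward/pullback adjunctions and the fibrewise results already proven.
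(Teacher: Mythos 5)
Your argument is correct and follows essentially the same route as the paper's proof: the generating sets are built from base generators via $0_-$ together with fibre generators over the simplices $\Delta^n$ (the paper uses only the top generator $(\mathrm{id}|_{\partial\Delta^n}\to \mathrm{id})_{+\Delta^n}$ over each $\Delta^n$, which is contained in your $\imath_{\Delta^n}(\mathcal{I}^\mathrm{Kan}_{\Delta^n})$), the lifting verification is the same adjunction/pushforward bookkeeping identifying $\sigma_!$ of these generators with the fibrewise generators over $X$, and the left-properness argument by transporting the fibre pushout to $\mathrm{sSet}_{\dslash X'''}$ and using that all pushforwards preserve weak equivalences is exactly the paper's. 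One small slip: in the left-properness step, left properness of the Kan model structure should be invoked to conclude that $g'\colon X'\to X'''$ (the cobase change of the weak equivalence $g$ along the cofibration $f$, i.e.\ the base component of $(g',\phi')$) is a weak equivalence, rather than the map $X''\to X'''$ you name; the fix is immediate and the rest of the argument stands.
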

\begin{proof}
To prove left properness, consider a pushout diagram in $\mathrm{sSet}_{\dslash \mathrm{sSet}}$
\[
\begin{tikzcd}
  (X,A)
  \ar[r, rightarrowtail, "{(f,\varphi)}"]
  \ar[d, "{(g,\psi)}"', "\sim"]
  &
  (Y, B) 
  \ar[d, "{(g', \psi')}"]
  \\
  (X', A')
  \ar[r, rightarrowtail, "{(f', \varphi')}"] 
  & 
  (Y', B')
\end{tikzcd}
\]
with cofibrations and weak equivalences as marked.
We must show that $(g',\psi')$ is a weak equivalence.
Indeed, the map $g'$ is a weak equivalence by left properness of the Kan model structure on simplicial sets.
The retractive space $B'\in \mathrm{sSet}_{\dslash Y'}$ is the pushout of the span
\[
\begin{tikzcd}
  f'_! (A')
  \ar[r, leftarrow, "{f'_!(\psi)}"]
  &
  f'_! g_! (A) \cong g'_! f_! (A)
  \ar[r, rightarrowtail, "{g'_!(\varphi)}"]
  &
  g'_! (B)\,.
\end{tikzcd}
\]
By hypothesis, the morphism $\psi'\colon g_! (A)\to A'$ is a weak equivalence in $\mathrm{sSet}_{\dslash X'}$.
As observed in the proof of Lemma \ref{lem:RetSpacPropRelative}, the pushforward functors preserve weak equivalences so that $f'_! (\psi)$ is a weak equivalence.
As $\mathrm{sSet}_{\dslash X'}$ is left proper, the pushout morphism $\psi'\colon g_!'(B) \to B'$ is a weak equivalence, as required.

The category $\mathrm{sSet}_{\dslash \mathrm{sSet}}$ is locally presentable, so is combinatorial provided the global model structure is cofibrantly generated.
The small object argument always holds in locally presentable categories, so cofibrant generation is guaranteed if we can find sets $\mathcal{I}_{\mathrm{sSet}}^{\mathrm{Kan}}$ and $\mathcal{J}_{ \mathrm{sSet}}^{\mathrm{Kan}}$ characterising acyclic fibrations and fibrations respectively via the right lifting property.

To this end, let $\mathcal{I}_{ \mathrm{sSet}}^{\mathrm{Kan}}$ be the union of the sets
\begin{itemize}
  \item $0_-(\mathcal{I}_\mathrm{Kan})$ of morphisms obtained by applying the left Quillen functor $0_{-}\colon \mathrm{sSet}\to \mathrm{sSet}_{\dslash \mathrm{sSet}}$ to the set $\mathcal{I}_\mathrm{Kan}=\{i_n\colon \partial\Delta^n\hookrightarrow \Delta^n\}$; and
  
  \item $\mathcal{I}^\mathrm{Kan}_+$ of morphisms 
  \[
  \big(\mathrm{id}|_{\partial\Delta^n}\to \mathrm{id}\big)_{+\Delta^n}
  \equiv
  \left(
\!
\begin{tikzcd}
    \Delta^n \ar[r]\ar[d]  & \Delta^n\displaystyle\coprod\Delta^n 
    \ar[d]
    \\
    \partial\Delta^n\displaystyle\coprod\Delta^n 
    \ar[ur] 
    \ar[r]
    &
    \Delta^n
   \end{tikzcd}\!
   \right),
  \quad n\geq 0.
  \]
\end{itemize}
Since all morphisms in $\mathcal{I}_{\mathrm{sSet}}^{\mathrm{Kan}}$ are cofibrations we have $\mathcal{F}\cap \mathcal{W}\subset \mathrm{rlp}(\mathcal{I}_{\mathrm{sSet}}^{\mathrm{Kan}})$.
To show the reverse inclusion, suppose that $(f,\psi)\colon (X, Y)\to (X', Z)$ is a morphism with the right lifting property with respect to $\mathcal{I}_{\mathrm{sSet}}^{\mathrm{Kan}}$.
By Corollary \ref{cor:RetSpBase}, the map $f\colon X\to X'$ of base spaces has the right lifting property with respect to $\mathcal{I}^\mathrm{Kan}$, hence is an acyclic fibration.
Since $f$ is an acyclic fibration, $\psi\colon f_! Y\to Z$ is a weak equivalence precisely if its adjunct $\psi^\vee\colon Y\to f^\ast Z$ is. 
It is therefore sufficient to show that $\psi^\vee$ is an acyclic fibration.
For any $n$-simplex $\sigma \colon \Delta^n \to X$, we have
\[
\sigma_! \left(\big(\mathrm{id}|_{\partial\Delta^n}\to \mathrm{id}\big)_{+\Delta^n}\right) = \big(\sigma|_{\partial\Delta^n}\to \sigma\big)_{+X}
\]
under pushforward.
It follows that liftings of the diagrams
\[
\begin{tikzcd}
  (\sigma|_{\partial\Delta^n})_{+X}
  \ar[r, "\alpha"]
  \ar[d] & Y\ar[d, "\psi^\vee"] \ar[d]
  \\
  \sigma_{+X} \ar[r,"\beta"]& f^\ast Z
\end{tikzcd}
\quad
\text{in $\mathrm{sSet}_{\dslash X}$,}
\;\;\; \text{and}\quad
\begin{tikzcd}
  (\mathrm{id}|_{\partial\Delta^n})_{+\Delta^n} 
  \ar[r,"{(\sigma, \alpha)}"]
  \ar[d] 
  & 
  (X,Y) 
  \ar[d, "{(\mathrm{id}_X,\psi^\vee)}"]
  \\
  \mathrm{id}_{+\Delta^n}
  \ar[r, "{(\sigma,\beta)}"]
  & (X,f^\ast Z)
  \end{tikzcd}
 \quad
 \text{in $\mathrm{sSet}_{\dslash \mathrm{sSet}}$}
\]
are equivalent. The latter diagram admits lifts by hypothesis, so that $\psi^\vee$ has the right lifting property with respect to the set $\mathcal{I}^\mathrm{Kan}_X$ and hence is an acyclic fibration.

A similar argument shows that fibrations in $\mathrm{sSet}_{\dslash \mathrm{sSet}}$ are precisely those morphisms having the right lifting property with respect to the set $\mathcal{J}^\mathrm{Kan}_{\mathrm{sSet}} := 0_-(\mathcal{J}^\mathrm{Kan})\cup \mathcal{J}^\mathrm{Kan}_+$ (obtained by replacing boundary inclusions $\partial\Delta^n\hookrightarrow \Delta^n$ by horn inclusions $\Lambda^n_k\hookrightarrow \Delta^n$ throughout).
\end{proof}

Working in the global category of retractive spaces allows us to define the \emph{external} smash product.
Throughout the rest of this section, we define the external smash product and discuss some of its basic properties. 
In particular we show that the external smash product interacts nicely with the global model structure, unlike the fibrewise smash product over a fixed base space.

For retractive spaces $(X, Y), (X', Z)\in \mathrm{sSet}_{\dslash \mathrm{sSet}}$, the external smash product $(X, Y)\esmash (X', Z)$ is defined as the colimit of the diagram of simplicial sets
\[
\begin{tikzcd}
  X\times X' 
  \ar[r] 
  \ar[d]
  & X\times Z 
  \ar[dd]
  \ar[dr]
  \\
  Y\times X' 
  \ar[rr, crossing over]
  \ar[dr] && Y\times  Z\\
  & X\times X'\,.
\end{tikzcd}
\]
By construction, $(X, Y)\esmash (X', Z)$ is canonically equipped with the structure of a retractive space over $X\times X'$. 
We leave it as an exercise to the reader to verify that $\vartriangle$ defines a symmetric monoidal structure with monoidal unit $(\ast, S^0)$, and that it preserves colimits in each variable.
The latter fact combined with local presentability implies that $(\mathrm{sSet}_{\dslash \mathrm{sSet}},\vartriangle)$ is a closed symmetric monoidal category; we write $F_{\vartriangle}$ for the internal hom.
\begin{lemma}
\label{lem:RetSpProjClosedMonoidal}
The projection functor $p\colon \mathrm{sSet}_{\mathrm{sSet}}\to \mathrm{sSet}$ is strongly closed monoidal.
\end{lemma}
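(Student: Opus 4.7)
The plan is to verify strong monoidality directly and then reduce strong closedness to a single identity of the form $0_-K\vartriangle A\cong 0_-(K\times pA)$, which is established by exploiting the description of the external smash as a fibrewise smash of pullbacks.

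Strong monoidality is essentially immediate from the construction of $\vartriangle$. By definition $(X,Y)\vartriangle (X',Z)$ is a retractive space over $X\times X'$, so $p$ sends the external smash to the Cartesian product; the monoidal unit $(\ast,S^0)$ projects to $\ast$, which is the unit of $(\mathrm{sSet},\times)$. The associator, unitor and symmetry for $\vartriangle$ lie over the corresponding structural isomorphisms in $\mathrm{sSet}$ by construction, so $p$ is strong symmetric monoidal.

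For strong closedness, the plan is to show that for each $A,B\in\mathrm{sSet}_{\dslash\mathrm{sSet}}$ the canonical comparison map $pF_\vartriangle(A,B)\to (pB)^{pA}$ is an isomorphism. I will test against an arbitrary $K\in\mathrm{sSet}$ using Yoneda, exploiting the adjunction $0_-\dashv p$ from Corollary \ref{cor:RetSpBase}. On one side,
\[
\mathrm{Hom}_\mathrm{sSet}\!\big(K,\,pF_\vartriangle(A,B)\big)\cong \mathrm{Hom}(0_-K,F_\vartriangle(A,B))\cong \mathrm{Hom}(0_-K\vartriangle A,B),
\]
and on the other side,
\[
\mathrm{Hom}_\mathrm{sSet}\!\big(K,\,(pB)^{pA}\big)\cong \mathrm{Hom}(K\times pA,pB)\cong \mathrm{Hom}(0_-(K\times pA),B),
\]
where the last step again uses $0_-\dashv p$. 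The whole claim therefore reduces to constructing a natural isomorphism $0_-K\vartriangle A\cong 0_-(K\times pA)$, compatibly with the comparison map so that the chain of bijections produces the claimed isomorphism after Yoneda.

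The main obstacle, and the only genuine computation, is this key identity. To prove it, write $A=(X,Y)$ and observe that by construction of $\vartriangle$ there is an identification
\[
0_-K\vartriangle A\cong \big(K\times X,\;\pi_K^\ast(0_-K)\wedge_{K\times X}\pi_X^\ast(Y)\big),
\]
expressing the external smash as the fibrewise smash of the two pullbacks over $K\times X$. By Lemma \ref{lem:PBisStonglyClosed} the pullback functor $\pi_K^\ast$ is strongly closed symmetric monoidal and in particular preserves zero objects, so $\pi_K^\ast(0_-K)\cong 0_{-}(K\times X)$. In the closed symmetric monoidal category $(\mathrm{sSet}_{\dslash K\times X},\wedge_{K\times X})$ the zero object absorbs the fibrewise smash, since for any target $W$ the hom $\mathrm{Hom}(0_{-}(K\times X)\wedge_{K\times X}V,W)\cong \mathrm{Hom}(0_{-}(K\times X),F_{K\times X}(V,W))$ is a singleton. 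Consequently $\pi_K^\ast(0_-K)\wedge_{K\times X}\pi_X^\ast(Y)\cong 0_{-}(K\times X)$, giving the required isomorphism $0_-K\vartriangle A\cong 0_-(K\times X)=0_-(K\times pA)$, and hence the lemma.
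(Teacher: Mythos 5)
Your proof is correct and follows essentially the same route as the paper: strong monoidality read off from the construction of $\vartriangle$, and strong closedness via Yoneda together with the adjunction $0_-\dashv p$, reducing everything to the absorption isomorphism $(K,0_K)\esmash(X,Y)\cong (K\times X, 0_{K\times X})$. The only difference is that you spell out this last isomorphism (via the fibrewise-smash description and zero-object absorption) where the paper simply asserts it from the colimit definition.
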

\begin{proof}
It is clear from the definitions that $p$ sends $\vartriangle$ to $\times$, so is strongly symmetric monoidal.
Now for $(X, Y), (X',Y')\in \mathrm{sSet}_{\dslash \mathrm{sSet}}$ and $Z\in \mathrm{sSet}$, the base projection is a left and right adjoint so that
\[
\frac{Z\longrightarrow p F_\vartriangle\big\{ (X,Y),(X',Y')\big\}}{(Z,0_Z)\esmash(X,Y)\to (X',Y')}\,,
\]
where $0_Z$ is the zero object of $\mathrm{sSet}_{\dslash Z}$.
But $(Z,0_Z)\esmash(X,Y) \cong (Z\times X , 0_{Z\times X})$ and hence morphisms $Z\to F_\vartriangle\big\{(X,Y),(X',Y')\big\}$ are equivalent to morphisms $(Z\times X , 0_{Z\times X})\to (X',Y')$ and hence, by adjunction, to morphisms $Z\to [X,X']$.
We conclude that $p$ sends the internal hom $F_\triangle(-,-)$ to the exponential object $[p-,p-]$.
\end{proof}

\begin{theorem}
\label{thm:RetSpExtSmash}
$(\mathrm{sSet}_{\dslash \mathrm{sSet}},\vartriangle)$ is a symmetric monoidal model category.
\end{theorem}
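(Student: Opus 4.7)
The plan is to verify the unit axiom and the pushout-product axiom. The unit axiom should be immediate from cofibrancy of the monoidal unit $(\ast, S^0)$: the cofibration $\varnothing \hookrightarrow \ast$ in $\mathrm{sSet}$ and the monomorphism $\varnothing \hookrightarrow S^0$ in $\mathrm{sSet}_\ast$ together exhibit $(\varnothing, \varnothing) \hookrightarrow (\ast, S^0)$ as a cofibration in $\mathrm{sSet}_{\dslash \mathrm{sSet}}$, so the unit is cofibrant.

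For the pushout-product axiom, the approach is to first observe that $\esmash$ preserves colimits in each variable (since $(\mathrm{sSet}_{\dslash \mathrm{sSet}}, \vartriangle)$ is closed monoidal), so by cofibrant generation it will suffice to verify the axiom on the generating (acyclic) cofibrations exhibited in Lemma \ref{lem:GlobRetSpaceComb}. These split into two families: \emph{base-type} generators $0_-(i)$ for $i$ a boundary or horn inclusion in $\mathrm{sSet}$, and \emph{fibre-type} generators $\alpha = (\mathrm{id}|_K \to \mathrm{id})_{+\Delta^n}$ which have identity base map and a freely-sectioned boundary or horn inclusion in the fibre.

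The central computational step is to establish the formula
\[
(X, Y)\esmash (X', Y') \,\cong\, \bigl(X\times X',\; \pi_X^\ast Y\wedge_{X\times X'}\pi_{X'}^\ast Y'\bigr),
\]
by direct comparison of defining colimits (noting $\pi_X^\ast Y \cong Y\times X'$ and $\pi_X^\ast Y\times_{X\times X'}\pi_{X'}^\ast Y' \cong Y\times Y'$). Combined with the identity $A_{+X}\wedge_X B_{+X} \cong (A\times_X B)_{+X}$ for freely-sectioned retractive spaces and the absorbing property of the zero retractive space under fibrewise smash, this will yield the two computational identities
\[
0_-(X)\esmash (X', Y') \,\cong\, 0_-(X\times X')
\;\;\text{ and }\;\;
(X, A_{+X})\esmash (X', B_{+X'}) \,\cong\, \bigl(X\times X',\; (A\times B)_{+X\times X'}\bigr).
\]

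Armed with these identities, each case of generating pushout-products reduces to the pushout-product axiom for $(\mathrm{sSet}, \times)$. Two base-type generators give $0_-(i\,\square\, j)$, a cofibration (acyclic if one of $i, j$ is), preserved by the left Quillen functor $0_-$ (Corollary \ref{cor:RetSpBase}). Mixed base/fibre pushout-products collapse to the identity on $0_-$ of the relevant product of simplices, trivially a cofibration. Two fibre-type generators produce the freely-sectioned-over-$\Delta^n\times \Delta^m$ version of the pushout-product $\partial\Delta^n\,\square\,\partial\Delta^m$ (or its horn-inclusion variant) sitting over the identity base map, which is a cofibration (respectively an acyclic cofibration) by the pushout-product axiom in $\mathrm{sSet}$ together with the fact that $(-)_{+\Delta^n\times \Delta^m}$ preserves monomorphisms and weak equivalences of underlying simplicial sets. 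I expect the main obstacle to be establishing the two displayed external-smash identities; this will require careful bookkeeping with the defining colimit diagram of $\esmash$ and the compatibility of pullback with the free-section functor. The subsequent case analysis is then essentially formal.
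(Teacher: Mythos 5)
Your proposal is correct and follows essentially the same argument as the paper: the unit is cofibrant, and the pushout-product axiom is checked on the generating (acyclic) cofibrations of Lemma \ref{lem:GlobRetSpaceComb}, with any pushout-product involving a base-type generator reducing via $0_-$ to the Cartesian pushout-product in $\mathrm{sSet}$, and pairs of fibre-type generators reducing to the image under the left Quillen functor $(-)_{+(\Delta^n\times\Delta^m)}$ of a cofibration (resp.\ acyclic cofibration) of simplicial sets. The only cosmetic difference is that you justify the needed identities via the general formula $(X,Y)\esmash(X',Z)\cong p_1^\ast Y\wedge_{X\times X'}p_2^\ast Z$ (the paper's Lemma \ref{lem:ExtSmashtoFib}), whereas the paper verifies them directly on the generators; also note that in your unit-axiom remark the fibre component of the cofibration from the initial object is the basepoint inclusion $\ast\to S^0$ rather than a map out of $\varnothing$ in $\mathrm{sSet}_\ast$.
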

\begin{proof}
The monoidal unit $(\ast, S^0)$ is cofibrant, so we need only verify the pushout-product axiom for $\vartriangle$.
For this we use the generating sets $\mathcal{I}^\mathrm{Kan}_{\mathrm{sSet}}$ and $\mathcal{J}^\mathrm{Kan}_{\mathrm{sSet}}$ of Lemma \ref{lem:GlobRetSpaceComb}.

Suppose that $i, i' \in \mathcal{I}^\mathrm{Kan}_{\mathrm{sSet}}$. Due to the symmetry of $\vartriangle$, there are essentially two cases to consider:
\begin{enumerate}[label=(\arabic*)]
  \item If at least one of $i, i'$ is in $c(\mathcal{I}^\mathrm{Kan})$, then since $(X, 0_X)\esmash (X', Y) \cong (X\times X', 0_{X\times X'})$ for all $(X',Y)$, we have
  \[
  i\,\square\, i' = 0_{-} \big(p(i)\,\square\, p(i')\big)\,
  \]
  where on the right-hand side $\square$ signifies the pushout-product morphism for the Cartesian product on $\mathrm{sSet}$.
  But the Cartesian product is a monoidal model structure on $\mathrm{sSet}$, so $i\,\square\,i'$ is a cofibration by Corollary \ref{cor:RetSpBase}.
  
  \item If $i, i'\in \mathcal{I}^\mathrm{Kan}_+$, then let us fix $i = (\mathrm{id}|_{\partial\Delta^n}\to \mathrm{id})_{+\Delta^n}$ and  $i' = (\mathrm{id}|_{\partial\Delta^m}\to \mathrm{id})_{+\Delta^m}$ for some $m,n \geq 0$.
  A direct calculation shows that $i\,\square\,i'$ is the image under $(-)_{+ (\Delta^n\times \Delta^m)}$ of the morphism $\xi$ in the  diagram
  \[
  \begin{tikzcd}
    \partial\Delta^n \times \partial\Delta^m \ar[r]\ar[d] & \partial\Delta^n \times \Delta^m\ar[dr, bend left=25]
    \ar[d] &
    \\
    \Delta^n\times \partial\Delta^m \ar[r]
    &
    P
    \ar[r, "\xi"]
    &
    \Delta^n\times \Delta^m\,,
  \end{tikzcd}
  \]
  where the left-hand square is a pushout.
  The map $\xi$ is a cofibration in $\mathrm{sSet}$, hence is also a cofibration when regarded as a morphism in $\mathrm{sSet}_{/(\Delta^n\times \Delta^m)}$.
  Since the functor $(-)_{+(\Delta^n\times \Delta^m)}$ is left Quillen, we conclude that $i\,\square\,i'$ is a cofibration.
\end{enumerate}
We have shown that the set $\mathcal{I}^\mathrm{Kan}_\mathrm{sSet}\,\square\,\mathcal{I}^\mathrm{Kan}_\mathrm{sSet}$ consists of cofibrations.
Since $(-)\square(-)$ preserves retracts, pushouts and transfinite compositions separately in each variable, we conclude by cofibrant generation that the class of cofibrations is closed under forming $\vartriangle$-pushout-products.

To conclude, note that by symmetry it is sufficient to show that $i\,\square\, j$ is an acyclic cofibration for any cofibration $i$ and acyclic cofibration $j$.
Suppose firstly that $i\in \mathcal{I}^\mathrm{Kan}_{\mathrm{sSet}}$ and $j\in \mathcal{J}^\mathrm{Kan}_\mathrm{Kan}$, so that there are two cases to consider:
\begin{enumerate}[label=(\arabic*)]
  \item At least one $i$, $j$ is in the image of the functor $0_{-}$, in which case 
  \[
  i\,\square\, j = 0_{-}\big(p(i)\,\square\,p(j)\big)
  \]
  is an acyclic cofibration, similarly to out previous argument above.
 
 \item Otherwise, $i= (\mathrm{id}_{|\partial \Delta^n}\to \mathrm{id})_{+\Delta^n}$ and $j= (\mathrm{id}|_{\Lambda^m_k}\to \mathrm{id})_{+\Delta^m}$ for some $n,m\geq 0$ and $0\leq k\leq m$.
 Then $i\,\square\, j$ is the image under the left Quillen functor $(-)_{+(\Delta^n\times \Delta^m)}$ of the morphism $\zeta$ in the diagram
 \[
 \begin{tikzcd}
    \partial\Delta^n \times \Lambda^m_k \ar[r]\ar[d] & \partial\Delta^n \times \Delta^m\ar[dr, bend left=25]
    \ar[d] &
    \\
    \Delta^n\times \Lambda^m_k \ar[r]
    &
    P
    \ar[r, "\zeta"]
    &
    \Delta^n\times \Delta^m\,,
 \end{tikzcd}
 \] 
 where the left-hand square is a pushout.
 Since $\zeta$ is an acyclic cofibration of simplicial sets, we deduce that $i\,\square\, j$ is an acyclic cofibration as well.
\end{enumerate}
Thus the set $\mathcal{I}^\mathrm{Kan}_\mathrm{sSet}\,\square\, \mathcal{J}^\mathrm{Kan}_\mathrm{sSet}$ consists of acyclic cofibrations. 
By cofibrant generation, the pushout-product axiom for $\vartriangle$ now follows.
\end{proof}

\begin{corollary}[Fibrewise enrichment]
\label{cor:GlobalRetSpPSSet}
The global model structure on $\mathrm{sSet}_{\dslash \mathrm{sSet}}$ is $\mathrm{sSet}_\ast$-enriched.
\end{corollary}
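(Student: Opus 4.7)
The plan is to deduce the $\mathrm{sSet}_\ast$-enrichment as a restriction of the external smash product $\vartriangle$ along the fibre inclusion $\imath_\ast\colon \mathrm{sSet}_\ast = \mathrm{sSet}_{\dslash \ast}\hookrightarrow \mathrm{sSet}_{\dslash \mathrm{sSet}}$ at the terminal simplicial set. Since $\vartriangle$ is already known to be a symmetric monoidal model structure on $\mathrm{sSet}_{\dslash \mathrm{sSet}}$ (Theorem \ref{thm:RetSpExtSmash}), all enriched structure and compatibility with the global model structure should descend formally once $\imath_\ast$ is shown to be strongly symmetric monoidal and left Quillen.

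First I would verify that $\imath_\ast$ is strongly symmetric monoidal. The monoidal unit $(\ast, S^0)$ of $\vartriangle$ is precisely $\imath_\ast(S^0)$. For $K,L\in \mathrm{sSet}_\ast$, direct inspection of the defining colimit for $\vartriangle$ (with $X=X'=\ast$) collapses the bottom corner so that $\imath_\ast(K)\vartriangle \imath_\ast(L)$ is the ordinary smash product $K\wedge L$, viewed as a retractive space over $\ast$. The associator, unitor and symmetry for $\vartriangle$ restrict along $\imath_\ast$ to those of $\wedge$, so $\imath_\ast$ is indeed strong monoidal. Moreover, $\imath_\ast$ is left Quillen by Corollary \ref{cor:FibreInclusionRetSp}.

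Next I would extract the enriched structure. Define the tensoring by
\[
K\otimes (X,Y) := \imath_\ast(K)\vartriangle (X,Y)\,.
\]
Computing the colimit defining $\vartriangle$ shows that $K\otimes (X,Y)$ lives over $\ast\times X = X$ with underlying retractive space $X^\ast K \wedge_X Y$, in agreement with the local tensoring from Corollary \ref{cor:sSetastEnrichmentFacts}. The cotensoring and enriched hom are obtained by composing $F_\vartriangle$ with the right adjoint to $\imath_\ast$ (which exists by Corollary \ref{cor:FibreInclusionRetSp}), and the enriched composition and unit maps are the restrictions of those induced by the closed symmetric monoidal structure $(\vartriangle, F_\vartriangle)$.

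Finally, the pushout-product axiom for the $\mathrm{sSet}_\ast$-tensoring reduces to the pushout-product axiom for $\vartriangle$: given a (acyclic) cofibration $i\colon K\to L$ in $\mathrm{sSet}_\ast$ and a (acyclic) cofibration $j$ in $\mathrm{sSet}_{\dslash \mathrm{sSet}}$, the map $i\,\square\, j$ with respect to $\otimes$ coincides with $\imath_\ast(i)\,\square\, j$ with respect to $\vartriangle$. Since $\imath_\ast$ is left Quillen, $\imath_\ast(i)$ is a (acyclic) cofibration in $\mathrm{sSet}_{\dslash \mathrm{sSet}}$, and Theorem \ref{thm:RetSpExtSmash} supplies the conclusion. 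The unit axiom holds trivially since $S^0$ is cofibrant in $\mathrm{sSet}_\ast$. The only point requiring genuine attention is the identification of the restricted tensoring with the fibrewise one of Corollary \ref{cor:sSetastEnrichmentFacts}; once that is in hand, everything else is formal.
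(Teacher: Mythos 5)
Your proposal is correct and follows essentially the same route as the paper: the paper's proof likewise observes that the inclusion $\imath_\ast\colon \mathrm{sSet}_\ast\to \mathrm{sSet}_{\dslash\mathrm{sSet}}$ is a strongly monoidal left Quillen functor, defines the tensoring by $(K,(X,Y))\mapsto (\ast,K)\esmash(X,Y)$, and obtains the enriched homs by composing $F_\vartriangle$ with the right adjoint to $\imath_\ast$. Your additional checks (identification with the fibrewise tensoring of Corollary \ref{cor:sSetastEnrichmentFacts} and the pushout-product reduction via Theorem \ref{thm:RetSpExtSmash}) simply spell out what the paper leaves implicit.
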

\begin{proof}
The canonical inclusion $\imath_\ast\colon\mathrm{sSet}_\ast \to \mathrm{sSet}_{\dslash \mathrm{sSet}}$ is a strongly monoidal left Quillen functor, so that $(K, (X,Y))\mapsto (\ast, K)\esmash(X,Y)$ defines a $\mathrm{sSet}_\ast$-tensoring compatible with the model structures.

The $\mathrm{sSet}_\ast$-enriched hom-spaces of $\mathrm{sSet}_{\dslash\mathrm{sSet}}$ are thus given by 
\[
\wp_\ast F_\vartriangle\!\left\{(X,Y), (X',Z)\right\}\cong [X, X']_\ast F_\vartriangle\!\left\{(X,Y), (X',Z)\right\}\,,
\]
the space of sections of the internal hom $F_\vartriangle\!\left\{(X,Y), (X',Z)\right\}$ over the exponential object $[X,X']$.
Here we have written $\wp_\ast$ for the right adjoint to $\imath_\ast$.
\end{proof}

We conclude this section by recording some relations between the fibrewise and external smash products.
The proofs of the next two results are relatively straightforward, so have been omitted.
\begin{lemma}
Let $X$ be a simplicial set with diagonal map $\Delta_X\colon X\to X\times X$.
For any $Y,Z\in \mathrm{sSet}_{\dslash X}$ there is a natural isomorphism
\[
\Delta^\ast_X \big((X,Y)\esmash(X,Z))\big) \longrightarrow Y\wedge_X Z
\]
of retractive spaces covering the identity on $X$.
\end{lemma}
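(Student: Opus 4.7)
The plan is to reduce the external smash product to a fibrewise smash product over $X\times X$, then apply Lemma~\ref{lem:PBisStonglyClosed} to pull back.

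First I would verify the identification
\[
(X,Y)\esmash(X,Z) \cong \pi_1^\ast Y\wedge_{X\times X}\pi_2^\ast Z\,,
\]
where $\pi_1,\pi_2\colon X\times X\to X$ are the canonical projections. This is a direct computation: the underlying simplicial set $\pi_1^\ast Y$ of the retractive space $\pi_1^\ast Y$ over $X\times X$ is $Y\times X$ (with projection $r_Y\times\mathrm{id}_X$ and section $i_Y\times\mathrm{id}_X$), and similarly $\pi_2^\ast Z$ has underlying simplicial set $X\times Z$. The fibre product of these two retractive spaces over $X\times X$ is then $Y\times Z$, and unwinding the pushout diagram (\ref{eqn:SmashColim}) defining the fibrewise smash product over $X\times X$ reproduces exactly the defining colimit of $(X,Y)\esmash(X,Z)$. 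This identification evidently respects both projections to and sections from $X\times X$.

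Next, I apply the pullback functor $\Delta_X^\ast\colon \mathrm{sSet}_{\dslash X\times X}\to \mathrm{sSet}_{\dslash X}$. By Lemma~\ref{lem:PBisStonglyClosed} the pullback functor is strongly symmetric monoidal with respect to the fibrewise smash product, so
\[
\Delta_X^\ast\bigl(\pi_1^\ast Y\wedge_{X\times X}\pi_2^\ast Z\bigr)
\cong
\Delta_X^\ast \pi_1^\ast Y\wedge_X \Delta_X^\ast\pi_2^\ast Z\,.
\]
Since $\pi_i\circ \Delta_X = \mathrm{id}_X$ for $i=1,2$, the pseudofunctoriality of the pullback assignment $f\mapsto f^\ast$ gives canonical isomorphisms $\Delta_X^\ast\pi_i^\ast\cong \mathrm{id}_{\mathrm{sSet}_{\dslash X}}$. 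Composing, I obtain a canonical isomorphism $\Delta_X^\ast\bigl((X,Y)\esmash(X,Z)\bigr)\cong Y\wedge_X Z$ in $\mathrm{sSet}_{\dslash X}$, and chasing through the construction shows this covers the identity on $X$.

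Naturality in $Y$ and $Z$ is automatic, since every isomorphism appearing above is either a unit/counit of a pullback adjunction or comes from the strong monoidal structure constraints of $\Delta_X^\ast$ (which are natural by construction), together with the pseudonatural isomorphisms $\Delta_X^\ast\pi_i^\ast\cong \mathrm{id}$ that witness the composability of the pullback pseudofunctor $\mathrm{sSet}_{\dslash-}$. The only step requiring any real care is the explicit identification in the first paragraph, but this is purely a matter of comparing two colimit diagrams term by term; I do not anticipate any genuine obstacle.
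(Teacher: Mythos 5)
Your proposal is correct. The paper omits the proof of this lemma (declaring it straightforward), so there is no argument to compare against line by line, but your route is sound: the identification $(X,Y)\esmash(X,Z)\cong \pi_1^\ast Y\wedge_{X\times X}\pi_2^\ast Z$ is a termwise comparison of the colimit diagram \eqref{eqn:SmashColim} over $X\times X$ with the diagram defining $\vartriangle$ (note that this identification is literally Lemma \ref{lem:ExtSmashtoFib} in the special case $X'=X$, so you have in effect proven both omitted lemmas), and the remaining steps correctly invoke the strong monoidality of $\Delta_X^\ast$ from Lemma \ref{lem:PBisStonglyClosed} together with the canonical isomorphisms $\Delta_X^\ast\pi_i^\ast\cong(\pi_i\circ\Delta_X)^\ast=\mathrm{id}$, all of which are natural. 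A marginally more direct alternative, avoiding the detour through the fibrewise smash over $X\times X$, is to use pullback-stability of colimits in $\mathrm{sSet}$: $\Delta_X^\ast$ preserves the colimit defining $(X,Y)\esmash(X,Z)$, and applying $\Delta_X^\ast$ to that diagram termwise ($Y\times X\mapsto Y$, $X\times Z\mapsto Z$, $Y\times Z\mapsto Y\times_X Z$, $X\times X\mapsto X$) yields exactly the diagram \eqref{eqn:SmashColim} defining $Y\wedge_X Z$; both arguments buy the same thing, and yours has the small advantage of factoring the computation through a statement the paper records anyway.
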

\begin{lemma}
\label{lem:ExtSmashtoFib}
For any $(X, Y), (X',Z)\in \mathrm{sSet}_{\dslash \mathrm{sSet}}$, there is a natural isomorphism
\[
(X,Y)\esmash(X',Z)\longrightarrow p_1^\ast Y\wedge_{X\times X'} p_2^\ast Z
\]
of retractive spaces covering the identity on $X\times X'$, where $p_1, p_2$ are the projection maps.
\end{lemma}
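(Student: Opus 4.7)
The plan is to prove the isomorphism by showing that the defining colimit diagrams for both sides are literally the same diagram, once we unfold the pullbacks $p_1^\ast Y$ and $p_2^\ast Z$ explicitly.

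First I would compute the pullbacks. Since $p_1\colon X\times X'\to X$ is the first projection, stability of colimits under pullback yields a canonical isomorphism $p_1^\ast Y\cong Y\times X'$ of simplicial sets over $X\times X'$, with projection $(y,x')\mapsto(r(y),x')$ and section $(x,x')\mapsto (i(x),x')$, where $i$ and $r$ are the structure maps of $Y\in\mathrm{sSet}_{\dslash X}$. Symmetrically $p_2^\ast Z\cong X\times Z$ as a retractive space over $X\times X'$. A direct check using the explicit descriptions of these pullbacks then gives a natural isomorphism
\[
p_1^\ast Y\times_{X\times X'}p_2^\ast Z\;\cong\; Y\times Z,
\]
since an element of the fibre product is a pair $((y,x'),(x,z))$ with $x=r(y)$ and $x'=r'(z)$, and these two constraints determine $(x,x')$ uniquely from $(y,z)$.

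Next I would compare the two pushout diagrams. The fibrewise smash product $p_1^\ast Y\wedge_{X\times X'}p_2^\ast Z$ is by definition the colimit of the diagram \eqref{eqn:SmashColim} with $(X,Y,Z)$ replaced by $(X\times X', p_1^\ast Y, p_2^\ast Z)$. Substituting the identifications of the previous step replaces $p_1^\ast Y$, $p_2^\ast Z$ and their fibre product by $Y\times X'$, $X\times Z$ and $Y\times Z$, together with the structure maps arising from $i,r,i',r'$. The resulting diagram is precisely the cube defining the external smash $(X,Y)\esmash(X',Z)$, and one checks that the inclusion of $X\times X'$ as the apex of both cones coincides under the identification. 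Taking colimits produces the desired natural isomorphism of underlying simplicial sets, and since the retractive space structures are in both cases read off from the canonical map from $X\times X'$ and the projection to $X\times X'$, the isomorphism covers the identity on $X\times X'$.

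Naturality in $(X,Y)$ and $(X',Z)$ is then automatic: each of the identifications $p_1^\ast Y\cong Y\times X'$, $p_2^\ast Z\cong X\times Z$ and $p_1^\ast Y\times_{X\times X'}p_2^\ast Z\cong Y\times Z$ is natural in the obvious sense, so the induced isomorphism of colimit diagrams is natural and descends to a natural isomorphism of the retractive spaces.

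There is no genuine obstacle here; the content is purely diagrammatic bookkeeping that the two pushout presentations agree on the nose after identifying the pullbacks. The only mild care required is to keep track of structure maps (sections and projections) so as to verify that the isomorphism really does cover $\mathrm{id}_{X\times X'}$ rather than merely some automorphism.
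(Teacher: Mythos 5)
Your argument is correct: identifying $p_1^\ast Y\cong Y\times X'$, $p_2^\ast Z\cong X\times Z$ and their fibre product over $X\times X'$ with $Y\times Z$ turns the colimit diagram \eqref{eqn:SmashColim} for the fibrewise smash over $X\times X'$ into exactly the cube defining $(X,Y)\esmash(X',Z)$, with all sections and projections matching, so the colimits agree naturally over $X\times X'$. The paper omits this proof as straightforward, and your diagrammatic bookkeeping is precisely the intended argument.
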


The last result gives us another way of realising $\mathrm{sSet}_{\dslash \mathrm{sSet}}$ as a $\mathrm{sSet}$-model category.
This second $\mathrm{sSet}$-enrichment will play a key role in our proof of Theorem \ref{thm:Tangent}.
\begin{corollary}[Base-wise enrichment]
\label{cor:GlobRetSpBaseEnrich}
The global model structure on $\mathrm{sSet}_{\dslash \mathrm{sSet}}$ is $\mathrm{sSet}$-enriched.
\end{corollary}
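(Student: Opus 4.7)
The plan is to inherit the $\mathrm{sSet}$-enrichment from the symmetric monoidal model structure $(\mathrm{sSet}_{\dslash \mathrm{sSet}}, \vartriangle)$ established in Theorem \ref{thm:RetSpExtSmash}, by restriction of scalars along a strongly monoidal left Quillen functor $\varepsilon\colon (\mathrm{sSet},\times) \to (\mathrm{sSet}_{\dslash \mathrm{sSet}}, \vartriangle)$. The candidate I would take is the functor sending a simplicial set $K$ to the retractive space obtained by adjoining a disjoint section to $K$ regarded as the identity parametrised space:
\[
\varepsilon\colon K \longmapsto (K, K_{+K})\,,
\]
where $K_{+K}=K\coprod K$ lies over $K$ via the fold map. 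The base-wise $\mathrm{sSet}$-tensoring would then be defined by $K\otimes (X,Y) := \varepsilon(K)\vartriangle (X,Y)$, the cotensoring by $(X,Y)^K := F_\vartriangle(\varepsilon(K),(X,Y))$, and the enriched hom by $\mathrm{Map}((X,Y),(X',Y')) := U\, F_\vartriangle((X,Y),(X',Y'))$, where $U\colon \mathrm{sSet}_{\dslash \mathrm{sSet}}\to \mathrm{sSet}$ sends $(X,Y)\mapsto Y$ (the underlying simplicial set of the retractive space).

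The first task is to verify that $\varepsilon$ is strongly monoidal. Using Lemma \ref{lem:ExtSmashtoFib}, one computes
\[
\varepsilon(K)\vartriangle \varepsilon(K') \;\cong\; p_1^\ast(K_{+K}) \wedge_{K\times K'} p_2^\ast(K'_{+K'})\,,
\]
and both factors are canonically isomorphic to the monoidal unit $(K\times K')_{+(K\times K')}$ of $\wedge_{K\times K'}$, so the result is $\varepsilon(K\times K')$; moreover $\varepsilon(\ast) = (\ast, S^0)$, the unit of $\vartriangle$. Next I would show that $\varepsilon$ is in fact left adjoint to $U$ (a routine computation using Remark \ref{rem:GlobalMorphisms} shows morphisms $\varepsilon(K)\to (X,Y)$ are naturally in bijection with maps $K\to Y$ of simplicial sets), so the proposed enrichment is coherent and closed. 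That $\varepsilon$ preserves cofibrations and weak equivalences is immediate from the description of the global model structure: for a monomorphism $i\colon K\hookrightarrow K'$ in $\mathrm{sSet}$, the map $\varepsilon(i)=(i,\psi)$ has base $i$ and fibre part $\psi\colon K'\coprod K \to K'\coprod K'$ given by $\mathrm{id}_{K'}\coprod i$, both of which are (acyclic) cofibrations whenever $i$ is.

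Finally, the pushout-product axiom follows at once: for a cofibration $i$ in $\mathrm{sSet}$ and a cofibration $j$ in $\mathrm{sSet}_{\dslash \mathrm{sSet}}$,
\[
i\,\square\, j \;=\; \varepsilon(i)\,\square_\vartriangle\, j\,,
\]
and by the previous step $\varepsilon(i)$ is a cofibration in the symmetric monoidal model category $(\mathrm{sSet}_{\dslash \mathrm{sSet}},\vartriangle)$, acyclic if $i$ is; Theorem \ref{thm:RetSpExtSmash} then guarantees that $\varepsilon(i)\,\square_\vartriangle\, j$ is a cofibration, acyclic if either $i$ or $j$ is. No unit condition is needed since $\ast$ is cofibrant in $\mathrm{sSet}$. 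The only slightly delicate point in the whole argument is identifying the correct enriching functor: the obvious alternative $0_{-}(K)=(K,0_K)$ fails to be monoidal (it sends $\ast$ to $(\ast,\ast)$ rather than to $(\ast,S^0)$, and indeed $0_X\wedge_X Y \cong 0_X$ collapses everything), whereas $\varepsilon(K)=(K,K_{+K})$ succeeds precisely because pulling back the fibrewise unit along a projection yields the fibrewise unit downstairs.
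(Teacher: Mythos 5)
Your proposal is correct and follows essentially the same route as the paper: you use the same enriching functor $\varepsilon(K)=(K,K_{+K})\cong (K,K^\ast S^0)$, establish its strong monoidality via Lemma \ref{lem:ExtSmashtoFib}, note that it preserves cofibrations and weak equivalences, and then deduce the pushout-product axiom for the tensoring $K\otimes(X,Y)=\varepsilon(K)\esmash(X,Y)$ from Theorem \ref{thm:RetSpExtSmash}. The extra observations (the adjunction $\varepsilon\dashv U$ giving the enriched hom as $U F_\vartriangle$, and the remark on why $0_-$ fails) are fine additions but not a different argument.
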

\begin{proof}
For any simplicial set $K$, there is an isomorphism $K_{+K}= (K\to K\coprod K\to K)\cong K^\ast (S^0)$ of retractive spaces over $K$.
For simplicial sets $K$ and $L$,  Lemma \ref{lem:ExtSmashtoFib} implies natural isomorphisms
\[
(K, K^\ast S^0 )\esmash (L, L^\ast S^0) \cong p^\ast_1 K^\ast S^0 \wedge_{K\times L } p^\ast_2 L^\ast S^0 \cong (K\times L)^\ast S^0\wedge_{K\times L}  (K\times L)^\ast S^0
\cong (K\times L)^\ast S^0 \,,
\]
so that the assignment $
K\mapsto (K, K^\ast S^0)$
defines a strongly monoidal functor $\varepsilon\colon\mathrm{sSet}\to \mathrm{sSet}_{\dslash \mathrm{sSet}}$.
Since $\varepsilon$ clearly preserves cofibrations and weak equivalences, the assignment
\begin{align*}
\mathrm{sSet}\times \mathrm{sSet}_{\dslash \mathrm{sSet}}&\longrightarrow \mathrm{sSet}_{\dslash \mathrm{sSet}}
\\
\big(K, (X,Y))
&
\longmapsto 
\varepsilon(K) \esmash (X,Y)
\end{align*}
is a Quillen bifunctor.
\end{proof}

\begin{remark}
We have seen that $\mathrm{sSet}_{\dslash\mathrm{sSet}}$ carries two simplicial enrichments of very different flavours: the enrichment of Corollary \ref{cor:GlobalRetSpPSSet} captures simplicial homotopy information in the \lq\lq fibre direction'', whereas that of Corollary \ref{cor:GlobRetSpBaseEnrich} captures information in the \lq\lq base direction''.
\end{remark}

\subsection{Koszul (pre-)duality}
In this section, we discuss a model-theoretic formulation of Koszul duality for retractive spaces.
We now restrict our attention to the full subcategory $\mathrm{sSet}_{\geq 1}\hookrightarrow\mathrm{sSet}$ of reduced simplicial sets. 
Recall that a simplicial set $X$ is \emph{reduced} if it has precisely one $0$-simplex.
For reduced spaces, Kan's functor $\mathbb{G}$ sends the reduced simplicial set $X$ to a levelwise free simplicial group $\mathbb{G}X$ with the homotopy type of $\Omega X$.
There is an adjunction
\[
\begin{tikzcd}
\mathrm{sSet}_{\geq 1}
\ar[rr, shift left=1.1ex, "\mathbb{G}"]
\ar[rr, leftarrow, shift left=-1.1ex, "\bot", "\overline{W}"']
&&
\mathrm{sGrp}\,,
\end{tikzcd}
\]
with the right adjoint $G\mapsto\overline{W}G$ sending a simplicial group to Kan's model for the classifying space.
For each simplicial group $G$, there is a principal $G$-bundle $WG\to \overline{W}G$ with weakly acyclic total space; pulling back along the $(\mathbb{G}\dashv\overline{W})$-unit yields a principal $\mathbb{G}X$-bundle $p(X)\colon \mathbb{P}X\to X$ modelling the path fibration.

\begin{remark}
The category of reduced simplicial sets has a model structure with respect to which the cofibrations and weak equivalences are created by the inclusion $\mathrm{sSet}_{\geq 1}\hookrightarrow \mathrm{sSet}$.
The category of simplicial groups also has a model structure, in this case induced by transferring the Kan model structure along the free-forgetful adjunction
\[
\begin{tikzcd}
\mathrm{sSet}
\ar[rr, shift left=1.1ex, "\mathrm{free}"]
\ar[rr, leftarrow, shift left=-1.1ex, "\bot", ""']
&&
\mathrm{sGrp}\,.
\end{tikzcd}
\]
With respect to these model structures, the adjunction $(\mathbb{G}\dashv \overline{W})$ is a Quillen equivalence with the particularly nice property that both the unit and counit are natural weak equivalences.
\end{remark}

For a simplicial group $G$, write $G\mathrm{-sSet}$ for the category of simplicial sets endowed with a $G$-action.
We also write $G_+\mathrm{-Mod}_u$ for the category of pointed $G$-spaces; $G$-spaces with a chosen $G$-fixed basepoint or, equivalently, $G_+$-modules in $\mathrm{sSet}_\ast$ (the subscript \lq\lq u'' is for \lq\lq unstable'').
The free-forgetful adjunctions
\[
\begin{tikzcd}
\mathrm{sSet}
\ar[rr, shift left=1.1ex, "G\times(-)"]
\ar[rr, leftarrow, shift left=-1.1ex, "\bot", ""']
&&
G\mathrm{-sSet}
\end{tikzcd}
  \qquad
  \mbox{ and }
  \qquad
\begin{tikzcd}
\mathrm{sSet}_\ast
\ar[rr, shift left=1.1ex, "G_+\wedge(-)"]
\ar[rr, leftarrow, shift left=-1.1ex, "\bot", ""']
&&
G_+\mathrm{-Mod}_u
\end{tikzcd}
\]
equip both $G\mathrm{-sSet}$ and $G\mathrm{-Mod}_{u}$ with proper combinatorial model structures.
In particular, note that $G_+\mathrm{-Mod}_{u}$ is a $\mathrm{sSet}_\ast$-model category.
\begin{lemma}
\label{lem:RetSpandLoopSp}
For each reduced simplicial set $X$ there are $\mathrm{sSet}_\ast$-Quillen equivalences
\[
\begin{tikzcd}
\mathrm{sSet}_{\dslash X}
\ar[rr, shift left=1.1ex, "\mathfrak{f}_X"]
\ar[rr, leftarrow, shift left=-1.1ex, "\bot", "\mathfrak{b}_X"']
&&
\mathbb{G}X_+\mathrm{-Mod}_{u}
\end{tikzcd}
  \qquad
  \mbox{ and }
  \qquad
\begin{tikzcd}
\mathbb{G}X_+\mathrm{-Mod}_{u}
\ar[rr, shift left=1.1ex, "\mathfrak{b}_X"]
\ar[rr, leftarrow, shift left=-1.1ex, "\bot", "\mathfrak{s}_X"']
&&
\mathrm{sSet}_{\dslash X}\,.
\end{tikzcd}
\]
\end{lemma}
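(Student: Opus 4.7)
The plan is to construct the three functors explicitly using the principal $\mathbb{G}X$-bundle $p\colon \mathbb{P}X \to X$, exhibit the chain of adjunctions $\mathfrak{f}_X \dashv \mathfrak{b}_X \dashv \mathfrak{s}_X$, and then use contractibility of $\mathbb{P}X$ together with freeness of the $\mathbb{G}X$-action off the basepoint/section to promote these to Quillen equivalences. For the constructions: given $Y\in \mathrm{sSet}_{\dslash X}$, set $\mathfrak{f}_X(Y) := (p^\ast Y)/\mathbb{P}X$, where we collapse the tautological section $\mathbb{P}X \hookrightarrow p^\ast Y$ to a point; the diagonal $\mathbb{G}X$-action on $p^\ast Y$ descends to a fixed-basepoint action, so $\mathfrak{f}_X(Y)\in \mathbb{G}X_+\mathrm{-Mod}_u$. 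For $M\in \mathbb{G}X_+\mathrm{-Mod}_u$ define the Borel construction $\mathfrak{b}_X(M) := \mathbb{P}X\times_{\mathbb{G}X} M$, regarded as a retractive space over $X=\mathbb{P}X/\mathbb{G}X$ with section induced by the $\mathbb{G}X$-fixed basepoint of $M$.

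For the first adjunction, I would verify $(\mathfrak{f}_X\dashv \mathfrak{b}_X)$ by descent along $p$: since $p$ is a principal $\mathbb{G}X$-bundle, the pullback $p^\ast$ gives an equivalence between $\mathrm{sSet}_{\dslash X}$ and $\mathbb{G}X$-equivariant retractive spaces over $\mathbb{P}X$, under which $\mathfrak{b}_X(M)$ corresponds to $\mathbb{P}X\times M$ with the free diagonal action; then the adjunction $((-)/\mathbb{P}X\dashv \mathbb{P}X\times(-))$ between pointed $\mathbb{G}X$-spaces and equivariant retractive spaces over $\mathbb{P}X$ gives the desired bijection. The existence of $\mathfrak{s}_X$ follows from the adjoint functor theorem once one observes that $\mathfrak{b}_X$ preserves colimits (both the product with $\mathbb{P}X$ and the quotient by the free $\mathbb{G}X$-action are colimit-preserving) and that both categories are locally presentable; an explicit description as a space of equivariant sections can be extracted if needed.

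Next, for the $\mathrm{sSet}_\ast$-enrichment and Quillen properties, both adjunctions are compatible with the $\mathrm{sSet}_\ast$-tensorings (obtained from Corollary \ref{cor:sSetastEnrichmentFacts} on the left and from the trivial $\mathbb{G}X$-action on the right), since $\mathfrak{f}_X$ and $\mathfrak{b}_X$ are built from functors that commute with smashing with pointed simplicial sets. Cofibrations and weak equivalences on both sides are detected by the forgetful functors to $\mathrm{sSet}$. The functor $\mathfrak{f}_X$ preserves cofibrations because $p^\ast$ preserves monomorphisms and collapsing a monomorphism preserves monomorphisms; it preserves weak equivalences because $p$ is a Kan fibration (so $p^\ast$ preserves weak equivalences by right properness of $\mathrm{sSet}$) and the free $\mathbb{G}X$-quotient preserves weak equivalences of $\mathbb{G}X$-equivariant spaces with free action. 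An analogous argument applied to the presentation $\mathfrak{b}_X(M) = (\mathbb{P}X\times M)/\mathbb{G}X$ shows that $\mathfrak{b}_X$ is also left Quillen for the second adjunction.

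Finally, for Quillen equivalence: since $\mathfrak{f}_X$ and $\mathfrak{b}_X$ both preserve all weak equivalences, it suffices to check that the ordinary unit and counit of each adjunction are weak equivalences. The counit $\mathfrak{f}_X\mathfrak{b}_X(M) = (\mathbb{P}X\times M)/\mathbb{P}X \to M$ is a weak equivalence because $\mathbb{P}X$ is weakly contractible; the unit $Y \to \mathfrak{b}_X\mathfrak{f}_X(Y)$ identifies $Y$ with the Borel construction of its pullback to $\mathbb{P}X$, which is an equivalence by descent for the principal bundle $p$. The second adjunction is handled symmetrically, using that $(\mathfrak{b}_X \dashv \mathfrak{s}_X)$ must also be a Quillen equivalence once $(\mathfrak{f}_X\dashv \mathfrak{b}_X)$ is and $\mathfrak{b}_X$ is its own \emph{left} Quillen equivalence (essential uniqueness of adjoints, as in the argument concluding the proof of Theorem \ref{thm:RetSpStructureTheorem}). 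The main obstacle will be justifying the descent statement for the unit in a hands-on combinatorial way with simplicial sets: one cannot appeal directly to a Serre-type local triviality argument, so the cleanest route is to reduce to the equivalence of categories between $\mathrm{sSet}_{\dslash X}$ and free $\mathbb{G}X$-equivariant retractive spaces over $\mathbb{P}X$ afforded by $p$ being a principal bundle, and then verify the homotopical content using that $\mathbb{P}X$ is a cofibrant-contractible resolution of $X$ in $\mathrm{sSet}$.
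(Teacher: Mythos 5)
Your construction of $\mathfrak{f}_X$ and $\mathfrak{b}_X$ and your overall strategy coincide with the paper's: there one writes $\mathfrak{f}_X=\mathbb{P}X_!\circ p(X)^\ast$ and $\mathfrak{b}_X=p(X)_!\circ \mathbb{P}X^\ast$, proves that $(p(X)^\ast\dashv p(X)_!)$ is a \emph{strict} adjoint equivalence between $\mathrm{sSet}_{\dslash X}$ and $\mathbb{G}X$-equivariant retractive spaces over $\mathbb{P}X$ (exactly the descent step you defer to the end), and then extracts all the homotopical content from the base-change adjunctions $(\mathbb{P}X_!\dashv\mathbb{P}X^\ast)$ and $(\mathbb{P}X^\ast\dashv\mathbb{P}X_\ast)$ along the acyclic fibration $\mathbb{P}X\to\ast$ in $\mathbb{G}X\mathrm{-sSet}$, by adapting Theorem \ref{thm:RetSpStructureTheorem}; this packages the unit/counit checks you carry out by hand, and the ``cleanest route'' you sketch in your final sentence is precisely the paper's proof. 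Your reduction to ordinary (underived) units and counits is legitimate since all functors in sight preserve all weak equivalences.

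One step is wrong as written, though fixable: cofibrations in $\mathbb{G}X_+\mathrm{-Mod}_u$ are \emph{not} detected by the forgetful functor. The model structure there is transferred along the free--forgetful adjunction, so fibrations and weak equivalences are underlying, but the cofibrations are retracts of relative cell complexes built from $\mathbb{G}X_+\wedge(\partial\Delta^n_+\to\Delta^n_+)$, a strictly smaller class than the underlying monomorphisms. Consequently ``$\mathfrak{f}_X$ preserves monomorphisms'' does not yet show that $\mathfrak{f}_X$ is left Quillen. The repair is the freeness observation the paper makes explicit: every object of $\mathbb{G}X\mathrm{-sSet}_{\dslash \mathbb{P}X}$, hence everything in the image of $p(X)^\ast$ and of $\mathfrak{f}_X$, carries a free $\mathbb{G}X$-action away from the section (resp.~basepoint), and a monomorphism between such objects \emph{is} a cofibration in the transferred structure. (Your argument for $\mathfrak{b}_X$ survives unchanged, since any functor sending underlying monomorphisms to cofibrations and underlying weak equivalences to weak equivalences is a fortiori left Quillen for the transferred structure.) A further small mis-attribution: the second half of $\mathfrak{f}_X$ collapses the section $\mathbb{P}X\hookrightarrow p^\ast Y$, which is not a $\mathbb{G}X$-quotient, so its preservation of weak equivalences comes from the cube lemma applied to a pushout along this cofibration (equivalently, from $\mathbb{P}X_!$ being left Quillen with all objects cofibrant), not from homotopy invariance of free quotients.
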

\begin{proof}
The first adjunction is a composite of $\mathrm{sSet}_\ast$-Quillen equivalences
\[
\begin{tikzcd}
\mathrm{sSet}_{\dslash X}
\ar[rr, shift left=1.1ex, "p(X)^\ast"]
\ar[rr, leftarrow, shift left=-1.1ex, "\bot", "p(X)_!"']
&&
 \mathbb{G}X\mathrm{-sSet}_{\dslash \mathbb{P}X}
\ar[rr, shift left=1.1ex, "\mathbb{P}X_!"]
\ar[rr, leftarrow, shift left=-1.1ex, "\bot", "\mathbb{P}X^\ast"']
&&  
\mathbb{G}X\mathrm{-sSet}_{\dslash \ast}\cong\mathbb{G}X_+\mathrm{-Mod}_{u}\,.
\end{tikzcd}
\]
The functor $p(X)^\ast$ sends a retractive space $(X\to Y\to X)$ to its pullback along the path fibration $p(X)\colon \mathbb{P}X\to X$;
the space $p(X)^\ast Y$ inherits a $\mathbb{G}X$-action such that $(\mathbb{P}X\to p(X)^\ast (Y)\to \mathbb{P}X)$ is a retract in $\mathbb{G}X\mathrm{-sSet}$.
 
The functor $p(X)^\ast$ is part of an adjoint equivalence of categories.
To prove this, first note that all objects of $\mathbb{G}X\mathrm{-sSet}_{\dslash \mathbb{P}X}$ are free $\mathbb{G}X$-spaces since $\mathbb{P}X$ is.
For any map of free $\mathbb{G}X$-spaces $P\to \mathbb{P}X$ (with or without section) there is a natural $\mathbb{G}X$-equivariant map $\eta_P\colon P\to p(X)^\ast(P/\mathbb{G}X)$.
The map $\eta_P$ becomes the identity after taking the quotient by $\mathbb{G}X$, so by freeness $\eta_P$ is an isomorphism of $\mathbb{G}X$-spaces.
On the other hand, for any map of simplicial sets $Y\to X$ there is a natural isomorphism $\epsilon_Y \colon p(X)^\ast Y / \mathbb{G}X \to Y$.
Since $\mathbb{P}X/\mathbb{G}X \cong X$, the functor $p(X)_!$ that takes quotients by $\mathbb{G}X$ determines an adjoint equivalence of categories $(p(X)^\ast\dashv p(X)_!)$.
The map $p(X)$ is a fibration, so that $p(X)^\ast$ preserves weak equivalences.
It is not hard to see that $p(X)^\ast$ also preserves cofibrations and $\mathrm{sSet}_\ast$-tensors, so that $(p(X)^\ast\dashv p(X)_!)$ is a $\mathrm{sSet}_\ast$-Quillen equivalence.
It follows that $(p(X)_!\dashv p(X)^\ast)$ is also a $\mathrm{sSet}_\ast$-Quillen equivalence.

A straightforward adaptation of the proof of Theorem \ref{thm:RetSpStructureTheorem} shows that $(\mathbb{P}X_!\dashv \mathbb{P}X^\ast)$ and $(\mathbb{P}X^\ast\dashv \mathbb{P}X_\ast)$ are $\mathrm{sSet}_\ast$-Quillen equivalences; crucial ingredients in this argument are the facts that $\mathbb{G}X\mathrm{-sSet}$ is proper and $p(X)\colon \mathbb{P}X\to X$ is a fibration (of $\mathbb{G}X$-spaces).
\end{proof}

Any simplicial set $X$ (not necessarily reduced) naturally determines a coalgebra in $\mathrm{sSet}$ with comultiplication given by the diagonal map $\Delta_X\colon X\to X\times X$.
After adjoining disjoint basepoints we obtain a coalgebra $X_+$ in $(\mathrm{sSet}_\ast, \wedge)$.
Hess and Shipley use the forgetful-cofree adjunction
\[
\begin{tikzcd}
X_+\mathrm{-Comod}_{\geq 0}
\ar[rr, shift left=1.1ex, "U"]
\ar[rr, leftarrow, shift left=-1.1ex, "\bot", "X_+\wedge (-)"']
&&
\mathrm{sSet}_\ast
\end{tikzcd}
\]
to equip the category of (left) $X_+$-comodules in $\mathrm{sSet}_\ast$ with a left proper combinatorial $\mathrm{sSet}_\ast$-model structure \cite[Theorem 4.1]{hess_waldhausen_2016}.
Cofibrations, weak equivalences, and $\mathrm{sSet}_\ast$-tensors of $X_+$-comodules are created by the forgetful functor $U$. 
Note that $X_+\mathrm{-Comod}_{\geq 0}$ is locally presentable as the comonad $X_+\wedge(-)$ is accessible \cite[Proposition A.1]{ching_coalgebraic_2014}.

\begin{remark}
\label{rem:InducedCoaction}
The pushforward $X_!\colon \mathrm{sSet}_{\dslash X}\to \mathrm{sSet}_\ast$ factors via the forgetful functor on $X_+$-comodules.
For a retractive space $Y\in \mathrm{sSet}_{\dslash X}$ with projection $p\colon Y\to X$, the map
\begin{align*}
\rho\colon X_! Y&\longrightarrow X_+\wedge X_!Y\\
[y] &\longmapsto p(y)\wedge [y]
\end{align*}
defines an $X_+$-coaction on $X_! Y$, as is readily checked.
\end{remark}
\begin{remark}
Fix a  field $k$.
The projection $p\colon Y\to X$ allows us to regard $H_\ast(Y;k)$ as a (left) $H_k(X;k)$-comodule; $H_\ast(\rho;k)$ then exhibits the $H_\ast(X;k)$-coaction on $\widetilde{H}_\ast(X_!Y;k)$ induced by the split cofibre sequence 
\[
0
\longrightarrow
H_p(X; k)
\longrightarrow
H_p(Y;k)
\longrightarrow
\widetilde{H}_p(X_!Y; k)
\longrightarrow 
0\,.
\]
In \cite{braunack-mayer_strict_2019}, we use this identification in the case $k=\mathbb{Q}$ to produce algebraic models of rational homotopy types of parametrised spectra.
\end{remark}

The functor $X_!\colon \mathrm{sSet}_{\dslash X}\to X_+\mathrm{-Comod}_{\geq 0}$ to $X_+$-comodules has a right adjoint $X\star (-)$.
Given an $X_+$-comodule $N$ with coaction $\rho\colon N\to X_+\wedge N$, the underlying object of $X\star N$ is the simplicial set
\[
X\star N := \mathrm{lim}\,
\left(
\!\begin{tikzcd}
  N \ar[r, "\rho"]
  &
  X_+\wedge N
  \ar[r, leftarrow]
  &
  X\times N
\end{tikzcd}\!
  \right)
\]
with $X\times N \to X_+\wedge N$ the quotient map.
Writing $n_0\in N$ for the basepoint, $X\star N$ is equipped with projection $X\star N\to X\times N \to X$ and section $x\mapsto (x,n_0)$. 
With these maps, $X\star N$ can be regarded as an object of $\mathrm{sSet}_{\dslash X}$.
\begin{lemma}
\label{lem:RetSpandComod}
For each simplicial set $X$, there is a $\mathrm{sSet}_\ast$-Quillen adjunction
\[
\begin{tikzcd}
\mathrm{sSet}_{\dslash X}
\ar[rr, shift left=1.1ex, "X_!"]
\ar[rr, leftarrow, shift left=-1.1ex, "\bot", "X\star(-)"']
&&
X_+\mathrm{-Comod}_{\geq 0}\,.
\end{tikzcd}
\]
\end{lemma}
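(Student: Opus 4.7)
The plan is to verify three things in sequence: the bare adjunction, the Quillen adjunction, and compatibility with the $\mathrm{sSet}_\ast$-enrichment.

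First I would establish the adjunction by direct unwinding. Given a retractive space $Y$ over $X$ with section $i$ and projection $p$, and an $X_+$-comodule $N$ with coaction $\rho\colon N\to X_+\wedge N$, a map of $X_+$-comodules $\varphi\colon X_!Y\to N$ corresponds to a pointed map $\tilde{\varphi}\colon Y/i(X)\to N$ such that the square
\[
\begin{tikzcd}
Y \ar[r, "\tilde\varphi"] \ar[d, "p\wedge \tilde\varphi"'] & N \ar[d, "\rho"] \\
X_+\wedge N & X_+\wedge N \ar[l, equal]
\end{tikzcd}
\]
commutes (using that the coaction on $X_!Y$ from Remark \ref{rem:InducedCoaction} is $[y]\mapsto p(y)\wedge[y]$). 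This is precisely the data needed for the pair $(p,\tilde\varphi)\colon Y\to X\times N$ to factor through the equaliser $X\star N$, and the resulting map $Y\to X\star N$ respects sections (the section $x\mapsto(x,n_0)$ matches $i$ because $\tilde\varphi\circ i$ is the zero map). Conversely, a map $g\colon Y\to X\star N$ of retractive spaces composes with the projection $X\star N\to N$ to give $Y\to N$, which factors through $X_!Y$ by the section-preservation condition, and the equaliser description forces the resulting pointed map to be a map of $X_+$-comodules. These constructions are mutually inverse and natural.

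Second, I would verify the Quillen condition. By \cite[Theorem 4.1]{hess_waldhausen_2016}, cofibrations and weak equivalences in $X_+\mathrm{-Comod}_{\geq 0}$ are created by the forgetful functor $U\colon X_+\mathrm{-Comod}_{\geq 0}\to \mathrm{sSet}_\ast$. Therefore it suffices to check that the composite $U\circ X_!\colon \mathrm{sSet}_{\dslash X}\to \mathrm{sSet}_\ast$ preserves cofibrations and acyclic cofibrations. But $U\circ X_!$ is the pushforward along the terminal map $X\colon X\to \ast$, which is left Quillen by Theorem \ref{thm:RetSpStructureTheorem}. Hence $X_!\colon\mathrm{sSet}_{\dslash X}\to X_+\mathrm{-Comod}_{\geq 0}$ is left Quillen.

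Third, for the $\mathrm{sSet}_\ast$-enrichment I would check that $X_!$ preserves $\mathrm{sSet}_\ast$-tensors. The tensor on $\mathrm{sSet}_{\dslash X}$ is $K\owedge_X Y = X^\ast K\wedge_X Y$ (Corollary \ref{cor:sSetastEnrichmentFacts}), and the tensor on $X_+\mathrm{-Comod}_{\geq 0}$ is created by $U$: for a comodule $N$ with coaction $\rho$, the tensor $K\wedge N$ carries the coaction $K\wedge\rho\colon K\wedge N\to K\wedge X_+\wedge N\cong X_+\wedge(K\wedge N)$ (using the symmetry of $\wedge$). By Corollary \ref{cor:sSetastEnrichmentFacts}, $X_!$ already preserves $\mathrm{sSet}_\ast$-tensors at the level of pointed simplicial sets, giving a natural isomorphism $X_!(X^\ast K\wedge_X Y)\cong K\wedge X_!Y$. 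A direct inspection using the formula of Remark \ref{rem:InducedCoaction} shows that under this isomorphism the induced $X_+$-coaction on the left-hand side matches $K\wedge\rho_{X_!Y}$ on the right-hand side, since both record the projection to $X$ through the second tensor factor. This identifies $X_!$ as a $\mathrm{sSet}_\ast$-enriched left Quillen functor, completing the proof.

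The main obstacle is bookkeeping rather than anything conceptual: one has to track the interaction between the retractive-space section, the induced comodule coaction, and the equaliser defining $X\star N$ carefully enough to see that the adjunction bijection is well-defined on both sides.
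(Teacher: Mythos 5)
Your proof is correct and follows essentially the same route as the paper: the paper also establishes the adjunction by direct inspection (writing the unit $y\mapsto(p(y),[y])$ and counit $[(x,n)]\mapsto n$ explicitly rather than your hom-set bijection, which amounts to the same computation), and then deduces the $\mathrm{sSet}_\ast$-Quillen property exactly as you do, from the fact that cofibrations, weak equivalences and $\mathrm{sSet}_\ast$-tensors in $X_+\mathrm{-Comod}_{\geq 0}$ are created by the forgetful functor to $\mathrm{sSet}_\ast$ and that $X_!\colon\mathrm{sSet}_{\dslash X}\to\mathrm{sSet}_\ast$ is left Quillen and tensor-preserving. (Only a cosmetic remark: $X\star N$ is defined as a pullback of the cospan $N\to X_+\wedge N\leftarrow X\times N$, not an equaliser, though your factorisation argument is unaffected.)
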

\begin{proof}
It is not difficult to establish the adjunction directly; the counit $X_!(X\star N)\to N$ sends $[(x,n)]\mapsto n$ and the unit $Y\to X\star (X_! Y)$ sends $y\mapsto (p(x), [y])$.
The left adjoint $X_!$ is Quillen: it preserves cofibrations, weak equivalences and $\mathrm{sSet}_\ast$-tensors as these are created in $X_+\mathrm{-Comod}$ by the forgetful functor to $\mathrm{sSet}_\ast$.
\end{proof}

\begin{theorem}[Koszul pre-duality]
\label{thm:ProtoKoszul}
For each reduced simplicial set $X$, there is a $\mathrm{sSet}_\ast$-Quillen adjunction
\[
\begin{tikzcd}
 \mathbb{G}X_+\mathrm{-Mod}_{u}
\ar[rr, shift left=1.1ex, "\mathfrak{K}_X^\ast"]
\ar[rr, leftarrow, shift left=-1.1ex, "\bot", "\mathfrak{K}^X_\ast"']
&&
X_+\mathrm{-Comod}_{\geq 0}\,.
\end{tikzcd}
\]
\end{theorem}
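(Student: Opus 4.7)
The plan is to construct the desired adjunction as a composite of the $\mathrm{sSet}_\ast$-Quillen adjunctions already assembled in Lemmas \ref{lem:RetSpandLoopSp} and \ref{lem:RetSpandComod}. Concretely, I would set
\[
\mathfrak{K}^\ast_X \;:=\; X_! \circ \mathfrak{b}_X
\qquad\text{and}\qquad
\mathfrak{K}^X_\ast \;:=\; \mathfrak{s}_X \circ \big(X\star(-)\big),
\]
where $\mathfrak{b}_X$ is interpreted as a \emph{left} adjoint by invoking the second of the two $\mathrm{sSet}_\ast$-Quillen equivalences of Lemma \ref{lem:RetSpandLoopSp}, namely $(\mathfrak{b}_X\dashv \mathfrak{s}_X)$. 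Since $X$ is reduced, $\mathbb{G}X$ is defined, so the inputs to both halves of the composition make sense.

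Next I would check that $(\mathfrak{K}^\ast_X \dashv \mathfrak{K}^X_\ast)$ is indeed an adjoint pair. This is formal: composites of left adjoints are left adjoint to the corresponding composite of right adjoints, with unit and counit built in the standard way from those of $(\mathfrak{b}_X \dashv \mathfrak{s}_X)$ and $(X_! \dashv X\star(-))$. The resulting natural hom-bijection
\[
X_+\mathrm{-Comod}_{\geq 0}\big(X_!\mathfrak{b}_X M,\,N\big)
\;\cong\;
\mathrm{sSet}_{\dslash X}\big(\mathfrak{b}_X M,\,X\star N\big)
\;\cong\;
\mathbb{G}X_+\mathrm{-Mod}_u\big(M,\,\mathfrak{s}_X(X\star N)\big)
\]
is the desired adjunction.

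It then remains to verify that $(\mathfrak{K}^\ast_X\dashv \mathfrak{K}^X_\ast)$ is a $\mathrm{sSet}_\ast$-Quillen adjunction. Both $\mathfrak{b}_X$ (as the left adjoint in the pair $(\mathfrak{b}_X\dashv \mathfrak{s}_X)$) and $X_!$ are left $\mathrm{sSet}_\ast$-Quillen functors by the two lemmas cited. Since the composite of left Quillen functors is left Quillen and the composite of $\mathrm{sSet}_\ast$-enriched left adjoints preserving $\mathrm{sSet}_\ast$-tensors again preserves $\mathrm{sSet}_\ast$-tensors (equivalently, its right adjoint canonically preserves cotensors), $\mathfrak{K}^\ast_X$ is left $\mathrm{sSet}_\ast$-Quillen.

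There is essentially no obstacle here: the genuine work has all been done in the preceding lemmas. The only mild point to be careful about is to invoke the \emph{correct} adjunction involving $\mathfrak{b}_X$ from Lemma \ref{lem:RetSpandLoopSp}; using $(\mathfrak{f}_X\dashv \mathfrak{b}_X)$ instead would produce an adjunction in the wrong direction. Aside from this bookkeeping, the proof amounts to observing that Quillen adjunctions, and their $\mathrm{sSet}_\ast$-enriched counterparts, are closed under composition.
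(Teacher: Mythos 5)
Your proposal is correct and matches the paper's proof, which simply combines the adjunctions of Lemmas \ref{lem:RetSpandLoopSp} and \ref{lem:RetSpandComod}; in particular you correctly pick out the $(\mathfrak{b}_X\dashv \mathfrak{s}_X)$ direction so that the composite points from $\mathbb{G}X_+\mathrm{-Mod}_u$ to $X_+\mathrm{-Comod}_{\geq 0}$. The extra detail you supply (closure of $\mathrm{sSet}_\ast$-Quillen adjunctions under composition) is exactly what the paper leaves implicit.
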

\begin{proof}
Combine the adjunctions of Lemmas \ref{lem:RetSpandLoopSp} and \ref{lem:RetSpandComod}.
\end{proof}

Any map of simplicial sets $f\colon X\to X'$ induces a $\mathrm{sSet}_\ast$-Quillen adjunction
\[
\begin{tikzcd}
X_+\mathrm{-Comod}_{\geq 0}
\ar[rr, shift left=1.1ex, "f_!"]
\ar[rr, leftarrow, shift left=-1.1ex, "\bot", "f^\ast"']
&&
X'_+\mathrm{-Comod}_{\geq 0}\,.
\end{tikzcd}
\]
For an $X_+$-comodule $N$ we set $f_! N = N$ regarded as an  $X'_+$-comodule in the obvious way.
The right adjoint $f^\ast$ sends the $X'_+$-comodule $(M,\rho)$ to the pullback of the cospan
\[
\begin{tikzcd}
  M
  \ar[r, "\rho"]
  &
  X'_+\wedge M
  \ar[r, leftarrow, "{f_+\wedge \mathrm{id}_M}"]
  &
  X_+\wedge M\,,
\end{tikzcd}
\]
which is canonically a left $X_+$-comodule.
Equivalently, $f^\ast NM= X_+\square_{X'_+}M$ is the cotensor product of the $X_+'$-comodules $X_+$ and $M$.
Since cofibrations and weak equivalences are created in $\mathrm{sSet}_\ast$ in both cases, the functor $f_!$ is easily seen to be left Quillen.

Similarly, for a map of \emph{reduced} simplicial sets $f\colon X\to X'$ there is a corresponding map of simplicial groups $\mathbb{G}f\colon \mathbb{G}X\to \mathbb{G}X'$.
There is an induced $\mathrm{sSet}_\ast$-Quillen adjunction
\[
\begin{tikzcd}
\mathbb{G}X_+\mathrm{-Mod}_{u}
\ar[rr, shift left=1.1ex, "\mathbb{G}f_!"]
\ar[rr, leftarrow, shift left=-1.1ex, "\bot", "\mathbb{G}f^\ast"']
&&
\mathbb{G}X'_+\mathrm{-Mod}_{u}\,,
\end{tikzcd}
\]
which is moreover a $\mathrm{sSet}_\ast$-Quillen equivalence if $f$ is a weak equivalence.
It is left to the reader to verify the following
\begin{lemma}
Let $f\colon X\to X'$ be a map of reduced simplicial sets. 
Then there is a natural isomorphism of left $\mathrm{sSet}_\ast$-Quillen functors $\mathfrak{K}^\ast_{X'} \circ \mathbb{G}f_! \cong f_!\circ \mathfrak{K}^\ast_{X}$.
\end{lemma}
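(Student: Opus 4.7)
The plan is to decompose $\mathfrak{K}^\ast_Y = Y_! \circ \mathfrak{b}_Y$ for each reduced $Y$ and verify naturality in each factor separately. Pseudofunctoriality of pushforward along the factorisation $X \xrightarrow{f} X' \to \ast$ supplies a canonical natural isomorphism $f_! \circ X_! \cong X'_! \circ f_!$ between left Quillen functors $\mathrm{sSet}_{\dslash X} \to X'_+\mathrm{-Comod}_{\geq 0}$; compatibility with the induced coactions described in Remark \ref{rem:InducedCoaction} is immediate since both composites yield the same underlying pointed simplicial set endowed with the coaction $[y]\mapsto f(p(y)) \wedge [y]$. Substituting this into $\mathfrak{K}^\ast$, the desired statement reduces to exhibiting a natural isomorphism
\[
\mathfrak{b}_{X'} \circ \mathbb{G}f_! \;\cong\; f_! \circ \mathfrak{b}_X
\]
of left Quillen functors $\mathbb{G}X_+\mathrm{-Mod}_u \to \mathrm{sSet}_{\dslash X'}$.

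For this reduction I would exploit functoriality of Kan's loop group together with the path space construction. The map $f$ induces $\mathbb{G}f \colon \mathbb{G}X \to \mathbb{G}X'$ and, since $\mathbb{P}Y$ is defined as the pullback $W\mathbb{G}Y \times_{\overline{W}\mathbb{G}Y} Y$ depending functorially on $Y$, a canonical $\mathbb{G}f$-equivariant map $\mathbb{P}f \colon \mathbb{P}X \to \mathbb{P}X'$ covering $f$. Unpacking $\mathfrak{b}_Y(M) \cong \mathbb{P}Y \times_{\mathbb{G}Y} M$ and $\mathbb{G}f_!(M) = \mathbb{G}X'_+ \wedge_{\mathbb{G}X_+} M$, and using the associativity isomorphism
\[
\mathbb{P}X' \times_{\mathbb{G}X'}\bigl(\mathbb{G}X'_+ \wedge_{\mathbb{G}X_+} M\bigr) \;\cong\; \mathbb{P}X' \times_{\mathbb{G}X} M
\]
for the Borel construction, the comparison morphism $f_! \mathfrak{b}_X(M) \to \mathfrak{b}_{X'}\mathbb{G}f_!(M)$ in $\mathrm{sSet}_{\dslash X'}$ will be constructed as the adjunct of the map $\mathbb{P}f \times_{\mathbb{G}X} M$ covering $f$; naturality in $M$ and compatibility with sections are manifest.

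The main obstacle is checking that this comparison morphism is an isomorphism. Formally this is a Beck--Chevalley condition for the commuting square of left adjoints built from $\mathbb{P}f$ and $f$, asserting that $\mathbb{P}$ transports cleanly under pullback along $f$. I expect to verify it directly on the level of underlying simplicial sets, reducing to the assertion that the relevant squares involving $W\mathbb{G}Y$ and $\overline{W}\mathbb{G}Y$ are pointwise Cartesian after quotienting by the free simplicial group actions. As an alternative avoiding this combinatorial computation, one may dualise and prove the right-adjoint statement $(\mathbb{G}f)^\ast \circ \mathfrak{K}^{X'}_\ast \cong \mathfrak{K}^X_\ast \circ f^\ast$, using the explicit formulas $\mathfrak{K}^Y_\ast = \mathbb{P}Y_\ast \circ p(Y)^\ast \circ (Y\star-)$ together with the cotensor-product description of $f^\ast$ on comodules; this path reduces the verification to standard base-change identities for principal $\mathbb{G}Y$-bundles and the compatibility of $(Y\star -)$ with the cotensor product $X_+ \square_{X'_+}(-)$.
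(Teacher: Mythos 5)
Your reduction is sound as far as it goes: the decomposition $\mathfrak{K}^\ast_Y = Y_!\circ\mathfrak{b}_Y$ is correct, the square $f_!\circ X_!\cong X'_!\circ f_!$ (with the coactions of Remark \ref{rem:InducedCoaction}) is a genuine natural isomorphism, and the comparison morphism $f_!\,\mathfrak{b}_X(M)\to\mathfrak{b}_{X'}\,\mathbb{G}f_!(M)$ induced by $\mathbb{P}f$ is the right transformation to consider. But the step you flag as "the main obstacle" is not an obstacle to be overcome -- it fails. The square you hope is Cartesian is not: the canonical map $\mathbb{P}X\to X\times_{X'}\mathbb{P}X'$ is essentially never an isomorphism (already for $X'=\ast$ it would assert $\mathbb{P}X\cong X$), and no quotienting repairs this, because the two composite functors take non-isomorphic values. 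Concretely, take $f\colon X\to\ast$ (a map of reduced simplicial sets) and $M=\mathbb{G}X_+$. Then $\mathbb{G}f_!(M)\cong S^0$ and $\mathfrak{K}^\ast_\ast\mathbb{G}f_!(M)$ is the strict coinvariants $M/\mathbb{G}X\cong S^0$, whereas $\mathfrak{b}_X(\mathbb{G}X_+)\cong\mathbb{P}X\sqcup X$ and hence $f_!\,\mathfrak{K}^\ast_X(M)\cong\mathbb{P}X_+\wedge_{\mathbb{G}X_+}\mathbb{G}X_+\cong\mathbb{P}X_+$, which has infinitely many simplices whenever $\mathbb{G}X$ is nontrivial (e.g.\ $X=\Delta^1/\partial\Delta^1$). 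So no natural isomorphism between the two composites exists, and your verification cannot be completed. The dual route through right adjoints cannot rescue it either: by essential uniqueness of adjoints, a natural isomorphism $(\mathbb{G}f)^\ast\circ\mathfrak{K}^{X'}_\ast\cong\mathfrak{K}^X_\ast\circ f^\ast$ would force the corresponding isomorphism of left adjoints, which the example above rules out.

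What your construction does yield, after a cell-induction (cube lemma) argument over the free cells $\mathbb{G}X_+\wedge K$, is that $\mathbb{P}f$ induces a natural \emph{weak equivalence} $f_!\circ\mathfrak{K}^\ast_X\Rightarrow\mathfrak{K}^\ast_{X'}\circ\mathbb{G}f_!$ on cofibrant modules, hence an isomorphism of derived functors: both composites are left Quillen, and on a free cell the comparison is $\mathbb{P}X_+\wedge K\to\mathbb{P}X'_+\wedge K$, a weak equivalence since both path spaces are weakly contractible. That derived statement is all that the subsequent use of Koszul pre-duality under base change requires. Note also that the paper offers no proof here (the verification is "left to the reader"), and the strict isomorphism in the statement appears to be an overstatement: the honest conclusion of your argument is commutativity of the square up to natural weak equivalence, not up to isomorphism, and you should either prove that weaker statement or record the counterexample showing the stronger one fails.
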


\begin{remark}
Let $\mathcal{E}$ be a generalised reduced homology theory.
Hess and Shipley show that $\mathbb{G}X_+\mathrm{-Mod}_{u}$ and $X_+\mathrm{-Comod}_{\geq 0}$ support $\mathcal{E}$-local model structures, and that Koszul pre-duality becomes a Quillen equivalence after $\mathcal{E}$-localisation.
In the particular case $\mathcal{E}= H\mathbb{Q}$ we obtain an equivalence between the rational homotopy theories of connective $\mathbb{G}X_+$-modules and connective $X_+$-comodules
\begin{equation}
\label{eqn:SpaceHQKoszul}
\begin{tikzcd}
\big(\mathbb{G}X_+\mathrm{-Mod}_{u}\big)_{H\mathbb{Q}}
\ar[rr, shift left=1.1ex, "\mathfrak{K}_X^\ast"]
\ar[rr, leftarrow, shift left=-1.1ex, "\bot", "\mathfrak{K}^X_\ast"']
&&
\big(X_+\mathrm{-Comod}_{\geq 0}\big)_{H\mathbb{Q}}\,.
\end{tikzcd}
\end{equation}
When $X$ is moreover simply connected, we can use Quillen's rational homotopy theory \cite{quillen_rational_1969} to produce strict dg models $C_X$ and $\lambda_X$ for the rational chains $H_\ast(X;\mathbb{Q})$ and Whitehead Lie algebra $\mathcal{P}H_\ast(\Omega X;\mathbb{Q})$ of $X$ respectively.
In this setting we have a more familiar Koszul duality Quillen equivalence
\begin{equation}
\label{eqn:AlgKoszul}
\begin{tikzcd}
\lambda_X \mathrm{-Rep}_{\geq 0}
\ar[rr, shift left=1.1ex, "t^!"]
\ar[rr, leftarrow, shift left=-1.1ex, "\bot", "t_\ast"']
&&
C_X\mathrm{-Comod}_{\geq 0}
\end{tikzcd}
\end{equation}
given by forming twisted extensions with respect to a canonical twisting chain $t\colon C_X
\rightsquigarrow \mathcal{U}\lambda_X$.
The derived equivalences of \eqref{eqn:SpaceHQKoszul} and \eqref{eqn:AlgKoszul} are identified via the rational homotopy theory of parametrised spectra \cite{braunack-mayer_strict_2019}.
\end{remark}

\section{Parametrised spectra}
\label{S:ParamSpec}
In this section, we construct several different combinatorial model categories of parametrised spectra.
Working initially over a fixed parameter space $X$, we construct models for $X$-parametrised stable homotopy theory in terms of sequential and symmetric spectrum objects following Hovey \cite{hovey_spectra_2001}.
Though they both present the $\infty$-category of $X$-parametrised spectra, these two alternative models have different mathematical properties, affording a degree of flexibility in applications (for example in \cite{braunack-mayer_strict_2019, braunack-mayer_strict_2019-1} and Section \ref{SS:TwistDiffCoh}).
As in the unstable case, there are base change Quillen adjunctions relating model categories of spectra parametrised over different bases.

As in the unstable case, the various model categories of spectra parametrised over a fixed base glue together via the Grothendieck construction.
The result is a global model category of parametrised spectra, encoding the homotopy theory of parametrised spectra over all possible base spaces.
We construct two Quillen equivalent versions in terms of sequential and symmetric spectra, and show that both model categories are left proper, combinatorial and simplicial.
Moreover, we show that the global model category of parametrised symmetric spectra is equipped with a symmetric monoidal model structure via the external smash product, generalising the smash product of symmetric spectra to the parametrised setting.

\subsection{Local theory}
For a fixed simplicial set $X$, we produce two  model categories presenting the homotopy theory of $X$-parametrised spectra by using the sequential and symmetric stabilisation machines studied in \cite{hovey_spectra_2001}.
In either case, we produce $X$-parametrised spectra as a means of homotopically inverting the fibrewise suspension endofunctor 
\[
\Sigma_X \colon Y \longmapsto S^1\owedge_X Y = (X^\ast S^1)\wedge_X Y
\]
on $\mathrm{sSet}_{\dslash X}$,  
where $S^1 := \Delta^1/\partial\Delta^1$ is pointed by its unique $0$-simplex.
The nomenclature is motivated by the fact that $x^\ast \Sigma_XY\cong \Sigma (x^\ast Y)$ for all points $x\colon \ast \to X$, since pullback functors preserve $\mathrm{sSet}_\ast$-tensors.
The operation of forming $S^1$-cotensors 
\[
\Omega_X \colon Y\longmapsto F_X(X^\ast S^1, Y)
\]
defines a right adjoint to $\Sigma_X$.
All pullback functors are strongly closed, so for all points $x$ we have $x^\ast \Omega_X Y \cong \Omega (x^\ast Y)$.
Finally, since $S^1$ is a cofibrant object of $\mathrm{sSet}_\ast$ it follows from pushout-product axiom for $\mathrm{sSet}_\ast$-tensors that the adjunction
\[
\begin{tikzcd}
\mathrm{sSet}_{\dslash X}
\ar[rr, shift left=1.1ex, "\Sigma_X"]
\ar[rr, leftarrow, shift left=-1.1ex, "\bot", "\Omega_X"']
&&
\mathrm{sSet}_{\dslash X}
\end{tikzcd}
\]
is Quillen.
\begin{remark}
\label{rem:SuspensionEndoonRetSpaces}
The endofunctor $\Sigma_X$ models suspension in $Ho(\mathrm{sSet}_{\dslash X})$.
Since $S^1 = \Delta^1_+/\partial\Delta^1_+$, we can write $\Sigma_X Y$ as the pushout of the span $Y \leftarrow \partial\Delta^1_+ \owedge_X Y \to \Delta^1_+ \owedge_X Y$.
But $\partial\Delta^1_+\owedge_X Y \cong Y\coprod Y$ and $\Delta^1_+\owedge_X Y$ is a very good cylinder object for $Y$, so this pushout presents the suspension of $Y$ in $Ho(\mathrm{sSet}_{\dslash X})$ (cf.~\cite[Chapter 6]{hovey_model_1999}).
\end{remark}

\subsubsection{Sequential stabilisation}
\label{sss:SequentialStabFibrewise}
We apply the sequential stabilisation machine to pass from retractive spaces to parametrised spectra.
The main characters in this part of the story are parametrised sequential spectra:
\begin{definition}
A \emph{sequential $X$-spectrum} is a sequence $A=\{A_n\}_{n\in \mathbb{N}}$ of retractive spaces over $X$ together with a collection of structure maps $\sigma^A_n \colon \Sigma_X A_n\to A_{n+1}$ for each $n\geq 0$. 
A morphism of sequential $X$-spectra $f\colon A\to B$ is a sequence of maps $f_n \colon A_n\to B_n$ such that the diagram
\[
\begin{tikzcd}
  \Sigma_X A_n \ar[r,"\Sigma_X f_n"]
  \ar[d, "\sigma^A_n"']
  &
  \Sigma_X B_n
  \ar[d, "\sigma^B_n"]
  \\
  A_{n+1}
  \ar[r, "f_{n+1}"]
  &
  B_{n+1}
\end{tikzcd}
\]
commutes for each $n\geq 0$.
The category of sequential $X$-spectra is denoted $\mathrm{Sp}^\mathbb{N}_X$.
\end{definition}

\begin{remark}
\label{rem:FibrewiseSeqSpecMonad}
Viewing $\mathbb{N}$ as a discrete category, we may characterise sequential $X$-spectra as algebras over a monad on $\mathrm{Fun}(\mathbb{N},\mathrm{sSet}_{\dslash X})$.
Let $T_X^\mathrm{sp}$ be the endofunctor that sends the sequence $Z:=\{Z_n\}$ to the sequence with $n$-th term
\[
\coprod_{i=0}^n \Sigma^{n-i} Z_i\,.
\]
There is a multiplication transformation $\mu\colon T_X^\mathrm{sp}T_X^\mathrm{sp}\Rightarrow T_X^\mathrm{sp}$ determined by the folding maps
\[
T_X^\mathrm{sp}T_X^\mathrm{sp}(Z)_n =\coprod_{i=0}^n \left(\Sigma^{n-i} Z_i\right)^{\coprod(n+1-i)}
\longrightarrow\coprod_{i=0}^n \Sigma^{n-i} Z_i\,,
\]
and a unit transformation $\eta\colon \mathrm{id}\Rightarrow T_X^\mathrm{sp}$ induced by the coprojection maps $Z_n\to \coprod_{i=0}^n \Sigma^{n-i} Z_i$.
The triple\footnote{\lq\lq Spectral triple''.} $(T_X^\mathrm{sp},\mu,\eta)$ defines a monad on $\mathrm{Fun}(\mathbb{N},\mathrm{sSet}_{\dslash X})$.
For  a $T^\mathrm{sp}$-algebra $A$, the algebra structure map $T^\mathrm{sp}A_n \to A_n$ restricts to a structure map $\sigma_{n-1}\colon \Sigma_X A_{n-1}\to A_n$ on the $(i=n-1)$-th summand.
Via this identification there is a canonical isomorphism of categories between $\mathrm{Sp}^\mathbb{N}_X$ and $T_X^\mathrm{sp}\mathrm{-Alg}$.

The monad $T^\mathrm{sp}$ preserves all colimits, so that that $\mathrm{Sp}^\mathrm{N}_X$ is locally presentable.
Limits and colimits are created by the forgetful functor $\mathrm{Sp}^\mathbb{N}_X\to \mathrm{Fun}(\mathbb{N},\mathrm{sSet}_{\dslash X})$.
\end{remark}

\begin{remark}
\label{rem:FreeSeqSpec}
For each $k\geq 0$, the evaluation functor on sequences $\mathrm{ev}_k \colon \{Z_n\}\mapsto Z_k$ has a left adjoint $i_k$.
For a retractive space $Y\in \mathrm{sSet}_{\dslash X}$, the sequence $i_k(Y)$ has $Y$ as its $k$-th term, with all other terms in the sequence equal to the zero object $0_X$. 
Concatenating with the free-forgetful adjunction of the monad $T^\mathrm{sp}_X$ (Remark \ref{rem:FibrewiseSeqSpecMonad}), we obtain an adjunction
\[
(\Sigma^{\infty-k}_X\dashv \widetilde{\Omega}^{\infty-k}_X)\colon
\begin{tikzcd}
 \mathrm{sSet}_{\dslash X}
\ar[rr, shift left=1.1ex, "i_k"]
\ar[rr, leftarrow, shift left=-1.1ex, "\bot", "\mathrm{ev}_k"']
&&
\mathrm{Fun}(\mathbb{N}, \mathrm{sSet}_{\dslash X})
\ar[rr, shift left=1.1ex, "T^\mathrm{sp}_X"]
\ar[rr, leftarrow, shift left=-1.1ex, "\bot", ""']
&&
\mathrm{Sp}^\mathbb{N}_{X}\,.
\end{tikzcd}
\]
The left adjoint $\Sigma^{\infty-k}_X$ sends a retractive space $Y$ to the sequential $X$-spectrum freely generated by $Y$ at level $k$
\[
\Sigma^{\infty-k}_X Y_n =
\begin{cases}
\;\;\; 0_X & n<k\\
\Sigma^{n-k}_X Y & n\geq k\,,
\end{cases}
\]
equipped with the obvious structure maps.
The right adjoint $\widetilde{\Omega}_X^{\infty-k}$ evaluates the $k$-th term of a sequential $X$-spectrum.
A sequential $X$-spectrum of the form $\Sigma^\infty_X Y$ (or~$\Sigma^{\infty-k}_X Y$ for $k>0$) is called a \emph{fibrewise suspension spectrum} (respectively, a \emph{shifted fibrewise suspension spectrum}).

We reserve the notation $\Omega^{\infty-k}_X$ for the right derived functor $\mathbf{R}\widetilde{\Omega}^{\infty-k}_X$ with respect to the sequential $X$-stable model structure (discussed below).
We do not distinguish between $\Sigma^{\infty-k}_X$ and its derived functor in the notation; all objects of $\mathrm{sSet}_{\dslash X}$ are cofibrant, so $\Sigma^{\infty-k}_X$ is already derived.
\end{remark}

The free-forgetful adjunction of the monad $T^\mathrm{sp}$ induces a left proper combinatorial $\mathrm{sSet}_\ast$-model structure on $\mathrm{Sp}^\mathbb{N}_{X}$, which we call the \emph{sequential $X$-projective model structure}.
Weak equivalences and fibrations for the sequential $X$-projective model structure are determined levelwise by the underlying  maps of sequences.
The $\mathrm{sSet}_\ast$-tensors are computed objectwise, so that for sequential $X$-spectrum $Z$ and pointed simplicial set $K$ we have $(K\owedge_X Z)_n = K\owedge_X Z_n$ with structure maps defined using the symmetry isomorphism $K\wedge S^1\cong S^1 \wedge K$ in $\mathrm{sSet}_\ast$.
The sets
\[
\mathcal{I}^{\mathbb{N}\mathrm{-proj}}_X
:= \bigcup_{k\in \mathbb{N}} \Sigma^{\infty-k}_X (\mathcal{I}^\mathrm{Kan}_X)
\qquad
\mbox{ and }
\qquad
\mathcal{J}^{\mathbb{N}\mathrm{-proj}}_X
:= \bigcup_{k\in \mathbb{N}} \Sigma^{\infty-k}_X (\mathcal{J}^\mathrm{Kan}_X)
\]
are respectively sets of generating cofibrations and generating acyclic cofibrations for the sequential $X$-projective model structure. 
\begin{remark}
As for retractive spaces, the endofunctor $S^1\owedge_X(-)$ models the suspension endofunctor on the sequential $X$-projective homotopy category (cf.~Remark \ref{rem:SuspensionEndoonRetSpaces}).
\end{remark}
\begin{remark}
\label{rem:ProjCofib}
The cofibrations for the sequential $X$-projective model structure are precisely the maps of sequential $X$-spectra $\varphi\colon A\to B$ for which $\varphi_0 \colon A_0 \to B_0$ and 
$  A_{n+1} \coprod_{\Sigma_X A_{n}} \Sigma_X B_{n}\longrightarrow B_{n+1}
$,
$n\geq 0$, are cofibrations in $\mathrm{sSet}_{\dslash X}$ (cf.~\cite[Proposition 1.14]{hovey_spectra_2001}).
\end{remark}

The suspension endofunctor $\Sigma_X$ on $\mathrm{sSet}_{\dslash X}$ prolongs to $\mathrm{Sp}^\mathbb{N}_X$.
For a sequential $X$-spectrum $Z = \{Z_n\}$, we set $(\Sigma_X Z)_n := \Sigma_X Z_n$ with structure maps
$
\Sigma_X \sigma_n\colon \Sigma^2_X Z_n \to \Sigma_X Z_{n+1} 
$.
The functor $\Sigma_X$ on $\mathrm{Sp}^\mathbb{N}_X$ has right adjoint $\Omega_X$,  where $(\Omega_X Z)_n := \Omega_X Z_n$ equipped with structure maps adjunct to $\Omega_X \sigma^\vee_n \colon\Omega_X Z_n \to \Omega^2_X Z_{n+1}$.
The resulting adjunction
\[
(\Sigma_X \dashv \Omega_X)
\colon 
\begin{tikzcd}
\mathrm{Sp}^\mathbb{N}_X
\ar[rr, shift left=1.1ex, ""]
\ar[rr, leftarrow, shift left=-1.1ex, "\bot", ""']
&&
\mathrm{Sp}^\mathbb{N}_X
\end{tikzcd}
\]
is Quillen with respect to the sequential $X$-projective model structure since $\Omega_X$ preserves levelwise fibrations and acyclic fibrations.
\begin{remark}
\label{rem:SmashvSuspendProlong}
For a sequential $X$-spectrum $Z$, the structure maps of $S^1\owedge_X Z$ and $\Sigma_X Z$ differ by the twist isomorphism $S^1\wedge S^1 \to S^1\wedge S^1$ that interchanges smash factors.
\end{remark}

In order to invert $\Sigma_X$ up to homotopy we perform left Bousfield localisation at the set 
\[
S_{\mathbb{N},X} :=\big\{\zeta_{k,X} (C)\colon \Sigma^{\infty-(k+1)}_X (\Sigma_X C)\to \Sigma^{\infty-k}_{X} (C) \big\}\,,
\]
where  $\zeta_{k,X}(Z)$ is the $(\Sigma^{\infty-(k+1)}_X\dashv \widetilde{\Omega}^{\infty-(k+1)}_X)$-adjunct of $\mathrm{id}\colon \Sigma_X Z \to \Sigma_XZ = \widetilde{\Omega}^{\infty-(k+1)}_X \Sigma^{\infty-k}_X Z$,  $C$ ranges over domains and codomains of morphisms in $\mathcal{I}^\mathrm{Kan}_X$, and $k\geq 0$.
The left Bousfield localisation at $S_{\mathbb{N},X}$ exists and is left proper, combinatorial and simplicial \cite[Proposition A.3.7.3]{lurie_higher_2009}.
The localised model structure is called the \emph{sequential $X$-stable model structure}.
Fibrations and weak equivalences in the sequential $X$-stable model structure are referred to as stable fibrations and stable weak equivalences respectively.

\begin{remark}
\label{rem:ShiftFunctors}
The \emph{shift functor} $s_X\colon \mathrm{Sp}^\mathbb{N}_X\to \mathrm{Sp}^\mathbb{N}_X$ is defined by $s_X(Z)_n := Z_{n+1}$, equipped with the obvious structure maps.
The functor $s_X$ has a left adjoint $\ell_X(Z)_n := Z_{n-1}$ (where $\ell_X(Z)_0 := 0_X$) and $(\ell_X\dashv s_X)$ is a $\mathrm{sSet}_\ast$-Quillen adjunction with respect to the sequential $X$-projective model structure.
Since $\ell_X(S_{\mathbb{N},X} )\subset S_{\mathbb{N},X}$, the adjunction is Quillen for the localised model structure as well.
\end{remark}

We now record some important properties of the sequential $X$-stable model structure. 
The following three results are special cases of the more general statements of \cite[Section 3]{hovey_spectra_2001}; we record the proofs here for the sake of completeness and as a warm-up for the global setting.
\begin{lemma}
\label{lem:SeqSpecFib}
Stably fibrant objects of $\mathrm{Sp}^\mathbb{N}_X$ are precisely the \emph{fibrant $\Omega_X$-spectra}, namely the sequential $X$-spectra $A$ such that each $A_n\in \mathrm{sSet}_{\dslash X}$ is fibrant and the adjoint structure maps $\sigma^\vee_n\colon A_n\to \Omega_X A_{n+1}$ are weak equivalences for all $n\geq 0$.
The stable weak equivalences between fibrant $\Omega_X$-spectra are precisely the levelwise weak equivalences.
\end{lemma}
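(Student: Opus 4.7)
The plan is to run the standard left Bousfield localisation recipe applied to the set $S_{\mathbb{N}, X}$. By \cite[Proposition 3.4.1]{hirschhorn_model_2003} (or \cite[Proposition A.3.7.3]{lurie_higher_2009}), the fibrant objects in the localised model structure are precisely the $S_{\mathbb{N}, X}$-local objects among the projectively fibrant ones, i.e.~those $A$ such that each $A_n$ is fibrant in $\mathrm{sSet}_{\dslash X}$ and such that, for every $k \geq 0$ and every $C$ in the domain or codomain of a map in $\mathcal{I}^\mathrm{Kan}_X$, the induced map of derived mapping spaces
\[
\mathrm{Map}\big(\Sigma^{\infty-k}_X C, A\big)
\longrightarrow
\mathrm{Map}\big(\Sigma^{\infty-(k+1)}_X \Sigma_X C, A\big)
\]
is a weak equivalence of simplicial sets.

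First I would unpack this condition using the $(\Sigma^{\infty-j}_X \dashv \widetilde{\Omega}^{\infty-j}_X)$ adjunctions of Remark \ref{rem:FreeSeqSpec} together with the $(\Sigma_X \dashv \Omega_X)$ Quillen adjunction on $\mathrm{sSet}_{\dslash X}$. Since $\zeta_{k,X}(C)$ is by construction the $(\Sigma^{\infty-(k+1)}_X\dashv \widetilde{\Omega}^{\infty-(k+1)}_X)$-adjunct of the identity on $\Sigma_X C$, the above map is identified with the map
\[
\mathrm{Map}\big(C, A_k\big)
\longrightarrow
\mathrm{Map}\big(C, \Omega_X A_{k+1}\big)
\]
induced by post-composition with the adjoint structure map $\sigma_k^\vee \colon A_k \to \Omega_X A_{k+1}$. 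Both $A_k$ and $\Omega_X A_{k+1}$ are fibrant (the latter because $\Omega_X$ is right Quillen on $\mathrm{sSet}_{\dslash X}$), so as $C$ ranges over the domains and codomains of $\mathcal{I}^\mathrm{Kan}_X$ (a set of generating cofibrations for the combinatorial simplicial model category $\mathrm{sSet}_{\dslash X}$) this condition is equivalent by Yoneda/detection of equivalences to $\sigma_k^\vee$ being a weak equivalence in $\mathrm{sSet}_{\dslash X}$. This yields the first claim.

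For the second claim, left Bousfield localisation leaves cofibrations unchanged and only adds weak equivalences, so a stable weak equivalence between stably fibrant objects $A \to B$ factors, up to homotopy, through a trivial cofibration in the localised structure between fibrant replacements. Standard Bousfield localisation theory \cite[Theorem 3.2.13]{hirschhorn_model_2003} then gives that a morphism between stably fibrant objects is a stable weak equivalence if and only if it is a weak equivalence in the underlying (sequential $X$-projective) model structure, which by construction means a levelwise weak equivalence of retractive spaces. The main subtlety here is keeping track of the interplay between $\Sigma_X$ and $S^1 \owedge_X (-)$ noted in Remark \ref{rem:SmashvSuspendProlong}, but this only introduces a twist isomorphism and does not affect whether the adjoint structure maps are weak equivalences.
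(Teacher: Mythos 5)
Your proposal is correct and follows essentially the same route as the paper: identify the stably fibrant objects as the $S_{\mathbb{N},X}$-local projectively fibrant objects, adjoint the locality condition over to the maps $\mathrm{map}(C,A_k)\to\mathrm{map}(C,\Omega_X A_{k+1})$ induced by $\sigma^\vee_k$, detect that these are equivalences iff $\sigma^\vee_k$ is (the paper defers this "cell attachment" step to Hovey, where you wave at Yoneda-type detection), and invoke the standard fact that local equivalences between local objects are weak equivalences in the unlocalised (projective, hence levelwise) structure. The closing remark about the $\Sigma_X$ versus $S^1\owedge_X(-)$ twist is harmless but irrelevant here, since the lemma only concerns the structure maps of retractive spaces, not the prolonged endofunctor on $\mathrm{Sp}^\mathbb{N}_X$.
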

\begin{proof}
The stably fibrant objects are precisely the $S_{\mathbb{N},X}$-local fibrant objects, so that $A$ is stably fibrant if and only if the map of homotopy function complexes
\[
\mathrm{map}(C, \sigma^\vee_n) \colon \mathrm{map}(C, A_n)\longrightarrow \mathrm{map}(C, \Omega_X A_{n+1})
\]
is a weak equivalence for all $n\geq 0$, with $C$ ranging over domain and codomains of morphisms in $\mathcal{I}^\mathrm{Kan}_X$.
A formal argument on cell attachments shows that this is the case precisely if $\sigma^\vee_n$ is a weak equivalence \cite[Proposition 3.2]{hovey_spectra_2001}.
To complete the proof, we remark that by general properties of Bousfield localisation, the stable weak equivalences between stably fibrant objects are precisely the projective, hence levelwise, weak equivalences.
\end{proof}
\begin{lemma}
\label{lem:CofibWE}
For each $Y\in \mathrm{sSet}_{\dslash X}$, the map $\zeta_{k,X}(Y)$ is a stable weak equivalence for all $k\geq 0$.
\end{lemma}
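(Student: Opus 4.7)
The plan is to show that $\zeta_{k,X}(Y)$ is an $S_{\mathbb{N},X}$-local equivalence by testing it against fibrant $\Omega_X$-spectra. Since all retractive spaces are cofibrant and each $\Sigma^{\infty-j}_X$ is left Quillen, both the domain $\Sigma^{\infty-(k+1)}_X(\Sigma_X Y)$ and the codomain $\Sigma^{\infty-k}_X Y$ of $\zeta_{k,X}(Y)$ are cofibrant in the sequential $X$-projective model structure. By the universal property of left Bousfield localisation together with Lemma \ref{lem:SeqSpecFib}, it therefore suffices to verify that for every fibrant $\Omega_X$-spectrum $A$ the induced map
\[
\zeta_{k,X}(Y)^{\ast}\colon \mathrm{map}^h\big(\Sigma^{\infty-k}_X Y,\, A\big)\longrightarrow \mathrm{map}^h\big(\Sigma^{\infty-(k+1)}_X(\Sigma_X Y),\, A\big)
\]
is a weak equivalence of simplicial sets.

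The next step is to unwind this map through the adjunctions. Since $A$ is levelwise fibrant, the simplicial adjunctions $(\Sigma^{\infty-j}_X\dashv \widetilde{\Omega}^{\infty-j}_X)=(\Sigma^{\infty-j}_X\dashv \mathrm{ev}_j)$ of Remark \ref{rem:FreeSeqSpec}, together with the Quillen adjunction $(\Sigma_X\dashv\Omega_X)$ on $\mathrm{sSet}_{\dslash X}$, descend to natural weak equivalences on derived mapping spaces
\[
\mathrm{map}^h\big(\Sigma^{\infty-k}_X Y,\, A\big)\simeq \mathrm{map}^h_{\mathrm{sSet}_{\dslash X}}(Y, A_k)
\]
and
\[
\mathrm{map}^h\big(\Sigma^{\infty-(k+1)}_X(\Sigma_X Y),\, A\big)\simeq \mathrm{map}^h_{\mathrm{sSet}_{\dslash X}}(\Sigma_X Y, A_{k+1})\simeq \mathrm{map}^h_{\mathrm{sSet}_{\dslash X}}(Y, \Omega_X A_{k+1}).
\]
By construction $\zeta_{k,X}(Y)$ is the adjunct of $\mathrm{id}_{\Sigma_X Y}\colon \Sigma_X Y\to \mathrm{ev}_{k+1}(\Sigma^{\infty-k}_X Y)$, so tracing the identifications shows that under these equivalences $\zeta_{k,X}(Y)^{\ast}$ becomes post-composition with the adjoint structure map $\sigma^{\vee}_k\colon A_k\to \Omega_X A_{k+1}$.

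To finish, I would invoke Lemma \ref{lem:SeqSpecFib}: since $A$ is a fibrant $\Omega_X$-spectrum, $\sigma^{\vee}_k$ is a weak equivalence between fibrant objects of $\mathrm{sSet}_{\dslash X}$, and post-composition with a weak equivalence between fibrant objects induces a weak equivalence of mapping spaces out of the cofibrant object $Y$ (Ken Brown's lemma applied in the simplicial model category $\mathrm{sSet}_{\dslash X}$). This completes the argument. The only non-routine step is the careful identification of $\zeta_{k,X}(Y)^{\ast}$ with post-composition by $\sigma^{\vee}_k$, but this is formal once one recalls that $\zeta_{k,X}(Y)$ is defined precisely as the adjunct of the identity under $(\Sigma^{\infty-(k+1)}_X\dashv \mathrm{ev}_{k+1})$.
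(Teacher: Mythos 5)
Your proof is correct and follows essentially the same route as the paper: the paper likewise reduces to checking $\mathrm{map}(\zeta_{k,X}(Y),A)$ against stably fibrant $A$, identifies it by adjointness with $\mathrm{map}(Y,\sigma^\vee_k)$, and concludes from Lemma \ref{lem:SeqSpecFib} using cofibrancy of all objects of $\mathrm{sSet}_{\dslash X}$. Your extra care in tracing the adjunct identification is fine but not a difference in substance.
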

\begin{proof}
The map $\zeta_{k,X}(Y)$ is a stable weak equivalence precisely if $\mathrm{map}(\zeta_{k,X}(Y), A)$ is a weak equivalence for all stably fibrant $A$.
By adjointness, $\mathrm{map}(\zeta_{k,X}(Y), A)$ is equivalent to $\mathrm{map}(Y, \sigma_k^\vee)$, where $\sigma^\vee_k\colon A_k \to \Omega_X A_{k+1}$ is the adjoint spectrum structure map.
But this latter map is an equivalence by the previous result.
Note that we have used that all objects of $\mathrm{sSet}_{\dslash X}$ are cofibrant.
\end{proof}
\begin{lemma}
\label{lem:SeqSpecShifts}
The adjunction $(\Sigma_X\dashv \Omega_X)\colon \mathrm{Sp}^\mathbb{N}_X\to \mathrm{Sp}^\mathbb{N}_X$ is a Quillen equivalence.
Moreover, there are natural isomorphisms $\mathbf{L}\Sigma_X\cong \mathbf{R}s_X$ and $\mathbf{R}\Omega_X\cong \mathbf{L}\ell_X$.
\end{lemma}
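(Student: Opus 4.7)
The strategy is to compare the adjunction $(\Sigma_X\dashv\Omega_X)$ against the shift Quillen adjunction $(\ell_X\dashv s_X)$ of Remark \ref{rem:ShiftFunctors}. First I verify that $(\Sigma_X\dashv\Omega_X)$ descends from a Quillen adjunction on the projective model structure to one on the stable model structure. The functor $\Sigma_X$ preserves projective cofibrations, and by the universal property of left Bousfield localization it suffices to check that $\Sigma_X$ sends the localizing set $S_{\mathbb{N},X}$ to stable weak equivalences. Using the natural isomorphism $\Sigma_X\Sigma^{\infty-k}_X\cong \Sigma^{\infty-k}_X\Sigma_X$ (up to the twist of Remark \ref{rem:SmashvSuspendProlong}, which is a levelwise isomorphism), one has $\Sigma_X\zeta_{k,X}(C)\cong \zeta_{k,X}(\Sigma_X C)$, which is a stable weak equivalence by Lemma \ref{lem:CofibWE}.

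The heart of the argument is the construction of a natural comparison $\sigma\colon \Sigma_X\Rightarrow s_X$ that is a stable weak equivalence on cofibrant spectra. The spectrum structure maps $\sigma_n^Z\colon \Sigma_X Z_n\to Z_{n+1}$ assemble into a morphism $\sigma_Z\colon \Sigma_X Z\to s_X Z$ of sequential $X$-spectra, natural in $Z$. A level-by-level inspection on a free spectrum $\Sigma^{\infty-k}_X C$ with $k\geq 1$ identifies $\sigma_{\Sigma^{\infty-k}_X C}$ with the localizing map $\zeta_{k-1,X}(C)$ (modulo the Remark \ref{rem:SmashvSuspendProlong} twist): it is the identity at levels $n\geq k$ and the coprojection $0_X\to C$ at level $k-1$, hence is a stable weak equivalence by Lemma \ref{lem:CofibWE}. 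Since every projectively cofibrant sequential $X$-spectrum is a retract of a cell complex built from generating cofibrations $\Sigma^{\infty-k}_X(i)$, $i\in \mathcal{I}^\mathrm{Kan}_X$ (Remark \ref{rem:ProjCofib}), and since both $\Sigma_X$ and $s_X$ preserve the pertinent levelwise colimits, a standard cell induction---leveraging left properness and closure of stable weak equivalences under transfinite composition, pushouts along cofibrations, and retracts---establishes that $\sigma_Z$ is a stable weak equivalence for every cofibrant spectrum $Z$.

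To conclude, I observe that $(\ell_X\dashv s_X)$ is a Quillen equivalence on the stable structure: the unit $\eta'\colon Z\to s_X\ell_X Z$ is a natural isomorphism (direct computation shows $(s_X\ell_X Z)_n = Z_n$ with matching structure maps), so $\mathbf{L}\ell_X$ is fully faithful; essential surjectivity follows because, for a fibrant $\Omega_X$-spectrum $A$, the counit $\ell_X s_X A\to A$ agrees with $A$ at all levels $n\geq 1$, so that fibrantly replacing $\ell_X s_X A$ produces an $\Omega_X$-spectrum whose level $0$ is, by the condition of Lemma \ref{lem:SeqSpecFib}, equivalent to $\Omega_X A_1\simeq A_0$. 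At any cofibrant-fibrant spectrum $Z$, the stable equivalence $\sigma_Z\colon \Sigma_X Z\xrightarrow{\sim} s_X Z$ then induces a natural isomorphism $\mathbf{L}\Sigma_X\cong \mathbf{R}s_X$ of functors on the stable homotopy category, and uniqueness of adjoints yields $\mathbf{R}\Omega_X\cong \mathbf{L}\ell_X$. Since $\mathbf{L}\ell_X$ is an equivalence of homotopy categories, so is $\mathbf{R}\Omega_X$, whence $(\Sigma_X\dashv \Omega_X)$ is a Quillen equivalence. The principal obstacle is the cell induction in the second step: verifying that the stable equivalence property of $\sigma$ on free spectra propagates to all cofibrant spectra requires careful bookkeeping of how $\Sigma_X$ and $s_X$ interact with pushouts and colimits along generating cofibrations.
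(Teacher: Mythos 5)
Your first two steps are sound: the descent of $(\Sigma_X\dashv\Omega_X)$ to the stable structure via $\Sigma_X(S_{\mathbb{N},X})$, the observation that the structure maps assemble into a natural map $\sigma_Z\colon\Sigma_XZ\to s_XZ$, its identification on free spectra with (a twist of) $\zeta_{k-1,X}$, and the cell induction over $\mathcal{I}^{\mathbb{N}\mathrm{-proj}}_X$-complexes all work in this left proper combinatorial simplicial setting (you should also record the $k=0$ case, where $\sigma$ is a levelwise isomorphism). The genuine gap is in your third step, where you claim $(\ell_X\dashv s_X)$ is a Quillen equivalence, and this step carries the entire conclusion. First, ``the unit $Z\to s_X\ell_XZ$ is an isomorphism, so $\mathbf{L}\ell_X$ is fully faithful'' conflates the underived unit with the derived one: the derived unit is $Z\to s_X\mathcal{R}\ell_XZ$, and you must know that $s_X$ carries the stable equivalence $\ell_XZ\to\mathcal{R}\ell_XZ$ (whose source is not stably fibrant) to a stable equivalence. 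A right Quillen functor only preserves weak equivalences between fibrant objects, so this is not automatic and is nowhere proven. Second, your essential surjectivity argument asserts that a stable fibrant replacement of $\ell_Xs_XA$ ``agrees with $A$ at levels $n\geq 1$''; fibrant replacement in the localised model structure is not levelwise controllable, so what you actually need --- that the counit $\ell_Xs_XA\to A$ is a stable weak equivalence for fibrant $\Omega_X$-spectra $A$ --- is asserted rather than proven. Since $\mathbf{L}\Sigma_X\cong\mathbf{R}s_X$ alone does not make either functor an equivalence, these two gaps leave the Quillen equivalence statement unestablished.

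Both gaps are avoidable. The paper's proof sidesteps the mixing of left and right derived functors entirely: the adjoint structure maps give a natural transformation $\mathrm{id}\Rightarrow s_X\Omega_X$ which is a \emph{levelwise} equivalence on fibrant $\Omega_X$-spectra; since $s_X$ and $\Omega_X$ are commuting right Quillen functors this yields $\mathbf{R}s_X\circ\mathbf{R}\Omega_X\cong\mathrm{id}\cong\mathbf{R}\Omega_X\circ\mathbf{R}s_X$ at once, so both are equivalences, and uniqueness of adjoints gives $\mathbf{L}\Sigma_X\cong\mathbf{R}s_X$ and $\mathbf{R}\Omega_X\cong\mathbf{L}\ell_X$ --- no cell induction needed. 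If you want to keep your route, you can close the first gap by observing that $s_X$ is also \emph{left} Quillen for the stable structure: it has a right adjoint $r_X$ with $(r_XZ)_0=\Omega_XZ_0$, $(r_XZ)_{n+1}=Z_n$, it preserves projective cofibrations, and $s_X(S_{\mathbb{N},X})$ consists of stable equivalences; being left Quillen and preserving levelwise equivalences, it then preserves all stable equivalences by the naturality-square argument used in Lemma \ref{lem:SpecPseudoFunAreProper}. But you would still owe an honest proof that the counit $\ell_Xs_XA\to A$ is a stable equivalence on fibrant objects, at which point the paper's two-line argument with $\mathrm{id}\Rightarrow s_X\Omega_X$ is the more economical choice.
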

\begin{proof}
There is a natural transformation $\mathrm{id}\Rightarrow s_X\Omega_X$.
For a fibrant $\Omega_X$-spectrum $A$, the natural map $A\to s_X\Omega_X A$ is a stable weak equivalence.
Since $s_X$ and $\Omega_X$ commute with each other, there are natural isomorphisms $\mathrm{id} \cong \mathbf{R}(s_X \Omega_X)\cong \mathbf{R}s_X\circ \mathbf{R}\Omega_X\cong \mathbf{R}\Omega_X\circ \mathbf{R}s_X$.
In particular, $\mathbf{R}s_X$ and $\mathbf{R}\Omega_X$ are adjoint equivalences of categories so that $s_X$ and $\Omega_X$ are both right Quillen equivalences.
The result follows by invoking essential uniqueness of adjoints.
\end{proof}

Note that the last result does not yet imply that the sequential $X$-stable model structure is stable (in the sense that suspension is an equivalence on $Ho(\mathrm{Sp}^\mathbb{N}_X)$), since $\Sigma_X$ and $S^1\owedge_X(-)$ differ by the twist map (Remark \ref{rem:SmashvSuspendProlong}).
A result of Hovey \cite[Theorem 10.3]{hovey_spectra_2001} allows us to compare the twisted and untwisted suspensions functors, and we find that the corresponding derived functors are isomorphic provided $S^1$ is \emph{symmetric}, meaning  there is a simplicial homotopy
\[
S^1\wedge S^1 \wedge S^1 \wedge I_+\longrightarrow S^1\wedge S^1 \wedge S^1
\]
between the identity and the cyclic permutation of smash factors, where $I_+$ is a cylinder object of $S^0$.
While these maps are certainly homotopic (both are self-maps of $S^3$ of index $+1$), we have been unable to produce a simplicial homotopy of the prescribed form.
Nevertheless, the model categories $\mathrm{Sp}^\mathbb{N}_X$ are stable (Lemma \ref{lem:SeqSpecisStable}), which fact we show using the base change adjunctions:
\begin{theorem}
\label{thm:SeqSpecStructureTheorem}
For any map of simplicial sets $f\colon X\to X'$ there is an adjoint triple of $\mathrm{sSet}_\ast$-functors
\[
(f_!\dashv f^\ast\dashv f_\ast)\colon
\begin{tikzcd}
\mathrm{Sp}^\mathbb{N}_X
\ar[rr, shift left=2.2ex, ""]
\ar[rr, leftarrow, "\bot"]
\ar[rr,  shift left=-2.2ex, "\bot", ""']
&&
\mathrm{Sp}^\mathbb{N}_{X'}\,.
\end{tikzcd}
\]
With respect to both the projective and  stable model structures, $(f_!\dashv f^\ast)$ is a $\mathrm{sSet}_\ast$-Quillen adjunction that is moreover a $\mathrm{sSet}_\ast$-Quillen equivalence if $f$ is a weak equivalence.
If $f$ is a fibration (or a projection to a factor of a product) then $(f^\ast \dashv f_\ast)$ is a $\mathrm{sSet}_\ast$-Quillen adjunction with respect to either model structure, and is a $\mathrm{sSet}_\ast$-Quillen equivalence if $f$ is an acyclic fibration.
\end{theorem}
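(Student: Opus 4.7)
The plan is to lift the base change adjoint triple from retractive spaces to sequential spectra levelwise, and then promote every Quillen-theoretic statement from the sequential $X$-projective model structure to the stable one via left Bousfield localization. I would begin by constructing the functors. Since $f^\ast$ and $f_!$ preserve $\mathrm{sSet}_\ast$-tensors by Corollary \ref{cor:sSetastEnrichmentFacts}, each commutes naturally with the fibrewise suspension $\Sigma_{(-)}$. Setting $(f^\ast A)_n := f^\ast A_n$ with structure map the composite $\Sigma_X f^\ast A_n \cong f^\ast \Sigma_{X'} A_n \to f^\ast A_{n+1}$, and similarly $(f_! A)_n := f_! A_n$, defines endofunctors on sequential spectra; the adjunction $(f_! \dashv f^\ast)$ lifts by a direct verification of universal properties (or, more conceptually, via the monadic description of Remark \ref{rem:FibrewiseSeqSpecMonad}). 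Colimits in $\mathrm{Sp}^\mathbb{N}_X$ are created levelwise and $f^\ast$ preserves colimits at each level, so the prolonged $f^\ast$ preserves all colimits; local presentability then furnishes the right adjoint $f_\ast$ via the adjoint functor theorem.

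For the projective model structure, fibrations and weak equivalences are detected levelwise, so applying Theorem \ref{thm:RetSpStructureTheorem} term by term immediately yields the projective versions of all statements: $(f_! \dashv f^\ast)$ is a $\mathrm{sSet}_\ast$-Quillen adjunction, and a Quillen equivalence when $f$ is a weak equivalence; $(f^\ast \dashv f_\ast)$ is Quillen when $f$ is a fibration or product projection (so that $f^\ast$ preserves levelwise weak equivalences and cofibrations of retractive spaces), and a Quillen equivalence when $f$ is an acyclic fibration. The $\mathrm{sSet}_\ast$-enrichment descends since tensors are computed objectwise and the levelwise functors are $\mathrm{sSet}_\ast$-enriched.

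To pass to the stable model structure I would invoke compatibility of Quillen adjunctions and Quillen equivalences with left Bousfield localization, e.g.\ \cite[Theorem 3.3.20]{hirschhorn_model_2003}. Uniqueness of adjoints applied to the identity $\mathrm{ev}_k \circ f^\ast = f^\ast \circ \mathrm{ev}_k$ yields natural isomorphisms $f_!\, \Sigma^{\infty-k}_X \cong \Sigma^{\infty-k}_{X'}\, f_!$ and $f^\ast\, \Sigma^{\infty-k}_{X'} \cong \Sigma^{\infty-k}_X\, f^\ast$, whence $f_!\, \zeta_{k,X}(C) \cong \zeta_{k,X'}(f_! C)$ and $f^\ast\, \zeta_{k,X'}(C) \cong \zeta_{k,X}(f^\ast C)$. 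Both images lie among stable weak equivalences by Lemma \ref{lem:CofibWE}. The Bousfield localization criterion then shows that both adjunctions remain Quillen after passing to the stable model structures, and that the respective Quillen equivalences descend. The main subtlety is verifying the hypotheses of this descent, namely left properness and combinatoriality of the projective model structure; both are inherited from Lemma \ref{lem:RetSpaceLocalModelStructure} and from the local presentability guaranteed by the monadic description in Remark \ref{rem:FibrewiseSeqSpecMonad}, after which the stable statements follow formally.
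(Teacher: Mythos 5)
Your construction of the adjoint triple, the levelwise deduction of all projective-level statements from Theorem \ref{thm:RetSpStructureTheorem}, and the descent of the two Quillen \emph{adjunctions} to the stable structures (via $f_!S_{\mathbb{N},X}\subset S_{\mathbb{N},X'}$ and $f^\ast\zeta_{k,X'}(C)\cong\zeta_{k,X}(f^\ast C)$ together with Lemma \ref{lem:CofibWE}) all match the paper's argument and are fine. The gap is in the final sentence, where you assert that the Quillen \emph{equivalences} descend "by the Bousfield localization criterion". The theorem you invoke (Hirschhorn 3.3.20) produces a Quillen equivalence between $L_{S_{\mathbb{N},X}}\mathrm{Sp}^\mathbb{N}_X$ and the localization of $\mathrm{Sp}^\mathbb{N}_{X'}$ at the \emph{image set} $f_!S_{\mathbb{N},X}$, not at $S_{\mathbb{N},X'}$. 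Since $S_{\mathbb{N},X}$ is indexed by the simplices of $X$, the image $f_!S_{\mathbb{N},X}$ only involves simplices of $X'$ factoring through $f$ and is in general a proper subset of $S_{\mathbb{N},X'}$; a priori $L_{f_!S_{\mathbb{N},X}}\mathrm{Sp}^\mathbb{N}_{X'}$ could have more local objects than the stable structure. You therefore still owe the identification $L_{f_!S_{\mathbb{N},X}}\mathrm{Sp}^\mathbb{N}_{X'}=L_{S_{\mathbb{N},X'}}\mathrm{Sp}^\mathbb{N}_{X'}$, i.e.\ that any levelwise fibrant $X'$-spectrum $B$ with $f^\ast B$ a fibrant $\Omega_X$-spectrum is already an $\Omega_{X'}$-spectrum; this is exactly where the paper's proof does its real work (showing every fibrant $\Omega_X$-spectrum is levelwise equivalent to some $f^\ast A$, via the derived unit and the fact that $f^\ast$ reflects weak equivalences between fibrant objects and commutes with $\Omega$). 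The same issue recurs for the acyclic-fibration case, where the stable structure on $\mathrm{Sp}^\mathbb{N}_X$ is not literally the localization at $f^\ast S_{\mathbb{N},X'}$; the paper sidesteps this by using $\mathbf{R}f^\ast\cong\mathbf{L}f^\ast$ and uniqueness of adjoints rather than a localization-descent theorem.

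The gap is fillable, so your outline can be repaired: for $f$ a weak equivalence and $B$ an $f_!S_{\mathbb{N},X}$-local object, adjunction shows $f^\ast B$ is a fibrant $\Omega_X$-spectrum (Lemma \ref{lem:SeqSpecFib}), and since each $B_n\to X'$ and $\Omega_{X'}B_{n+1}\to X'$ is a fibration, right properness of $\mathrm{sSet}$ makes the comparison maps $f^\ast B_n\to B_n$ and $f^\ast\Omega_{X'}B_{n+1}\to\Omega_{X'}B_{n+1}$ weak equivalences; the $2$-out-of-$3$ property then shows $B$ is an $\Omega_{X'}$-spectrum, so the two localizations of $\mathrm{Sp}^\mathbb{N}_{X'}$ coincide and your citation applies. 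For the acyclic-fibration case, the cleanest repair is the paper's: note $f^\ast$ preserves cofibrations, fibrations and all stable weak equivalences, so $\mathbf{R}f^\ast\cong\mathbf{L}f^\ast$, and conclude $(\mathbf{L}f^\ast\dashv\mathbf{R}f_\ast)$ is an equivalence from the already-established equivalence $(\mathbf{L}f_!\dashv\mathbf{R}f^\ast)$ by essential uniqueness of adjoints. As written, however, the appeal to the localization criterion alone does not establish the stable Quillen equivalences.
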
 
\begin{proof}
The functors $f_!$, $f^\ast$ and $f_\ast$ are given by levelwise application of the respective functors on underlying sequences of retractive spaces.
For $f_!$ and $f^\ast$ this is straightforward as both functors preserve $\mathrm{sSet}_\ast$-tensors.
As for the remaining functor, first note that the natural isomorphism $\Sigma_X f^\ast \cong f^\ast \Sigma_{X'}$ implies a natural isomorphism of right adjoints $f_\ast \Omega_X \cong \Omega_{X'} f_\ast$
Given a a sequential $X$-spectrum $Z$, we set $(f_\ast Z)_n := f_\ast Z_n$ with structure maps adjunct to 
\[
\begin{tikzcd}
  f_\ast Z_n 
  \ar[r, "f_\ast \sigma_n^\vee"]
  &
  f_\ast(
  \Omega_X
  Z_{n+1}
  )
  \cong \Omega_{X'}f_\ast Z_{n+1}\,.
\end{tikzcd}
\]
The results for sequential projective model structures now follow from Theorem \ref{thm:RetSpStructureTheorem}.

Observe that $f_! S_{\mathbb{N},X} \subset S_{\mathbb{N}, X'}$, so that the adjunction $(f_!\dashv f^\ast)$ descends to the localised model structures.
Suppose that $f$ is a weak equivalence, so that $(f_!\dashv f^\ast)$ is a Quillen equivalence of the sequential projective model structures.
To show that $(f_!\dashv f^\ast)$ is a Quillen equivalence with respect to the sequential stable model structures, it is sufficient to show that $f_!$ reflects stable weak equivalences between cofibrant objects and that the derived counit is a natural weak equivalence for all stably fibrant objects.
Let $\mathcal{Q}$ and $\mathcal{R}$ be cofibrant and fibrant replacement functors for the sequential $X$-projective and sequential $X'$-projective model structures respectively. 
The derived counit $f_!\mathcal{Q}f^\ast A\to A$ is thus a levelwise weak equivalence for all fibrant $\Omega_X$-spectra $A\in \mathrm{Sp}^\mathbb{N}_X$.

Now let $\varphi\colon Y\to Z$ be a map of cofibrant $X$-spectra such that $f_!(\varphi)$ is a stable weak equivalence.
Then for all fibrant $\Omega_X$-spectra $A$, the induced map of derived hom-spaces $\mathrm{map}(f_!(\varphi), A)$ is a weak equivalence by Lemma \ref{lem:SeqSpecFib}.
By adjointness, $\mathrm{map}(\varphi, f^\ast A)$ is a weak equivalence for all stably fibrant $A\in \mathrm{Sp}^\mathbb{N}_{X'}$.
We conclude that that $\varphi$ is a stable weak equivalence, since every fibrant $\Omega_X$-spectrum is levelwise weakly equivalent to some $f^\ast A$.
Indeed, suppose that $P$ is a fibrant $\Omega_X$-spectrum so that $\mathcal{Q}P$, too, is stably fibrant.
Then the derived unit $\mathcal{Q}P\to f^\ast \mathcal{R} f_! \mathcal{Q}P$ is a levelwise weak equivalence.
The right Quillen equivalence $f^\ast \colon \mathrm{sSet}_{\dslash X'}\to \mathrm{sSet}_{\dslash X}$ reflects weak equivalences between fibrant objects and we have a natural isomorphism $\Omega_X f^\ast \cong f^\ast\Omega_{X'}$, from which it follows that $\mathcal{R} f_! \mathcal{Q}P$ is a fibrant $\Omega_{X'}$-spectrum and the claim is proven.

Finally, we note that $f^\ast$ sends all morphisms in $S_{\mathbb{N}, X'}$ to stable weak equivalences by Lemma \ref{lem:CofibWE} so that the Quillen adjunction $(f^\ast\dashv f_\ast)$ descends to the localised model structures if $f$ is a fibration.
If $f$ is moreover an acyclic fibration, then there is an isomorphism of derived functors $\mathbf{R}f^\ast\cong \mathbf{L}f^\ast$.
Since $f$ is a weak equivalence, $(\mathbf{L}f_!\dashv \mathbf{R}f^\ast)$ is an adjoint equivalence of categories, so that there is a natural isomorphism $\mathbf{R}f_\ast \cong \mathbf{L}f_!$ by uniqueness of adjoints.
In particular, $(\mathbf{L}f^\ast\dashv \mathbf{R}f_\ast)$ is an adjoint equivalence of categories.
\end{proof}
\begin{remark}[Stable Koszul pre-duality]
\label{rem:StabProtoKoszul}
For each simplicial set $X$, the theorem above implies that we have a $\mathrm{sSet}_\ast$-Quillen adjunction
\[
\begin{tikzcd}
\mathrm{Sp}^\mathbb{N}_{X}
\ar[rr, shift left=1.1ex, "X_!"]
\ar[rr, leftarrow, shift left=-1.1ex, "\bot", "X\overline{\star}(-)"']
&&
X_+\mathrm{-Comod}^\mathbb{N}\,,
\end{tikzcd}
\]
with $X_+\mathrm{-Comod}^\mathbb{N}$ the category of coalgebras in $\mathrm{Sp}^\mathbb{N}$ for the comonad $X_+\wedge(-)$ arising from the $\mathrm{sSet}_{\ast}$-tensoring.
Cofibrations and weak equivalences in $X_+\mathrm{-Comod}^\mathbb{N}$ are created by the forgetful functor to $\mathrm{Sp}^\mathbb{N}$.
In the theorem above, we saw that the functor $X_!\colon \mathrm{Sp}^\mathbb{N}_X\to \mathrm{Sp}^\mathbb{N}$ is defined by levelwise application of $X_!\colon \mathrm{sSet}_{\dslash X}\to \mathrm{sSet}_\ast$.
The symmetry isomorphisms of the smash product and Remark \ref{rem:InducedCoaction} together imply a factorisation
\[
X_!\colon
\begin{tikzcd}
  \mathrm{Sp}^\mathbb{N}_X
  \ar[r, "X_!"]
  &
  X_+\mathrm{-Comod}^\mathbb{N}
  \ar[r]
  &
  \mathrm{Sp}^\mathbb{N}\,,
\end{tikzcd}
\]
where we denote the functor from sequential $X$-spectra to $X_+$-comodule spectra by $X_!$ in a mild abuse of notation.
This functor preserves colimits, so has a right adjoint by the adjoint functor theorem. 
Since $X_!$ also preserves cofibrations, acyclic cofibrations and $\mathrm{sSet}_\ast$-tensors it is left Quillen.

If $X$ is a reduced simplicial set, then a similar argument applied to the $\mathrm{sSet}_\ast$-Quillen equivalence $(\mathfrak{b}_X\dashv \mathfrak{s}_X)$ of Lemma \ref{lem:RetSpandLoopSp} provides a $\mathrm{sSet}_\ast$-Quillen adjunction
\[
\begin{tikzcd}
\mathbb{G}X_+\mathrm{-Mod}^\mathbb{N}
\ar[rr, shift left=1.1ex, "\mathfrak{b}_X"]
\ar[rr, leftarrow, shift left=-1.1ex, "\bot", "\mathfrak{s}_X"']
&&
\mathrm{Sp}^\mathbb{N}_X\,,
\end{tikzcd}
\]
with $\mathbb{G}X_+\mathrm{-Mod}^\mathbb{N}$ the category of algebras for the monad $\mathbb{G}X_+\wedge(-)$ on $\mathrm{Sp}^\mathbb{N}_X$.
By a similar argument to that of the theorem above, the stabilised adjunction $(\mathfrak{b}_X\dashv \mathfrak{s}_X)\colon \mathbb{G}X_+\mathrm{-Mod}^\mathbb{N}\to \mathrm{Sp}^\mathbb{N}_X$ is a $\mathrm{sSet}_\ast$-Quillen equivalence.
Thus for any reduced simplicial set $X$ there is a composite $\mathrm{sSet}_\ast$-Quillen adjunction
\[
\begin{tikzcd}
\mathbb{G}X_+\mathrm{-Mod}^\mathbb{N}
\ar[rr, shift left=1.1ex, "\mathfrak{K}_X^\ast"]
\ar[rr, leftarrow, shift left=-1.1ex, "\bot", "{\mathfrak{K}^X_\ast}"']
&&
X_+\mathrm{-Comod}^\mathbb{N}\,,
\end{tikzcd}
\]
stabilising the adjunction of Theorem \ref{thm:ProtoKoszul}.
This is the sequential spectrum version of the stable Koszul pre-duality adjunction derived by Hess and Shipley (cf.~\cite[Theorem 5.4]{hess_waldhausen_2016}).
\end{remark}

In the classical unparametrised setting, stable weak equivalences of spectra are detected by stable homotopy groups.
That is, a map of (sequential) spectra $\psi\colon E\to F$ is a stable weak equivalence precisely if the induced map on stable homotopy groups is an isomorphism.
The notion corresponding to $\pi^\mathrm{st}_\ast$-isomorphism in the parametrised setting is provided by the following
\begin{definition}
Fix a fibrant replacement functor $\mathcal{R}$ for the sequential $X$-stable model structure.
A map of sequential $X$-spectra $\psi\colon A\to B$ is a \emph{fibrewise $\pi^\mathrm{st}_\ast$-isomorphism} if $x^\ast\mathcal{R}\psi\colon x^\ast \mathcal{R}A\to x^\ast \mathcal{R}B$ is a $\pi^\mathrm{st}_\ast$-isomorphism for all points $x\colon \ast\to X$.
\end{definition}
\begin{remark}
This definition is independent of the particular choice of stably fibrant replacement functor $\mathcal{R}$. 
Moreover, for $A$ a sequential $X$-spectrum and $\gamma\colon \Delta^1 \to X$ a path in $X$, a straightforward argument involving base change functors shows that $\gamma(0)^\ast \mathcal{R}A$ and $\gamma(1)^\ast\mathcal{R}A$ have the same stable homotopy type (see Lemma \ref{lem:ChangeofTwist}).
\end{remark}
\begin{lemma}
\label{lem:SeqSpecFibrewiseStabEquiv}
A map of sequential $X$-spectra $\psi\colon A\to B$ is a stable weak equivalence precisely if it is a fibrewise $\pi^\mathrm{st}_\ast$-isomorphism.
\end{lemma}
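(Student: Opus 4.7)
My plan is to pass to the stably fibrant replacement $\mathcal{R}\psi\colon\mathcal{R}A\to\mathcal{R}B$ in order to recast both conditions as levelwise assertions. By Lemma \ref{lem:SeqSpecFib}, $\mathcal{R}A$ and $\mathcal{R}B$ are fibrant $\Omega_X$-spectra, so $\psi$ is a stable weak equivalence precisely when $\mathcal{R}\psi$ is a levelwise weak equivalence. To make the fibrewise hypothesis equally concrete, I would exploit that pullback functors are strongly closed (Lemma \ref{lem:PBisStonglyClosed}), which yields a natural isomorphism $x^\ast\Omega_X\cong\Omega x^\ast$ for every vertex $x\colon\ast\to X$. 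Consequently $x^\ast\mathcal{R}A$ is a fibrant $\Omega$-spectrum of pointed Kan complexes whose $n$-th term is the honest fibre of the Kan fibration $(\mathcal{R}A)_n\twoheadrightarrow X$ over $x$ (pointed by the image of the section), and similarly for $\mathcal{R}B$. Since a $\pi^\mathrm{st}_\ast$-isomorphism between fibrant $\Omega$-spectra of pointed Kan complexes is automatically a levelwise weak equivalence---the $\Omega$-spectrum condition identifying $\pi_k$ of the $n$-th term with the stable group $\pi^\mathrm{st}_{k-n}$---the fibrewise hypothesis says exactly that every $x^\ast(\mathcal{R}\psi)_n$ is a weak equivalence of Kan complexes.

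After these reductions the claim becomes the following general fact: a map $\varphi\colon E_1\to E_2$ of Kan fibrations over $X$ equipped with compatible sections is a weak equivalence if and only if the fibre $x^\ast\varphi$ over every vertex $x$ of $X$ is a weak equivalence. Both implications are handled by the five-lemma applied to the long exact homotopy sequences of the fibrations $E_i\twoheadrightarrow X$: the outer terms $\pi_\ast(X,x)$ are identified under $\varphi$, so the middle-fibre maps and middle-total maps determine one another. The sections provide compatible basepoints in every fibre, and the argument is carried out componentwise on $X$.

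The main obstacle is the bookkeeping with connected components and basepoints for a general base simplicial set $X$, but the presence of sections ensures that each connected component of $E_i$ is hit by the image of some vertex of $X$, so the potential difficulty dissolves. Putting the pieces together yields the chain: $\psi$ is a stable equivalence if and only if $\mathcal{R}\psi$ is a levelwise weak equivalence, if and only if each $x^\ast(\mathcal{R}\psi)_n$ is a weak equivalence, if and only if each $x^\ast\mathcal{R}\psi$ is a levelwise (equivalently $\pi^\mathrm{st}_\ast$-) equivalence of $\Omega$-spectra, if and only if $\psi$ is a fibrewise $\pi^\mathrm{st}_\ast$-isomorphism.
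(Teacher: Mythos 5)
Your proof follows essentially the same route as the paper's: reduce to the stably fibrant replacement $\mathcal{R}\psi$ via Lemma \ref{lem:SeqSpecFib}, identify the unstable homotopy groups of the fibres with stable ones using the $\Omega_X$-spectrum property (and $x^\ast\Omega_X\cong\Omega\, x^\ast$), and compare total spaces and fibres of the level fibrations over $X$ by the long exact sequence and the five lemma; the only cosmetic differences are that you run the easy direction through the same five-lemma argument where the paper invokes Ken Brown's lemma, and that the paper tests only at one chosen point $x_i$ per path component of $X$ rather than at every vertex. One caveat: your assertion that the sections hit every connected component of the total spaces is false (a fibre can have components missing the section, e.g.\ a fibre weakly equivalent to $S^0$ over a point), so it cannot be what disposes of the $\pi_0$/basepoint bookkeeping; that bookkeeping is instead handled by the tail of the long exact sequence together with the fact that the levels of a fibrant $\Omega_X$-spectrum are fibrewise grouplike up to homotopy, so the fibres are simple and all of their components are equivalent --- the same point the paper's own five-lemma step passes over silently, so this does not affect the substance of your argument.
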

\begin{proof}
Stable weak equivalences are fibrewise $\pi^\mathrm{st}_\ast$-isomorphisms by Ken Brown's lemma.

Conversely, suppose that $\psi$ is a fibrewise $\pi^\mathrm{st}_\ast$-isomorphism and choose  a section $i\mapsto x_i$ of the quotient map $X\to \pi_0(X)$.
By the naturality diagram for the fibrant replacement functor $\mathcal{R}$, $\psi$ is a stable weak equivalence precisely if $\mathcal{R}\psi$ is.

For each $k\geq 0$, the maps $\mathcal{R}A_k\to X$ and $\mathcal{R}B_k\to X$ are fibrations.
Fixing $i\in \pi_0(X)$ and $k\geq 0$, we have a morphism of long exact sequences
\[
\begin{tikzcd}[row sep=small]
  \dotsb \ar[r]
  &
  \pi_n (x_i^\ast \mathcal{R}A_k, x_i)
  \ar[r]
  \ar[d]
  &
  \pi_n (\mathcal{R}A_k, x_i)
  \ar[r]
  \ar[d]
  &
  \pi_n(X, x_i)
  \ar[r]
  \ar[d, equal]
  &
  \dotsb
  \\
  \dotsb \ar[r]
  &
  \pi_n (x_i^\ast \mathcal{R}B_k, x_i)
  \ar[r]
  &
  \pi_n (\mathcal{R}B_k, x_i)
  \ar[r]
  &
  \pi_n(X, x_i)
  \ar[r]
  &
  \dotsb
\end{tikzcd}
\]
where we regard $x_i$ as a point in $\mathcal{R}A_k$, $\mathcal{R}B_k$ via the given section.
Since $x_i^\ast \mathcal{R}A$ and $x^\ast_i\mathcal{R}B$ are fibrant $\Omega$-spectra, the map $\pi_n(x_i^\ast \mathcal{R}A_k)\to \pi_n(x_i^\ast \mathcal{R}B_k)$ can be identified with $
\pi^\mathrm{st}_{n-k}(x_i^\ast \mathcal{R}A)\to
\pi^\mathrm{st}_{n-k}(x_i^\ast \mathcal{R}B)
$, which is an isomorphism by assumption.
By the five lemma, $\mathcal{R}A_k\to \mathcal{R}B_k$ is a weak equivalence.
Since this is true for all $k\geq 0$, $\mathcal{R}\psi\colon \mathcal{R}A\to \mathcal{R}B$ is a levelwise weak equivalence of fibrant $\Omega_X$-spectra.
This completes the proof.
\end{proof}

\begin{lemma}
\label{lem:SeqSpecisStable}
For any simplicial set $X$, the sequential $X$-stable model structure is stable.
\end{lemma}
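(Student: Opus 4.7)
The plan is to reduce the assertion to the stability of the classical Bousfield--Friedlander model structure on unparametrised sequential spectra via the fibrewise detection criterion of Lemma \ref{lem:SeqSpecFibrewiseStabEquiv}. Stability of $\mathrm{Sp}^\mathbb{N}_X$ means that the derived Quillen adjunction $(\mathbf{L}(S^1 \owedge_X -) \dashv \mathbf{R}F_X(X^\ast S^1, -))$ is an adjoint equivalence of $Ho(\mathrm{Sp}^\mathbb{N}_X)$; we must check that its unit and counit are natural isomorphisms on $Ho$.

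For a point $x\colon \ast \to X$, Corollary \ref{cor:sSetastEnrichmentFacts} gives natural isomorphisms
\[
x^\ast (S^1 \owedge_X A) \cong S^1 \wedge x^\ast A,\qquad x^\ast F_X(X^\ast S^1, B) \cong F_\ast(S^1, x^\ast B),
\]
since $x^\ast$ preserves $\mathrm{sSet}_\ast$-tensors and cotensors. If $A$ is stably fibrant, hence a fibrant $\Omega_X$-spectrum by Lemma \ref{lem:SeqSpecFib}, then each $A_n \to X$ is a Kan fibration; consequently $x^\ast A_n$ is a Kan complex and the adjoint structure maps are preserved under pullback, so $x^\ast A$ is a fibrant $\Omega$-spectrum in $\mathrm{Sp}^\mathbb{N}$. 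In particular, $x^\ast$ sends stably fibrant $X$-spectra directly to stably fibrant objects of $\mathrm{Sp}^\mathbb{N}$, so derived functors on stably fibrant input are computed without further replacement after pulling back.

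Applying Lemma \ref{lem:SeqSpecFibrewiseStabEquiv}, the derived unit $\eta_A\colon A \to \mathbf{R}F_X(X^\ast S^1, S^1 \owedge_X A)$ is a stable equivalence in $\mathrm{Sp}^\mathbb{N}_X$ if and only if, for every point $x$, the corresponding derived unit $x^\ast A \to \mathbf{R}F_\ast(S^1, S^1 \wedge x^\ast A)$ in $\mathrm{Sp}^\mathbb{N}$ is a stable equivalence; and symmetrically for the counit. This reduces the claim to classical stability of $\mathrm{Sp}^\mathbb{N}$, which follows from Lemma \ref{lem:SeqSpecShifts} specialised to $X = \ast$ together with the fact that the twist $\tau_{S^1, S^1}$ distinguishing the prolongation $\Sigma$ from $S^1 \wedge -$ acts as $-\mathrm{id}$ on stable homotopy groups and hence invertibly on the classical stable homotopy category. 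The main obstacle is the delicate interaction between pullback functors and stable fibrant replacement, neatly circumvented by the observation above that $x^\ast$ preserves stably fibrant objects.
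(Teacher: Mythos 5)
Your reduction stalls at the step where you pass from the derived unit of $A$ to the derived units of its fibres. Lemma \ref{lem:SeqSpecFibrewiseStabEquiv} tells you that the derived unit $\eta_A\colon A\to \overline{\Omega}_X\mathcal{R}\overline{\Sigma}_X A$ is a stable equivalence iff $x^\ast\mathcal{R}(\eta_A)$ is a $\pi^\mathrm{st}_\ast$-isomorphism for all points $x$; to identify this with the classical derived unit of $x^\ast\mathcal{R}A$ in $\mathrm{Sp}^\mathbb{N}$ you must know that $x^\ast \mathcal{R}(\overline{\Sigma}_X A)$ is stably equivalent to $S^1\wedge x^\ast\mathcal{R}A$, i.e.\ that the homotopy fibre functor $\mathbf{R}x^\ast$ commutes with fibrewise suspension. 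This is not formal, and your observation that $x^\ast$ preserves stably fibrant objects does not deliver it: even when $A$ is a fibrant $\Omega_X$-spectrum, $\overline{\Sigma}_X A$ is essentially never one, so the fibrant replacement map $\overline{\Sigma}_X A\to \mathcal{R}\overline{\Sigma}_X A$ has non-fibrant source, and $x^\ast$ (being only right Quillen for a general point inclusion $x$, which is neither a fibration nor a projection, cf.\ Theorem \ref{thm:SeqSpecStructureTheorem}) is not known to carry it to a stable equivalence. In short, preservation of stably fibrant objects controls the target of the unit but not the interaction of $x^\ast$ with the stabilisation of $\overline{\Sigma}_X A$, which is exactly the "delicate interaction" you set aside.

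This commutation of $\mathbf{R}x^\ast$ with $\overline{\Sigma}_X$ is precisely the non-trivial content of the paper's argument, and it is established there by a different device: reduce (via base change along component inclusions and a zig-zag $X\to X'\leftarrow X''$ with $X''$ reduced) to the case of a reduced base, and then replace the ill-behaved $\mathbf{R}x^\ast$ by the composite $\mathbb{P}X_!\,p(X)^\ast$ built from the path fibration. Since $p(X)$ is a fibration, $p(X)^\ast$ is both left and right Quillen and hence preserves all stable weak equivalences, and $\mathbb{P}X_!$ and $p(X)^\ast$ both preserve $\mathrm{sSet}_\ast$-tensors; the zig-zag \eqref{eqn:FibreSpecZigZag} then shows this composite computes the homotopy fibre spectrum, so that $S^1\owedge_X(-)$ visibly shifts fibrewise stable homotopy groups up by one, and the detection lemma finishes the proof. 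If you want to keep your fibrewise strategy, you need to supply an argument of this kind (or otherwise prove $\mathbf{R}x^\ast\mathbf{L}\overline{\Sigma}_X\simeq \mathbf{L}\Sigma\,\mathbf{R}x^\ast$); as written, the proposal assumes the very point at issue.
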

\begin{proof}
We must show that the left Quillen endofunctor $\overline{\Sigma}_X := S^1\owedge_X (-)$ is a Quillen equivalence for the sequential $X$-stable model structure.
We write $\overline{\Omega}_X$ for the right adjoint to $\overline{\Sigma}_X$ given by computing $\mathrm{sSet}_\ast$-cotensors with respect to $S^1$.
Without loss of generality, we may suppose that $X$ is connected and, furthermore, that $X$ is reduced.
For if $|\pi_0(X)|>1$ then we can work over each path component separately since pulling back along the inclusions $X_i\hookrightarrow X$ of path components induces an equivalence of categories
\[
\mathrm{Sp}^\mathbb{N}_X \longrightarrow \prod_{X_i\in \pi_0(X)} \mathrm{Sp}^\mathbb{N}_{X_i}\,.
\]
If $X$ is connected but not necessarily reduced, we first choose a fibrant replacement $X\to X'$ and pass to the first Eilenberg subcomplex $X''\to X'$ so that we have a zig-zag of weak equivalences $X\to X'\leftarrow X''$ with $X''$ reduced.
Theorem \ref{thm:SeqSpecStructureTheorem} implies that $\mathrm{Sp}^\mathbb{N}_{X''}$ is stable precisely if $\mathrm{Sp}^\mathbb{N}_{X}$ is. 

Now suppose that $X$ is reduced, with $p(X)\colon \mathbb{P}X\to X$ the simplicial path fibration.
The pullback $p(X)^\ast\colon \mathrm{Sp}^\mathbb{N}_X\to \mathrm{Sp}^\mathbb{N}_{\mathbb{P}X}$ is both left and right Quillen for the sequential stable model structures, so preserves all stable weak equivalences.
Let $\kappa\colon \ast\to \mathbb{P}X$ be the constant path at the unique $0$-simplex $x$ of $X$.
The simplicial set $\mathbb{P}X$ is weakly contractible, so that $\kappa$ is a weak equivalence.
We now have a diagram of left $\mathrm{sSet}_\ast$-Quillen functors
\[
\begin{tikzcd}
  \mathrm{Sp}^\mathbb{N} 
  \ar[r, "\kappa_!", "\sim"']
  &
  \mathrm{Sp}^\mathbb{N}_{\mathbb{P}X}
  \ar[r, "\mathbb{P}X_!", "\sim"']
  \ar[d, leftarrow, "p(X)^\ast"]
  &
  \mathrm{Sp}^\mathbb{N}
  \\
  &
  \mathrm{Sp}^\mathbb{N}_X
\end{tikzcd}
\]
in which the arrows marked \lq\lq$\sim$'' are Quillen equivalences.
Fix a sequential $X$-spectrum $A$, take cofibrant and fibrant replacements $A^c \to A\to A^f$ for the sequential $X$-stable model structure.
Then $p(X)^\ast A^c \to p(X)^\ast A\to p(X)^\ast A^f$ is a diagram of stable weak equivalences of sequential $\mathbb{P}X$-spectra.
Choose a fibrant replacement $\mathbb{P}X_! p(X)^\ast A^c \to F$ in $\mathrm{Sp}^\mathbb{N}$, then the derived unit map $\eta\colon p(X)^\ast A^c \to \mathbb{P}X^\ast F$ is a stable weak  equivalence since $(\mathbb{P}X_!\dashv \mathbb{P}X^\ast)$ is a Quillen equivalence.
Factoring $\eta$ into a cofibration followed by an acyclic fibration, we have a diagram of stable equivalences
\[
\begin{tikzcd}
  &
  p(X)^\ast A^c
  \ar[d, rightarrowtail]
  \ar[dl, bend left=-19]
  \ar[dr, bend left=20, "\eta"]
  &
  \\
  p(X)^\ast A^f \ar[r, leftarrow]
  &
  (p(X)^\ast A^c)^f 
  \ar[r, twoheadrightarrow]
  & \mathbb{P}X^\ast F
\end{tikzcd}
\]
of $\mathbb{P}X$-spectra.
The bottom left arrow exists by the right lifting property of the terminal map $p(X)^\ast A^f \to 0_{\mathbb{P}X}$ with respect to the cofibration displayed above.
Applying $\kappa^\ast$, we have a zig-zag of stable weak equivalences 
\begin{equation}
\label{eqn:FibreSpecZigZag}
\begin{tikzcd}
  x^\ast A^f = \kappa^\ast p(X)^\ast A^f 
  \ar[r, leftarrow]
  &
  \kappa^\ast (p(X)^\ast A^c )^f
  \ar[r]
  &
  \kappa^\ast \mathbb{P}X^\ast F \cong F
  \ar[r, leftarrow]
  &
  \mathbb{P}X_! p(X)^\ast A^c\,.
\end{tikzcd}
\end{equation}
In particular, the stable homotopy groups of $\mathbb{P}X_! p(X)^\ast A^c$ are the stable homotopy groups of the fibre spectrum $x^\ast A^f$ of $A$.

In the unparametrised setting, suspension $S^1\wedge (-)$ has the effect of shifting stable homotopy groups up by one: $\pi^\mathrm{st}_\ast (\Sigma P)\cong \pi^\mathrm{st}_{\ast - 1} (P)$.
Both $\mathbb{P}X_!$ and $p(X)^\ast$ preserve $\mathrm{sSet}_\ast$-tensors, so \eqref{eqn:FibreSpecZigZag} shows that the suspension functor $\overline{\Sigma}_X$ shifts fibrewise homotopy groups up by one.
Similarly, $x^\ast$ preserves $\mathrm{sSet}_\ast$-cotensors, so that $\overline{\Omega}_X$ shifts fibrewise homotopy groups down by one.

Putting this all together, for any cofibrant $A\in \mathrm{Sp}^\mathbb{N}_X$ we have that the derived unit $
A \to \overline{\Omega}_X \mathcal{R} \overline{\Sigma}_X A
$ is a fibrewise $\pi^\mathrm{st}_\ast$-isomorphism.
The functor $\overline{\Omega}_X$ preserves and reflects stable weak equivalences between stably fibrant objects (this is proven following a similar argument to Lemma \ref{lem:SeqSpecFibrewiseStabEquiv}), so that $(\overline{\Sigma}_X\dashv \overline{\Omega}_X)$ is a Quillen equivalence.
\end{proof}

\begin{remark}
\label{rem:CompactGen}
The homotopy category of a stable model category is triangulated \cite[Chapter 7]{hovey_model_1999}.
In the classical setting, the stable homotopy category $Ho(\mathrm{Sp}
^\mathbb{N})$ is compactly generated by the sphere spectrum $\mathbb{S} =\Sigma^\infty  S^0$ since $\mathbb{S}$ corepresents stable homotopy.
For $X$-parametrised spectra, we fix a collection of points $\{x_i\}$ indexing the path components of $X$.
According to the derived base change adjunctions and Lemma \ref{lem:SeqSpecFibrewiseStabEquiv}, the collection of sequential $X$-spectra $\{(x_i)_! \mathbb{S}\}$ is a set of compact generators for the triangulated category $Ho(\mathrm{Sp}^\mathbb{N}_X)$.
\end{remark}

We conclude our discussion of sequential parametrised spectra by recording a useful technical result and discussing some canonical examples.
\begin{lemma}
Any sequential $X$-spectrum is a sequential homotopy colimit of shifted fibrewise suspension spectra.
\end{lemma}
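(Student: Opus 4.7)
The plan is to realize $A$ as the strict colimit of a sequence of projective cofibrations between sequential $X$-spectra, each of which is stably equivalent to a shifted fibrewise suspension spectrum. Since $\mathrm{Sp}^\mathbb{N}_X$ is combinatorial and left proper, such a strict colimit will already compute the homotopy colimit. I first replace $A$ by a projectively cofibrant model, so that by Remark \ref{rem:ProjCofib} applied to $0_X \to A$, the retractive space $A_0$ is cofibrant and each structure map $\sigma_n \colon \Sigma_X A_n \to A_{n+1}$ is a cofibration in $\mathrm{sSet}_{\dslash X}$.

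For each $n \geq 0$, I define a sequential $X$-spectrum $F_n A$ with
\[
(F_n A)_k =
\begin{cases}
A_k, & k \leq n,\\
\Sigma_X^{k-n} A_n, & k > n,
\end{cases}
\]
whose structure maps are the original $\sigma_k$ for $k < n$ and the identity for $k \geq n$. These assemble into a sequential diagram of inclusions $F_n A \to F_{n+1} A$, and it is immediate that $\mathrm{colim}_n F_n A = A$ as sequential spectra, since each level stabilises from stage $n = k$ onwards.

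The substantive verifications are: (i) each $F_n A \to F_{n+1} A$ is a projective cofibration, and (ii) the evident map $\Sigma^{\infty-n}_X A_n \to F_n A$ that is zero below level $n$ and the identity at and above level $n$ is a stable weak equivalence. For (i), direct inspection via the criterion of Remark \ref{rem:ProjCofib} shows that every relative latching map of $F_n A \to F_{n+1} A$ is an identity save at level $n+1$, where it equals $\sigma_n$; cofibrancy then follows from our hypothesis on $A$. For (ii), I would invoke Lemma \ref{lem:SeqSpecFibrewiseStabEquiv} and check that the map induces an isomorphism on fibrewise stable homotopy groups; at any point $x \in X$, both pull-backs $x^\ast \Sigma^{\infty-n}_X A_n$ and $x^\ast F_n A$ present shifted suspension spectra that agree eventually from level $n$ upwards, hence both compute $\pi^\mathrm{st}_{\ast+n}(x^\ast A_n)$. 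Equivalently, applying the shift functor $s_X^n$ of Remark \ref{rem:ShiftFunctors} identifies both sides with $\Sigma^\infty_X A_n$ and the map with the identity, and the Quillen equivalence $(\ell_X\dashv s_X)$ implicit in Lemma \ref{lem:SeqSpecShifts} then pushes this through fibrant replacement.

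Combining (i) and (ii), $A \cong \mathrm{colim}_n F_n A$ presents $A$ as a sequential homotopy colimit of the shifted fibrewise suspension spectra $\Sigma^{\infty-n}_X A_n$. I expect the main obstacle to be the latching-map analysis underlying (i), which is precisely what necessitates the initial cofibrant replacement.
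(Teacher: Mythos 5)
Your filtration $F_nA$ is exactly the paper's (there called $A^i$), your latching-map analysis of $F_nA\to F_{n+1}A$ is correct, and the cofibrant replacement you insert is the right hypothesis (via Remark \ref{rem:ProjCofib}, cofibrancy of $A$ is precisely the statement that the $\sigma_n$ are cofibrations). The genuine gap is in step (ii), which is where all the content of the lemma sits, and neither of your two justifications survives scrutiny. First, the fibrewise criterion of Lemma \ref{lem:SeqSpecFibrewiseStabEquiv} is stated in terms of $x^\ast\mathcal{R}\psi$ for a \emph{stably fibrant replacement} $\mathcal{R}$, not the strict pullback $x^\ast\psi$: the levels $\Sigma^{k-n}_X A_n$ are not fibrations over $X$ in general, so strict fibres do not compute homotopy fibres, and after applying $\mathcal{R}$ the levels are no longer the given shifted suspensions, so the claim that "both pull-backs present shifted suspension spectra agreeing from level $n$ upwards" is not available. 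Second, the shift argument: from $s^n_Xc_n=\mathrm{id}$ you want to conclude that $c_n$ is a stable equivalence using Lemma \ref{lem:SeqSpecShifts}, but that requires knowing that the \emph{underived} $s^n_X$ computes $\mathbf{R}s^n_X$ on the non-fibrant objects $\Sigma^{\infty-n}_XA_n$ and $F_nA$ --- equivalently, that $s_X$ preserves stable weak equivalences --- and the paper only establishes that $s_X$ is right Quillen, hence preserves stable equivalences between stably fibrant objects. Note that $c_n$ is precisely the $(\ell^n_X\dashv s^n_X)$-counit at $F_nA$, so "the Quillen equivalence then pushes this through fibrant replacement" asserts that the underived counit already agrees with the derived one; that is exactly the statement requiring proof, not a proof. (Your closing guess that the latching analysis is the main obstacle is therefore misplaced: that part is routine.)

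The paper closes this gap by changing base rather than shifting in $\mathrm{Sp}^\mathbb{N}_X$: assuming without loss of generality that $X$ is reduced, it applies $\mathbb{P}X_!\,p(X)^\ast$ to $c_n$. Both functors preserve \emph{all} stable weak equivalences ($p(X)^\ast$ is left and right Quillen since $p(X)$ is a fibration, and pushforwards preserve stable equivalences by the argument of Lemma \ref{lem:SpecPseudoFunAreProper}), and the zig-zag \eqref{eqn:FibreSpecZigZag} identifies $\mathbb{P}X_!\,p(X)^\ast c_n$ with the derived fibre $x^\ast\mathcal{R}c_n$ up to stable equivalence. The transported map is a map of genuine sequential spectra which is an isomorphism in all levels $\geq n$, hence a stable equivalence (this is where the shift argument is harmless, since unparametrised stable equivalences are $\pi^{\mathrm{st}}_\ast$-isomorphisms), and Lemma \ref{lem:SeqSpecFibrewiseStabEquiv} then yields that $c_n$ is a stable equivalence. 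If you prefer to avoid base change, you could instead argue directly that for every fibrant $\Omega_X$-spectrum $Z$ the restriction $\mathrm{map}(F_nA,Z)\to\mathrm{map}(\Sigma^{\infty-n}_XA_n,Z)\cong\mathrm{map}_{\mathrm{sSet}_{\dslash X}}(A_n,Z_n)$ is a weak equivalence, using that a map out of $F_nA$ is determined by its components in levels $\leq n$ and that the comparison is a finite tower of fibrations controlled by the equivalences $Z_k\to\Omega_XZ_{k+1}$ (here cofibrancy of the $\sigma_k$ is used again); but some such argument must be supplied --- as written, (ii) is unproven.
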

\begin{proof}
As in Lemma \ref{lem:SeqSpecisStable} we may suppose without loss of generality that $X$ is reduced, with unique $0$-simplex $x\colon \ast \to X$.
Suppose $A$ is a cofibrant sequential $X$-spectrum, and write $A^i$ for the sequential $X$-spectrum with
\[
A^i_n =
\begin{cases}
\;\;\; A_n & {n<i}\\
\Sigma^{n-i}_X A_i & {n\geq i}
\end{cases}
\]
equipped with the obvious structure maps.
For each $i$ there is a cofibration $A^i\to A^{i+1}$, and $A$ is the sequential colimit of the sequence of cofibrations
\[
\begin{tikzcd}
  \dotsb 
  \ar[r, rightarrowtail]
  &
  A^{i-1}
  \ar[r, rightarrowtail]
  &
  A^i
  \ar[r, rightarrowtail]
  &
  A^{i+1}
  \ar[r, rightarrowtail]
  &
  \dotsb
\end{tikzcd}
\]
That is, $A\cong \mathrm{hocolim}_i A^i$.

For each $i$, there is a canonical map of $X$-spectra $c_i\colon \Sigma^{\infty-i}_X A_i \to A^i$.
We now show that each $c_i$ is a stable weak equivalence, which completes the proof.
Arguing as in Lemma \ref{lem:SeqSpecisStable}, we find that the map of sequential spectra $\mathbb{P}X_! p(X)^\ast c_i$ is stably equivalent to $x^\ast\mathcal{R}c_i$, where $\mathcal{R}$ is a fibrant replacement functor for $\mathrm{Sp}^\mathbb{N}_X$.
Applying the  shift functors $s_X$ and $s= s_\ast$ (Remark \ref{rem:ShiftFunctors}), we have that $\mathbb{P}X_! p(X)^\ast s_X^i c_i = s^i \mathbb{P}X_! p(X)^\ast c_i $ is the identity, and hence $s^i x^\ast \mathcal{R} c_i$ is a stable equivalence.
The shift functor reflects stable weak equivalences between fibrant $\Omega$-spectra, so that $x^\ast \mathcal{R}c_i$ is a stable weak equivalence.
By Lemma \ref{lem:SeqSpecFibrewiseStabEquiv}, this in turn implies that $c_i$ is a stable weak equivalence.
\end{proof}

\begin{example}
\label{exa:CanonicalExamplesofParamSpec}
Given a map $p\colon Y\to X$, the fibrewise suspension spectrum $\Sigma^\infty_X Y_{+X}$ is the result of replacing each of the (homotopy) fibres of $p$ by their stabilisations.
Arguing as in Lemma \ref{lem:SeqSpecisStable}, we find that the homotopy fibre spectrum $\mathbf{R}x^\ast \Sigma^\infty_X Y_{+X}$ is stably equivalent to the suspension spectrum $\Sigma^\infty F_+$ of the fibre  $F_x \to Y\to X$ at $x$.
The fibre spectrum $\mathbf{R}x^\ast \Sigma^\infty_X Y_{+X}$ inherits a $\Omega_x X_+$-action by stabilising the $\Omega_x X$-action on $F_x$.

On the other hand, pushing forward along the terminal map gives $\mathbf{L}X_! \Sigma^\infty_{X}Y_{+X} \cong \Sigma^\infty Y_{+}$.
The suspension spectrum $\Sigma^\infty Y_+$ becomes a (left) $X_+$-comodule via the structure map
\[
\begin{tikzcd}
  Y
  \ar[r, "\Delta_Y"]
  &
  Y\times Y
  \ar[r, "{p\times \mathrm{id}_Y}"]
  &
  X\times Y\,,
\end{tikzcd}
\]
 and stable Koszul pre-duality (Remark \ref{rem:StabProtoKoszul}) relates the $X_+$-comodule $\Sigma^\infty Y_+$ to the $\mathbb{G}X_+$-module $\Sigma^\infty F_+$.
We highlight two particular examples:
\begin{enumerate}[label=(\arabic*)]
  \item In the case of the identity map $\mathrm{id}_X\colon X\to X$, the fibrewise suspension spectrum $\Sigma^\infty_X X_{+X}$ is isomorphic to the trivial $\mathbb{S}$-bundle $X^\ast \mathbb{S}$.
 Taking homotopy fibre spectra at any point $x$ of $X$ gives $\mathbb{S}$ regarded as a trivial $\Omega_x X_+$-module.
  The Koszul (pre-)dual is $\Sigma^\infty X_+$ with its canonical $X_+$-comodule structure.
  
  \item For a point inclusion $x\colon \ast \to X$, the fibrewise suspension spectrum $\Sigma^\infty_X x_{+X}\cong x_! \mathbb{S}$ has $n$-th space the wedge sum $S^n \vee_x X$.
  Taking homotopy fibres at $y\in X$ yields either the $\Omega_y X_+$-module $\Sigma^\infty\Omega_y X_+$ or the zero spectrum, depending on whether $y$ is in the same path component as $x$.
  Taking pushforwards yields $X_! x_! \mathbb{S}\cong \mathbb{S}$: the sphere spectrum regarded as a $X_+$-comodule by stabilising the map $x_+\colon \ast_+ \to X_+$. 
\end{enumerate}
\end{example}

\begin{remark}
For a simplicial set $X$, fix a set $\{(x_i)\}$ of points of $X$ indexing the path components.
We saw that the collection $\{(x_i)_!\mathbb{S}\}$ is a set of compact generators for the triangulated category $Ho(\mathrm{Sp}^\mathbb{N}_X)$.
By the main result of \cite{schwede_stable_2003}, $Ho(\mathrm{Sp}^\mathbb{N}_X)$ is equivalent to the homotopy category of  module spectra over 
\[
\bigoplus_{i\in \pi_0(X)} \Omega_{x_i} X_+\,,
\]
where we have used that $(x_i)^\ast (x_i)_! \mathbb{S}\cong \Sigma^\infty \Omega_{x_i} X_+$ by the previous example.
This is a $|\pi_0(X)|  > 0$ generalisation of the equivalence $Ho(\mathrm{Sp}^\mathbb{N}_X)\cong Ho(\mathbb{G}X_+\mathrm{-Mod})$ previously obtained for the reduced case (Remark \ref{rem:StabProtoKoszul}).
\end{remark}

\subsubsection{Symmetric stabilisation}
\label{SS:SymStabLoc}
Our second approach to combinatorial parametrised spectra uses Hovey's symmetric stabilisation machine, which was initially developed in order to obtain a symmetric monoidal model category of spectra.
However, we do not obtain a symmetric monoidal model category of parametrised spectra since the fibrewise smash product already fails to be a homotopically well-behaved in the unstable setting (see Remark \ref{rem:SmashFail}).
Nevertheless, the $\mathrm{sSet}_\ast$-tensoring on retractive spaces prolongs to a bifunctor
\[
\odot_X \colon \mathrm{Sp}^\Sigma\times \mathrm{Sp}^\Sigma_X\longrightarrow \mathrm{Sp}^\Sigma_X
\]
that \emph{is} homotopically well-behaved, which is crucial for certain applications (as in \cite{braunack-mayer_strict_2019}, for example).

Recall that the category of symmetric sequences $\mathrm{Fun}(\mathbf{\Sigma}, \mathrm{sSet}_{\dslash X})$ is symmetric monoidal with respect to the Day convolution tensor product
\[
A\otimes^\mathrm{Day}_X B \colon n\longmapsto \int^{p,q \in \mathbf{\Sigma}} \mathbf{\Sigma}(p+q, n)_+ \owedge_X A(p)\wedge_X B(q)  
\]
for symmetric sequences $A, B \in \mathrm{Fun}(\mathbf{\Sigma}, \mathrm{sSet}_{\dslash X})$.
For any map $f\colon X\to X'$, the pullback functor on retractive spaces prolongs to a strongly monoidal functor $f^\ast\colon \mathrm{Fun}(\mathbf{\Sigma},\mathrm{sSet}_{\dslash X'}) \to \mathrm{Fun}(\mathbf{\Sigma},\mathrm{sSet}_{\dslash X})$ on categories of symmetric sequences.
The category $\mathrm{Fun}(\mathbf{\Sigma}, \mathrm{sSet}_{\dslash X})$ becomes a $\mathrm{Fun}(\mathbf{\Sigma},\mathrm{sSet}_\ast)$-module by taking pullbacks along the terminal map.
Explicitly, the action of $K\in \mathrm{Fun}(\mathbf{\Sigma}, \mathrm{sSet}_\ast)$ on $A\in \mathrm{Fun}(\mathbf{\Sigma}, \mathrm{sSet}_{\dslash X})$ is
\[
K\odot^\mathrm{Day}_X A:= X^\ast K\otimes^\mathrm{Day}_X A\,.
\]
The symmetric monoidal functor $\mathbf{\Sigma}\to\mathrm{sSet}_\ast$ sending $n\mapsto S^n := \bigwedge_{i=1}^n S^1$ defines a commutative monoid $\mathbb{S}$ in $(\mathrm{Fun}(\mathbf{\Sigma}, \mathrm{sSet}_\ast), \otimes_\mathrm{Day})$ called the \emph{symmetric sphere spectrum}.
Symmetric spectra then defined as $\mathbb{S}$-modules in $\mathrm{Fun}(\mathbf{\Sigma}, \mathrm{sSet}_\ast)$, which has an evident generalisation to the parametrised setting:
\begin{definition}
A \emph{symmetric $X$-spectrum} is an $\mathbb{S}$-module in $\mathrm{Fun}(\mathbf{\Sigma},\mathrm{sSet}_{\dslash X})$.
The category of symmetric $X$-spectra is denoted $\mathrm{Sp}^\Sigma_X$.
\end{definition}
\begin{remark}
Unravelling the definitions, a symmetric $X$-spectrum $A$ is the data of a symmetric sequence $\{A(n)\}_{n\in \mathbb{N}}$ of retractive spaces over $X$ together with a $\Sigma_n$-equivariant map of retractive spaces $\sigma_n \colon S^1 \owedge_X A(n) \to A(n+1)$ for each $n\geq 0$.
These data are subject to the condition that for all $p,q\geq 0$ the composite
\[
  S^p\owedge_X A(q)
  \xrightarrow{S^{p-1}\owedge_X \sigma_q}
  S^{p-1}\owedge_X A(q+1)
  \xrightarrow{S^{p-2}\owedge_X \sigma_{q+1}}
  \dotsb
  \xrightarrow{\sigma_{p+q-1}}
  A(p+q)
\]
is $\Sigma_p\times \Sigma_q$-equivariant.
\end{remark}
\begin{remark}
\label{rem:FreeSymSpec}
There is a free-forgetful adjunction
\begin{equation}
\label{eqn:FreeSymmSpec}
\begin{tikzcd}
\mathrm{Fun}(\mathbf{\Sigma},\mathrm{sSet}_{\dslash X})
\ar[rr, shift left=1.1ex, "\mathbb{S}\odot^\mathrm{Day}_X(-)"]
\ar[rr, leftarrow, shift left=-1.1ex, "\bot", ""']
&&
\mathrm{Sp}^\Sigma_X\,.
\end{tikzcd}
\end{equation}
Since the monad $\mathbb{S}\odot^\mathrm{Day}_X(-)$ preserves colimits, the category $\mathrm{Sp}^\Sigma_X$ of symmetric $X$-spectra is locally presentable, with limits and colimits created by the forgetful functor to symmetric sequences.

There is a family of adjunctions relating $\mathrm{sSet}_{\dslash X}$ and $\mathrm{Sp}^\Sigma_X$ indexed by $k\geq 0$.
The construction is analogous to that of Remark \ref{rem:FreeSeqSpec}, though we must take care to remember the symmetric group actions. The evaluation functor on symmetric sequences $\mathrm{ev}_k \colon \{Z_n\}\mapsto Z_k$ has a left adjoint $s_k$: for a retractive space $Y\in \mathrm{sSet}_{\dslash X}$, the symmetric sequence $s_k(Y)$ has $(\Sigma_{k})_+ \owedge_X Y$ as its $k$-th term, with all other terms in the sequence equal to $0_X$. 
We then have the composite adjunction
\[
(\mathbf{\Sigma}^{\infty-k}_X\dashv \widetilde{\mathbf{\Omega}}^{\infty-k}_X)\colon
\begin{tikzcd}
\mathrm{sSet}_{\dslash X}
\ar[rr, shift left=1.1ex, "s_k"]
\ar[rr, leftarrow, shift left=-1.1ex, "\bot", "\mathrm{ev}_k"']
&&
\mathrm{Fun}(\mathbf{\Sigma},\mathrm{sSet}_{\dslash X})
\ar[rr, shift left=1.1ex, "\mathbb{S}\odot^\mathrm{Day}_X(-)"]
\ar[rr, leftarrow, shift left=-1.1ex, "\bot", ""']
&&
\mathrm{Sp}^\Sigma_X\,,
\end{tikzcd}
\]
where boldface symbols are used in order to distinguish from the case of sequential parametrised spectra considered previously.
Explicitly, $\widetilde{\mathbf{\Omega}}^{\infty-k}$ evaluates the $k$-th term of a symmetric $X$-spectrum (forgetting the $\Sigma_k$-action), whereas 
\[
\mathbf{\Sigma}^{\infty-k}_X Y_n =
\begin{cases}
\qquad\qquad\quad\; 0_X & n<k\\
\big((\Sigma_n)_+ \owedge_X  S^{n-k} \owedge_X Y \big)\big/\Sigma_{n-k}& n\geq k\,,
\end{cases}
\]
where $\Sigma_{n-k}$ acts via the canonical inclusion on $\Sigma_n$ and on $S^{n-k}$ by permuting smash factors.
\end{remark}

\begin{remark}
For any map of simplicial sets $f\colon X\to X'$ the base change adjunctions $(f_! \dashv f^\ast \dashv f_\ast)$ prolong to categories of symmetric sequences by levelwise application:
\[
(f_!\dashv f^\ast \dashv f_\ast)\colon
\begin{tikzcd}
\mathrm{Fun}(\mathbf{\Sigma},\mathrm{sSet}_{\dslash X})
\ar[rr, shift left=2.2ex, ""]
\ar[rr, leftarrow, "\bot"]
\ar[rr, shift left=-2.2ex, "\bot", ""']
&&
\mathrm{Fun}(\mathbf{\Sigma},\mathrm{sSet}_{\dslash X'})\,.
\end{tikzcd}
\]
The functors $f_!$ and $f^\ast$  preserve $\mathrm{Fun}(\mathbf{\Sigma},\mathrm{sSet}_\ast)$-tensors, so send $\mathbb{S}$-modules to $\mathbb{S}$-modules.
We thus have a triple of adjoint functors between categories of symmetric parametrised spectra
\begin{equation}
\label{eqn:BChangeSymmSpec}
(f_!\dashv f^\ast \dashv f_\ast)\colon
\begin{tikzcd}
\mathrm{Sp}^\Sigma_X
\ar[rr, shift left=2.2ex, ""]
\ar[rr, leftarrow, "\bot"]
\ar[rr, shift left=-2.2ex, "\bot", ""']
&&
\mathrm{Sp}^\Sigma_{X'}\,,
\end{tikzcd}
\end{equation}
each of which coincides with the functor of the same name on underlying symmetric sequences (justifying the abuse of notation).
\end{remark}

\begin{remark}
\label{rem:FibSymSmash}
The fibrewise smash product on retractive spaces over $X$ prolongs to a closed symmetric monoidal structure on $\mathrm{Sp}^\Sigma_X$.
The \emph{fibrewise smash product} of symmetric $X$-spectra $A$ and $B$ is the symmetric $X$-spectrum
\[
A\otimes_X B =\mathrm{colim}\left(\!
\begin{tikzcd}
  A\odot^\mathrm{Day}_X
  \mathbb{S}
  \odot^\mathrm{Day}_X
  B
  \ar[r, shift left=0.8ex]
  \ar[r, shift left=-0.8ex]
  &
  A\otimes^\mathrm{Day}_X
  B
\end{tikzcd}\!
\right)\,,
\]
where upper and lower arrows are given by the $\mathbb{S}$-actions on $A$ and $B$ respectively, using the symmetry isomorphism of the Day convolution product in the first instance. 
The unit of the fibrewise smash product $\otimes_X$ is the symmetric $X$-spectrum\footnote{This expression has, regrettably, far too many $X$'s.} $\mathbf{\Sigma}^\infty_X (X_{+X}) \cong X^\ast \mathbb{S}$.
The reader can convince themselves that for each $k,l\geq 0$ there is an isomorphism
\begin{equation}
\label{eqn:SmashVSSuspend}
\mathbf{\Sigma}^{\infty-k}_X Y\otimes_X \mathbf{\Sigma}^{\infty-l}_X Z \cong \mathbf{\Sigma}^{\infty-(k+l)}_X (Y\wedge_X Z)
\end{equation}
natural in $Y,Z\in \mathrm{sSet}_{\dslash X}$.
Setting $k=l=0$ implies that $\mathbf{\Sigma}^\infty_X\colon (\mathrm{sSet}_{\dslash}, \wedge_X)\to (\mathrm{Sp}^\Sigma_X, \otimes_X)$ is strongly symmetric monoidal.

While generally incompatible with the model structures we consider, the fibrewise smash product gives us a slick way of understanding $\mathrm{Sp}^\Sigma_X$ as a $\mathrm{Sp}^\Sigma$-module category.
For any map of simplicial sets $f\colon X\to X'$, Lemma \ref{lem:PBisStonglyClosed} implies that $f^\ast$ is strongly closed symmetric monoidal with respect to fibrewise smash products.
In particular, each category $\mathrm{Sp}^\Sigma_X$ is $\mathrm{Sp}^\Sigma$-enriched with $\mathrm{Sp}^\Sigma$-tensors and cotensors with the tensoring given by
\begin{equation}
\label{eqn:psSetTensorSymSpec}
(K, A)\longmapsto K\odot_X A := X^\ast K \otimes_X A\,.
\end{equation}
The base change adjunctions $(f_!\dashv f^\ast)$ and $(f^\ast\dashv f_\ast)$ on parametrised symmetric spectra are $\mathrm{Sp}^\Sigma$-enriched; $f_!$ preserves $\mathrm{Sp}^\Sigma$-tensors whereas $f^\ast$ preserves both tensors and cotensors.
\end{remark}

$\mathrm{Fun}(\mathbf{\Sigma}, \mathrm{sSet}_{\dslash X})$ is a left proper combinatorial $\mathrm{sSet}_\ast$-model category with fibrations and weak equivalences created by the evaluation functors $\mathrm{ev}_k$.
The free-forgetful adjunction \eqref{eqn:FreeSymmSpec} induces a left proper combinatorial $\mathrm{sSet}_\ast$-model structure on $\mathrm{Sp}^\Sigma_X$, which we call the \emph{symmetric $X$-projective model structure}.
A map $f\colon A\to B$ of symmetric $X$-spectra is thus a weak equivalence or fibration for the symmetric $X$-projective model structure precisely if each map $f(n)\colon A(n)\to B(n)$ is a weak equivalence or fibration in $\mathrm{sSet}_{\dslash X}$.
Sets of generating cofibrations and generating acyclic cofibrations are given by
\[
\mathcal{I}^{\Sigma\mathrm{-proj}}_X
:= \bigcup_{k\in \mathbb{N}} \mathbf{\Sigma}^{\infty-k}_X (\mathcal{I}^\mathrm{Kan}_X)
\qquad
\mbox{ and }
\qquad
\mathcal{J}^{\Sigma\mathrm{-proj}}_X
:= \bigcup_{k\in \mathbb{N}} \mathbf{\Sigma}^{\infty-k}_X (\mathcal{J}^\mathrm{Kan}_X)
\]
respectively.
The $\mathrm{sSet}_\ast$-action on $\mathrm{Sp}^\Sigma_X$ is defined by restricting \eqref{eqn:psSetTensorSymSpec} along $\mathbf{\Sigma}^\infty$; for a simplicial set $K$ and symmetric $X$-spectrum $A$ we calculate $(\mathbf{\Sigma}^\infty K \odot_X A)(n) \cong K\owedge_X A(n)\cong X^\ast K\wedge_X A(n)$ with structure maps induced from those of $A$ using the symmetry isomorphism of $\wedge_X$.

\begin{remark}
The left Quillen endofunctor $\mathbf{\Sigma}_X := \mathbf{\Sigma}^\infty S^1\odot_X (-)$ models suspension on the symmetric $X$-projective homotopy category.
Its right adjoint, which we denote $\mathbf{\Omega}_X$, is given by the $\mathrm{sSet}_\ast$-cotensoring with respect to $S^1$. 
Observe that, unlike the case of symmetric spectra, there is no difference between the suspension endofunctor and tensoring with $S^1$.
\end{remark}

For any $Y\in\mathrm{sSet}_{\dslash X}$ and $k\geq 0$, we write $\xi_{k,X}(Y)$ for the $(\mathbf{\Sigma}^{\infty-k}_X\dashv \widetilde{\mathbf{\Omega}}^{\infty-k}_X)$-adjunct of the map
\[
S^1 \owedge_X Y \longrightarrow (\Sigma_{n+1})_+ \owedge_X S^1 \owedge_X Y
\]
induced by the unit $1\colon \ast \to \Sigma_{n+1}$.
The \emph{symmetric $X$-stable model structure} is the left Bousfield localisation of the symmetric $X$-projective model structure at the set
\[
S_{\Sigma,X} :=\big\{\xi_{k,X} (C)\colon \mathbf{\Sigma}^{\infty-(k+1)}_X (S^1 \owedge_X C)\to \mathbf{\Sigma}^{\infty-k}_{X} (C) \big\}\,,
\]
where $C$ ranges over domains and codomains of morphisms in $\mathcal{I}^\mathrm{Kan}_X$ and $k\geq 0$.
The weak equivalences and fibrations of the symmetric $X$-stable model structure are called stable weak equivalences and stable fibrations respectively.

The following properties of the symmetric $X$-stable model structure are proven analogously to the sequential case:
\begin{lemma}
\label{lem:SymStabFib}
Stably fibrant objects of $\mathrm{Sp}^\Sigma_X$ are precisely the \emph{fibrant $\mathbf{\Omega}_X$-spectra}: the symmetric $X$-spectra $A$ such that each $A(n)\in \mathrm{sSet}_{\dslash X}$ is fibrant and the adjoint structure maps $\sigma^\vee_n\colon A(n)\to \Omega_X A(n+1)$ are weak equivalences for all $n\geq 0$.
The stable weak equivalences of fibrant $\mathbf{\Omega}_X$-spectra are precisely the levelwise weak equivalences.
\end{lemma}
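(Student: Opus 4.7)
The plan is to mirror the argument given for the sequential case in Lemma \ref{lem:SeqSpecFib}, with appropriate bookkeeping for the symmetric group actions.

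First I would recall that since the symmetric $X$-stable model structure is, by definition, the left Bousfield localisation of the symmetric $X$-projective model structure at $S_{\Sigma,X}$, the stably fibrant objects are precisely the $S_{\Sigma,X}$-local projectively fibrant objects. Thus a symmetric $X$-spectrum $A$ is stably fibrant if and only if each $A(n)\in \mathrm{sSet}_{\dslash X}$ is fibrant and, for every $k\geq 0$ and every domain or codomain $C$ of a morphism in $\mathcal{I}^\mathrm{Kan}_X$, the induced map of homotopy function complexes
\[
\mathrm{map}\big(\xi_{k,X}(C), A\big)\colon \mathrm{map}\big(\mathbf{\Sigma}^{\infty-k}_X C, A\big)\longrightarrow \mathrm{map}\big(\mathbf{\Sigma}^{\infty-(k+1)}_X(S^1\owedge_X C), A\big)
\]
is a weak equivalence.

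Next I would apply the $(\mathbf{\Sigma}^{\infty-k}_X\dashv \widetilde{\mathbf{\Omega}}^{\infty-k}_X)$-adjunction (Remark \ref{rem:FreeSymSpec}) together with the $(S^1\owedge_X(-)\dashv \Omega_X)$-adjunction to identify the displayed map with $\mathrm{map}(C, \sigma_k^\vee)$, where $\sigma_k^\vee\colon A(k)\to \Omega_X A(k+1)$ is the adjoint structure map. Unwinding the definition of $\xi_{k,X}$ via the unit $1\colon \ast\to \Sigma_{n+1}$ shows that this identification is induced precisely by $\sigma_k^\vee$, as in the sequential story. It then suffices to argue that $\mathrm{map}(C, \sigma_k^\vee)$ being a weak equivalence for all $C$ running over domains and codomains of $\mathcal{I}^\mathrm{Kan}_X$ is equivalent to $\sigma_k^\vee$ itself being a weak equivalence in $\mathrm{sSet}_{\dslash X}$; this is a formal small-object argument on cell attachments, identical to the one invoked in the proof of Lemma \ref{lem:SeqSpecFib} (Hovey's \cite[Proposition 3.2]{hovey_spectra_2001}). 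Together these steps characterise the stably fibrant objects as the fibrant $\mathbf{\Omega}_X$-spectra.

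For the second assertion, I would invoke the general fact about left Bousfield localisations: between local objects, the local and original weak equivalences coincide. Since the projective weak equivalences of $\mathrm{Sp}^\Sigma_X$ are precisely the levelwise weak equivalences, this yields exactly the claim. The main obstacle, such as there is one, is really just the routine verification that the cell attachment argument from the sequential case carries over unchanged — the presence of the $\Sigma_n$-actions adds no new content because the free functors $\mathbf{\Sigma}^{\infty-k}_X$ absorb them, and the evaluation functor $\widetilde{\mathbf{\Omega}}^{\infty-k}_X$ forgets them before any mapping-space computation is performed.
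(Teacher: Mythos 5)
Your proposal is correct and follows exactly the route the paper intends: the paper states that Lemma \ref{lem:SymStabFib} is proven analogously to the sequential case, i.e.\ by the argument of Lemma \ref{lem:SeqSpecFib} (locality against $S_{\Sigma,X}$, adjunction to reduce to $\mathrm{map}(C,\sigma_k^\vee)$ via \cite[Proposition 3.2]{hovey_spectra_2001}, and the general fact that local equivalences between local objects are the original, hence levelwise, weak equivalences). Your remark that the symmetric group actions are absorbed by the free functors and forgotten by the evaluation functors is precisely the bookkeeping that makes the transfer work.
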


\begin{lemma}
\label{lem:XiMapsAreStable}
For each $Y\in \mathrm{sSet}_{\dslash X}$, the map $\xi_{k,X}(Y)$ is a stable weak equivalence for all $k\geq 0$.
\end{lemma}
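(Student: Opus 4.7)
The plan is to follow the same template as the proof of Lemma \ref{lem:CofibWE}, replacing the sequential free-forgetful adjunctions with their symmetric analogues. Fix $k\geq 0$ and $Y\in\mathrm{sSet}_{\dslash X}$. By general properties of left Bousfield localisation, a morphism between (symmetric $X$-projectively) cofibrant objects is a stable weak equivalence if and only if it induces a weak equivalence on homotopy function complexes $\mathrm{map}(-,A)$ into every stably fibrant $A$. Since all retractive spaces are cofibrant and $\mathbf{\Sigma}^{\infty-m}_X$ is left Quillen for the projective model structure, both the domain $\mathbf{\Sigma}^{\infty-(k+1)}_X(S^1\owedge_X Y)$ and the codomain $\mathbf{\Sigma}^{\infty-k}_X Y$ of $\xi_{k,X}(Y)$ are symmetric $X$-projectively cofibrant, so it suffices to test against an arbitrary fibrant $\mathbf{\Omega}_X$-spectrum $A$ using Lemma \ref{lem:SymStabFib}.

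Next, I would unwind the adjunctions. Applying $\mathrm{map}(-,A)$ to $\xi_{k,X}(Y)$ and using the $\mathrm{sSet}_\ast$-enriched adjunctions $(\mathbf{\Sigma}^{\infty-m}_X\dashv \widetilde{\mathbf{\Omega}}^{\infty-m}_X)$ for $m=k$ and $m=k+1$, the resulting map of mapping spaces is equivalent to
\[
\mathrm{map}\bigl(Y,\, A(k)\bigr)\longrightarrow \mathrm{map}\bigl(S^1\owedge_X Y,\, A(k+1)\bigr)\cong \mathrm{map}\bigl(Y,\,\mathbf{\Omega}_X A(k+1)\bigr),
\]
where the isomorphism uses the $(\Sigma_X\dashv \Omega_X)$-adjunction on $\mathrm{sSet}_{\dslash X}$. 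By construction, $\xi_{k,X}(Y)$ is adjunct to the map $S^1\owedge_X Y\to (\Sigma_{k+1})_+\owedge_X S^1\owedge_X Y$ induced by the unit $1\colon\ast\to \Sigma_{k+1}$, and this latter target is precisely $(\mathbf{\Sigma}^{\infty-k}_X Y)(k+1)$ after forgetting the $\Sigma_{k+1}$-action (since the $\Sigma_1$-quotient in the formula of Remark \ref{rem:FreeSymSpec} is trivial). Tracing this identification through the adjunction chain, the displayed map is exactly postcomposition with the adjoint structure map $\sigma_k^\vee\colon A(k)\to \mathbf{\Omega}_X A(k+1)$.

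Finally, since $A$ is a fibrant $\mathbf{\Omega}_X$-spectrum, Lemma \ref{lem:SymStabFib} tells us that $\sigma_k^\vee$ is a weak equivalence between fibrant retractive spaces, and hence induces a weak equivalence on homotopy function complexes. Therefore $\mathrm{map}(\xi_{k,X}(Y),A)$ is a weak equivalence for every stably fibrant $A$, proving that $\xi_{k,X}(Y)$ is a stable weak equivalence.

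The only point requiring care is the bookkeeping of symmetric group actions when identifying the right adjunct of $\xi_{k,X}(Y)$, since $\widetilde{\mathbf{\Omega}}^{\infty-(k+1)}_X\mathbf{\Sigma}^{\infty-k}_X Y$ involves a priori a non-trivial $(\Sigma_{k+1})$-twist; however, $\widetilde{\mathbf{\Omega}}^{\infty-m}_X$ forgets the $\Sigma_m$-action, so this twist disappears on hom-spaces into a symmetric $X$-spectrum, and the calculation reduces cleanly to the adjoint structure map of $A$ at level $k$.
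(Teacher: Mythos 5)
Your proof is correct and follows exactly the route the paper intends: the paper gives no separate argument for Lemma \ref{lem:XiMapsAreStable}, stating only that it is "proven analogously to the sequential case," i.e.\ by the same adjunction-unwinding used for Lemma \ref{lem:CofibWE}, testing against fibrant $\mathbf{\Omega}_X$-spectra via Lemma \ref{lem:SymStabFib}. Your careful identification of the adjunct of $\xi_{k,X}(Y)$ with postcomposition by $\sigma_k^\vee$, including the observation that the $\Sigma_{k+1}$-twist is harmless because $\widetilde{\mathbf{\Omega}}^{\infty-(k+1)}_X$ forgets the group action, is precisely the bookkeeping the paper leaves implicit.
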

\begin{lemma}
\label{lem:SymSpecShifts}
The adjunction $(\mathbf{\Sigma}_X\dashv \mathbf{\Omega}_X)\colon \mathrm{Sp}^\Sigma_X\to \mathrm{Sp}^\Sigma_X$ is a Quillen equivalence. In particular, $\mathrm{Sp}^\Sigma_X$ is a stable model category.
\end{lemma}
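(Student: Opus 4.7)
The plan is to adapt the path-space argument used in the proof of Lemma~\ref{lem:SeqSpecisStable} to the symmetric setting, which is in fact slightly cleaner here because there is no twist discrepancy between smash and suspension (cf.~Remark~\ref{rem:SmashvSuspendProlong}): the functor $\mathbf{\Sigma}_X$ already models suspension on $Ho(\mathrm{Sp}^\Sigma_X)$, so the Quillen equivalence claim and the stability claim are essentially the same statement once one of them is known.

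First I would establish the symmetric analogue of Lemma~\ref{lem:SeqSpecFibrewiseStabEquiv}: a morphism $\psi$ of symmetric $X$-spectra is a stable weak equivalence if and only if $x^\ast\mathcal{R}\psi$ is a $\pi^{\mathrm{st}}_\ast$-isomorphism for every point $x\colon\ast\to X$, where $\mathcal{R}$ is a stably fibrant replacement. The proof is a verbatim transcription of the sequential argument, using Lemma~\ref{lem:SymStabFib} in place of Lemma~\ref{lem:SeqSpecFib} together with the five lemma applied to the fibration long exact sequences of the $A(n)\to X$.

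Next, reduce without loss of generality to the case where $X$ is reduced and connected: the multi-component case splits as a product under pullback along the component inclusions, while for a connected non-reduced $X$ one passes to the reduced first Eilenberg subcomplex of a fibrant replacement and invokes (the symmetric analogue of) Theorem~\ref{thm:SymSpecStructureTheorem}. For $X$ reduced with unique vertex $x$, consider the path fibration $p(X)\colon\mathbb{P}X\to X$ and the constant path $\kappa\colon\ast\to\mathbb{P}X$. Exactly as in the proof of Lemma~\ref{lem:SeqSpecisStable}, for any cofibrant $A\in\mathrm{Sp}^\Sigma_X$ construct a zig-zag of stable weak equivalences in $\mathrm{Sp}^\Sigma$
\[
x^\ast A^f \longleftarrow \kappa^\ast(p(X)^\ast A^c)^f \longrightarrow \kappa^\ast\mathbb{P}X^\ast F \cong F \longleftarrow \mathbb{P}X_! p(X)^\ast A^c,
\]
where $F$ is a fibrant replacement of $\mathbb{P}X_! p(X)^\ast A^c$. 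This identifies, up to stable equivalence, $\mathbb{P}X_! p(X)^\ast A^c$ with the fibre spectrum $x^\ast A^f$.

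Because both $p(X)^\ast$ and $\mathbb{P}X_!$ preserve $\mathrm{Sp}^\Sigma$-tensors (Remark~\ref{rem:FibSymSmash}), the functor $\mathbf{\Sigma}_X = X^\ast S^1\otimes_X(-)$ commutes up to natural isomorphism with the operations in the zig-zag, and similarly for $\mathbf{\Omega}_X$. Appealing to the classical fact that $\mathrm{Sp}^\Sigma$ is a stable model category---so that $\mathbf{\Sigma}=S^1\odot(-)$ shifts stable homotopy groups up by one---we conclude that $\mathbf{\Sigma}_X$ shifts fibrewise stable homotopy groups up by one and $\mathbf{\Omega}_X$ shifts them down by one. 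In particular, for cofibrant $A$ the derived unit $A\to\mathbf{\Omega}_X\mathcal{R}\mathbf{\Sigma}_X A$ is a fibrewise $\pi^{\mathrm{st}}_\ast$-isomorphism, and a similar argument shows that $\mathbf{\Omega}_X$ preserves and reflects stable weak equivalences between stably fibrant objects. Combined, these yield that $(\mathbf{\Sigma}_X\dashv\mathbf{\Omega}_X)$ is a Quillen equivalence, from which stability of $\mathrm{Sp}^\Sigma_X$ is immediate since $\mathbf{\Sigma}_X$ is the suspension endofunctor on $Ho(\mathrm{Sp}^\Sigma_X)$.

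The main obstacle will be making the comparison between $\mathbf{\Sigma}_X$ on an $X$-spectrum and $\mathbf{\Sigma}$ on the corresponding unparametrised fibre spectrum rigorous along the zig-zag, tracking cofibrant and fibrant replacements carefully so that base change does not degrade the stable equivalence class. Once this compatibility is in hand, everything else is a direct transcription of the sequential argument.
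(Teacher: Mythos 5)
Your proposal is correct in substance, but it takes a genuinely different route from the paper. The paper's (implicit) proof is the remark that this lemma is proven analogously to the sequential case, i.e.\ by the shift-functor argument of Lemma \ref{lem:SeqSpecShifts}: for a fibrant $\mathbf{\Omega}_X$-spectrum $A$ the natural map $A\to s_X\mathbf{\Omega}_X A$ is a level equivalence, $s_X$ and $\mathbf{\Omega}_X$ commute, so $\mathbf{R}s_X$ and $\mathbf{R}\mathbf{\Omega}_X$ are mutually inverse and $\mathbf{\Omega}_X$ is a right Quillen equivalence; stability then follows at once because $\mathbf{\Sigma}_X$ \emph{is} the $S^1$-tensoring, so the absence of the twist discrepancy is precisely what makes the path-fibration detour of Lemma \ref{lem:SeqSpecisStable} unnecessary here. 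You instead transcribe that detour, and it does go through, with three caveats worth making explicit. First, the reduction to reduced $X$ and the path-fibration zig-zag use Theorem \ref{thm:SymSpecStructureTheorem}, which in the paper appears \emph{after} this lemma; there is no circularity, since that theorem's proof only invokes Lemmas \ref{lem:SymStabFib} and \ref{lem:XiMapsAreStable}, but the forward reference should be acknowledged. Second, the detection lemma must be run with \emph{true} stable homotopy groups $\pmb{\pi}^{\mathrm{st}}_\ast$ (naive ones do not detect stable equivalences of symmetric spectra); your formulation via $x^\ast\mathcal{R}\psi$ is fine because $x^\ast\mathcal{R}A$ is stably fibrant, but the same care is needed when you smash the fibre spectrum with $S^1$, since $S^1\wedge\mathbb{P}X_!p(X)^\ast A^c$ is not fibrant. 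Third, the ``classical fact that $\mathrm{Sp}^\Sigma$ is stable'' is exactly the $X=\ast$ instance of the lemma you are proving, so your argument reduces the general case to the unparametrised one and imports the latter from Hovey--Shipley--Smith; this is legitimate, and parallels the paper's appeal to $\pi^{\mathrm{st}}_\ast(\Sigma P)\cong\pi^{\mathrm{st}}_{\ast-1}(P)$ in the sequential setting, but it is a markedly heavier external input than the sequential one, and it is what the shift-functor argument avoids. In exchange, your route produces the fibrewise characterisation of stable equivalences along the way (which the paper records separately afterwards), while the paper's route is shorter, uniform in $X$, and self-contained.
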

\begin{theorem}
\label{thm:SymSpecStructureTheorem}
For any simplicial set $X$, $\mathrm{Sp}^\Sigma_X$ is a left proper combinatorial $\mathrm{Sp}^\Sigma$-model category.
For any map of simplicial sets $f\colon X\to X'$ there is a triple of adjoint $\mathrm{Sp}^\Sigma$-functors
\[
(f_!\dashv f^\ast\dashv f_\ast)\colon
\begin{tikzcd}
\mathrm{Sp}^\Sigma_X
\ar[rr, shift left=2.2ex, ""]
\ar[rr, leftarrow, "\bot"]
\ar[rr, shift left=-2.2ex, "\bot", ""']
&&
\mathrm{Sp}^\Sigma_{X'}\,,
\end{tikzcd}
\]
where $(f_!\dashv f^\ast)$ is a $\mathrm{Sp}^\Sigma$-Quillen adjunction that is moreover a $\mathrm{Sp}^\Sigma$-Quillen equivalence if $f$ is a weak equivalence.
If $f$ is a fibration (or a projection to a factor in a product) then $(f^\ast \dashv f_\ast)$ is a $\mathrm{Sp}^\Sigma$-Quillen adjunction, and is a $\mathrm{Sp}^\Sigma$-Quillen equivalence if $f$ is an acyclic fibration.
\end{theorem}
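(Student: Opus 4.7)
The plan is to follow the blueprint of Theorem \ref{thm:SeqSpecStructureTheorem}, exploiting the fact that symmetric sequences, unlike sequential spectra, have base change functors that are all defined strictly levelwise. First I would establish the symmetric $X$-projective model structure by transfer along the free-forgetful adjunction \eqref{eqn:FreeSymmSpec}: since $\mathrm{sSet}_{\dslash X}$ is left proper combinatorial (Lemma \ref{lem:RetSpaceLocalModelStructure}), so is the projective model structure on $\mathrm{Fun}(\mathbf{\Sigma},\mathrm{sSet}_{\dslash X})$, and the monad $\mathbb{S}\odot^{\mathrm{Day}}_X(-)$ preserves filtered colimits so the transferred model structure exists and inherits combinatoriality and left properness (the latter because cofibrations in $\mathrm{Sp}^\Sigma_X$ are underlying cofibrations on retractive spaces at each level). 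The generating sets $\mathcal{I}^{\Sigma\mathrm{-proj}}_X$ and $\mathcal{J}^{\Sigma\mathrm{-proj}}_X$ are as stated. For the $\mathrm{Sp}^\Sigma$-enrichment, note that by Remark \ref{rem:FibSymSmash} the terminal map $X\colon X\to \ast$ makes $\mathrm{Sp}^\Sigma_X$ into a $\mathrm{Sp}^\Sigma$-module via $K\odot_X A = X^\ast K\otimes_X A$, with $X^\ast$ strongly closed monoidal (Lemma \ref{lem:PBisStonglyClosed} and its symmetric-spectrum prolongation). The pushout-product axiom for $\odot_X$ reduces to checking it on generating cofibrations $\mathbf{\Sigma}^{\infty-k}(i_+)\,\square_{\odot_X}\,\mathbf{\Sigma}^{\infty-l}_X(j_{+X})$; using the isomorphism \eqref{eqn:SmashVSSuspend}, this reduces to the pushout-product axiom for $\owedge_X$ (Theorem \ref{thm:RetSpStructureTheorem}) on the underlying retractive spaces, and similarly for the acyclic case.

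Next I would construct the base change triple. The functors $(f_!\dashv f^\ast\dashv f_\ast)$ on symmetric sequences are defined levelwise as in the display preceding \eqref{eqn:BChangeSymmSpec}; since $f_!$ and $f^\ast$ preserve the $\mathrm{Fun}(\mathbf{\Sigma},\mathrm{sSet}_\ast)$-action, they descend to $\mathrm{Sp}^\Sigma_X\to \mathrm{Sp}^\Sigma_{X'}$ and $\mathrm{Sp}^\Sigma_{X'}\to \mathrm{Sp}^\Sigma_X$ respectively. The right adjoint $f_\ast$ on symmetric spectra is constructed exactly as in Theorem \ref{thm:SeqSpecStructureTheorem} using the natural isomorphism $f_\ast\mathbf{\Omega}_X\cong \mathbf{\Omega}_{X'}f_\ast$ to assemble structure maps. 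Because fibrations and weak equivalences in the projective model structure are levelwise and $(f_!\dashv f^\ast)$ at the retractive space level is Quillen (Theorem \ref{thm:RetSpStructureTheorem}), $(f_!\dashv f^\ast)$ is projectively Quillen; the $\mathrm{Sp}^\Sigma$-enrichment of this adjunction comes from the projection formula $f_!(f^\ast K\otimes_X A)\cong K\otimes_{X'}f_! A$ at the spectrum level, a direct consequence of Remark \ref{rem:FibSymSmash} and the analogous unstable projection formula. The adjunction $(f_!\dashv f^\ast)$ descends to the stable model structures because $f_!$ sends $S_{\Sigma,X}$ into $S_{\Sigma,X'}$ by inspection (it commutes with $\mathbf{\Sigma}^{\infty-k}$ and $S^1\owedge_X(-)$).

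To handle the Quillen equivalence claims I would proceed verbatim as in the proof of Theorem \ref{thm:SeqSpecStructureTheorem}, with the key inputs now being Lemma \ref{lem:SymStabFib} (characterisation of stably fibrant objects as fibrant $\mathbf{\Omega}_X$-spectra, with stable equivalences between them being levelwise) and Lemma \ref{lem:XiMapsAreStable}. If $f$ is a weak equivalence then $(f_!\dashv f^\ast)$ is a projective Quillen equivalence by levelwise application of Theorem \ref{thm:RetSpStructureTheorem}(i); for the stable statement, one checks that $f_!$ reflects stable equivalences between cofibrant objects by mapping into $f^\ast A$ for fibrant $\mathbf{\Omega}_{X'}$-spectra $A$, using that $f^\ast\mathbf{\Omega}_{X'}\cong \mathbf{\Omega}_X f^\ast$ implies $f^\ast$ preserves fibrant $\mathbf{\Omega}$-spectra up to levelwise equivalence. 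When $f$ is a fibration or a product projection, $f^\ast$ preserves levelwise weak equivalences (by Theorem \ref{thm:RetSpStructureTheorem}(ii)) and sends each $\xi_{k,X'}(C)$ to a stable weak equivalence in $\mathrm{Sp}^\Sigma_X$ (Lemma \ref{lem:XiMapsAreStable}), so $(f^\ast\dashv f_\ast)$ descends to a $\mathrm{Sp}^\Sigma$-Quillen adjunction on the stable model structures; it preserves $\mathrm{Sp}^\Sigma$-tensors and cotensors because $f^\ast$ is strongly closed monoidal. The acyclic fibration case follows by the now-standard two-out-of-three dance invoking uniqueness of adjoints: $\mathbf{R}f^\ast\cong \mathbf{L}f^\ast$, and $(\mathbf{L}f_!\dashv \mathbf{L}f^\ast)$ being an adjoint equivalence forces $\mathbf{R}f_\ast\cong \mathbf{L}f_!$. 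The main obstacle is bookkeeping the interaction between the $\Sigma_n$-equivariance of the $\mathbf{\Sigma}_X$-spectrum structure and the base change functors while verifying the $\mathrm{Sp}^\Sigma$-enriched pushout-product axiom, but this is rendered routine by reducing through $f^\ast = (X\to \ast)^\ast$ to the closed monoidal structure on $(\mathrm{Sp}^\Sigma_X,\otimes_X)$ which is itself a formal consequence of Remark \ref{rem:FibSymSmash}.
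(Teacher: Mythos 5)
Your plan reproduces the paper's architecture in outline, but it has a genuine gap at the one step where the symmetric case requires real work: the acyclicity clause of the $\mathrm{Sp}^\Sigma$-enriched pushout-product axiom for the \emph{stable} model structure on $\mathrm{Sp}^\Sigma_X$ (which is what the first sentence of the theorem asserts). Your reduction to generating cofibrations and, via \eqref{eqn:SmashVSSuspend}, to the pushout-product axiom for $\owedge_X$, with ``similarly for the acyclic case,'' only works for the projective model structures: the symmetric $X$-stable structure is a left Bousfield localisation, its acyclic cofibrations are \emph{not} generated by $\mathbf{\Sigma}^{\infty-l}_X(\mathcal{J}^{\mathrm{Kan}}_X)$, and no explicit generating set of stable acyclic cofibrations is available, so there is nothing to ``reduce levelwise.'' The argument the paper supplies at exactly this point is the isomorphism $\mathbf{\Sigma}^{\infty-k}K\odot_X \xi_{l,X}(Y)\cong \xi_{k+l,X}(K\owedge_X Y)$, which together with Lemma \ref{lem:XiMapsAreStable} shows that $\mathbf{\Sigma}^{\infty-k}K\odot_X(-)$ is a left Quillen endofunctor for the stable structure when $K$ is a domain or codomain of a generating cofibration of $\mathrm{Sp}^\Sigma$; one then concludes that $i\,\square\,j$ is a stable equivalence for such $i$ and any stable acyclic cofibration $j$ by a $2$-out-of-$3$ argument in the defining pushout square, and passes to arbitrary cofibrations $i$ by cofibrant generation of $\mathrm{Sp}^\Sigma$. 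Relatedly, your closing claim that the enriched axiom is ``rendered routine'' by the closed monoidal structure on $(\mathrm{Sp}^\Sigma_X,\otimes_X)$ conflates the categorical structure with homotopical compatibility: the paper emphasises (Remarks \ref{rem:SmashFail} and \ref{rem:FibSymSmash}) that $\otimes_X$ is generally \emph{not} compatible with the model structure; only the $\mathrm{Sp}^\Sigma$-module structure $\odot_X$ is, and establishing that is precisely the nontrivial content.

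The rest of your proposal — transfer of the projective structure with the stated generating sets, the levelwise construction of $(f_!\dashv f^\ast\dashv f_\ast)$ using $f_\ast\mathbf{\Omega}_X\cong\mathbf{\Omega}_{X'}f_\ast$, descent of $(f_!\dashv f^\ast)$ to the stable structures via $f_!S_{\Sigma,X}\subset S_{\Sigma,X'}$, the use of Lemma \ref{lem:XiMapsAreStable} for $(f^\ast\dashv f_\ast)$ when $f$ is a fibration, and the Quillen-equivalence arguments patterned on Theorem \ref{thm:SeqSpecStructureTheorem} with Lemma \ref{lem:SymStabFib} as input — matches the paper, which handles all of these claims with the phrase that they are proven along the same lines as the sequential case, using that $f_!$ and $f^\ast$ preserve $\mathrm{Sp}^\Sigma$-tensors. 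So the proposal is salvageable, but you must replace the levelwise reduction in the acyclic half of the pushout-product axiom by the localisation-theoretic argument above.
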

\begin{proof}
We first verify that the $\mathrm{Sp}^\Sigma$-tensoring on $\mathrm{Sp}^\Sigma_X$ is a Quillen bifunctor.
Using \eqref{eqn:SmashVSSuspend} and Theorem \ref{thm:RetSpStructureTheorem}, one verifies that the pushout-products of generating cofibrations are cofibrations.
By a lifting argument, it follows that all pushout-products of cofibrations are cofibrations.

It remains to check that the pushout-product of cofibrations $i\,\square\, j$ is acyclic if either of $i$, $j$ is acyclic. 
For a pointed simplicial set $K$ one checks that $\mathbf{\Sigma}^{\infty-k}K\odot_X \xi_{l,X}(Y) \cong \xi_{l+k,X}(K\owedge_X Y)$ for all $Y\in \mathrm{sSet}_{\dslash X}$ and $k,l\geq 0$.
By Lemma \ref{lem:XiMapsAreStable} the endofunctor $\mathbf{\Sigma}^{\infty-k}K\odot_X (-)$ defines a left Quillen endofunctor for the stable model structure.
Supposing now that $i =\mathbf{\Sigma}^{\infty-k}(\partial\Delta^n_+\to \Delta^n_+)$ is a generating cofibration of $\mathrm{Sp}^\Sigma$, then for any (stably) acyclic cofibration $j\colon A\to B$ of sequential $X$-spectra, we have a diagram
 \[
 \begin{tikzcd}
    \mathbf{\Sigma}^{\infty-k}\partial\Delta^n_+ \odot_X A 
    \ar[r]
    \ar[d, "\sim"'] & 
    \mathbf{\Sigma}^{\infty-k}\Delta^n_+ \odot_X A
    \ar[dr, bend left =15, "\sim"]
    \ar[d, "\sim"] 
    &
    \\
    \mathbf{\Sigma}^{\infty-k}\partial\Delta^n_+ \odot_X B 
    \ar[r]
    &
    P
    \ar[r, "{i\,\square\, j}"]
    &
    \mathbf{\Sigma}^{\infty-k}\Delta^n_+ \odot_X B\,,
 \end{tikzcd}
 \] 
 with stable weak equivalences as marked, where $P$ is the pushout.
 By the $2$-out-of-$3$ property, $i\,\square\, j$ is a stable weak equivalence.
 For all cofibrations $i$ and acyclic cofibrations $j$, it now follows from cofibrant generation (of $\mathrm{Sp}^\Sigma$) that $i\,\square\, j$ is an acyclic cofibration.
 The remaining case is proven similarly, and we conclude that the $\mathrm{Sp}^\Sigma$-tensoring on $\mathrm{Sp}^\Sigma_X$ is a Quillen bifunctor---for $X=\ast$ this also shows that $\mathrm{Sp}^\Sigma$ is a symmetric monoidal model category.
 
 The remaining claims are proven along the same lines as Theorem \ref{thm:SeqSpecStructureTheorem}, using that $f^\ast$ and $f_!$ preserve $\mathrm{Sp}^\Sigma$-tensors.
\end{proof}

We conclude this section by giving a fibrewise characterisation of stable weak equivalences of symmetric parametrised spectra.
Forgetting symmetric group actions gives rise to a functor $\mathrm{Sp}^\Sigma_X\to \mathrm{Sp}^\mathbb{N}_X$; for $X=\ast$ it is an unpleasant but nevertheless true fact that this functor reflects but does not preserve stable weak equivalences \cite[Section 3.1]{hovey_symmetric_2000}.
To remedy this, we pose the following
\begin{definition}
The \emph{naive stable homotopy groups} of a symmetric spectrum $P$ are the stable homotopy groups $\pi^\mathrm{st}_\ast(P)$ of the underlying sequential spectrum.
The \emph{true stable homotopy groups} of $P$ are $\pmb{\pi}^\mathrm{st}_\ast (P) :=\pi^\mathrm{st}_\ast(\mathcal{R}P)$, where $\mathcal{R}$ is a fibrant replacement functor on $\mathrm{Sp}^\Sigma$.
\end{definition}
\begin{remark}
The definition of true stable homotopy groups only depends on $\mathcal{R}$ up to natural isomorphism.
Using Lemma \ref{lem:SymStabFib} it is easy to show that a morphism of symmetric spectra is a stable weak equivalence if and only if it is a $\pmb{\pi}^\mathrm{st}_\ast$-isomorphism.
\end{remark}

\begin{definition}
Fix a fibrant replacement functor $\mathcal{R}_X$ for the symmetric $X$-stable model structure.
A map of symmetric $X$-spectra $\psi\colon A\to B$ is a \emph{fibrewise $\pmb{\pi}^\mathrm{st}_\ast$-isomorphism} if $x^\ast \mathcal{R}_X\psi$ is a $\pmb{\pi}^\mathrm{st}_\ast$-isomorphism for all points $x\colon \ast \to X$.
\end{definition}

The fibrewise characterisation of stable weak equivalences of symmetric parametrised spectra is proven following the same argument as for sequential parametrised spectra (Lemma \ref{lem:SeqSpecFibrewiseStabEquiv}).
\begin{lemma}
A map of symmetric $X$-spectra $\psi\colon A\to B$ is stable weak equivalence precisely if it is a fibrewise $\pmb{\pi}^\mathrm{st}_\ast$-isomorphism.
\end{lemma}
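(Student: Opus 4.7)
The plan is to follow the template of Lemma \ref{lem:SeqSpecFibrewiseStabEquiv} more or less verbatim, replacing naive stable homotopy groups by true stable homotopy groups and invoking Lemma \ref{lem:SymStabFib} at the appropriate places. The argument splits into the two implications, neither of which I expect to pose much difficulty.

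First I would dispose of the forward direction. If $\psi$ is a stable weak equivalence, then so is $\mathcal{R}_X\psi$, and by Lemma \ref{lem:SymStabFib} this is a levelwise weak equivalence between fibrant $\mathbf{\Omega}_X$-spectra. For a point $x\colon \ast\to X$, the functor $x^\ast$ preserves fibrant retractive spaces and (by Ken Brown's lemma) weak equivalences between them; as it also preserves $\mathrm{sSet}_\ast$-cotensors, the symmetric spectra $x^\ast \mathcal{R}_X A$ and $x^\ast \mathcal{R}_X B$ are themselves fibrant $\mathbf{\Omega}$-spectra, and $x^\ast \mathcal{R}_X \psi$ is a levelwise equivalence between them. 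It is therefore a stable weak equivalence of symmetric spectra, and so a $\pmb{\pi}^\mathrm{st}_\ast$-isomorphism by definition. In particular, since $x^\ast \mathcal{R}_X A$ is already fibrant we may compute the true stable homotopy groups using $\mathcal{R}_X$ in place of $\mathcal{R}$ on $\mathrm{Sp}^\Sigma$.

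For the reverse direction I would repeat the five lemma argument from the sequential case. Suppose $\psi$ is a fibrewise $\pmb{\pi}^\mathrm{st}_\ast$-isomorphism; it suffices, again by Lemma \ref{lem:SymStabFib}, to show that $\mathcal{R}_X\psi$ is a levelwise weak equivalence. Choose a section $i\mapsto x_i$ of $X\to\pi_0(X)$. The projections $\mathcal{R}_XA(k)\to X$ and $\mathcal{R}_XB(k)\to X$ are Kan fibrations, yielding a morphism of long exact sequences of homotopy groups in which the fibre terms are $\pi_n(x_i^\ast \mathcal{R}_X A(k), x_i)$ and similarly for $B$, and in which the base terms $\pi_n(X,x_i)$ agree. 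Because $x_i^\ast \mathcal{R}_X A$ and $x_i^\ast \mathcal{R}_X B$ are fibrant $\mathbf{\Omega}$-spectra, the homotopy groups of their $k$-th levels coincide with the corresponding true stable homotopy groups shifted by $k$. The fibrewise $\pmb{\pi}^\mathrm{st}_\ast$-hypothesis then forces the fibre vertical maps to be isomorphisms, and the five lemma completes the step for each $i$ and each $n,k$. Running over all components produces a levelwise weak equivalence.

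The only conceptual step requiring slight care is the compatibility between symmetric fibrant replacement in the parametrised and unparametrised categories: one must observe that because $x^\ast$ preserves fibrations, weak equivalences between fibrant objects, and $S^1$-cotensors, the fibre $x^\ast \mathcal{R}_X A$ is automatically fibrant in $\mathrm{Sp}^\Sigma$, so no additional unparametrised fibrant replacement is needed when computing $\pmb{\pi}^\mathrm{st}_\ast$. I expect this to be the main (and only mildly nontrivial) point; beyond it the proof is a cosmetic adaptation of the sequential argument.
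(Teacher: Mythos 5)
Your proposal is correct and follows essentially the same route as the paper, which simply states that the symmetric case is proven by the same argument as Lemma \ref{lem:SeqSpecFibrewiseStabEquiv} (Ken Brown for the forward direction, the long-exact-sequence/five-lemma argument over each component for the converse). Your additional observation that $x^\ast\mathcal{R}_X A$ is already a fibrant $\mathbf{\Omega}$-spectrum, so that true and naive stable homotopy groups agree and no further unparametrised fibrant replacement is needed, is exactly the point that makes the verbatim transfer legitimate.
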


\subsubsection{Comparing models of parametrised spectra}
We conclude our discussion of the local theory of parametrised spectra by comparing various different models.
Our first goal is to compare the sequential and symmetric models for parametrised spectra, which we subsequently compare with the $\infty$-categorical construction.
The first result, a special case of \cite[Theorem 10.1]{hovey_spectra_2001}, compares the sequential and symmetric models by showing that both are equivalent to an auxiliary homotopy theory of \lq\lq double spectra'':
\begin{theorem}
\label{thm:LocalSpecComp}
For any simplicial set $X$, there is a zig-zag of left Quillen equivalences
\[
\begin{tikzcd}
  \mathrm{Sp}^\Sigma_X
  \ar[r, "d(\Sigma)"]
  &
  \mathrm{Sp}^\mathcal{D}_{X}
  \ar[r, leftarrow, "d(\mathbb{N})"]
  &
  \mathrm{Sp}^\mathbb{N}_X
\end{tikzcd}
\]
pseudonatural in $X$.
\end{theorem}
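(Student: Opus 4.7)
The plan is to adapt the comparison strategy of \cite[Section 10]{hovey_spectra_2001} to the parametrised setting. First I would define the intermediate category $\mathrm{Sp}^\mathcal{D}_X$ of \emph{double $X$-spectra}: objects are bi-indexed families $\{A_{p,q}\}_{p,q \geq 0}$ of retractive spaces over $X$ equipped with a $\Sigma_p$-action on each $A_{p,q}$, together with $\Sigma_p$-equivariant symmetric structure maps in the $p$-direction and sequential structure maps $S^1 \owedge_X A_{p,q} \to A_{p,q+1}$ in the $q$-direction, subject to the evident compatibilities. Equivalently, $\mathrm{Sp}^\mathcal{D}_X$ is the category of sequential objects internal to $\mathrm{Sp}^\Sigma_X$ (or symmetric objects internal to $\mathrm{Sp}^\mathbb{N}_X$). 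Following the pattern of Sections \ref{sss:SequentialStabFibrewise} and \ref{SS:SymStabLoc} verbatim, one endows $\mathrm{Sp}^\mathcal{D}_X$ with a projective $\mathrm{sSet}_\ast$-model structure induced from the free-forgetful adjunction over bi-indexed families, and then left-Bousfield-localises by inverting the combined symmetric and sequential stabilisation maps. Arguments parallel to Lemmas \ref{lem:SeqSpecFib} and \ref{lem:SymStabFib} identify the fibrant objects with bi-indexed $\mathbf{\Omega}_X$-spectra, and the resulting stable model structure is left proper, combinatorial and simplicial.

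Next I would construct the two comparison functors. Viewing a symmetric $X$-spectrum $A$ as a double $X$-spectrum concentrated at $q=0$ and then freely generating the sequential direction defines $d(\Sigma)$; analogously a sequential $X$-spectrum $B$ yields a double $X$-spectrum concentrated at $p=0$ with trivial $\Sigma_p$-actions, defining $d(\mathbb{N})$. Both functors preserve colimits and $\mathrm{sSet}_\ast$-tensors (and so admit right adjoints by the adjoint functor theorem), and they send generating projective (acyclic) cofibrations to (acyclic) cofibrations in $\mathrm{Sp}^\mathcal{D}_X$. Moreover $d(\Sigma)(S_{\Sigma,X})$ and $d(\mathbb{N})(S_{\mathbb{N},X})$ both lie in the joint localising class, so both descend to left Quillen functors between the stable model structures.

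The core of the proof is to show that $d(\Sigma)$ and $d(\mathbb{N})$ are Quillen equivalences. The guiding principle is that once a model category is stable, further stabilisation is homotopically trivial: the structure maps of a fibrant $\mathbf{\Omega}_X$-bispectrum are already weak equivalences in both directions. To make this precise I would show that the right adjoint to $d(\mathbb{N})$ (respectively $d(\Sigma)$) is computed, up to natural weak equivalence, by evaluation at the $p=0$ row (respectively $q=0$ column) of a fibrant replacement, and that on bifibrant objects both derived unit and derived counit are levelwise weak equivalences by Lemmas \ref{lem:SeqSpecisStable} and \ref{lem:SymSpecShifts}. The fibrewise characterisation of stable weak equivalences (Lemma \ref{lem:SeqSpecFibrewiseStabEquiv} together with its symmetric analogue) then reduces the verification to the classical unparametrised comparison.

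The main obstacle I anticipate is pseudonaturality in $X$. Each of $\mathrm{Sp}^\Sigma_{(-)}$, $\mathrm{Sp}^\mathbb{N}_{(-)}$ and $\mathrm{Sp}^\mathcal{D}_{(-)}$ extends to a pseudofunctor $\mathrm{sSet} \to \mathbf{Model}$ via the base change adjunctions of Theorems \ref{thm:SeqSpecStructureTheorem} and \ref{thm:SymSpecStructureTheorem}, because pullback preserves $\mathrm{sSet}_\ast$-tensors and all relevant limits and colimits (Lemma \ref{lem:PBisStonglyClosed}, Corollary \ref{cor:sSetastEnrichmentFacts}). Since $d(\Sigma)$ and $d(\mathbb{N})$ are built entirely from such pullback-stable data, there are canonical comparison isomorphisms $f_! \circ d(\Sigma) \cong d(\Sigma) \circ f_!$ and $f_! \circ d(\mathbb{N}) \cong d(\mathbb{N}) \circ f_!$. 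Verifying that these satisfy the cocycle conditions required of a $2$-morphism in $\mathbf{Model}$ in the sense of \cite{harpaz_grothendieck_2015} is formal but requires careful bookkeeping, and is the most technical step of the argument.
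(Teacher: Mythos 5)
Your proposal follows essentially the same route as the paper's proof: the intermediate category of double $X$-spectra with its projective and twice-stable (jointly localised) model structures, the comparison functors $d(\Sigma)$ and $d(\mathbb{N})$ as left adjoints to evaluation at level zero, the stability of $\mathrm{Sp}^\Sigma_X$ and $\mathrm{Sp}^\mathbb{N}_X$ (Lemmas \ref{lem:SymSpecShifts} and \ref{lem:SeqSpecisStable}) as the reason the extra stabilisation is a Quillen equivalence, and pseudonaturality via the (strict) compatibility of base change with the evaluation functors. The only substantive difference is one of emphasis: the paper's hardest step is the derived-unit computation, carried out by fibrantly replacing in the intermediate ``symmetric projective'' localisation and showing the result is already twice-stably fibrant, whereas you treat this as routine and instead flag the (in fact easy) pseudonaturality bookkeeping as the technical core.
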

\begin{proof}
The intermediate category of \emph{double $X$-spectra} is $\mathrm{Sp}^\mathcal{D}_X := \mathrm{Sp}^\mathbb{N}(\mathrm{Sp}^\Sigma_X)\cong \mathrm{Sp}^\Sigma(\mathrm{Sp}^\mathbb{N}_X)$ obtained by taking sequential (resp.~symmetric) spectrum objects in symmetric (resp.~sequential) $X$-spectra with respect to the endofunctor $\Sigma_X := S^1\owedge_X(-)$ arising from the $\mathrm{sSet}_\ast$-tensoring.
A double $X$-spectrum is thus a family $\{A_m(n)\}_{m,n\geq 0}$ of retractive spaces over $X$ such that 
\begin{enumerate}[label=(\roman*)]
  \item for each $m\geq 0$, the collection $\{A_{m}(n)\}_{n\geq 0}$ is a symmetric $X$-spectrum; 
  \item for each $n\geq 0$, the collection $\{A_m(n)\}_{m\geq 0}$ is a sequential $X$-spectrum; and
  \item for each $m,n \geq 0$ the structure maps $\Sigma_X A_m(\ast) \to A_{m+1}(\ast)$ and $\Sigma_X A_\bullet(n)\to A_\bullet(n+1)$ are morphisms of symmetric and sequential $X$-spectra respectively. 
\end{enumerate}
The category of double $X$-spectra has a projective model structure that is left proper, combinatorial, and $\mathrm{sSet}_\ast$-enriched. The weak equivalences and fibrations of the projective model structure are precisely the levelwise such maps of retractive spaces over $X$.

For each pair $m,n\geq 0$, the evaluation functor $\mathrm{ev}_{m,n}\colon \mathrm{Sp}^\mathcal{D}_X\to \mathrm{sSet}_{\dslash X}$, $\{A_k(l)\}_{k,l\geq 0}\mapsto A_m(n)$, has a left adjoint $F_{m,n}$ (the first index is \lq\lq sequential'' and the second is \lq\lq symmetric'').
The \emph{twice stable model structure} on double $X$-spectra is the left Bousfield localisation of the projective model structure at the set 
\[
S_{\mathcal{D}, X}:= 
\underbrace{\big\{\zeta_{m,n}(C)\colon F_{m+1, n}(\Sigma_X C)\to F_{m,n}(C)\big\}}_{\vphantom{\big(}=:T_{\mathbb{N},X}}
\cup
\underbrace{
\big\{
\xi_{m,n}(C)\colon F_{m,n+1}(\Sigma_X C)\to F_{m,n}(C)\big\}}_{\vphantom{\big(}=: T_{\Sigma, X}}\,,
\]
with $\zeta$ and $\xi$ as in $S_{\mathbb{N},X}$ and $S_{\Sigma,X}$ respectively.
As usual, $C$ ranges over domains and codomains of morphisms in the set of generating cofibrations $\mathcal{I}^\mathrm{Kan}_X$ and $m,n\geq 0$.
The category $\mathrm{Sp}^\mathcal{D}_X$ has two Quillen endo-adjunctions $(\Sigma_X\dashv \Omega_X)$ and $(\mathbf{\Sigma}_X\dashv \mathbf{\Omega}_X)$ implementing fibrewise suspensions in the \lq\lq sequential'' and \lq\lq symmetric'' directions respectively.
One shows that both are Quillen auto-equivalences for the $S_{\mathcal{D},X}$-local model structure (cf.~Lemmas \ref{lem:SeqSpecShifts}, \ref{lem:SymSpecShifts}).

From each double $X$-spectrum $\{A_m(n)\}$ we extract a sequential and a symmetric $X$-spectrum at level zero by the assignments 
\[
\mathrm{ev}_0^\mathbb{N}\colon A\longmapsto A_\bullet(0)
\qquad
\mbox{ and }
\qquad
\mathrm{ev}_0^\Sigma\colon A \longmapsto A_0(-)
\]
respectively.
Both assignments are functorial and preserve limits, so have left adjoints by the adjoint functor theorem; for example, $d(\Sigma)$ sends the symmetric $X$-spectrum $P$ to the double $X$-spectrum
\[
(d(\Sigma) P)_m(n) = \Sigma^m_X P_n\,.
\]
The resulting zig-zag of adjunctions
\begin{equation}
\label{eqn:DStab}
\begin{tikzcd}
\mathrm{Sp}^\Sigma_X
\ar[rr, shift left=1.1ex, "d(\Sigma)"]
\ar[rr, leftarrow, shift left=-1.1ex, "\bot", "\mathrm{ev}^\Sigma_0"']
&&
\mathrm{Sp}^\mathcal{D}_X
\ar[rr, leftarrow, shift left=1.1ex, "d(\mathbb{N})"]
\ar[rr,  shift left=-1.1ex, "\bot", "\mathrm{ev}^\mathbb{N}_0"']
&&
\mathrm{Sp}^\mathbb{N}_X
\end{tikzcd}
\end{equation}
is Quillen for the stable model structures.
The above model categories are all stable (as can be shown for $\mathrm{Sp}^\mathcal{D}_X$ using our previous methods), and $\mathrm{Sp}^\mathcal{D}_X$ can be viewed either as the stabilisation of $\mathrm{Sp}^\mathbb{N}_X$ with respect to $\Sigma_X$ or of $\mathrm{Sp}^\Sigma_X$ with respect to $\mathbf{\Sigma}_X$.
Since $\Sigma_X$ and $\mathbf{\Sigma}_X$ are already Quillen autoequivalences of their respective domains, we expect the zig-zag \eqref{eqn:DStab} to be a Quillen equivalence.
This is indeed the case, as we now show for the adjunction $(d(\Sigma)\dashv \mathrm{ev}^\Sigma_0)$.

There is a commuting diagram of left Quillen functors
\begin{equation}
\label{eqn:ModelStrucDSpec}
\begin{tikzcd}
\left(\mathrm{Sp}^\Sigma_X\right)_\mathrm{proj}
\ar[r, "d(\Sigma)"]
\ar[d]
&
\left(\mathrm{Sp}^\mathcal{D}_X\right)_\mathrm{proj}
\ar[r]
\ar[d]
&
L_{T_{\mathbb{N},X}}
\left(\mathrm{Sp}^\mathcal{D}_X\right)_\mathrm{proj}
\ar[d]
\\
\left(\mathrm{Sp}^\Sigma_X\right)_\mathrm{stab}
\ar[r, "d(\Sigma)"]
&
\displaystyle{\underbrace{
L_{T_{\Sigma,X}}
\left(\mathrm{Sp}^\mathcal{D}_X\right)_\mathrm{proj}}_{\text{\lq\lq symmetric projective''}}}
\ar[r]
&
\displaystyle{\underbrace{L_{S_{\mathcal{D},X}}
\left(\mathrm{Sp}^\mathcal{D}_X\right)_\mathrm{proj}
=\left(\mathrm{Sp}^\mathcal{D}_X\right)_\mathrm{stab}}_{\text{\lq\lq twice stable''}}}
\end{tikzcd}
\end{equation}
where unlabelled arrows are identity functors implementing left Bousfield localisations.
Observe that the left Bousfield localisation of the projective model structure on $\mathrm{Sp}^\mathcal{D}_X$ at the set $T_{\Sigma,X}$ coincides with the \emph{symmetric projective} model structure on $\mathrm{Sp}^\mathcal{D}_X \cong \mathrm{Sp}^\mathbb{N}(\mathrm{Sp}^\Sigma_X)$, for which the weak equivalences and fibrations are the morphisms of double $X$-spectra $A_\bullet(\ast)\to B_\bullet(\ast)$ such that for each $m\geq 0$, $A_m(\ast)\to B_m (\ast)$ is a stable weak equivalence, respectively a stable fibration, in $\mathrm{Sp}^\Sigma_X$.

The fibrant objects for the twice stable model structure on $\mathrm{Sp}^\mathcal{D}_X$ are the level fibrant double $X$-spectra $A_\bullet(\ast)$ such that for each $m,n\geq 0$, $A_m(\ast)$ is an $\mathbf{\Omega}_X$-spectrum and $A_\bullet(n)$ is an $\Omega_X$-spectrum (cf.~Lemmas \ref{lem:SeqSpecFib} and \ref{lem:SymStabFib}).
Since $\mathbf{\Omega}_X$ is a right Quillen auto-equivalence of $\mathrm{Sp}^\Sigma_X$ it reflects weak equivalences between fibrant $\mathbf{\Omega}_X$-spectra.
Suppose, then, that $f\colon A_\bullet(\ast)\to B_\bullet(\ast)$ is a morphism of fibrant double $X$-spectra for which $\mathrm{ev}^\Sigma_0\colon A_0(\ast)\to B_0(\ast)$ is a stable weak equivalence of symmetric $X$-spectra.
Since $A_\bullet(\ast)$ and $B_\bullet(\ast)$ are fibrant, the induced map $\mathbf{\Omega}_X^n A_n(\ast)\to \mathbf{\Omega}_X^n B_n(\ast)$ is a weak equivalence for all $n$.
As $\mathbf{\Omega}_X$ reflects weak equivalences of fibrant objects, $A_\bullet(\ast)\to B_\bullet(\ast)$ is a level weak equivalence of fibrant double $X$-spectra.
We have thus shown that $\mathrm{ev}^\Sigma_0$ reflects weak equivalences between fibrant objects.

In order to show that $(d(\Sigma)\dashv \mathrm{ev}^\Sigma_0)$ is a Quillen equivalence for the stable model structures, it is sufficient to show that the derived unit of the adjunction is a weak equivalence on cofibrant objects \cite[Corollary 1.3.16]{hovey_model_1999}.
This is done using the diagram of left Quillen functors \eqref{eqn:ModelStrucDSpec} that relates the various model structures at play.
To this end:
\begin{itemize}
  \item Let $\mathcal{R}$ be a fibrant replacement functor for the \emph{symmetric projective} model structure on $\mathrm{Sp}^\mathcal{D}_X$.
  
  \item Let $\mathcal{S}$ be a fibrant replacement functor for the \emph{twice stable} model structure on $\mathrm{Sp}^\mathcal{D}_X$.
\end{itemize}
The goal is to show that the derived unit $P \to \mathrm{ev}^\Sigma_0(\mathcal{S}d(\Sigma)P)$ is a stable weak equivalence  for every cofibrant symmetric $X$-spectrum $P$.

Proceeding along the lines of \cite[Theorem 5.1]{hovey_spectra_2001}, fix a cofibrant symmetric $X$-spectrum $P$ and consider the composite
$
P \to 
\mathrm{ev}^\Sigma_0( d(\Sigma) P)
\to
\mathrm{ev}^\Sigma_0(\mathcal{R} d(\Sigma) P)
$,
which is a stable weak equivalence of symmetric $X$-spectra (by the properties of the symmetric projective model structure).
We claim that $\mathcal{R}d(\Sigma)P$ is already fibrant for the twice stable model structure.
Indeed, by the properties of the symmetric projective model structure, for each $m,n\geq 0$ we have that 
\begin{itemize}
  \item $\mathrm{ev}_{m,n} (\mathcal{R} d(\Sigma)P)$ is a fibrant object of $\mathrm{sSet}_{\dslash X}$
  \item $\{(\mathcal{R}d(\Sigma)P)_m (k)\}_{k\geq 0}$ is an $\mathbf{\Omega}_X$-spectrum,
\end{itemize}
so we must only check that $(\mathcal{R}d(\Sigma)P)_\bullet (n)$ is an $\Omega_X$-spectrum for each $n\geq 0$.
We do this using the fact that the $\mathrm{sSet}_\ast$-tensoring $A\mapsto \Sigma_X A = S^1 \owedge_X A$ is a left Quillen equivalence of $\mathrm{Sp}^\Sigma_X$.
Let $\mathcal{R}'$ be a fibrant replacement functor for the stable model structure on symmetric $X$-spectra, so that for each $n\geq 0$ the morphism
\[
(d(\Sigma) P)_n(\ast)
\cong
\Sigma^n_X P_\ast
\longrightarrow
\Omega_X \mathcal{R}' (\Sigma^{n+1}_X P_\ast)
\]
is a stable weak equivalence of symmetric $X$-spectra.
By the properties of the symmetric projective model structure, the morphism of double $X$-spectra $d(\Sigma) P\to \mathcal{R} d(\Sigma) P$ is a stable weak equivalence in the \lq\lq symmetric'' direction, and for each $m\geq 0$ the symmetric $X$-spectrum $(\mathcal{R} d(\Sigma) P)_m(\ast)$ is stably fibrant.
The lifting diagram of symmetric $X$-spectra
\[
\begin{tikzcd}
\Sigma^m_X P_\ast
\cong (d(\Sigma)P)_m(\ast)
\ar[d, rightarrowtail, "\sim"']
\ar[r, "\sim"]
&
(\mathcal{R}d(\Sigma)P)_m(\ast)
\\
\mathcal{R}' \Sigma^m_X P_\ast
\ar[ur, dashed, "\exists"']
&
\end{tikzcd}
\]
yields a stable weak equivalence of symmetric $X$-spectra $\mathcal{R}'\Sigma^m_XP_\ast\to (\mathcal{R}d(\Sigma)P)_m(\ast)$.
Being a right Quillen equivalence, $\Omega_X$ preserves stable weak equivalences between stably fibrant symmetric $X$-spectra and hence  the composite
\begin{equation}
\label{eqn:DSpecEquiv}
\Sigma^n_X P_\ast
\longrightarrow
\Omega_X (\Sigma^{n+1}_XP_\ast)
\longrightarrow
\Omega_X (\mathcal{R}'\Sigma^{n+1}_X P_\ast)
\longrightarrow
\Omega_X(\mathcal{R} d(\Sigma)P)_{n+1}(\ast)
\end{equation}
is a stable weak equivalence in $\mathrm{Sp}^\Sigma_X$.
The morphism of double $X$-spectra $d(\Sigma) P \to \mathcal{R}d(\Sigma) P $ induces a commuting diagram of symmetric $X$-spectra
\[
\begin{tikzcd}
\Sigma^n_X P_\ast
\ar[r]
\ar[d]
&
\Omega_X (\Sigma^{n+1}_X P_\ast)
\ar[d]
\\
(\mathcal{R}d(\Sigma)P)_n(\ast)
\ar[r]
&
\Omega_X (\mathcal{R}d(\Sigma)P)_{n+1}(\ast)
\end{tikzcd}
\]
for each $n \geq 0$.
The left vertical arrow is a stable weak equivalence of symmetric $X$-spectra, as is the composite of the top horizontal and right vertical arrows since it coincides with \eqref{eqn:DSpecEquiv}.
The bottom horizontal arrow is therefore a stable weak equivalence of stably fibrant symmetric $X$-spectra, thus a level weak equivalence.
In particular, this shows  that $\{(\mathcal{R} d(\Sigma) P)_k(n)\}_{k\geq 0}$ is an $\Omega_X$-spectrum for each $n\geq 0$.

The morphism of double $X$-spectra $d(\Sigma) P \to \mathcal{R}d(\Sigma)P$ is an acyclic cofibration in the symmetric projective model structure, hence also in the twice stable model structure.
The lifting diagram of double $X$-spectra
\[
\begin{tikzcd}
d(\Sigma) P
\ar[d, rightarrowtail, "\sim"']
\ar[r, "\sim"]
&
\mathcal{S}
d(\Sigma)P
\\
\mathcal{R}
d(\Sigma) P
\ar[ur, dashed, "\exists"']
\end{tikzcd}
\]
gives rise to a stable weak equivalence of stably fibrant double $X$-spectra $\mathcal{R}d(\Sigma)P\to \mathcal{S}d(\Sigma)P$.
As such stable weak equivalences are precisely the level weak equivalences of double $X$-spectra, the composite
\[
P
\longrightarrow
\mathrm{ev}^\Sigma_0(\mathcal{R}d(\Sigma)P)
\longrightarrow
\mathrm{ev}^\Sigma_0(\mathcal{S}d(\Sigma)P)
\]
is a stable weak equivalence in $\mathrm{Sp}^\Sigma_X$.
We have thus shown that $(d(\Sigma)\dashv \mathrm{ev}^\Sigma_0)$ is a Quillen equivalence.

To prove that $(d(\mathbb{N})\dashv \mathrm{ev}^\mathbb{N}_0)$ is also a Quillen equivalence we use the same argument \emph{mutatis mutandis}.

Finally, associated to any map of simplicial sets $f\colon X\to X'$ there is an adjunction of categories of double spectra $(f_!\dashv f^\ast)\colon \mathrm{Sp}^\mathcal{D}_X\to\mathrm{Sp}^\mathcal{D}_{X'}$ which is obtained by levelwise application of the functors $(f_!\dashv f^\ast)\colon \mathrm{sSet}_{\dslash X}\to \mathrm{sSet}_{\dslash X'}$. 
As for symmetric and sequential parametrised spectra, the adjunction $(f_!\dashv f^\ast)$ is Quillen for the twice stable model structure and is a Quillen equivalence if $f$ is a weak equivalence; we omit the proof.
Forming pullbacks commutes with $\mathrm{ev}^\Sigma_0$ and $\mathrm{ev}^\mathbb{N}_0$, from which it follows that the zig-zag of Quillen equivalences \eqref{eqn:DStab} has the claimed pseudonaturality properties.
\end{proof}
\begin{remark}
The composite functors $d(\mathbb{N})\circ \Sigma^{\infty}_X$ and $d(\Sigma)\circ \mathbf{\Sigma}^\infty_X$ are both naturally isomorphic to the functor $F_{0,0} \colon \mathrm{sSet}_{\dslash X}\to \mathrm{Sp}^\mathcal{D}_X$ introduced in the proof above.
In particular for any $Y,Z\in \mathrm{sSet}_{\dslash X}$ we have isomorphisms of abelian groups
\[
Ho(\mathrm{Sp}^\Sigma_X)\big(
\mathbf{\Sigma}^{\infty}_X Y, \mathbf{\Sigma}_X^\infty Z\big) \cong 
Ho(\mathrm{Sp}^\mathcal{D}_X)\big(
F_{0,0} Y, F_{0,0} Z\big) 
\cong
Ho(\mathrm{Sp}^\mathbb{N}_X)\big(
{\Sigma}^{\infty}_X Y, {\Sigma}^\infty_X Z\big) 
\]
and, more generally, $Ho(\mathrm{Sp}^\Sigma_X)\big(
\mathbf{\Sigma}^{\infty-k}_X Y, \mathbf{\Sigma}^{\infty-l}_X Z\big) \cong Ho(\mathrm{Sp}^\mathbb{N}_X)\big(
{\Sigma}^{\infty-k}_X Y, {\Sigma}^{\infty -l}_X Z\big) $ for $k,l \geq 0$.
\end{remark}

We conclude this section by showing that $\mathrm{Sp}^\mathbb{N}_X$ and $\mathrm{Sp}^\Sigma_X$ are equivalent presentations of the $\infty$-category of $X$-parametrised spectra.
\begin{remark}[Model categories as presentations of $\infty$-categories]
\label{rem:ooCatsFromModelCats}
To any model category $M$ there is associated an $\infty$-category $M^\infty$ obtained by formally inverting weak equivalences.
If $M$ is a $\mathrm{sSet}$-model category then the full subcategory $M^\circ\hookrightarrow M$ on fibrant-cofibrant objects is enriched in Kan complexes and
$M^\infty = N_\Delta(M^\circ)$ is obtained by applying the homotopy coherent nerve functor $N_\Delta\colon \mathrm{sSetCat}\to\mathrm{sSet}$.
If $M$ is a combinatorial $\mathrm{sSet}$-model category then $M^\infty$ is a presentable $\infty$-category.
For every Quillen adjunction $(L\dashv R)\colon M\to N$ there is an associated adjunction of $\infty$-categories $(L^\infty\dashv R^\infty)\colon M^\infty\to N^\infty$, which is an adjoint equivalence if $(L\dashv R)$ is a Quillen equivalence.
\end{remark}

Theorem \ref{thm:LocalSpecComp} implies that we have a zig-zag of adjoint equivalences of stable presentable $\infty$-categories
\[
\big(\mathrm{Sp}^\mathbb{N}_X\big)^\infty
\longrightarrow
\big(\mathrm{Sp}^\mathcal{D}_X\big)^\infty
\longleftarrow
\big(\mathrm{Sp}^\Sigma_X\big)^\infty
\]
(see Remark \ref{rem:ooCatsFromModelCats}).
Writing $\mathrm{Sp}^\infty_X := (\mathrm{Sp}^\mathbb{N}_X)^\infty$, our goal is now to show that $\mathrm{Sp}^\infty_X$ is equivalent to the $\infty$-categorical stabilisation $\mathrm{Sp}(\mathrm{sSet}_{\dslash X}^\infty)$ in the sense of Lurie  \cite{lurie_higher_2017}.
Since the $\infty$-categories $\mathrm{sSet}_{\dslash X}^\infty$ and $\mathcal{S}_{\dslash X}$ are equivalent, both describing the homotopy theory of retractive spaces over $X$, this implies that the model categories $\mathrm{Sp}^\mathbb{N}_X$ and $\mathrm{Sp}^\Sigma_X$ are both presentations of the $\infty$-category of $X$-parametrised spectra.
\begin{remark}
\label{rem:UniversalProp}
Recall that for a pointed $\infty$-category $\mathcal{C}$ with finite limits, the stabilisation is the $\infty$-category
\[
\mathrm{Sp}(\mathcal{C})\cong \mathrm{lim}
\left(\!
\begin{tikzcd}
  \dotsb 
  \ar[r, "\Omega_\mathcal{C}"]
  &
  \mathcal{C}
  \ar[r, "\Omega_\mathcal{C}"]
  &
  \mathcal{C}
\end{tikzcd}\!\right).
\]
The $\infty$-category $\mathrm{Sp}(\mathcal{C})$ is \emph{stable} in the sense that forming loopspace objects is an auto-equivalence, and the construction furnishes a left exact functor $\Omega^\infty_\mathcal{C}\colon \mathrm{Sp}(\mathcal{C})\to\mathcal{C}$.

If $\mathcal{C}$ is presentable, then $\mathrm{Sp}(\mathcal{C})$ is too and $\Omega^\infty_\mathcal{C}$ admits a left adjoint $\Sigma^\infty_\mathcal{C}$ which enjoys the following universal property: precomposition with $\Sigma^\infty_\mathcal{C}$ induces a natural equivalence of $\infty$-categories
\[
\mathrm{Fun}^L (\mathrm{Sp}(\mathcal{C}), \mathcal{D})\longrightarrow
\mathrm{Fun}^L (\mathcal{C}, \mathcal{D})
\]
for any stable  presentable $\infty$-category $\mathcal{D}$ \cite[Corollary 1.4.4.5]{lurie_higher_2017}.
The superscript \lq\lq$L$'' means functors that are left adjoints or, equivalently, are colimit-preserving.
\end{remark}
\begin{theorem}
\label{thm:FibStabisooStab}
There is a natural equivalence of $\infty$-categories $\mathrm{Sp}(\mathrm{sSet}_{\dslash X}^\infty)\to \mathrm{Sp}^\infty_X$ for all $X$.
\end{theorem}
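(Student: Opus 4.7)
The plan is to invoke the universal property of $\infty$-categorical stabilization recorded in Remark \ref{rem:UniversalProp}. The $\infty$-category $\mathrm{Sp}^\infty_X$ is presentable by Remark \ref{rem:ooCatsFromModelCats} (since $\mathrm{Sp}^\mathbb{N}_X$ is combinatorial) and stable by Lemma \ref{lem:SeqSpecisStable}. The Quillen adjunction $(\Sigma^\infty_X\dashv \widetilde{\Omega}^\infty_X)$ of Remark \ref{rem:FreeSeqSpec} induces a colimit-preserving functor $(\Sigma^\infty_X)^\infty\colon \mathrm{sSet}_{\dslash X}^\infty\to \mathrm{Sp}^\infty_X$, which by the universal property factors essentially uniquely through an exact colimit-preserving functor
\[
F_X\colon \mathrm{Sp}(\mathrm{sSet}_{\dslash X}^\infty)\longrightarrow \mathrm{Sp}^\infty_X\,,\qquad F_X\circ \Sigma^\infty_{\mathrm{sSet}_{\dslash X}^\infty}\simeq (\Sigma^\infty_X)^\infty\,.
\]

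To show $F_X$ is an equivalence we exploit compact generation: both $\infty$-categories are stable and presentable, so it suffices to check that $F_X$ hits a set of compact generators and is fully faithful on them. Remark \ref{rem:CompactGen} identifies $\{\mathbf{L}(x_i)_!\mathbb{S}\}_{i\in \pi_0(X)}$ as compact generators of $\mathrm{Sp}^\infty_X$, and these lie in the essential image of $F_X$ as $F_X\Sigma^\infty_{\mathrm{sSet}^\infty_{\dslash X}}(x_i)_{+X}$. For fully faithfulness, one identifies $\mathrm{Sp}(\mathrm{sSet}_{\dslash X}^\infty)$ as an accessible localization of $\mathrm{Fun}(\mathbb{N},\mathrm{sSet}_{\dslash X}^\infty)$ at the $\infty$-categorical analogues of the maps in $S_{\mathbb{N},X}$. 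Lemmas \ref{lem:SeqSpecFib} and \ref{lem:SeqSpecFibrewiseStabEquiv} then identify fibrant $\Omega_X$-spectra and their derived mapping spaces with the corresponding $\infty$-categorical spectrum objects and mapping spaces, yielding fully faithfulness on generators.

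Naturality in $X$ is a formal consequence of the uniqueness clause in the universal property: for any $f\colon X\to X'$, both composites $(f_!)^\infty\circ F_X$ and $F_{X'}\circ \mathrm{Sp}(f_!^\infty)$ are colimit-preserving and send $\Sigma^\infty_{\mathrm{sSet}^\infty_{\dslash X}}Y$ to the $\infty$-categorical image of $\Sigma^\infty_{X'}f_!Y$, hence agree up to canonical equivalence. The main obstacle is the precise identification of Hovey's sequential stabilization $\mathrm{Sp}^\mathbb{N}_X$ with the $\infty$-categorical stabilization $\mathrm{Sp}(\mathrm{sSet}_{\dslash X}^\infty)$; this is a standard but non-trivial application of \cite[Section 1.4]{lurie_higher_2017}, requiring one to match the $\mathrm{sSet}_\ast$-enriched mapping spaces in $\mathrm{Sp}^\mathbb{N}_X$ with the $\infty$-categorical mapping spaces computed as colimits along the $\Omega_X$-tower in $\mathrm{sSet}_{\dslash X}^\infty$.
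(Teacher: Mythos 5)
Your construction of $F_X$ via the universal property of Remark \ref{rem:UniversalProp} and your treatment of naturality match the paper exactly; the divergence is in how you propose to show $F_X$ is an equivalence, and there the argument has two genuine gaps. First, the reduction you state -- ``it suffices to check that $F_X$ hits a set of compact generators and is fully faithful on them'' -- is not a valid criterion as phrased. Knowing that the compact generators $\{(x_i)_!\mathbb{S}\}$ of $\mathrm{Sp}^\infty_X$ lie in the essential image and that $F_X$ induces equivalences of mapping spectra on their preimages does not rule out $F_X$ collapsing part of its source: the projection $\mathcal{D}\times\mathcal{E}\to\mathcal{D}$ of stable presentable $\infty$-categories hits generators of $\mathcal{D}$ and is fully faithful on the objects $(d,0)$, yet is no equivalence. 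The Schwede--Shipley-type argument you are invoking additionally requires that the objects $\Sigma^{\infty}_{\mathrm{sSet}^\infty_{\dslash X}}(x_i)_{+X}$ (and their desuspensions) form a set of compact generators of the \emph{source} $\mathrm{Sp}(\mathrm{sSet}^\infty_{\dslash X})$. This is true, but it needs an argument (every retractive space over $X$ is a colimit of cells $\sigma_{+X}$, each equivalent to a point-cell, and shifted suspension spectra on a generating set generate the stabilisation), and you never supply it.

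Second, the step that carries the real weight -- fully faithfulness on generators, i.e.\ matching the derived mapping spaces of $\mathrm{Sp}^\mathbb{N}_X$ with the $\infty$-categorical mapping spaces in $\mathrm{Sp}(\mathrm{sSet}^\infty_{\dslash X})$ -- is exactly what you defer as ``a standard but non-trivial application'' of stabilisation theory. Lemmas \ref{lem:SeqSpecFib} and \ref{lem:SeqSpecFibrewiseStabEquiv} identify the stably fibrant objects and the stable weak equivalences, but they do not compute or compare mapping spaces, so citing them does not close this step; moreover your identification of $\mathrm{Sp}(\mathrm{sSet}^\infty_{\dslash X})$ as a localisation of $\mathrm{Fun}(\mathbb{N},\mathrm{sSet}^\infty_{\dslash X})$ is wrong as stated, since diagrams over the discrete category $\mathbb{N}$ carry no structure maps -- one needs the lax limit of the $\Omega_X$-tower (prespectrum objects), and making that identification precise is essentially the model-to-$\infty$ comparison being proven. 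This is precisely where the paper does its work: it presents the limit defining $\mathrm{Sp}(\mathrm{sSet}^\infty_{\dslash X})$ via marked simplicial sets, constructs the right adjoint $\rho_X$ explicitly, reads off essential surjectivity from the identification of objects with fibrant $\Omega_X$-spectra, and proves fully faithfulness by comparing the homotopy pullback squares computing $\mathrm{Hom}_{\mathrm{Sp}(\mathrm{sSet}^\infty_{\dslash X})}(\rho_X A,\rho_X B)$ and $\mathrm{map}_{\mathrm{Sp}^\mathbb{N}_X}(A,B)$. Your compact-generation route could in principle work (and would only require the comparison of mapping spectra between shifted suspension spectra of points, a more computable statement), but as written the proposal assumes the hardest part rather than proving it.
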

\begin{proof}
$\mathrm{Sp}^\infty_X$ is stable and presentable, so the universal property of Remark \ref{rem:UniversalProp} applied to the functor $(\Sigma^\infty_X\colon \mathrm{sSet}_{\dslash X}\to \mathrm{Sp}^\mathbb{N}_X)^\infty$ gives rise to a colimit-preserving functor $\kappa_X\colon \mathrm{Sp}(\mathrm{sSet}_{\dslash X}^\infty)\to \mathrm{Sp}^\infty_X$.
If $f\colon X\to X'$ is a map of simplicial sets, applying the universal property to 
\[
\Sigma^\infty\circ f_!^\infty \colon \mathrm{sSet}_{\dslash X}^\infty\longrightarrow \mathrm{Sp}(\mathrm{sSet}^\infty_{\dslash X'})
\] 
gives rise to a colimit-preserving functor $\mathrm{Sp}(f_!)\colon \mathrm{Sp}(\mathrm{sSet}_{\dslash X}^\infty)\to \mathrm{Sp}(\mathrm{sSet}_{\dslash X'}^\infty)$. 
Using the universal property once more, we find that the functors $\kappa_{X'}\circ \mathrm{Sp}(f_!)$ and $f_!^\infty\circ \kappa_X$ are equivalent.
By using the naturality of the universal property, it is possible to choose equivalences $\kappa_{X'}\circ \mathrm{Sp}(f_!)\cong f_!^\infty \circ \kappa_X$ exhibiting $\kappa_X$ as the component at $X$ of a natural transformation of functors $\mathrm{sSet}^\infty\to \mathrm{Cat}_\infty$.
We omit the details.

To see that $\kappa_X$ is an equivalence we give a streamlined version of the proof of \cite[Proposition 4.15]{robalo_noncommutative_2012}.
The category $\mathrm{sSet}^+$ of marked simplicial sets can be endowed with a $\mathrm{sSet}$-model structure for which the functor $u\colon \mathrm{sSet}^+\to \mathrm{sSet}$ forgetting markings is a right Quillen equivalence with respect to the Joyal model structure on simplicial sets \cite[Theorem 3.1.5.1]{lurie_higher_2009}. 
If $\mathcal{C}$ is an $\infty$-category (that is, fibrant object for the Joyal model structure), then we write $\mathcal{C}^\natural$ for the simplicial set $\mathcal{C}$ with equivalences as the marked $1$-simplices.
Then $\mathcal{C}^\natural\in \mathrm{sSet}^+$ is fibrant and is sent to $\mathcal{C}$ under by the forgetful functor.

The homotopy limit defining $\mathrm{Sp}(\mathrm{sSet}_{\dslash X}^\infty)$ can be computed as the pullback
\[
\begin{tikzcd}
  & 
  \displaystyle\prod_{n\in \mathbb{N}}\mathrm{Fun}^\sharp (\Delta^1 , \mathrm{sSet}^\natural_{\dslash X})
  \ar[d, "\mathrm{ev}_0\times \mathrm{ev}_1"]
  \\
  \displaystyle\prod_{n\in\mathbb{N}}
  \mathrm{sSet}_{\dslash X}^\natural
  \ar[r, "{(1,\Omega_X)}"]
  &
  \displaystyle\prod_{n\in\mathbb{N}}
  \mathrm{sSet}_{\dslash X}^\natural
  \times \mathrm{sSet}_{\dslash X}^\natural
\end{tikzcd}
\]
\cite[VI~Lemma 1.12]{goerss_simplicial_2009}
In this diagram, the top right-hand object is the $\Delta^1$-cotensor and $\mathrm{ev}_0\times \mathrm{ev}_1$ is the product of the maps obtained by cotensoring with $\partial\Delta^1 \hookrightarrow \Delta^1$, so is a fibration.
The map $(1,\Omega_X)$ is the product 
\[
\begin{tikzcd}
  \displaystyle\prod_{n\in \mathbb{N}}\mathrm{sSet}_{\dslash X}^\natural
  \ar[r, "{\mathrm{pr}_n}"]
  &
  \mathrm{sSet}_{\dslash X}^\natural
  \ar[r, "{\mathrm{id} \times \Omega_X}"]
  &
  \displaystyle\underbrace{\mathrm{sSet}_{\dslash X}^\natural}_{n}\times \displaystyle\underbrace{\mathrm{sSet}_{\dslash X}^\natural}_{n-1}
\end{tikzcd}
\]
where $\mathrm{pr}_n$ is the projection onto the $n$-th factor and the braces indicate indices in the product over $\mathbb{N}$.
In terms of this pullback, we can present the right adjoint $\rho_X$ to $\kappa_X$ explicitly as follows.
Regard $\mathrm{sSet}^{\Delta^1}_{\dslash X}$ as equipped with the projective model structure and let  $(\mathrm{sSet}^{\Delta^1}_{\dslash X})^\circ_\sim\hookrightarrow (\mathrm{sSet}^{\Delta^1}_{\dslash X})^\circ$ be the full Kan-complex-enriched subcategory on the morphisms that are equivalences. 
Then there is a commuting diagram of categories enriched in Kan complexes
\begin{equation}
\label{eqn:OOSpecCompDiag}
\begin{tikzcd}
  \big(\mathrm{Sp}^\mathbb{N}_{X}\big)^\circ
  \ar[d, "{p_\mathbb{N}}"']
  \ar[r, "{\mathbf{L}\sigma^\vee_\mathbb{N}}"]
  & 
  \displaystyle\prod_{n\in \mathbb{N}}\big(\mathrm{sSet}^{\Delta^1}_{\dslash X}) \big)_\sim^\circ
  \ar[d, "{\mathrm{ev}_0\times \mathrm{ev}_1}"]
  \\
  \displaystyle\prod_{n\in\mathbb{N}}
  \mathrm{sSet}_{\dslash X}^\circ
  \ar[r, "{(1,\mathbf{L}\Omega_X)}"]
  &
  \displaystyle\prod_{n\in\mathbb{N}}
  \mathrm{sSet}_{\dslash X}^\circ
  \times \mathrm{sSet}_{\dslash X}^\circ\,,
\end{tikzcd}
\end{equation}
in which
\begin{itemize}
  \item $p_\mathbb{N}$ is the functor that sends $A\mapsto (A_n)_{n\in \mathbb{N}}$, noting that each $A_n$ is cofibrant-fibrant in $\mathrm{sSet}_{\dslash X}$;
  \item $\mathbf{L}\sigma^{\vee}_\mathbb{N}$ sends the fibrant-cofibrant sequential $X$-spectrum $A$ to the collection of arrows $\overline{\sigma}_n^\vee(A)$ in the functorial factorisation
  \[
  \begin{tikzcd}
    A_n
    \ar[r, rightarrowtail, "{\overline{\sigma}_n^\vee(A)}"]
    &
    \mathcal{Q}\Omega_X A_{n+1}
    \ar[r, twoheadrightarrow, "{\sim}"]
    &
    \Omega_X A_{n+1}
  \end{tikzcd}
  \]
  of the adjoint structure map $\sigma^\vee_n\colon A_n \to\Omega_X A_{n+1}$ of $A$ into a (necessarily acyclic) cofibration followed by an acyclic fibration;
  \item $(1, \mathbf{L}\Omega_X)$ sends $(Y_n)_{n\in \mathbb{N}}$ to the sequence $(Y_n, \mathcal{Q} \Omega_X Y_{n+1})_{n\in \mathbb{N}}$, with $\mathcal{Q}\Omega_X Y_{n+1}$ arising from the functorial factorisation of $0_X\to \Omega_X Y_{n+1}$ into a cofibration followed by an acyclic fibration; and
  \item $\mathrm{ev}_0\times \mathrm{ev}_1$ is the evaluation map $(A_n \to B_n)_{n\in \mathbb{N}}\mapsto (A_n, B_n)_{n\in \mathbb{N}}$.
\end{itemize}
Note that the diagram commutes since we are working with functorial factorisations.
Applying the homotopy coherent nerve $N_\Delta$, there is a weak equivalence 
\[
N_\Delta \Big(\big(\mathrm{sSet}^{\Delta^1}_{\dslash X}) \big)_\sim^\circ\Big)\xrightarrow{\;\;\sim\;\;} u\mathrm{Fun}^\sharp (\Delta^1 , \mathrm{sSet}^\natural_{\dslash X})
\]
for the Joyal model structure \cite[Proposition 4.2.4.4]{lurie_higher_2009},
so we have a commuting diagram of simplicial sets
\begin{equation}
\label{eqn:OOSpecCompDiag2}
\begin{tikzcd}
  \mathrm{Sp}^\infty_X
  \ar[r]
  \ar[d]
  & 
  \displaystyle\prod_{n\in \mathbb{N}}u\mathrm{Fun}^\sharp (\Delta^1 , \mathrm{sSet}^\natural_{\dslash X})
  \ar[d,"{\mathrm{ev}_0\times \mathrm{ev}_1}"]
  \\
  \displaystyle\prod_{n\in\mathbb{N}}
  \mathrm{sSet}_{\dslash X}^\infty
  \ar[r, "{(1,\Omega_X)}"]
  &
  \displaystyle\prod_{n\in\mathbb{N}}
  \mathrm{sSet}_{\dslash X}^\infty
  \times \mathrm{sSet}_{\dslash X}^\infty\,,
\end{tikzcd}
\end{equation}
presenting the functor $\rho_X\colon \mathrm{Sp}^\infty_X\to \mathrm{Sp}(\mathrm{sSet}_{\dslash X}^\infty)$.
From this diagram it is easy to see that $\rho_X$ is essentially surjective.
Indeed, an object $A'$ of $\mathrm{Sp}(\mathrm{sSet}_{\dslash X}^\infty)$ is a collection of fibrant-cofibrant objects $A_n \in \mathrm{sSet}_{\dslash X}$ equipped with weak equivalences $A_n \to \Omega_X A_{n+1}$ of retractive spaces over $X$.
But this is precisely the data of a fibrant $\Omega_X$-spectrum $A$, and taking a cofibrant replacement $\mathcal{Q}A\to A$ gives an object of $\mathrm{Sp}^\infty_X$.
Since the stable equivalence $\mathcal{Q}A \to A$ is a levelwise equivalence (both objects are stably fibrant) we have a choice of equivalence $\rho_X(\mathcal{Q}A)\to A'$ in $\mathrm{Sp}(\mathrm{sSet}_{\dslash X})^\infty$.

To complete the proof, we show that $\rho_X$ is fully-faithful.
For cofibrant-fibrant sequential $X$-spectra $A, B$, the $\infty$-categorical hom space $\mathrm{Hom}_{\mathrm{Sp}(\mathrm{sSet}_{\dslash X}^\infty)}(\rho_XA, \rho_X B)$ is computed, via \eqref{eqn:OOSpecCompDiag2}, as the pullback of
\[
\begin{tikzcd}
  &
  \displaystyle\prod_{n\in \mathbb{N}}  \mathrm{Hom}_{\mathrm{Fun}(\Delta^1,  \mathrm{sSet}_{\dslash X}^\infty)}  \big(\overline{\sigma}^\vee_n (A),   \overline{\sigma}^\vee_n(B)\big)
  \ar[d]
  \\
  \displaystyle\prod_{n\in \mathbb{N}}
  \mathrm{Hom}_{\mathrm{sSet}_{\dslash X}^\infty}(A_n, B_n)
  \ar[r]
  &
  \displaystyle\prod_{n\in \mathbb{N}}
  \mathrm{Hom}_{\mathrm{sSet}_{\dslash X}^\infty}(A_n, B_n)
  \times
  \mathrm{Hom}_{\mathrm{sSet}_{\dslash X}^\infty}(\mathcal{Q}\Omega_X A_{n+1}, \mathcal{Q}\Omega_X B_{n+1})\,.
\end{tikzcd}
\]
By inspection, this pullback is isomorphic to the pullback of
\begin{equation}
\label{eqn:SpooHomSpace}
\begin{tikzcd}
  &
  \displaystyle\prod_{n\in \mathbb{N}} \mathrm{Hom}_{\mathrm{sSet}_{\dslash X}^\infty} (\mathcal{Q}\Omega_X A_{n+1} ,\mathcal{Q}\Omega_X B_{n+1})
  \ar[d]
  \\
  \displaystyle\prod_{n\in \mathbb{N}} \mathrm{Hom}_{\mathrm{sSet}_{\dslash X}^\infty} (A_n ,B_{n})
  \ar[r]
  &
  \displaystyle\prod_{n\in \mathbb{N}} \mathrm{Hom}_{\mathrm{sSet}_{\dslash X}^\infty} (A_n ,\mathcal{Q}\Omega_X B_{n+1})\,,
\end{tikzcd}
\end{equation}
in which the horizontal arrow is given by postcomposing with the maps $\overline{\sigma}^\vee_n(B) \colon B_n \to \mathcal{Q}\Omega_X B_{n+1}$ and the vertical arrow is obtained by precomposing with the maps $\overline{\sigma}^\vee_n (A)\colon A_n \to \mathcal{Q}\Omega_X A_{n+1}$.
Since the $\overline{\sigma}^\vee_n (A)$ are cofibrations, the vertical map is a fibration so that \eqref{eqn:SpooHomSpace} presents a homotopy pullback.

On the other hand, the Kan complex $\mathrm{map}_{\mathrm{Sp}^\mathbb{N}_X}(A,B)$ is the pullback of the cospan
\begin{equation}
\label{eqn:SpKanHom}
  \displaystyle\prod_{n\in \mathbb{N}}\mathrm{map}_{\mathrm{sSet}_{\dslash X}}(A_n, B_n)
  \xrightarrow{\;\;\beta\;\;}
  \displaystyle\prod_{n\in \mathbb{N}}\mathrm{map}_{\mathrm{sSet}_{\dslash X}}(A_n, \Omega_X B_n)
  \xleftarrow{\;\;\alpha\;\;}
  \displaystyle\prod_{n\in \mathbb{N}}\mathrm{map}_{\mathrm{sSet}_{\dslash X}}(A_n, B_n)\,.
\end{equation}
In this diagram, $\beta$ is obtained by postcomposition with the adjoint structure maps of $B$ and $\alpha$ is the product of the maps
\[
\mathrm{map}_{\mathrm{sSet}_{\dslash X}}(A_{n+1}, B_{n+1})\longrightarrow
\mathrm{map}_{\mathrm{sSet}_{\dslash X}}(\Sigma_X A_n, B_{n+1})\cong 
\mathrm{map}_{\mathrm{sSet}_{\dslash X}}(A_n, \Omega_X B_{n+1})
\]
each of which is a fibration, since $\sigma_n\colon \Sigma_X A_n \to A_{n+1}$ is a cofibration (Remark \ref{rem:ProjCofib}).
Using the functorial factorisation of morphisms in $\mathrm{sSet}_{\dslash X}$ into cofibrations followed by acyclic fibrations, the cospan of \eqref{eqn:SpKanHom} is termwise weakly equivalent to the cospan
\[
   \prod_{n\in \mathbb{N}}
   \mathrm{map}_{\mathrm{sSet}_{\dslash X}}(A_n, B_n)
\xrightarrow{\beta}
\prod_{n\in \mathbb{N}}\mathrm{map}_{\mathrm{sSet}_{\dslash X}}(A_n, \mathcal{Q}\Omega_X B_n)
\xleftarrow{\alpha}
 \prod_{n\in \mathbb{N}}\mathrm{map}_{\mathrm{sSet}_{\dslash X}}(\mathcal{Q}\Omega_X A_{n+1}, \mathcal{Q}\Omega_X B_{n+1})\,,
\]
using that $\overline{\sigma}^\vee_n(A)$ and $\overline{\sigma}^\vee_n(B)$ are acyclic cofibrations.
Now, for cofibrant-fibrant objects $x,y$ in a simplicial model category $\mathcal{C}$, there is a natural weak equivalence $\mathrm{map}_{\mathcal{C}}(x,y)\to \mathrm{Hom}_{\mathcal{C}^\infty}(x,y)$ \cite[Theorem 2.2.0.1]{lurie_higher_2009}.
Since the pullbacks of \eqref{eqn:SpooHomSpace} and \eqref{eqn:SpKanHom} are also homotopy pullbacks, there is a diagram of weak equivalences
\[
\begin{tikzcd}
  \mathrm{Hom}_{\mathrm{Sp}^\infty_X}(A,B) 
  \ar[r, leftarrow]
  \ar[rr, "{\rho_X}", bend left =15]
  &
  \mathrm{map}_{\mathrm{Sp}^\mathbb{N}_X}(A,B)
  \ar[r]
  &
  \mathrm{Hom}_{\mathrm{Sp}(\mathrm{sSet}_{\dslash X}^\infty)} (\rho_XA, \rho_XB)\,.
\end{tikzcd}
\]
This shows that $\rho_X$ is fully-faithful and hence is an equivalence of $\infty$-categories.
\end{proof}

\begin{remark}
The combinatorial model categories $\mathrm{Sp}^\mathbf{N}_X$ and $\mathrm{Sp}^\Sigma_X$ discussed in this article are additionally
Quillen equivalent to the model categories of parametrised spectra studied in \cite{may_parametrized_2006}.
May and Sigurdsson work with topological spaces, rather than simplicial sets, and their framework is related to ours by stabilising the adjunction
\[
\begin{tikzcd}
\mathrm{sSet}_{\dslash X}
\ar[rr, shift left=1.1ex, "|-|"]
\ar[rr, leftarrow, shift left=-1.1ex, "\bot", "\mathrm{Sing}"']
&&
\mathrm{Top}_{\dslash |X|}
\end{tikzcd}
\]
induced by the Quillen equivalence $(|-|\dashv \mathrm{Sing})\colon \mathrm{sSet}\to \mathrm{Top}$. We do not go into the details here, though note that both approaches present the same $\infty$-category of parametrised spectra by Theorem \ref{thm:FibStabisooStab} and \cite[Appendix B]{ando_parametrized_2018}.
\end{remark}

\subsection{Global theory}
In our work of the last three sections, we have encountered three pseudofunctors $\mathrm{sSet}\to \mathbf{Model}$ encoding parametrised spectra:
\[
\mathrm{Sp}^{\bullet}_{-}\colon 
\big(f\colon X\to Y\big)
\longmapsto
\left(\!
\begin{tikzcd}
\mathrm{Sp}^\bullet_{ X}
\ar[rr, shift left=1.1ex, "f_!"]
\ar[rr, leftarrow, shift left=-1.1ex, "\bot", "f^\ast"']
&&
\mathrm{Sp}^\bullet_{Y}
\end{tikzcd}
\!\right)
\]
for $\bullet = \mathbb{N}, \Sigma$ or $\mathcal{D}$, taking the stable model structure in each case.
According to Theorems \ref{thm:SeqSpecStructureTheorem}, \ref{thm:SymSpecStructureTheorem} and \ref{thm:LocalSpecComp} each of these pseudofunctors sends weak equivalences to Quillen equivalences so is relative (in the terminology of \cite{harpaz_grothendieck_2015}).
\begin{lemma}
\label{lem:SpecPseudoFunAreProper}
For $\bullet\in\{\mathbb{N},\Sigma,\mathcal{D}\}$, the pseudofunctors $\mathrm{Sp}^\bullet_{-}\colon \mathrm{sSet}\to\mathbf{Model}$ are proper.
\end{lemma}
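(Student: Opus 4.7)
Recall that left (resp.~right) properness of the pseudofunctor $\mathrm{Sp}^\bullet_{-}$ requires pushforward $f_!$ along acyclic cofibrations (resp.~pullback $f^\ast$ along acyclic fibrations) in $\mathrm{sSet}$ to preserve weak equivalences in the fibre. My plan is to handle each half separately, combining the local structure theorems already established (Theorems \ref{thm:SeqSpecStructureTheorem}, \ref{thm:SymSpecStructureTheorem}, and their twice-stable analogue) with Ken Brown's lemma.

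For left properness, I would prove the stronger statement that $f_!\colon \mathrm{Sp}^\bullet_X \to \mathrm{Sp}^\bullet_{X'}$ preserves \emph{all} weak equivalences for every map $f\colon X\to X'$, mirroring the analysis of Lemma \ref{lem:RetSpacPropRelative}. On spectra, $f_!$ acts by (level- and index-wise) application of the retractive space pushforward, which preserves all weak equivalences since every retractive space is cofibrant and $f_!$ is left Quillen (Theorem \ref{thm:RetSpStructureTheorem}). Hence the prolongation preserves projective weak equivalences of spectra. To upgrade this to stable weak equivalences, I would take a stable weak equivalence $\psi\colon A\to B$, choose projective cofibrant replacements $A^c\to A$ and $B^c\to B$ (which double as stable cofibrant replacements, since the stable model structure is a left Bousfield localisation of the projective), lift to obtain $\psi^c\colon A^c\to B^c$ between stably cofibrant objects, and conclude by 2-out-of-3: $f_!(\psi^c)$ is a stable weak equivalence by Ken Brown applied to $f_!$ as a left Quillen functor for the stable structure, while $f_!A^c\to f_!A$ and $f_!B^c\to f_!B$ remain projective---a fortiori stable---weak equivalences by the preceding step.

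For right properness, I would exploit the observation that when $f$ is an acyclic fibration, $f^\ast$ is simultaneously left Quillen (since $(f^\ast\dashv f_\ast)$ is Quillen whenever $f$ is a fibration) and right Quillen (since $(f_!\dashv f^\ast)$ is Quillen for any $f$). Any such doubly Quillen functor preserves all acyclic cofibrations and all acyclic fibrations; factoring any weak equivalence as an acyclic cofibration followed by an acyclic fibration (via functorial factorisation) then shows that $f^\ast$ preserves every weak equivalence, yielding right properness immediately.

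The main technical point, rather than a deep obstacle, is ensuring that the base change adjunctions $(f_!\dashv f^\ast\dashv f_\ast)$ for the twice-stable model structure on double $X$-spectra enjoy the same properties as their sequential and symmetric counterparts. These are obtained by levelwise prolongation and a localisation argument following the template of Theorems \ref{thm:SeqSpecStructureTheorem} and \ref{thm:SymSpecStructureTheorem}, as indicated at the end of the proof of Theorem \ref{thm:LocalSpecComp}; I would cite that discussion rather than redo it.
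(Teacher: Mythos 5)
Your proposal is correct and follows essentially the same route as the paper: right properness comes from observing that $f^\ast$ is simultaneously left and right Quillen when $f$ is an acyclic fibration, and left properness from the stronger claim that every pushforward $f_!$ preserves all stable weak equivalences, bootstrapped from the fact that it preserves levelwise (projective) weak equivalences. The only cosmetic difference is in the bootstrapping step: you use cofibrant replacement plus Ken Brown's lemma for $f_!$ in the stable structure, whereas the paper uses a stably fibrant replacement with acyclic-cofibration units together with the fact that stable equivalences between fibrant $\Omega_X$-spectra are levelwise; both are sound, and both defer the double-spectrum base-change properties to the discussion surrounding Theorem \ref{thm:LocalSpecComp} in the same way.
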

\begin{proof}
We treat the case of sequential parametrised spectra, the arguments for the other two cases being completely analogous.
For right properness, we observe that if $f\colon X\to X'$ is an acyclic fibration then $f^\ast \colon \mathrm{Sp}^\mathbb{N}_{X'}\to\mathrm{Sp}^\mathbb{N}_X$ is right and left Quillen, so preserves all stable weak equivalences.

For any map $f\colon X\to X'$, the pushforward functor $f_!\colon \mathrm{sSet}_{\dslash X}\to \mathrm{sSet}_{\dslash X'}$ preserves weak equivalences.
Consequently, the induced functor $f_!\colon \mathrm{Sp}^\mathbb{N}_X\to \mathrm{Sp}^\mathbb{N}_{X'}$ preserves levelwise weak equivalences.
Let $\mathcal{R}$ be a fibrant replacement functor for the sequential $X$-stable model structure such that the natural weak equivalence $A\to \mathcal{R}$ is a cofibration for all $A$.
If $\psi\colon A\to B$ is a stable weak equivalence of sequential $X$-spectra, then $\mathcal{R}(\psi)$ is a levelwise weak equivalence of fibrant $\Omega_X$-spectra.
Applying $f_!$ to the naturality square of the natural acyclic cofibration $\mathrm{id}\Rightarrow\mathcal{R}$ yields
\[
\begin{tikzcd}
  f_! A
  \ar[r, "f_!\psi"] 
  \ar[d, rightarrowtail, "\sim"'] 
  &  
  f_! B
  \ar[d, rightarrowtail, "\sim"]
  \\
  f_! \mathcal{R}(A)
  \ar[r, "f_! \mathcal{R}(\psi)"]
  &
  f_! \mathcal{R}(B)\,,
\end{tikzcd}
\]
where the vertical arrows are acyclic cofibrations since $f_!$ is left Quillen.
But $f_!$ preserves levelwise weak equivalences, so that $f_! \mathcal{R}(\psi)$ is a levelwise weak equivalence (though $f_!\mathcal{R}(A)$ and $f_!\mathcal{R}(B)$ are generally not $\Omega_{X'}$-spectra).
It follows that all pushforward functors $f_!$ preserve stable weak equivalences, implying left properness as a special case.
\end{proof}
\begin{definition}
For $\bullet\in \{\mathbb{N},\Sigma, \mathcal{D}\}$ we define the \emph{global category of parametrised $\bullet$-spectra}  as the Grothendieck construction
\[
\mathrm{Sp}^\bullet_\mathrm{sSet} :=\int_{X\in \mathrm{sSet}}\mathrm{Sp}^\bullet_{X}\,.
\]
Objects of $\mathrm{Sp}^\bullet_\mathrm{sSet}$ are pairs $(X,A)$ where $X\in \mathrm{sSet}$ and $A\in \mathrm{Sp}^\bullet_X$
Morphisms $(X,A)\to (X', B)$ are pairs of a morphism of simplicial sets $f\colon X\to X'$ and a morphism of $X'$-spectra $\psi\colon f_! A\to B$ or, equivalently, a morphism of $X$-spectra $\psi^\vee\colon A\to f^\ast B$.
\end{definition}
\begin{theorem}
\label{thm:GlobParamSpec}
For $\bullet\in \{\mathbb{N},\Sigma, \mathcal{D}\}$, $\mathrm{Sp}^\bullet_{\mathrm{sSet}}$ has a \emph{global model structure} with respect to which a morphism $(f,\psi)\colon (X,A)\to (X',B)$ is
\begin{itemize}
  \item a weak equivalence if $f$ and $\psi$ are weak equivalences in $\mathrm{sSet}$ and $\mathrm{Sp}^\bullet_{X'}$ respectively;
  \item a cofibration if $f$ and $\psi$ are cofibrations in $\mathrm{sSet}$ and $\mathrm{Sp}_{X'}^\bullet$ respectively; and
  \item a fibration if $f$ and $\psi^\vee$ are fibrations in $\mathrm{sSet}$ and $\mathrm{Sp}^\bullet_{X}$ respectively. 
\end{itemize}
Moreover, there is a zig-zag of Quillen equivalences
\[
\begin{tikzcd}
 \mathrm{Sp}^\Sigma_\mathrm{sSet}
\ar[rr, shift left=1.1ex, ""]
\ar[rr, leftarrow, shift left=-1.1ex, "\bot", ""']
&&
\mathrm{Sp}^\mathcal{D}_\mathrm{sSet}
\ar[rr, leftarrow, shift left=1.1ex, ""]
\ar[rr, shift left=-1.1ex, "\bot", ""']
&&
\mathrm{Sp}^\mathbb{N}_\mathrm{sSet}
\end{tikzcd}
\]
with respect to the global model structures.
\end{theorem}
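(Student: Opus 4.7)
The plan is to reduce the existence of the global model structure to the general machinery of \cite[Theorem 3.0.12]{harpaz_grothendieck_2015}, exactly as was done for retractive spaces. The hypotheses of that theorem are that the pseudofunctor being integrated is proper and relative; these are precisely the content of Lemma \ref{lem:SpecPseudoFunAreProper} together with the relativity statements of Theorems \ref{thm:SeqSpecStructureTheorem} and \ref{thm:SymSpecStructureTheorem} (and the analogous statement for double spectra, which is already used in the proof of Theorem \ref{thm:LocalSpecComp}). With these inputs in hand, the cited theorem immediately yields the existence of the integrated model structure on $\mathrm{Sp}^\bullet_{\mathrm{sSet}}$ with weak equivalences, cofibrations, and fibrations characterized as stated.

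For the zig-zag of global Quillen equivalences, the plan is to promote the pseudonatural zig-zag of local Quillen equivalences of Theorem \ref{thm:LocalSpecComp}
\[
\mathrm{Sp}^\Sigma_X \xrightarrow{d(\Sigma)} \mathrm{Sp}^\mathcal{D}_X \xleftarrow{d(\mathbb{N})} \mathrm{Sp}^\mathbb{N}_X
\]
to a zig-zag of Quillen equivalences between the Grothendieck constructions. First I would observe that a pseudonatural transformation of proper relative pseudofunctors $F \Rightarrow G$ whose components are left Quillen functors (resp.\ left Quillen equivalences) induces a left Quillen functor (resp.\ left Quillen equivalence) $\int F \to \int G$: the functor is defined fiberwise by the component functors, sends a morphism $(f,\psi) \colon (X,A) \to (X',B)$ to $(f, \Phi_{X'}(\psi))$ (up to the pseudonatural coherence), and the characterization of cofibrations/weak equivalences in the integral model structure in terms of the fiberwise data ensures that it preserves cofibrations and (acyclic) cofibrations. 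This general fact is either explicit in \cite{harpaz_grothendieck_2015} or follows from the characterizations therein by an immediate check on generators.

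Applying this principle to the components $d(\Sigma)$ and $d(\mathbb{N})$, which are levelwise left Quillen equivalences and pseudonatural in $X$ by Theorem \ref{thm:LocalSpecComp}, produces the desired zig-zag of global Quillen functors. To verify that each is a Quillen equivalence, I would use the characterization that for proper relative Grothendieck constructions over the same base, a morphism is a Quillen equivalence precisely if each component on the fibers is one: the derived unit (resp.\ counit) on an object $(X,A)$ factors through the derived unit (resp.\ counit) of the fiber adjunction at $X$, which is a weak equivalence by Theorem \ref{thm:LocalSpecComp}, and the base component is the identity.

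The main obstacle is the bookkeeping required to lift the pseudonaturality of the local zig-zag to the level of the Grothendieck construction, and verifying that the characterization of fiberwise weak equivalences survives the left Bousfield localization encoded in the stable model structures; this is precisely where properness of the pseudofunctors $\mathrm{Sp}^\bullet_{-}$ (Lemma \ref{lem:SpecPseudoFunAreProper}) is essential, as it guarantees that pushforward functors preserve stable weak equivalences and thus that the integral weak equivalences admit the clean fiberwise description used in the argument above.
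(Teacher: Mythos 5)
Your proposal is correct and follows essentially the same route as the paper: the paper's proof simply applies Theorems 3.0.12 and 4.1.3 of the Harpaz--Prasma Grothendieck construction to the properness/relativity of the pseudofunctors $\mathrm{Sp}^\bullet_{-}$ and to the pseudonatural zig-zag of Theorem \ref{thm:LocalSpecComp}. The "general fact" you invoke about pseudonatural transformations with componentwise left Quillen (equivalence) functors inducing left Quillen (equivalence) functors on the integral model structures is exactly Theorem 4.1.3 of that reference, so your sketch amounts to an expanded version of the paper's argument.
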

\begin{proof}
Apply Theorems 3.0.12 and 4.1.3 of \cite{harpaz_grothendieck_2015} to Theorem \ref{thm:LocalSpecComp}.
\end{proof}

\begin{corollary}
Let $\bullet\in\{\mathbb{N},\Sigma, \mathcal{D}\}$.
The projection functor $p\colon \mathrm{Sp}^\bullet_\mathrm{sSet}\to \mathrm{sSet}$ is left and right Quillen.
\end{corollary}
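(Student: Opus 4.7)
The plan is to adapt the argument of Corollary \ref{cor:RetSpBase} from the unstable setting to parametrised spectra, the key point being that the definition of the global model structure on $\mathrm{Sp}^\bullet_\mathrm{sSet}$ (for $\bullet\in\{\mathbb{N},\Sigma,\mathcal{D}\}$) is arranged precisely so that the conditions on the base simplicial set and on the fibre spectrum are imposed separately. I would first produce a candidate two-sided adjoint to $p$ by defining the zero-section functor
\[
0_{-}\colon \mathrm{sSet}\longrightarrow \mathrm{Sp}^\bullet_\mathrm{sSet}\,,\qquad X\longmapsto (X, 0_X)\,,
\]
where $0_X\in \mathrm{Sp}^\bullet_X$ is the zero object. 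Since pushforward functors are additive they send zero objects to zero objects, so $0_-$ is functorial and each $f_!0_X \cong 0_{X'}$.

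The adjunctions $(p\dashv 0_-)$ and $(0_-\dashv p)$ are then immediate from the description of morphisms in the Grothendieck construction: a map $(X,A)\to (X',0_{X'})$ is a pair $(f,\psi)$ with $\psi\colon f_!A\to 0_{X'}$, but the target is terminal in $\mathrm{Sp}^\bullet_{X'}$, so $\psi$ is uniquely determined and such morphisms are in natural bijection with $\mathrm{sSet}(X,X')$; dually, a map $(X,0_X)\to (X',B)$ is a pair $(f,\psi)$ with $\psi\colon 0_{X'}\cong f_!0_X\to B$, again uniquely determined, giving a natural bijection with $\mathrm{sSet}(X,X')$.

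It remains to check the homotopical conditions on $p$. By the very definition of the global model structure in Theorem \ref{thm:GlobParamSpec}, a morphism $(f,\psi)$ is a weak equivalence (resp.~cofibration, resp.~fibration) only if its base component $f$ is such in $\mathrm{sSet}$; conversely, one sees immediately from the same characterisation that $p$ preserves all three classes (and in particular preserves acyclic cofibrations and acyclic fibrations). Combining this with the two adjunctions above yields that $(p\dashv 0_-)$ is a Quillen adjunction (so $p$ is left Quillen) and that $(0_-\dashv p)$ is a Quillen adjunction (so $p$ is right Quillen). There is no substantive obstacle in this argument; the only thing to notice is the mild abuse that \emph{three} pseudofunctors are simultaneously at play and that Lemma \ref{lem:SpecPseudoFunAreProper} is what guarantees the global model structure used here is well-defined in each of the three cases $\bullet\in\{\mathbb{N},\Sigma,\mathcal{D}\}$, so the same proof goes through uniformly.
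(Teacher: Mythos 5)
Your proposal is correct and takes essentially the same route as the paper: the paper's proof likewise observes that $p\colon (X,A)\mapsto X$ preserves cofibrations, fibrations and weak equivalences by the very definition of the global model structure, and that $0_-\colon X\mapsto (X,0_X)$ is a two-sided adjoint to $p$. Your write-up merely makes explicit the adjunction bijections via the Grothendieck-construction description of morphisms and the fact that $f_!0_X\cong 0_{X'}$ (which follows since $f_!$ preserves colimits, rather than from any additivity), details the paper leaves implicit.
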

\begin{proof}
The projection functor $p\colon (X,A)\mapsto X$ preserves cofibrations, fibrations and weak equivalences. 
The functor $0_{-}\colon X\mapsto (X, 0_X)$ is a two-sided adjoint.
\end{proof}
\begin{corollary}
\label{cor:GlobalSpFibreInclusion}
Let $\bullet\in \{\mathbb{N},\Sigma\}$.
For any simplicial set $X$, the fibre inclusion $\imath_X \colon \mathrm{Sp}^\bullet_X\to \mathrm{Sp}^\bullet_\mathrm{sSet}$ is left and right Quillen.
\end{corollary}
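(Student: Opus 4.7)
The plan is to mirror exactly the proof strategy of Corollary \ref{cor:FibreInclusionRetSp} for retractive spaces. First I would establish that $\mathrm{Sp}^\bullet_{\mathrm{sSet}}$ is locally presentable for $\bullet\in\{\mathbb{N},\Sigma\}$, arguing as in Lemma \ref{lem:GlobRetSpaceLocPres}: each fibre $\mathrm{Sp}^\bullet_X$ is locally presentable by Remarks \ref{rem:FibrewiseSeqSpecMonad} and \ref{rem:FreeSymSpec}, and the Grothendieck construction inherits this via an accessible pullback presentation analogous to Construction \ref{cons:GlobRetSpaceAlg} (replacing $\mathrm{sSet}_{\dslash(-)}$ by $\mathrm{Sp}^\bullet_{-}$ throughout).

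Next I would observe that the fibre inclusion $\imath_X\colon A\mapsto (X,A)$ preserves small limits and colimits (limits and connected colimits in $\mathrm{Sp}^\bullet_{\mathrm{sSet}}$ with constant base are computed in the fibre $\mathrm{Sp}^\bullet_X$). The adjoint functor theorem then guarantees both a left and a right adjoint to $\imath_X$.

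Finally I would verify directly that $\imath_X$ preserves cofibrations, weak equivalences, and fibrations by unpacking the description of the global model structure from Theorem \ref{thm:GlobParamSpec}. For $\psi\colon A\to B$ in $\mathrm{Sp}^\bullet_X$ the image $\imath_X(\psi)=(\mathrm{id}_X,\psi)$ has base component $\mathrm{id}_X$, which is simultaneously a cofibration, fibration, and weak equivalence in $\mathrm{sSet}$; moreover, when the base map is the identity the adjunct $\psi^\vee$ coincides with $\psi$. Consequently $(\mathrm{id}_X,\psi)$ lies in each of the three classes of the global model structure precisely when $\psi$ does in $\mathrm{Sp}^\bullet_X$, so $\imath_X$ is both left and right Quillen.

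No step presents a serious obstacle; the only point that requires any genuine work is the local presentability of $\mathrm{Sp}^\bullet_{\mathrm{sSet}}$, and even that reduces to an accessible pullback argument already modelled in Lemma \ref{lem:GlobRetSpaceLocPres}. Everything else is a tautological consequence of the definition of the global (integral) model structure.
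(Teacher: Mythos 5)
Your outline is, step for step, the proof the paper itself gives: it quotes local presentability of $\mathrm{Sp}^\bullet_{\mathrm{sSet}}$ (Corollaries \ref{cor:GlobSeqSpecLocPres} and \ref{cor:GlobSymSpecLocPres}, established by precisely the algebraic presentations you sketch), asserts that $\imath_X$ preserves limits and colimits so that both adjoints exist by the adjoint functor theorem, and then observes that $\imath_X$ preserves cofibrations, fibrations and weak equivalences. Your remark that over an identity base map one has $\psi^\vee=\psi$, so that membership in the three classes of Theorem \ref{thm:GlobParamSpec} is detected in the fibre, is the right justification for that last point, and it is the part of the statement that genuinely holds.

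The middle step, however---the one your own parenthetical already hedges---does not hold as stated, either in your write-up or in the paper's. In the Grothendieck construction only \emph{connected} (co)limits of constant-base diagrams are computed in the fibre: the coproduct of $\imath_X A$ and $\imath_X B$ in $\mathrm{Sp}^\bullet_{\mathrm{sSet}}$ has base $X\sqcup X$, their product has base $X\times X$, and the initial and terminal objects of the global category are $(\varnothing,0_\varnothing)$ and $(\ast,0_\ast)$, neither of which is $\imath_X(0_X)$ in general. So $\imath_X$ preserves neither all small colimits nor all small limits, and the adjoint functor theorem cannot be invoked as you propose. The failure is not only in the argument: for every $(Z,C)$ one computes $\mathrm{Hom}_{\mathrm{Sp}^\bullet_{\mathrm{sSet}}}\big((Z,0_Z),\imath_X A\big)\cong\mathrm{Hom}_{\mathrm{sSet}}(Z,X)$ independently of $A$, so if a left adjoint $L$ existed then $\mathrm{Hom}_{\mathrm{Sp}^\bullet_X}\big(L(Z,0_Z),A\big)$ would be constant in $A$, which is impossible once $\mathrm{Hom}_{\mathrm{sSet}}(Z,X)$ has more than one element (evaluate at $A=0_X$); dually, $\mathrm{Hom}\big(\imath_X 0_X,(Z,C)\big)\cong\mathrm{Hom}_{\mathrm{sSet}}(X,Z)$ rules out a right adjoint. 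What your argument does establish is that $\imath_X$ preserves (acyclic) cofibrations, (acyclic) fibrations and weak equivalences; a literal ``left and right Quillen'' conclusion would require either weakening the statement to this preservation property or reformulating it through the base-change adjunctions of Theorems \ref{thm:SeqSpecStructureTheorem} and \ref{thm:SymSpecStructureTheorem} and the projection $p$. Since the same claim appears verbatim in the paper's proof of Corollary \ref{cor:FibreInclusionRetSp}, this is a gap you have inherited rather than introduced, but it is a genuine one, and your parenthetical about connected colimits with constant base is exactly where it surfaces.
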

\begin{proof}
The fibre inclusion $\imath_X$ preserves colimits, limits, cofibrations, fibrations and weak equivalences.
$\mathrm{Sp}^\bullet_\mathrm{sSet}$ is locally presentable (Corollaries \ref{cor:GlobSeqSpecLocPres} and \ref{cor:GlobSymSpecLocPres}) so that $\imath_X$ has a left and a right adjoint.
\end{proof}
\begin{remark}
\label{rem:RAtoInisGlobSect}
The right adjoint to $\imath \colon \mathrm{Sp}^\bullet\to \mathrm{Sp}^\bullet_\mathrm{sSet}$ is the functor
$
(X,A)\mapsto X_\ast A
$
computing global section spectra.
\end{remark}

\begin{corollary}
\label{cor:GlobalSigmaOmega}
Let $\bullet\in \{\mathbb{N},\Sigma\}$.
For any $k\geq 0$ there is a Quillen adjunction
\[
\begin{tikzcd}
\mathrm{sSet}_{\dslash \mathrm{sSet}}
\ar[rr, shift left=1.1ex, "\Sigma^{\infty-k}_{-}"]
\ar[rr, leftarrow, shift left=-1.1ex, "\bot", "\widetilde{\Omega}^{\infty-k}_{-}"']
&&
\mathrm{Sp}^\bullet_{\mathrm{sSet}}
\end{tikzcd}
\]
with respect to the global model structures that preserves projection functors to $\mathrm{sSet}$.
\end{corollary}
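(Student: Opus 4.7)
The plan is to exhibit the required adjunction via the Grothendieck construction machinery that underpins the global model structures themselves. The fibrewise adjunctions $(\Sigma^{\infty-k}_X\dashv \widetilde{\Omega}^{\infty-k}_X)$ of Remark \ref{rem:FreeSeqSpec}, together with the symmetric analogue of Remark \ref{rem:FreeSymSpec}, are already $\mathrm{sSet}_\ast$-Quillen for both the projective and the stable local model structures, so all that remains is to glue them coherently along the parameter direction and check that the Quillen conditions lift.

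First I would verify that the fibrewise left adjoints assemble into a pseudonatural transformation $\mathrm{sSet}_{\dslash -}\Rightarrow \mathrm{Sp}^\bullet_{-}$ of pseudofunctors $\mathrm{sSet}\to \mathbf{Model}$. Concretely, for every map $f\colon X\to X'$ one must exhibit a natural isomorphism $\alpha_f\colon f_!\circ \Sigma^{\infty-k}_X \xrightarrow{\cong} \Sigma^{\infty-k}_{X'}\circ f_!$ satisfying the usual cocycle condition. This is essentially formal: since $f_!$ preserves the zero object and $\mathrm{sSet}_\ast$-tensors (Corollary \ref{cor:sSetastEnrichmentFacts}), in degree $n\geq k$ one has $f_!\Sigma^{n-k}_X Y = f_!(S^{n-k}\owedge_X Y)\cong S^{n-k}\owedge_{X'} f_! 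Y = \Sigma^{n-k}_{X'} f_! Y$ naturally in $Y$, while both sides vanish in degrees $n<k$; the structure maps match on the nose. The symmetric case is proven identically, using in addition that $f_!$ acts levelwise and respects the symmetric group actions.

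With pseudonaturality in hand, the integration theorem of \cite{harpaz_grothendieck_2015} delivers the global Quillen adjunction: a pseudonatural transformation between proper relative pseudofunctors $\mathrm{sSet}\to \mathbf{Model}$ whose components are left Quillen induces a Quillen adjunction between the Grothendieck constructions equipped with their integral model structures. The properness and relative properties of both pseudofunctors have already been established (Lemmas \ref{lem:RetSpacPropRelative} and \ref{lem:SpecPseudoFunAreProper}). The induced global left adjoint is $\Sigma^{\infty-k}_{-}\colon (X,Y)\mapsto (X, \Sigma^{\infty-k}_X Y)$, acting on a morphism $(f,\psi)$ by $(f, \Sigma^{\infty-k}_{X'}(\psi)\circ \alpha_f)$, with right adjoint the level-$k$ evaluation $\widetilde{\Omega}^{\infty-k}_{-}\colon (X, A)\mapsto (X, A_k)$. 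Both functors visibly commute with the respective projection to $\mathrm{sSet}$.

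The verification is almost entirely routine. The only mild subtlety — and, I expect, the only even \emph{potential} obstacle — is that global (acyclic) cofibrations are defined in terms of the pushforward map on the second component rather than its adjunct, so one must observe that $\Sigma^{\infty-k}_{X'}(\psi)\circ \alpha_f$ is a (acyclic) cofibration in $\mathrm{Sp}^\bullet_{X'}$ whenever $\psi$ is a (acyclic) cofibration in $\mathrm{sSet}_{\dslash X'}$. This is immediate from the fact that $\alpha_f$ is an isomorphism and $\Sigma^{\infty-k}_{X'}$ is left Quillen, so no genuine obstacle arises.
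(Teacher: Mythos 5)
Your proposal is correct and matches the paper's own argument essentially verbatim: one checks that the fibrewise adjunctions $(\Sigma^{\infty-k}_X\dashv \widetilde{\Omega}^{\infty-k}_X)$ assemble into a pseudonatural transformation $\mathrm{sSet}_{\dslash -}\Rightarrow \mathrm{Sp}^\bullet_{-}$ using that $f_!$ preserves $\mathrm{sSet}_\ast$-tensors, and then applies the Grothendieck-construction result of Harpaz--Prasma (Theorem 4.1.3) to the proper relative pseudofunctors. Your closing remark about cofibrations being tested on the pushforward component is a harmless elaboration of the same mechanism, so nothing further is needed.
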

\begin{proof}
Taking $\bullet=\mathbb{N}$ for the sake of concreteness, observe that for any map of simplicial sets $f\colon X\to X'$ there is a natural isomorphism of functors $\Sigma^{\infty-k}_{X'}\circ f_!\cong f_! \circ \Sigma^{\infty-k}_X$ since $f_!$ preserves $\mathrm{sSet}_\ast$-tensors.
The assignment
\[
X\longmapsto \left((\Sigma^{\infty-k}_X\dashv \widetilde{\Omega}^{\infty-k}_X)\colon \mathrm{sSet}_{\dslash X}\to \mathrm{Sp}^\mathbb{N}_X\;\right)
\]
defines the components of a pseudonatural transformation $\mathrm{sSet}_{\dslash -}\Rightarrow \mathrm{Sp}^\bullet_{-}$ (the requisite coherence conditions can be verified using the universal property of the pullback and essential uniqueness of adjoints).
Applying the Grothendieck construction in view of \cite[Theorem 4.1.3]{harpaz_grothendieck_2015} completes the proof.
\end{proof}

\begin{remark}
For a simplicial set $X$, its \emph{cotangent complex} $\mathbb{L}_X$ is the result of the applying the following composite left Quillen functor to $X$:
\[
\begin{tikzcd}
\mathrm{sSet}
\ar[rr, shift left =1.1ex, "\mathrm{const.}"]
\ar[rr, leftarrow, shift left=-1.1ex, "\bot", "\mathrm{ev}_0"']
&&
\mathrm{Fun}(\Delta^1, \mathrm{sSet})
\ar[rr, shift left=1.1ex, "(\mathtt{0})_{+(\mathtt{1})}"]
\ar[rr, leftarrow, shift left=-1.1ex, "\bot", "U"']
&&
\mathrm{sSet}_{\dslash \mathrm{sSet}}
\ar[rr, shift left=1.1ex, "\Sigma^{\infty}_{-}"]
\ar[rr, leftarrow, shift left=-1.1ex, "\bot", "\widetilde{\Omega}^{\infty}_{-}"']
&&
\mathrm{Sp}^\mathbb{N}_{\mathrm{sSet}}
\end{tikzcd}
\]
(see Section \ref{S:CombTangent}, \cite[Chapter 7]{lurie_higher_2017}).
The cotangent complex of $X$ is therefore simply the fibrewise suspension spectrum $\mathbb{L}_X \cong \Sigma^\infty_X X_{+X}$ obtained by replacing each of the fibres of the identity fibration $\mathrm{id}_X\colon X\to X$ with the sphere spectrum.
Pushing forward along the terminal map $X\colon X\to \ast$, we find that $X_!\mathbb{L}_X$ is stably equivalent to the suspension spectrum $\Sigma^\infty_+ X$.

In algebraic contexts, such as for $\mathbf{E}_\infty$-ring spectra \cite[Section 7.4]{lurie_higher_2017}, the cotangent complex governs infinitesimal deformations.
In the setting of parametrised spectra, one might reasonably expect that there is some sense in which $\mathbb{L}_X$ governs infinitesimal deformations of the space $X$ (see also Remark \ref{rem:Goo}).
As a nod in this direction, observe that in the present setting the analogue of the space of derivations of $X$ with values in a (fibrant) $X$-spectrum $\mathcal{E}$ is 
\[
\mathrm{Sp}^\mathbb{N}_X \big(\mathbb{L}_X,
\mathcal{E} \big)
\cong
\left\{
\begin{tikzcd}[row sep= small]
&
\Omega^\infty_X \mathcal{E}
\ar[d]
\\
X
\ar[ur, dashed, bend left =15]
\ar[r, equals]
&
X
\end{tikzcd}
\right\}\,,
\]
the space of sections
computing the cohomology of $X$ with coefficients in the local system on $X$ determined by $\mathcal{E}$.
We plan to return to these ideas in future work.
\end{remark}

\subsubsection{Sequential stabilisation}
In this section we give another construction of the global category of parametrised sequential spectra which is more algebraic in nature.
We use this second construction to show that the global category of parametrised sequential spectra is combinatorial and simplicial.

\begin{construction}
Recalling the notation of Construction \ref{cons:GlobRetSpaceAlg}, let $\mathrm{RetSeq}$ be the pullback category
\[
\begin{tikzcd}
  \mathrm{RetSeq}\ar[r]\ar[d]
  &
  \mathrm{Fun}(\mathbb{N}\times \partial\Delta^2,\mathrm{sSet})
  \ar[d, "{(\mathrm{id}\times i_{\{0,2\}})^\ast}"]
  \\
  c(\mathrm{sSet})
  \ar[r, hookrightarrow]
  &
  \mathrm{Fun}(\mathbb{N}\times \Delta^{\{0,2\}},\mathrm{sSet})\,,
\end{tikzcd}
\]
with $c(\mathrm{sSet})\hookrightarrow \mathrm{Fun}(\mathbb{N}\times \partial\Delta^{\{0,2\}},\mathrm{sSet})$ the full subcategory on those functors $\mathbb{N}\times \Delta^1\to \mathrm{sSet}$ that are constant on the identity $(\mathrm{id}_X\colon X\to X)$ for some simplicial set $X$.
Objects of $\mathrm{RetSeq}$ are thus sequences of retractive spaces over a fixed base space.

There is an accessible monad $T^\mathrm{sp}$ on $\mathrm{RetSeq}$ which sends $\{(X,Y_n)\}_{n\in \mathbb{N}}$ to the sequence of retractive spaces over $X$ whose $n$-th term is the colimit of the diagram
\[
\begin{tikzcd}[sep =small]
  X
  \ar[d]
  \ar[dr, bend left= 5]
  \ar[drrr, bend left= 10]
  \ar[drrrr, bend left =15]
  \\
  \Sigma^n_X Y_0 
  &
  \Sigma^{n-1}_X Y_1
  &
  \dotsb
  &
  \Sigma_X Y_{n-1}
  &
  Y_n\,,
\end{tikzcd}
\]
which is a retractive space over $X$.
This defines the functor $T^\mathrm{sp}$ on objects; the action on morphisms is defined in the obvious way, using naturality of the fibrewise suspension functors.
The monad multiplication $T^\mathrm{sp}\circ T^\mathrm{sp}\Rightarrow T^\mathrm{sp}$ is determined at level $n$ by the folding maps
\[
\underbrace{\Sigma^{n-i}_X Y_i\coprod_X \dotsb \coprod_X \Sigma^{n-i}_X Y_i}_{\text{$(n+1-i)$ copies}}
\longrightarrow 
\Sigma^{n-i}_X Y_i\,,
\]
and the monad unit $\mathrm{id}\Rightarrow T^\mathrm{sp}$ is given at level $n$ by the inclusion of the summand $Y_n$.

Let $\{(X,A_n)\}_{n\in \mathbb{N}}$ be a $T^\mathrm{sp}$-algebra object. At level $(n+1)$, the $T^\mathrm{sp}$-algebra structure gives rise to morphisms 
\[
\rho^i_{n+1}\colon \Sigma^{n-i}_X A_i\longrightarrow A_{n+1}
\]
for $0\leq i\leq n+1$.
Writing $\overline{\sigma}_n:= \rho^n_{n+1}\colon (X,\Sigma_X A_n)\to (X, A_{n+1})$, the multiplication axiom implies that $
\rho^i_{n+1} = \overline{\sigma}_n \circ \Sigma_X \overline{\sigma}_{n-1}\circ \dotsb \circ \Sigma^{i-1}_X \overline{\sigma}_{n+1-i}
$  for all $0\leq i \leq n$, whereas the unit axiom implies that each $\rho^n_n$ is the identity.
Specifying a $T^\mathrm{sp}$-algebra on $\{(X, A_n)\}_{n\in \mathbb{N}}$ is the same as saying that the sequence $\{A_n\}_{n\in \mathbb{N}}$ assembles into a sequential $X$-spectrum (see Remark \ref{rem:FibrewiseSeqSpecMonad})---using the $T^\mathrm{sp}$-algebra axioms, one shows that the would-be spectrum structure maps $\overline{\sigma}_n$ cover the identity on $X$.
\end{construction}

\begin{lemma}
The categories $T^\mathrm{sp}\mathrm{-Alg}$ and $\mathrm{Sp}^\mathbb{N}_{\mathrm{sSet}}$ are canonically equivalent.
\end{lemma}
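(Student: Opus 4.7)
The plan is to construct functors $F\colon T^\mathrm{sp}\mathrm{-Alg}\to\mathrm{Sp}^\mathbb{N}_\mathrm{sSet}$ and $G\colon \mathrm{Sp}^\mathbb{N}_\mathrm{sSet}\to T^\mathrm{sp}\mathrm{-Alg}$ explicitly, and then verify that they are mutually inverse. On objects, $F$ is essentially already defined by the paragraphs preceding the lemma: send a $T^\mathrm{sp}$-algebra $\{(X,A_n)\}_{n\in\mathbb{N}}$ to the pair $(X,A)\in \mathrm{Sp}^\mathbb{N}_\mathrm{sSet}$, where $A$ has underlying sequence $\{A_n\}$ and structure maps $\overline\sigma_n \colon \Sigma_XA_n\to A_{n+1}$ extracted from the algebra structure on the $(i=n)$-summand. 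The author's calculation with $\rho^i_{n+1}$ shows this gives a sequential $X$-spectrum and that the unit axiom forces the $\overline\sigma_n$ to cover $\mathrm{id}_X$. Conversely, $G$ sends $(X,A)$ to $\{(X,A_n)\}$ equipped with the $T^\mathrm{sp}$-algebra structure whose level-$n$ component is the coproduct over $0\leq i\leq n$ of the iterated structure maps $\Sigma^{n-i}_X A_i\to A_n$; the axioms of a sequential $X$-spectrum (plus the fact that pushouts of retractive spaces are computed in $\mathrm{sSet}_{\dslash X}$ as in Remark~\ref{rem:ComputeRetSpaceCoLim}) imply the monad associativity and unit axioms.

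Next I would verify that $F$ and $G$ are mutually inverse on objects. This is a straightforward unwinding: starting from a $T^\mathrm{sp}$-algebra, restricting to the $i=n$-summand and re-assembling via the folding and inclusion maps recovers the original algebra map precisely because the $T^\mathrm{sp}$-action is determined (by the algebra axioms) by the single map $\overline\sigma_n$ at each level. In the other direction, extracting the $\overline\sigma_n$ from $GA$ recovers the original spectrum structure maps by definition.

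The main check is then on morphisms. A morphism $(f,\{\psi_n\})$ in $\mathrm{RetSeq}$ between sequences based at $X$ and $X'$ is a map $f\colon X\to X'$ together with a morphism of sequences of retractive spaces $\psi_n\colon f_!A_n\to A'_n$ in $\mathrm{sSet}_{\dslash X'}$ (equivalently, $\psi_n^\vee\colon A_n\to f^\ast A'_n$ in $\mathrm{sSet}_{\dslash X}$). The condition that $(f,\{\psi_n\})$ is a map of $T^\mathrm{sp}$-algebras amounts, using the explicit description of $T^\mathrm{sp}$ as a coproduct of iterated fibrewise suspensions and the fact that $f_!$ is strongly $\mathrm{sSet}_\ast$-monoidal so intertwines $\Sigma_X$ and $\Sigma_{X'}$ (Corollary~\ref{cor:sSetastEnrichmentFacts}), to the commutativity of the squares $\Sigma_{X'}f_!A_n\to f_!A_{n+1}\to A'_{n+1}$ and $\Sigma_{X'}f_!A_n\to \Sigma_{X'}A'_n\to A'_{n+1}$ for every $n\geq 0$. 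But this is exactly the condition that $\psi\colon f_!A\to A'$ is a morphism of sequential $X'$-spectra, i.e.\ defines a morphism $(f,\psi)$ in $\mathrm{Sp}^\mathbb{N}_\mathrm{sSet}$.

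The hard part, such as it is, will be the bookkeeping for morphisms, in particular confirming that the algebra-map condition over all summands $\Sigma^{n-i}_X A_i\to A_n$ collapses to the single spectrum-compatibility condition at each level. This follows once one observes that the summand inclusions are a component of the monad unit (so the $i=n$ check forces agreement on one summand), while the remaining summands are images under iterated $\Sigma$ of lower summands (so the multiplication axiom propagates compatibility automatically). Combined with the equivalent descriptions of morphisms in $\mathrm{sSet}_{\dslash\mathrm{sSet}}$ given in Remark~\ref{rem:GlobalMorphisms} (which extend termwise to sequences), this shows $F$ and $G$ are fully faithful. Since they are also essentially surjective by the object-level analysis above, they constitute inverse equivalences, and by construction they respect the projection functors to $\mathrm{sSet}$, so the equivalence is canonical.
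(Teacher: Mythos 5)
Your proposal is correct and follows essentially the same route as the paper, which leaves the lemma as an immediate consequence of the preceding construction: the object-level identification (with the unit axiom forcing the structure maps $\overline{\sigma}_n$ to cover $\mathrm{id}_X$) is exactly the paper's argument, and your morphism-level bookkeeping — reducing the full $T^\mathrm{sp}$-equivariance to the generating squares via the summand inclusions, the multiplication axiom, and the isomorphism $f_!\Sigma_X\cong\Sigma_{X'}f_!$ — correctly fills in the details the paper leaves implicit.
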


\begin{remark}
\label{rem:SeqSpecGlobalMorphisms}
According to the Lemma, morphisms $(f,\psi)\colon (X,A)\to (X',B)$ in $\mathrm{Sp}^\mathbb{N}_\mathrm{sSet}$ may be equivalently regarded as a collection of commutative diagrams
\[
\begin{tikzcd}
  X
  \ar[r]
  \ar[d, "f"']
  \ar[rr, "\mathrm{id}_X", bend left =25]
  &
  A_ n 
  \ar[r]\ar[d, "\Psi_n"']
  &
  X\ar[d, "f"]
  \\
  X'
  \ar[rr, "\mathrm{id}_{X'}"', bend left=-25]
  \ar[r]
  &
  B_n
  \ar[r]
  &
  X'
\end{tikzcd}
\]
such that the diagrams
\[
\begin{tikzcd}
  X
  \ar[r]\ar[d, "{f}"']
  &
  \Sigma_X A_ n
  \ar[r]
  \ar[d, "{\Sigma_f \Psi_{n}}"']
  &
  A_{n+1} 
  \ar[r]\ar[d, "{\Psi_{n+1}}"]
  &
  X\ar[d, "{f}"]
  \\
  X'
  \ar[r]
  &
  \Sigma_{X'} B_n
  \ar[r]
  &
  B_{n+1}
  \ar[r]
  &
  X'
\end{tikzcd}
\]
commute for all $n\in \mathbb{N}$ ($\Sigma_{f}\Psi_n$ corresponds to $f_! \Sigma_X A_n \to \Sigma_{X'} B_n$ via Remark \ref{rem:GlobalMorphisms}).
\end{remark}

\begin{corollary}
\label{cor:GlobSeqSpecLocPres}
$\mathrm{Sp}^\mathbb{N}_{\mathrm{sSet}}$ is locally presentable.
\end{corollary}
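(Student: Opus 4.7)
The plan is to leverage the description of $\mathrm{Sp}^\mathbb{N}_\mathrm{sSet}$ as the category of algebras $T^\mathrm{sp}\mathrm{-Alg}$ over the accessible monad $T^\mathrm{sp}$ on $\mathrm{RetSeq}$ established just above. Since the category of algebras over an accessible monad on a locally presentable category is again locally presentable (see \cite[Theorem 2.78]{adamek_locally_1994}), it suffices to check two things: first, that $\mathrm{RetSeq}$ itself is locally presentable; and second, that the monad $T^\mathrm{sp}$ is accessible (this was asserted in the construction, but I would want to confirm it).

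For the local presentability of $\mathrm{RetSeq}$, I would argue exactly as in Lemma \ref{lem:GlobRetSpaceLocPres}. The category $\mathrm{Fun}(\mathbb{N}\times \partial\Delta^2,\mathrm{sSet})$ is locally presentable as a diagram category in a locally presentable category, and the full subcategory $c(\mathrm{sSet})\hookrightarrow \mathrm{Fun}(\mathbb{N}\times\Delta^{\{0,2\}},\mathrm{sSet})$ on the sequences constant at identity arrows is equivalent to $\mathrm{sSet}$ (via the evaluation at vertex $0$) and is accessibly embedded. The pullback functor $(\mathrm{id}\times i_{\{0,2\}})^\ast$ preserves all colimits, so $\mathrm{RetSeq}$, being the inverse image of an accessibly embedded accessible subcategory along a colimit-preserving functor between locally presentable categories, is accessible by \cite[Corollary A.2.6.5]{lurie_higher_2009}. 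It is also closed under small colimits (which are created fibrewise over $\mathrm{sSet}$, exactly as in Remark \ref{rem:ComputeRetSpaceCoLim}), so it is locally presentable.

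For accessibility of $T^\mathrm{sp}$, I would note that each functor $\Sigma_X^k(-) = (X^\ast S^k)\wedge_X (-)$ preserves $\kappa$-filtered colimits fibrewise (pullbacks and fibrewise smash products are computed via finite limits and colimits of simplicial sets, which commute with filtered colimits); pseudonaturality in $X$ packages this into a filtered-colimit-preserving endofunctor of $\mathrm{RetSeq}$. Since the definition of $T^\mathrm{sp}$ at level $n$ is a finite colimit of such functors applied levelwise, $T^\mathrm{sp}$ preserves sufficiently filtered colimits, hence is accessible.

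The main (and only non-routine) obstacle I expect is verifying cleanly that limits and colimits in $\mathrm{RetSeq}$ interact well with the pullback-along-the-base construction so that the computation of $T^\mathrm{sp}$ really is fibrewise; once this is in hand the conclusion is immediate from \cite[Theorem 2.78]{adamek_locally_1994} together with the equivalence $\mathrm{Sp}^\mathbb{N}_\mathrm{sSet}\cong T^\mathrm{sp}\mathrm{-Alg}$.
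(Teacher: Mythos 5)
Your proposal is correct and follows essentially the same route as the paper: local presentability of $\mathrm{RetSeq}$ via the accessibly embedded subcategory $c(\mathrm{sSet})$ and \cite[Corollary A.2.6.5]{lurie_higher_2009} (as in Lemma \ref{lem:GlobRetSpaceLocPres}), followed by \cite[2.78]{adamek_locally_1994} applied to the monad $T^\mathrm{sp}$. The only cosmetic difference is that the paper observes $T^\mathrm{sp}$ preserves \emph{all} colimits (each fibrewise suspension being a left adjoint, computed by pullback-stable colimit constructions in $\mathrm{sSet}$), which both settles accessibility and shows colimits of algebras are created by the forgetful functor, whereas you argue only filtered-colimit preservation — sufficient, but slightly weaker than what is readily available.
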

\begin{proof}
$\mathrm{RetSeq}$ is accessible by \cite[Corollary A.2.6.5]{lurie_higher_2009} and closed under small colimits, hence is locally presentable. 
The monad $T^\mathrm{sp}$ preserves all colimits, so that $T^\mathrm{sp}\mathrm{-Alg}$ is accessible \cite[2.78]{adamek_locally_1994} and colimits in $T^\mathrm{sp}\mathrm{-Alg}$ are created by the forgetful functor.
$T^\mathrm{sp}\mathrm{-Alg}$, and hence $\mathrm{Sp}^\mathbb{N}_\mathrm{sSet}$, is thus locally presentable.
\end{proof}

The bulk of the rest of this section is devoted to the proof of the following
\begin{theorem}
\label{thm:SeqSpecGlobalComb}
$\mathrm{Sp}^\mathbb{N}_{\mathrm{sSet}}$ is a left proper combinatorial $\mathrm{sSet}_\ast$-model category.
\end{theorem}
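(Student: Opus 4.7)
The plan is to mirror the arguments of Lemma \ref{lem:GlobRetSpaceComb} and Theorem \ref{thm:RetSpExtSmash}, transferring the unstable results to the sequential-spectrum setting via the global shifted suspension functors of Corollary \ref{cor:GlobalSigmaOmega}. Local presentability was already established in Corollary \ref{cor:GlobSeqSpecLocPres}, so three things remain to verify: left properness, cofibrant generation, and the $\mathrm{sSet}_\ast$-enrichment.

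First, left properness follows by imitating the argument of Lemma \ref{lem:GlobRetSpaceComb}. Given a pushout of a weak equivalence $(g,\psi)\colon (X,A)\xrightarrow{\sim}(X',A')$ along a cofibration $(f,\varphi)\colon (X,A)\hookrightarrow (Y,B)$, the base component of the resulting morphism is a weak equivalence by left properness of $\mathrm{sSet}$; the fibre component is computed as the pushout in $\mathrm{Sp}^\mathbb{N}_{Y'}$ of the span $f'_!A'\leftarrow f'_!g_!A\rightarrowtail g'_!B$, which is a stable equivalence because pushforwards preserve stable weak equivalences (Lemma \ref{lem:SpecPseudoFunAreProper}) and $\mathrm{Sp}^\mathbb{N}_{Y'}$ is itself left proper.

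Next, for cofibrant generation, the candidate set of generating cofibrations is
\[
\mathcal{I}^{\mathbb{N}}_{\mathrm{sSet}}
:= 0_-(\mathcal{I}^\mathrm{Kan})\;\cup\;\bigcup_{k\geq 0}\Sigma^{\infty-k}_{-}\big(\mathcal{I}^\mathrm{Kan}_+\big),
\]
where $\mathcal{I}^\mathrm{Kan}_+$ is the set from Lemma \ref{lem:GlobRetSpaceComb} and $\Sigma^{\infty-k}_-$ is the global left Quillen functor of Corollary \ref{cor:GlobalSigmaOmega}. Repeating the lifting argument of Lemma \ref{lem:GlobRetSpaceComb}, a morphism $(f,\psi)$ has the right lifting property against $\mathcal{I}^{\mathbb{N}}_{\mathrm{sSet}}$ if and only if $f$ is an acyclic Kan fibration and, for each simplex $\sigma\colon \Delta^n\to X$, the adjoint $\psi^\vee$ lifts against $\Sigma^{\infty-k}_X(\sigma|_{\partial\Delta^n}\to\sigma)_{+X}$ for all $k\geq 0$; this precisely characterises stable acyclic fibrations in $\mathrm{Sp}^\mathbb{N}_X$ via the set $\mathcal{I}^{\mathbb{N}\text{-proj}}_X$. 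The analogous set $\mathcal{J}^{\mathbb{N}}_\mathrm{sSet}$ of generating acyclic cofibrations is built by replacing $\partial\Delta^n\hookrightarrow\Delta^n$ with horn inclusions throughout, and augmenting by the images under $\Sigma^{\infty-k}_-$ of horn extensions of the localising morphisms $\zeta_{k,\Delta^n}$ assembled over the simplex category. Cofibrant generation then follows from the small object argument, which is always available in a locally presentable category.

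Finally, for the $\mathrm{sSet}_\ast$-enrichment, define the tensoring by $K\owedge(X,A):=(X,K\owedge_X A)$; this is well-defined on morphisms because both $f_!$ and $f^\ast$ preserve $\mathrm{sSet}_\ast$-tensors (Theorem \ref{thm:SeqSpecStructureTheorem}). The pushout-product axiom reduces by cofibrant generation to checking pushout-products of elements of $\mathcal{I}^\mathrm{Kan}_\ast$ with elements of $\mathcal{I}^{\mathbb{N}}_\mathrm{sSet}$ (and analogously for the acyclic clause). Pushout-products with morphisms from $0_-(\mathcal{I}^\mathrm{Kan})$ reduce to the retractive-space computation of Theorem \ref{thm:RetSpExtSmash}, using $K\owedge(X,0_X)\cong(X,0_X)$; pushout-products with morphisms from $\Sigma^{\infty-k}_-(\mathcal{I}^\mathrm{Kan}_+)$ reduce to the unstable pushout-product calculation of Theorem \ref{thm:RetSpStructureTheorem} via the natural isomorphism $K\owedge\Sigma^{\infty-k}_X(-)\cong\Sigma^{\infty-k}_X(K\owedge_X -)$. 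The main obstacle will be the careful construction of $\mathcal{J}^{\mathbb{N}}_\mathrm{sSet}$: because each fibre's stable model structure is a left Bousfield localisation, one must verify that the global set contains enough stably-acyclic cell attachments so that its right-lifting characterisation of fibrations agrees fibre-wise with the $\Omega_X$-spectrum condition of Lemma \ref{lem:SeqSpecFib}. This is essentially the same bookkeeping required in the proof of \cite[Theorem 3.0.12]{harpaz_grothendieck_2015}, here intermingled with the fibre-wise Bousfield localisation data, and the two pieces must be shown to combine coherently over $\mathrm{sSet}$.
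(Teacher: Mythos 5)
Your treatment of left properness and of the generating cofibrations matches the paper's, and the reduction of the pushout--product axiom to generators via $K\owedge_X\Sigma^{\infty-k}_X(-)\cong\Sigma^{\infty-k}_X(K\owedge_X-)$ is fine at the projective level. The genuine gap is exactly the point you flag and then leave unresolved: the set $\mathcal{J}^{\mathbb{N}}_{\mathrm{sSet}}$ you describe (levelwise horns together with ``horn extensions of the localising morphisms $\zeta_{k,\Delta^n}$'') does not characterise the fibrations of the global stable model structure by right lifting. In a left Bousfield localisation, lifting against the original generating acyclic cofibrations together with $S$-horns detects local \emph{fibrant objects} (and fibrations with stably fibrant codomain), but not arbitrary local fibrations; a stable fibration $\psi^\vee\colon A\to f^\ast B$ with $B$ not an $\Omega$-spectrum is not recognised this way, so $\mathrm{rlp}(\mathcal{J}^{\mathbb{N}}_{\mathrm{sSet}})$ strictly contains the class of global stable fibrations and the factorisation produced by the small object argument lands in the wrong class. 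Since your verification of the acyclic clause of the pushout--product axiom also argues ``on generators'', that part of the enrichment inherits the same gap. Appealing to the small object argument in a locally presentable category does not help: the issue is not running the argument but exhibiting a set whose lifting class is the fibrations.

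The paper circumvents this entirely by never writing down generating acyclic cofibrations for the stable structure. It first establishes the global \emph{projective} model structure with your generating sets, checks that structure is $\mathrm{sSet}_\ast$-enriched, and then proves (this is the separate ``Claim'') that the global stable model structure \emph{is} the left Bousfield localisation of the global projective structure at the small set $S_{\mathbb{N}}=\{\zeta_{k,\Delta^n}((\mathrm{id}|_{\partial\Delta^n})_{+\Delta^n}),\ \zeta_{k,\Delta^n}(\mathrm{id}_{+\Delta^n})\}_{n,k\geq 0}$. That identification is itself nontrivial: one shows the $S_{\mathbb{N}}$-local fibrant objects are precisely the fibrant $\Omega_X$-spectra over Kan complexes (an adjointness and fibre-by-fibre argument over the simplices of the base), and then compares homotopy function complexes to conclude the two structures have the same weak equivalences, hence coincide. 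Combinatoriality and left properness then follow from the general existence theorem for left Bousfield localisations of left proper combinatorial simplicial model categories, and the enrichment descends because $K\otimes\zeta_{k,X}(Y)\cong\zeta_{k,X}(K\owedge_X Y)$ makes each $K\otimes(-)$ left Quillen for the localised structure, after which a pushout plus $2$-out-of-$3$ argument gives the acyclic clause without any explicit generators. To repair your proof you should either adopt this localisation strategy or replace your explicit $\mathcal{J}^{\mathbb{N}}_{\mathrm{sSet}}$ by an appeal to Smith's theorem (a set of generating cofibrations plus an accessible, suitably closed class of weak equivalences), which again yields combinatoriality without naming the generating acyclic cofibrations.
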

\begin{proof}
Left properness of the global model structure  on $\mathrm{Sp}^\mathbb{N}_{\mathrm{sSet}}$ is proven as in Lemma \ref{lem:GlobRetSpaceComb}, since $\mathrm{sSet}$ and $\mathrm{Sp}^\mathbb{N}_X$ are left proper for all $X$.

As $\mathrm{Sp}^\mathbb{N}_\mathrm{sSet}$ is locally presentable, in order to show that the global model structure is combinatorial we must demonstrate that it is cofibrantly generated.
The small object argument automatically holds in a locally presentable category so it is sufficient to find sets $\mathcal{I}_{\mathbb{N}\mathrm{-Sp}}$ and $\mathcal{J}_{\mathbb{N}\mathrm{-Sp}}$ characterising acyclic fibrations and fibrations respectively via the right lifting property.

As a first approximation, let 
\[
\mathcal{I}_{\mathbb{N}\mathrm{-Sp}}:=
\bigcup_{k\geq 0}\Sigma^{\infty-k}_-(\mathcal{I}^\mathrm{Kan}_{\mathrm{sSet}})
\;\;
\mbox{ and }
\;\;
\mathcal{J}'_{\mathbb{N}\mathrm{-Sp}}:=
\bigcup_{k\geq 0}\Sigma^{\infty-k}_-(\mathcal{J}^\mathrm{Kan}_{\mathrm{sSet}})
\]
be the set of (acyclic) cofibrations defined by applying $\Sigma^{\infty-k}_- \colon \mathrm{sSet}_{\dslash \mathrm{sSet}}\to \mathrm{Sp}^\mathbb{N}_\mathrm{sSet}$ to the set of generating (acyclic) cofibrations of Lemma \ref{lem:GlobRetSpaceComb}.
The global \emph{projective} model structure on $\mathrm{Sp}^\mathbb{N}_\mathrm{sSet}$ is the result of applying the Grothendieck construction for model categories to the pseudofunctor that sends $X$ to the sequential $X$-projective model category.
Arguing as in the proof of Lemma \ref{lem:GlobRetSpaceComb}, $\mathcal{I}_{\mathbb{N}\mathrm{-Sp}}$ and $\mathcal{J}'_{\mathbb{N}\mathrm{-Sp}}$ are seen to be respectively generating cofibrations and generating acyclic cofibrations for the global projective model structure, which is therefore combinatorial and left proper (by the above).

We identify the global model structure as a left Bousfield localisation of the global projective model structure. 
To do this, we first show that the global projective model structure is a $\mathrm{sSet
}_\ast$-enriched in order that we can use the  existence result for Bousfield localisations of left proper combinatorial $\mathrm{sSet}$-model categories.
The category $\mathrm{Sp}^\mathbb{N}_\mathrm{sSet}$ admits $\mathrm{sSet}_\ast$-tensors
\[
K\otimes (X,A) := (X, K\owedge_X A)
\]
inherited from the $\mathrm{sSet}_\ast$-tensoring on the fibre categories $\mathrm{Sp}^\mathbb{N}_X$.
The tensoring bifunctor preserves colimits in each variable and so participates in an adjunction of two variables by the adjoint functor theorem.
For a map of simplicial sets $i\colon K\to L$ and a map $g\colon (X, Y) \to (X',Z)$ in $\mathrm{sSet}_{\mathrm{sSet}}$ there is a natural isomorphism
\[
\Sigma^{\infty-k}_{-} \big(i\,\square\,g\big)
\cong i\, \square\, \Sigma^{\infty-k}_{-} g
\]
for all $k\geq 0$, the pushout-product on the left-hand side being computed with respect to the $\mathrm{sSet}_\ast$-tensoring on $\mathrm{sSet}_{\dslash \mathrm{sSet}}$.
Since the $\mathrm{sSet}_\ast$-tensoring on $\mathrm{sSet}_{\dslash \mathrm{sSet}}$ is a Quillen bifunctor, we deduce that the $\mathrm{sSet}_\ast$-tensoring on $\mathrm{Sp}^\mathbb{N}_\mathrm{sSet}$ also satisfies the pushout-product axiom.
\vspace{5mm}

\noindent {\bf Claim:} the global model structure on $\mathrm{Sp}^\mathbb{N}_\mathrm{sSet}$ is the left Bousfield localisation of the global projective model structure at the set of morphisms
\[
S_{\mathbb{N}} :=\left\{\zeta_{k,\Delta^n } \big((\mathrm{id}|_{\partial\Delta^n})_{+\Delta^n}\big), \zeta_{k,\Delta^n } \big(\mathrm{id}_{+\Delta^n}\big)\right\}_{n,k\geq 0}\,.
\]

\vspace{5mm}

Assuming the veracity of this claim for the moment, it follows that the global model structure is combinatorial and left proper.
To see that the $\mathrm{sSet}_\ast$-tensoring descends to a Quillen bifunctor after Bousfield localisation, note that  there are natural isomorphisms $K\otimes \zeta_{k,X}(Y)\cong \zeta_{k, X}(K\owedge_X Y)$.
It follows that the tensoring $K\otimes (-)$ on $\mathrm{Sp}^\mathbb{N}_{\mathrm{sSet}}$ carries morphisms in $S_{\mathbb{N}}$ to stable weak equivalences, so descends to a left Quillen endofunctor for the global model structure.
For $i\colon K\to L$ a cofibration of pointed simplicial sets and $j\colon (X, A)\to (X',B)$ a weak equivalence for the global model structure on $\mathrm{Sp}^\mathbb{N}_\mathrm{sSet}$, we have a commuting diagram
 \[
 \begin{tikzcd}
    K\otimes (X,A)
    \ar[r]\ar[d, "{\sim}"'] & 
    L\otimes (X,A)
    \ar[dr, bend left =15,"{\sim}"]
    \ar[d, "{\sim}"'] &
    \\
    K\otimes (X',B) \ar[r]
    &
    P
    \ar[r, "{i\,\square\, j}"]
    &
    L\otimes (X',B) \,,
 \end{tikzcd}
 \] 
 with $P$ the pushout, where all morphisms are cofibrations and with acyclic cofibrations as marked.
 By the $2$-out-of-$3$ property the pushout product $i\, \square\, j$ is an acyclic cofibration.
This completes the proof of the theorem. 
\end{proof}

\begin{proof}[Proof of Claim.]
The fibrant objects for the $S_\mathbb{N}$-local model structure are precisely those projectively fibrant pairs $(X, A)$ that are moreover $S_\mathbb{N}$-local.
A pair $(X,A)$ is projectively fibrant iff $X$ is a Kan complex and $A$ is a levelwise fibrant sequential $X$-spectrum, and $S_\mathbb{N}$-locality is the condition that the maps of homotopy function complexes
\begin{align*}
\mathrm{map}_{\mathrm{Sp}^\mathbb{N}_{\mathrm{sSet}}}\left(\Sigma^{\infty-k}_{\Delta^n}(\Delta^n_{+\Delta^n}), (X,A)\right)
&\longrightarrow 
\mathrm{map}_{\mathrm{Sp}^\mathbb{N}_{\mathrm{sSet}}}\left(\Sigma^{\infty-(k+1)}_{\Delta^n}(\Sigma_{\Delta^n} \Delta^n_{+\Delta^n}), (X,A)\right)
\\
\mathrm{map}_{\mathrm{Sp}^\mathbb{N}_{\mathrm{sSet}}}\left(\Sigma^{\infty-k}_{\Delta^n}(\partial\Delta^n_{+\Delta^n}), (X,A)\right)
&\longrightarrow 
\mathrm{map}_{\mathrm{Sp}^\mathbb{N}_{\mathrm{sSet}}}\left(\Sigma^{\infty-(k+1)}_{\Delta^n}(\Sigma_{\Delta^n} \partial\Delta^n_{+\Delta^n}), (X,A)\right)
\end{align*}
are weak equivalences for all $n,k\geq 0$.
By adjointness, this amounts to the condition that the maps of homotopy function complexes induced by the adjoint structure maps $\sigma^\vee_n \colon A_k \to \Omega_{X} A_{k+1}$
\begin{equation}
\label{eqn:GlobSeqStableMpSpCondition}
\mathrm{map}_{\mathrm{Fun}(\Delta^1, \mathrm{sSet})}
\left(
\!
\raisebox{4pt}{
\begin{tikzcd}[sep =small]
  K(n)
  \ar[d]
  \\
  \Delta^n 
\end{tikzcd}}
,
\raisebox{4pt}{
\begin{tikzcd}[sep=small]
  A_k  
  \ar[d]
  \\
  X 
\end{tikzcd}}\! 
\right)
\longrightarrow 
\mathrm{map}_{\mathrm{Fun}(\Delta^1, \mathrm{sSet})}
\left(
\raisebox{4pt}{
\begin{tikzcd}[sep=small]
  K(n) 
  \ar[d]
  \\
  \Delta^n 
\end{tikzcd}
}
,
\raisebox{4pt}{
\begin{tikzcd}[sep=small]
  \Omega_X A_{k+1}  
  \ar[d]
  \\
  X 
\end{tikzcd}
} 
\right)
\end{equation}
are equivalences for all $n,k\geq 0$, where $K(n) =\partial \Delta^n$ or $\Delta^n$.

For a cofibration $i\colon K\to L$ of simplicial sets and a fibration $f\colon E\to F$ between Kan complexes, the map of homotopy function complexes
$
 \mathrm{map}_{\mathrm{Fun}(\Delta^1, \mathrm{sSet})}(i,f)
\to
\mathrm{map}_\mathrm{sSet}(L, F)
$
induced by evaluating arrows at their codomains is a fibration.
To see this, observe that 
\[
\text{lifts of}\quad
\begin{tikzcd}
  \Lambda^m_l
  \ar[r, "{\tau}"]
  \ar[d] &
  \mathrm{map}_{\mathrm{Fun}(\Delta^1, \mathrm{sSet})}(i,f)
  \ar[d]
  \\
  \Delta^m 
  \ar[ur, dashed]
  \ar[r,"{\sigma}"']
  &
  \mathrm{map}_\mathrm{sSet}(L,F)
\end{tikzcd}
\longleftrightarrow
\text{ arrows $\theta$ in}
\quad
\begin{tikzcd}[sep=small]
  &
  \Delta^m \times K 
  \ar[rr, "{\theta}", dashed]
  \ar[dd] && 
  E
  \ar[dd, "{f}"]
  \\
  \Lambda^m_l \times K \ar[ur] \ar[dd]
  \ar[urrr, bend right=15, crossing over]
  &&&
  \\
  &
  \Delta^m \times L
  \ar[rr, "{\overline{\sigma}}"] &&
  F\,,
  \\
  \Lambda^m_l \times L 
  \ar[ur]\ar[urrr, bend right=15]
\end{tikzcd}
\]
where $\overline{\sigma}$ corresponds to the $m$-simplex $\Delta^m \to \mathrm{map}_\mathrm{sSet}(L,F)$ and the front face of the right-hand diagram corresponds to $\tau\colon \Lambda^m_l \to \mathrm{map}_{\mathrm{Fun}(\Delta, \mathrm{sSet})}(i,f)$.
That such an arrow $\theta$ exists is guaranteed by the lifting diagram
\[
\begin{tikzcd}
  \Lambda^m_l \times K\ar[rr] \ar[d, rightarrowtail, "\sim"'] && E\ar[d,"{f}", two heads]
  \\
  \Delta^m \times K
  \ar[r]
  \ar[urr, dashed,"{\exists \theta}"]
  &
  \Delta^m \times L
  \ar[r, "{\overline{\sigma}}"]
  &
  F\,,
\end{tikzcd}
\]
as $f$ is a fibration and the left-hand vertical arrow is an acyclic cofibration.

The map \eqref{eqn:GlobSeqStableMpSpCondition} is thus an equivalence precisely if the induced map on (homotopy) fibres over $\sigma\colon \ast \to \mathrm{map}_{\mathrm{sSet}}(\Delta^n , X)$ is an equivalence for all $n$-simplices $\sigma$ of $X$.
The induced map on fibres over $\sigma$ is isomorphic to
\[
\mathrm{map}_{\mathrm{sSet}_{\dslash X}}(\sigma_! K(n), \sigma^\vee_k)\colon
\mathrm{map}_{\mathrm{sSet}_{\dslash X}}(\sigma_! K(n), A_k)
\longrightarrow 
\mathrm{map}_{\mathrm{sSet}_{\dslash X}}(\sigma_! K(n), \Omega_X A_{k+1})\,,
\]
which, by Lemma \ref{lem:SeqSpecFib}, is a equivalence for all $K(n)$, $\sigma\colon \Delta^n \to X$ and $n,k\geq 0$ precisely if $A$ is a fibrant $\Omega_X$-spectrum---the $\sigma_! K(n)$ range over the sets of domains and codomains of the set $\mathcal{I}^\mathrm{Kan}_X$ of generating cofibrations of $\mathrm{sSet}_{\dslash X}$).

We conclude that the $S_\mathbb{N}$-local fibrant objects are precisely fibrant loop spectra parametrised over Kan complexes.
In particular, the fibrant objects of the $S_\mathbb{N}$-local model structure coincide with those of the global stable model structure.
For objects $(X,A)$ and $(X',B)$, the homotopy function complex of maps  $(X,A)\to (X',B)$ can be presented both in the global and in the $S_\mathbb{N}$-local model structures by taking a fibrant replacement $(X',B)\to \mathcal{R}(X',B)$ in the global model structure and a Reedy cosimplicial coframe $\mathcal{Q}^\bullet(X,A)$ of $(X,A)$ with respect to the global \emph{projective} model structure:
\[
\underbrace{\mathrm{map}_{\mathrm{Sp}^\mathbb{N}_\mathrm{sSet}}\big((X,A), (X',B)}_{\text{global or $S_\mathbb{N}$-local}}\big)
\sim_{\mathrm{w.e.}} \left([n]\mapsto\mathrm{Sp}^\mathbb{N}_{\mathrm{sSet}}\big(\mathcal{Q}^n(X,A), \mathcal{R}(X',B)\big)\right)\,.
\]
The upshot is that the homotopy function complexes are the same in both model structures, and since a morphism $f$ in a model category $M$ is a weak equivalence precisely if $\mathrm{map}_M(f, T)$ is a weak equivalence for all fibrant objects $T$, we conclude that the global and $S_\mathbb{N}$-local model structures have the same classes of weak equivalences.
In any model category, any two out of the three classes of cofibrations, fibrations and weak equivalences is sufficient to determine the third.
\end{proof}

\begin{remark}[Basewise simplicial homotopy]
\label{rem:SeqSpecBasewiseEnrich}
$\mathrm{Sp}^\mathbb{N}_\mathrm{sSet}$ carries another \lq\lq basewise'' $\mathrm{sSet}$-enrichment, obtained by prolonging the tensoring of Corollary \ref{cor:GlobRetSpBaseEnrich} to sequential spectra.
The tensor bifunctor $\mathrm{sSet}\times \mathrm{Sp}^\mathbb{N}_\mathrm{sSet}\to  \mathrm{Sp}^\mathbb{N}_\mathrm{sSet}$ sends
$
\big(K, (X,A)\big)\mapsto
\big(K\times X, p_2^\ast A\big)$,
with $p_2\colon K\times X\to X$ the projection.
To show that this is a Quillen bifunctor, we use that for $p$ a projection to a factor in a product, the associated pullback functor $p^\ast$ on sequential parametrised spectra preserves cofibrations and stable weak equivalences (Theorem \ref{thm:SeqSpecStructureTheorem}).
\end{remark}

\subsubsection{Symmetric stabilisation}
\label{SS:SymStabGlob}
As we have just seen for sequential parametrised spectra, there is also an algebraic construction of the global category of parametrised symmetric spectra.
Together with allowing us to deduce that the global model structure is combinatorial and simplicial, this construction allows us to simply define a symmetric monoidal external smash product on $\mathrm{Sp}^\Sigma_\mathrm{sSet}$.
With respect to the external smash product, $\mathrm{Sp}^\Sigma$ is a symmetric monoidal model category.
\begin{construction}
Let $\mathrm{SymSeq}$ be the pullback category
\[
\begin{tikzcd}
  \mathrm{SymSeq}
  \ar[r]\ar[d]
  &
  \mathrm{Fun}(\mathbf{\Sigma}, \mathrm{sSet}_{\dslash \mathrm{sSet}})
  \ar[d,"{p}"]
  \\
  c(\mathrm{sSet})\,
  \ar[r, hookrightarrow]
  &
  \mathrm{Fun}(\mathbf{\Sigma},\mathrm{sSet})\,,
\end{tikzcd}
\]
with $c(\mathrm{sSet})$ the full subcategory on constant symmetric sequences.
That is, objects of $\mathrm{SymSeq}$ are symmetric sequences of retractive spaces over a fixed (but arbitrary) base space.
\end{construction}
\begin{lemma}
The Day convolution product on $\mathrm{Fun}(\mathbf{\Sigma},\mathrm{sSet}_{\dslash \mathrm{sSet}})$ restricts to a symmetric monoidal structure on $\mathrm{SymSeq}$.
\end{lemma}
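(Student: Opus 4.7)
The symmetric monoidal structure on $\mathrm{Fun}(\mathbf{\Sigma},\mathrm{sSet}_{\dslash\mathrm{sSet}})$ is built from the monoidal structure on $\mathbf{\Sigma}$ (given by~$+$), the external smash product $\vartriangle$ on $\mathrm{sSet}_{\dslash\mathrm{sSet}}$ (Theorem~\ref{thm:RetSpExtSmash}), and the $\mathrm{sSet}_\ast$-tensoring $K\otimes(X,Y):=(\ast,K)\vartriangle(X,Y)$ of Corollary~\ref{cor:GlobalRetSpPSSet}; explicitly,
\[
(A\otimes^{\mathrm{Day}} B)(n) = \int^{p,q\in\mathbf{\Sigma}}\mathbf{\Sigma}(p+q,n)_+\otimes\big(A(p)\vartriangle B(q)\big),
\]
with monoidal unit $I$ concentrated at level~$0$ with value $I(0)=(\ast,S^0)$ (the unit of $\vartriangle$) and $I(n)=0_\ast=(\ast,\ast)$ for $n>0$. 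The plan is to verify that this convolution preserves the full subcategory $\mathrm{SymSeq}$ of symmetric sequences with constant underlying base space, after which the monoidal coherences restrict for free.

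The central observation is that base-space behaviour is tightly controlled by the projection $p\colon\mathrm{sSet}_{\dslash\mathrm{sSet}}\to\mathrm{sSet}$: by Lemma~\ref{lem:RetSpProjClosedMonoidal}, $p$ is strongly symmetric monoidal ($\vartriangle\mapsto\times$), and a direct computation gives
\[
p\big(K\otimes(X,Y)\big) = p\big((\ast,K)\vartriangle(X,Y)\big) = \ast\times X = X,
\]
so the $\mathrm{sSet}_\ast$-tensoring preserves base spaces. Now fix $A,B\in\mathrm{SymSeq}$ with underlying base spaces $X_A$ and~$X_B$. Then $A(p)\in\mathrm{sSet}_{\dslash X_A}$ and $B(q)\in\mathrm{sSet}_{\dslash X_B}$ for every~$p,q$, so $A(p)\vartriangle B(q)$ lies in the fibre category $\mathrm{sSet}_{\dslash(X_A\times X_B)}$, and the same is true of each tensored term $\mathbf{\Sigma}(p+q,n)_+\otimes(A(p)\vartriangle B(q))$. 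Since the fibre inclusion $\mathrm{sSet}_{\dslash(X_A\times X_B)}\hookrightarrow\mathrm{sSet}_{\dslash\mathrm{sSet}}$ preserves colimits (Corollary~\ref{cor:FibreInclusionRetSp}), the coend lives in this fibre for every~$n$. Consequently $p(A\otimes^{\mathrm{Day}} B)$ takes the constant value $X_A\times X_B$ on objects; on morphisms, the $\Sigma_n$-action on level~$n$ only permutes the $\mathbf{\Sigma}(p+q,n)_+$ factor, which is erased by $p$, so the projection is a genuinely constant $\mathbf{\Sigma}$-diagram and $A\otimes^{\mathrm{Day}} B\in\mathrm{SymSeq}$.

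For the unit, $p(I(0))=p(\ast,S^0)=\ast$ and $p(I(n))=p(\ast,\ast)=\ast$, so $I\in\mathrm{SymSeq}$. As $\mathrm{SymSeq}$ is a full subcategory, the associators, unitors, and symmetry isomorphisms of $\otimes^{\mathrm{Day}}$ automatically restrict to make $(\mathrm{SymSeq},\otimes^{\mathrm{Day}},I)$ symmetric monoidal. The only step requiring real content is the closure assertion of the previous paragraph; the point is that both ingredients of the Day formula---the $\mathrm{sSet}_\ast$-tensoring and the external smash $\vartriangle$---interact predictably with base projections, and colimits of retractive spaces with fixed base remain retractive spaces with that base. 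I expect no serious obstacles, since all the structural ingredients (strongly monoidal projection, colimit-preserving fibre inclusion, base-preservation of the tensoring) have been assembled in the preceding sections.
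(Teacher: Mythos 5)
Your strategy is the same as the paper's (verify the unit lies in $\mathrm{SymSeq}$, then control the base spaces of the convolution through the projection to $\mathrm{sSet}$), but the load-bearing step in your write-up --- that the fibre inclusion $\mathrm{sSet}_{\dslash (X_A\times X_B)}\hookrightarrow \mathrm{sSet}_{\dslash\mathrm{sSet}}$ preserves colimits, hence ``the coend lives in this fibre'' --- is exactly where the difficulty sits, and as stated it fails. By Construction \ref{cons:GlobRetSpaceAlg}, colimits in $\mathrm{sSet}_{\dslash\mathrm{sSet}}$ are computed componentwise on bases and total spaces, so a coproduct of two objects of the fibre over $Z$ has base $Z\coprod Z$, and the fibre inclusion does not even preserve initial objects ($\imath_Z(0_Z)=(Z,0_Z)$, whereas the initial object of the global category is $(\varnothing,\varnothing)$); only \emph{connected} (co)limits are preserved. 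Corollary \ref{cor:FibreInclusionRetSp}, which you cite, asserts the stronger preservation, but it is unreliable on precisely this point. Since the coend over $\mathbf{\Sigma}\times\mathbf{\Sigma}$ is a badly disconnected colimit, ``each integrand lies in the fibre over $X_A\times X_B$, hence so does the coend'' does not follow: computed conically in $\mathrm{sSet}_{\dslash\mathrm{sSet}}$, the level-$n$ base of the convolution of two constant-base sequences is a disjoint union of copies of $X_A\times X_B$ indexed by the components of the relevant comma category ($2^n$ copies for the ordinary Day formula with $\mathrm{Set}$-copowers, an $\mathbb{N}\times\mathbb{N}$-indexed coproduct for the formula using the pointed tensoring), not a single copy.

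What makes the lemma true is the \emph{fibrewise} reading of the coend: every structure map in the coend diagram covers the identity of $X_A\times X_B$, so the convolution should be formed as $\bigl(X_A\times X_B,\ \mathrm{pr}_1^\ast A\otimes^{\mathrm{Day}}_{X_A\times X_B}\mathrm{pr}_2^\ast B\bigr)$, i.e.\ with the coend taken in the pointed category $\mathrm{sSet}_{\dslash (X_A\times X_B)}$, where coproducts are fibrewise wedges, the terms with $p+q\neq n$ are zero objects, and the base stays put by construction (levelwise this agrees with the displayed integrand via Lemma \ref{lem:ExtSmashtoFib}). With that interpretation your unit check and the remark that coherences restrict along the full subcategory are fine. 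To be fair, the paper's own proof elides the same point: it deduces constancy of the bases from the projection being a two-sided adjoint (Corollary \ref{cor:RetSpBase}) and strongly monoidal (Lemma \ref{lem:RetSpProjClosedMonoidal}) together with the identification $\int^{p,q}\ast\times X\times X'\cong X\times X'$, but the conical coend of that constant diagram over the disconnected groupoid $\mathbf{\Sigma}\times\mathbf{\Sigma}$ is again a countable coproduct of copies of $X\times X'$. So your argument is faithful to the paper's stated toolkit, but both your proof and the paper's need the fibrewise interpretation of the coend to actually close.
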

\begin{proof}
The monoidal unit for Day convolution is the symmetric sequence that sends $0\mapsto S^0$ and all other terms to $0_\ast$---this is an object of $\mathrm{SymSeq}$.
On the other hand, for $(X, A), (X',B)\in \mathrm{SymSeq}$ we have
\[
(X, A)\otimes^\mathrm{Day} (X',B) \colon 
n\longmapsto
\int^{p,q\in \mathbf{\Sigma}}
(\ast,\mathbf{\Sigma}(p+q, n)_+ )\esmash (X, A(p))\esmash (X',B(q))\,,
\]
Since the projection functor $p\colon \mathrm{sSet}_{\dslash \mathrm{sSet}}\to \mathrm{sSet}$ is a two-sided adjoint
\[
p\Big((X, A)\otimes^\mathrm{Day} (X',B) \Big)(n)
=
\int^{p,q\in \mathbf{\Sigma}} \ast \times X\times X' \cong X\times X'\,,
\]
so that the sequence of base spaces is constant.
\end{proof}

The symmetric sequence $\mathbb{S} \colon n\mapsto (\ast, S^n)$ is a commutative monoid with respect to the Day convolution product on $\mathrm{SymSeq}$.
Writing $\Sigma\mathrm{-Sp}$ for the category of $\mathbb{S}$-modules in $\mathrm{SymSeq}$, by unwinding the definitions we have the following
\begin{lemma}
The categories $\Sigma\mathrm{-Sp}$ and $\mathrm{Sp}^\Sigma_{\mathrm{sSet}}$ are canonically equivalent.
\end{lemma}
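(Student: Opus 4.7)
The plan is to unpack both sides of the claimed equivalence and exhibit mutually inverse functors by showing that an $\mathbb{S}$-module structure on $(X,A) \in \mathrm{SymSeq}$ amounts to the structure of a symmetric $X$-spectrum on $A$. The key computation I would do first is a base-change identity for the Day convolution against $\mathbb{S}$. Because $(\ast, K) \esmash (X, Y) \cong (X, X^\ast K \wedge_X Y)$ for any pointed simplicial set $K$ and any $Y \in \mathrm{sSet}_{\dslash X}$, and because external smash preserves colimits in each variable, the coend formula yields a natural isomorphism $\mathbb{S} \otimes^\mathrm{Day} (X,A) \cong (X, X^\ast \mathbb{S} \otimes^\mathrm{Day}_X A)$ in $\mathrm{SymSeq}$. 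Consequently, an $\mathbb{S}$-action $\mathbb{S} \otimes^\mathrm{Day}(X,A) \to (X,A)$ in $\mathrm{SymSeq}$, which necessarily covers $\mathrm{id}_X$ since both sides have base $X$, is precisely the same data as an $X^\ast \mathbb{S}$-action on $A$ in $\mathrm{Fun}(\mathbf{\Sigma}, \mathrm{sSet}_{\dslash X})$, i.e.~a symmetric $X$-spectrum structure. This fixes the object-level correspondence $(X,A) \leftrightarrow (X,A)$.

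The only nontrivial point of the proof concerns morphisms, where I must verify that the base map is uniform across levels. A morphism $(X,A) \to (X',B)$ in $\mathrm{SymSeq}$ is a natural transformation in $\mathrm{Fun}(\mathbf{\Sigma}, \mathrm{sSet}_{\dslash \mathrm{sSet}})$ whose $n$-th component is a pair $\eta_n = (f_n, \psi_n)$ in the sense of Remark \ref{rem:GlobalMorphisms}. Compatibility with the two $\mathbb{S}$-actions, applied at level $p+q$ and projected to the base via $p \colon \mathrm{sSet}_{\dslash \mathrm{sSet}} \to \mathrm{sSet}$, forces $f_{p+q} = f_q$ for all $p,q \geq 0$, since both $\mathbb{S}$-actions cover identities; setting $q=0$ yields $f_n = f_0 =: f$ for every $n$. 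With this uniform base map in hand, the family $(\psi_n)$ assembles into a morphism of symmetric sequences $f_! A \to B$ over $X'$; the $\mathbb{S}$-equivariance of $\eta$, combined with the projection formula (so that $f_!$ intertwines the two Day convolutions against $\mathbb{S}$ and preserves $\mathrm{Fun}(\mathbf{\Sigma}, \mathrm{sSet}_\ast)$-tensors), translates into the condition that this map is a morphism of symmetric $X'$-spectra, that is, a morphism in $\mathrm{Sp}^\Sigma_\mathrm{sSet}$.

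The inverse functor is constructed by reversing the above procedure: a symmetric $X$-spectrum is recorded as a constant-base symmetric sequence in $\mathrm{SymSeq}$, its $X^\ast\mathbb{S}$-action is transported to an $\mathbb{S}$-action via the isomorphism above, and a morphism $(f,\psi) \colon (X,A) \to (X',B)$ in $\mathrm{Sp}^\Sigma_\mathrm{sSet}$ is sent to the levelwise transformation with components $(f, \psi_n)$. Functoriality is routine and the composites recover the identities on both sides, completing the proof. The only real obstacle I anticipate is the uniformity of the base map in a morphism of $\mathbb{S}$-modules in $\mathrm{SymSeq}$; once that is in place, the remainder is diagram-chasing.
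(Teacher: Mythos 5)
Your proposal is, in substance, the argument the paper leaves implicit (the lemma is introduced with "by unwinding the definitions"), and it is correct: you reduce the object-level statement to the identification of $\mathbb{S}\otimes^{\mathrm{Day}}(X,A)$ with $(X,\mathbb{S}\odot^{\mathrm{Day}}_X A)$ via $(\ast,K)\esmash(X,Y)\cong (X, X^\ast K\wedge_X Y)$ and colimit-preservation of $\esmash$ (granting the preceding lemma that the Day convolution of constant-base sequences again has constant base, so that it agrees with the fibrewise convolution over $X$), and you correctly isolate the only point with genuine content: that a morphism of $\mathbb{S}$-modules in $\mathrm{SymSeq}$ must have the same base map in every level, which you extract from equivariance against the summands $S^p\esmash A(q)$ and which then lets you package the levelwise data as a pair $(f,\psi\colon f_!A\to B)$ using that $f_!$ preserves the $\mathrm{Fun}(\mathbf{\Sigma},\mathrm{sSet}_\ast)$-tensoring.

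One justification is a non-sequitur, although the claim it supports is true and is genuinely needed. You assert that an $\mathbb{S}$-action $\alpha\colon\mathbb{S}\otimes^{\mathrm{Day}}(X,A)\to(X,A)$ "necessarily covers $\mathrm{id}_X$ since both sides have base $X$". Equality of bases does not force a morphism in $\mathrm{SymSeq}$ to cover the identity---your own analysis of morphisms shows that the base components of a map between constant-base sequences can be arbitrary, and even level-dependent, so the same caveat applies to the structure map $\alpha$ itself. The correct (and easy) argument is the unit axiom: the composite of the map $(X,A)\to \mathbb{S}\otimes^{\mathrm{Day}}(X,A)$ induced by the unit of $\mathbb{S}$ and the unitor with $\alpha$ is the identity of $(X,A)$, and the first map covers $\mathrm{id}_X$ in every level, so each component $\alpha_n$ covers $\mathrm{id}_X$ as well. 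This is the same trick you use for morphisms, applied to the action map; with that repair the remainder of your diagram chase, including the construction of the inverse functor, goes through as written.
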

\begin{corollary}
\label{cor:GlobSymSpecLocPres}
$\mathrm{Sp}^\Sigma_{\mathrm{sSet}}$ is locally presentable.
\end{corollary}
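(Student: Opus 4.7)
The plan is to mimic the proof of Corollary \ref{cor:GlobSeqSpecLocPres} in the symmetric setting, replacing the role of the sequential spectral monad $T^{\mathrm{sp}}$ with the monad $\mathbb{S}\otimes^{\mathrm{Day}}(-)$ whose algebras are $\mathbb{S}$-modules in $\mathrm{SymSeq}$.

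First, I would establish that $\mathrm{SymSeq}$ is locally presentable. Functor categories with small source and locally presentable target are locally presentable, so both $\mathrm{Fun}(\mathbf{\Sigma},\mathrm{sSet}_{\dslash \mathrm{sSet}})$ (using Lemma \ref{lem:GlobRetSpaceLocPres}) and $\mathrm{Fun}(\mathbf{\Sigma},\mathrm{sSet})$ are locally presentable. The full subcategory $c(\mathrm{sSet})\hookrightarrow \mathrm{Fun}(\mathbf{\Sigma},\mathrm{sSet})$ on constant symmetric sequences is equivalent to $\mathrm{sSet}$ and its inclusion preserves filtered colimits, so it is accessibly embedded. The projection $p\colon \mathrm{Fun}(\mathbf{\Sigma},\mathrm{sSet}_{\dslash \mathrm{sSet}})\to \mathrm{Fun}(\mathbf{\Sigma},\mathrm{sSet})$ is colimit-preserving, so the defining pullback square presents $\mathrm{SymSeq}$ as the inverse image of an accessibly embedded full subcategory under an accessible functor. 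By \cite[Corollary A.2.6.5]{lurie_higher_2009}, $\mathrm{SymSeq}$ is accessible; since it is also closed under small colimits computed in $\mathrm{Fun}(\mathbf{\Sigma},\mathrm{sSet}_{\dslash \mathrm{sSet}})$ (as $p$ preserves colimits and $c(\mathrm{sSet})$ is closed under colimits in $\mathrm{Fun}(\mathbf{\Sigma},\mathrm{sSet})$), it is locally presentable.

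Next, I would argue that the endofunctor $\mathbb{S}\otimes^{\mathrm{Day}}(-)$ on $\mathrm{SymSeq}$ preserves all small colimits (it is given by a coend formula that commutes with colimits in its second argument), and hence so does the associated monad $T^{\Sigma}:=\mathbb{S}\otimes^{\mathrm{Day}}(-)$. By \cite[Theorem 2.78]{adamek_locally_1994}, the category of algebras over an accessible (indeed colimit-preserving) monad on a locally presentable category is itself locally presentable, with colimits created by the forgetful functor. Finally, invoking the equivalence $\mathrm{Sp}^\Sigma_{\mathrm{sSet}}\cong T^{\Sigma}\mathrm{-Alg}$ established in the preceding lemma completes the proof.

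The only mildly delicate point is checking that the pullback square defining $\mathrm{SymSeq}$ actually satisfies the hypotheses of \cite[Corollary A.2.6.5]{lurie_higher_2009}: I need both that $c(\mathrm{sSet})$ is accessibly embedded and that $p$ is accessible. Neither is hard, but the inclusion of constant diagrams should be verified to commute with sufficiently filtered colimits, which it does since constancy is a pointwise condition preserved under filtered colimits in the target. Apart from this bookkeeping, the argument is formal and parallels the sequential case verbatim.
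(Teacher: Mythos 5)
Your argument is correct and is essentially the paper's own proof: the paper likewise shows $\mathrm{SymSeq}$ is locally presentable by the pullback/accessibility argument (as in Corollary \ref{cor:GlobSeqSpecLocPres}), notes that $\mathbb{S}\otimes^{\mathrm{Day}}(-)$ is a colimit-preserving (hence accessible) monad, and concludes via \cite[2.78]{adamek_locally_1994} together with cocompleteness of the algebra category and the identification $\Sigma\mathrm{-Sp}\cong \mathrm{Sp}^\Sigma_{\mathrm{sSet}}$. You have simply spelled out the accessibility bookkeeping that the paper compresses into \lq\lq by the same argument as for $\mathrm{RetSeq}$''.
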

\begin{proof}
$\mathrm{SymSeq}$ is locally presentable by the same argument as for $\mathrm{RetSeq}$ (Corollary \ref{cor:GlobSeqSpecLocPres}).
The Day convolution product on $\mathrm{SymSeq}$ preserves colimits separately in each variable, so that $\mathbb{S}\otimes^\mathrm{Day}(-)$ is an accessible monad.
The category of algebras over this monad is $\Sigma\mathrm{-Sp}$, which is therefore accessible \cite[2.78]{adamek_locally_1994}.
Since $\Sigma\mathrm{-Sp}$ has small colimits, the result is proven.
\end{proof}
Since $\mathbb{S}$ is a commutative monoid, the category $\mathrm{Sp}^\Sigma_\mathrm{sSet}$ inherits a symmetric monoidal product, the \emph{(symmetric) external smash product}, via
\[
(X,A) \esmash (X',B) :=
\mathrm{colim}\left(\!
\begin{tikzcd}
  (X,A)\otimes^\mathrm{Day}
  \mathbb{S}
  \otimes^\mathrm{Day}
  (X',B)
  \ar[r, shift left=0.8ex]
  \ar[r, shift left=-0.8ex]
  &
  (X,A)\otimes^\mathrm{Day} (X', B)
\end{tikzcd}\!
\right)\,.
\]
Since the Day convolution product on $\mathrm{SymSeq}$ preserves colimits in each variable, we have
\begin{lemma}
The external smash product on $\mathrm{Sp}^\Sigma_\mathrm{sSet}$ preserves colimits separately in each variable.
\end{lemma}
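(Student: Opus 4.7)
The plan is to reduce the claim to the fact --- just established in the excerpt --- that the Day convolution product $\otimes^\mathrm{Day}$ on $\mathrm{SymSeq}$ preserves colimits separately in each variable. The remaining ingredient is that colimits in $\mathrm{Sp}^\Sigma_\mathrm{sSet}$, viewed as $\mathbb{S}$-modules in $\mathrm{SymSeq}$, are created by the forgetful functor $U \colon \mathrm{Sp}^\Sigma_\mathrm{sSet} \to \mathrm{SymSeq}$.

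First, I would verify that $U$ creates all small colimits. This is a standard monadic argument: since the free-module monad $\mathbb{S} \otimes^\mathrm{Day} (-)$ on $\mathrm{SymSeq}$ preserves all colimits (an immediate consequence of the input lemma), any colimit of $\mathbb{S}$-modules computed in $\mathrm{SymSeq}$ inherits a canonical $\mathbb{S}$-module structure realising it as the colimit in $\mathrm{Sp}^\Sigma_\mathrm{sSet}$.

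With this in hand, fix $(X',B) \in \mathrm{Sp}^\Sigma_\mathrm{sSet}$ and a small diagram $D \colon \mathcal{I} \to \mathrm{Sp}^\Sigma_\mathrm{sSet}$ with colimit $C$. Writing out the coequalizer defining $\vartriangle$ and using that both $U$ and $\otimes^\mathrm{Day}$ preserve colimits, one gets
\begin{equation*}
U\bigl(C \esmash (X',B)\bigr) \cong \mathrm{coeq}\bigl(\mathrm{colim}_\mathcal{I}(UD(i) \otimes^\mathrm{Day} \mathbb{S} \otimes^\mathrm{Day} U(X',B)) \rightrightarrows \mathrm{colim}_\mathcal{I}(UD(i) \otimes^\mathrm{Day} U(X',B))\bigr).
\end{equation*}
Since colimits commute with colimits, the outer $\mathrm{colim}_\mathcal{I}$ can be pulled out of the coequalizer to yield $\mathrm{colim}_\mathcal{I} U(D(i) \esmash (X',B)) \cong U(\mathrm{colim}_\mathcal{I}(D(i) \esmash (X',B)))$. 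Monadicity (and in particular faithfulness) of $U$ then lifts this isomorphism to $\mathrm{Sp}^\Sigma_\mathrm{sSet}$, proving colimit-preservation in the first variable. The second variable follows by symmetry of $\esmash$.

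The only non-trivial point to verify is the creation of colimits by $U$ --- equivalently, that colimits of modules over the commutative monoid $\mathbb{S}$ are computed underlying. This follows from the standard fact that the forgetful functor from algebras over a colimit-preserving monad creates colimits, so there is no real obstacle once the input lemma is invoked.
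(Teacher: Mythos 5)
Your argument is correct and is essentially the paper's own (the paper simply asserts the lemma as an immediate consequence of the fact that $\otimes^\mathrm{Day}$ preserves colimits in each variable, leaving implicit exactly the steps you spell out: creation of colimits by the forgetful functor to $\mathrm{SymSeq}$, commuting the coequalizer defining $\esmash$ with the colimit, and lifting back to $\mathbb{S}$-modules). One small wording fix: the lifting step uses that the monadic forgetful functor is \emph{conservative} (reflects isomorphisms), not merely faithful, but since you invoke monadicity this is available and the proof stands.
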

\begin{corollary}
$(\mathrm{Sp}^\Sigma_\mathrm{sSet},\vartriangle)$ is closed symmetric monoidal.
The projection functor $p\colon \mathrm{Sp}^\Sigma_\mathrm{sSet}\to \mathrm{sSet}$ is strongly closed monoidal.
\end{corollary}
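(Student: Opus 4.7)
The plan is to dispatch the two claims sequentially, both reducing to formal adjoint-functor arguments once a single explicit identification is in hand.

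Step 1 (Closedness). The previous lemma records that $\vartriangle$ preserves colimits separately in each variable, and $\mathrm{Sp}^\Sigma_\mathrm{sSet}$ is locally presentable by Corollary \ref{cor:GlobSymSpecLocPres}. Invoking the adjoint functor theorem, for each $(X,A)$ the functor $(X,A)\vartriangle(-)$ admits a right adjoint, which we name $F_\vartriangle((X,A),-)$. This gives the internal hom and establishes closedness of $(\mathrm{Sp}^\Sigma_\mathrm{sSet},\vartriangle)$.

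Step 2 (Strong monoidality of $p$). The monoidal unit of $\vartriangle$ is $(\ast,\mathbb{S})$, whose base is $\ast$, the unit of $\times$ on $\mathrm{sSet}$. From the Day convolution formula and the definition of $\vartriangle$ as a reflexive coequaliser of $\mathbb{S}$-bimodule maps, the base of $(X,A)\vartriangle(X',B)$ is $X\times X'$; indeed each summand in the coend formula lives over $X\times X'$, and the coequaliser operates fibrewise. Thus $p$ carries $\vartriangle$ to $\times$ and $(\ast,\mathbb{S})$ to $\ast$, strongly symmetric-monoidally.

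Step 3 (Strong closedness). This mirrors Lemma \ref{lem:RetSpProjClosedMonoidal}. Recall from Corollary \ref{cor:RetSpBase} (and its obvious spectrum-level analogue) that $0_-\colon\mathrm{sSet}\to\mathrm{Sp}^\Sigma_\mathrm{sSet}$, $K\mapsto(K,0_K)$, is a two-sided adjoint to $p$. For any simplicial set $K$, the unit-adjunction and internal-hom adjunction yield
\[
\mathrm{sSet}\bigl(K,\,p\,F_\vartriangle((X,A),(X',B))\bigr)\;\cong\;\mathrm{Sp}^\Sigma_\mathrm{sSet}\bigl(0_-(K)\vartriangle(X,A),\,(X',B)\bigr).
\]
The crux is the identification $0_-(K)\vartriangle(X,A)\cong 0_-(K\times X)$. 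Unwinding the Day convolution formula for $\otimes^\mathrm{Day}$ on $\mathrm{SymSeq}$, each symmetric-sequence level of $0_-(K)\otimes^\mathrm{Day}(X,A)$ is a colimit of terms of the form $\mathbf{\Sigma}(p+q,n)_+\owedge_{K\times X}(0_K\vartriangle A(q))$. Since $0_K$ is the zero retractive space over $K$, a direct inspection of the pushout-colimit defining $\vartriangle$ on retractive spaces shows $0_K\vartriangle A(q)\cong 0_{K\times X}$, and $\mathrm{sSet}_\ast$-tensoring a zero retractive space yields a zero retractive space. Hence every level is $0_{K\times X}$, and this is preserved by the subsequent $\mathbb{S}$-bimodule coequaliser, giving $0_-(K)\vartriangle(X,A)\cong 0_-(K\times X)$. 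Applying the $0_-\dashv p$ adjunction once more,
\[
\mathrm{Sp}^\Sigma_\mathrm{sSet}\bigl(0_-(K\times X),(X',B)\bigr)\;\cong\;\mathrm{sSet}(K\times X,X')\;\cong\;\mathrm{sSet}(K,[X,X']),
\]
so by the Yoneda lemma $p\,F_\vartriangle((X,A),(X',B))\cong[X,X']$ naturally, which is precisely the strong-closedness condition.

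The only nontrivial step is the identification $0_-(K)\vartriangle(X,A)\cong 0_-(K\times X)$; once this is verified directly from the Day convolution definition (the fact that zero retractive spaces absorb under fibrewise smash products, and that the resulting constant zero symmetric sequence is unaffected by the coequaliser by $\mathbb{S}$), the remainder is purely formal adjoint bookkeeping.
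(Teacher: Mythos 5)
Your proposal is correct and follows essentially the same route as the paper: closedness via the adjoint functor theorem using colimit-preservation and local presentability, strong monoidality of $p$ from the level-wise Day convolution computation of base spaces, and strong closedness by the same two-sided adjunction argument as Lemma \ref{lem:RetSpProjClosedMonoidal}. Your Step 3 merely spells out the key identification $0_-(K)\esmash(X,A)\cong 0_-(K\times X)$ (zero objects absorb under the external smash, and the $\mathbb{S}$-coequaliser preserves this), which the paper leaves implicit by citing the retractive-space lemma.
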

\begin{proof}
For any $(X,A)\in \mathrm{Sp}^\Sigma_\mathrm{sSet}$, the colimit-preserving functor $(X,A)\esmash(-)$ has a right adjoint by the adjoint functor theorem.
We have already seen that the projection functor on $\mathrm{SymSeq}$ is strongly symmetric monoidal, so that $\vartriangle$ covers the Cartesian product.
That $p\colon \mathrm{Sp}^\Sigma_\mathrm{sSet}\to \mathrm{sSet}$ is strongly closed monoidal follows as in Lemma \ref{lem:RetSpProjClosedMonoidal}.
\end{proof}

In Remark \ref{rem:FibSymSmash} we observed that the shifted fibrewise suspension functors are compatible with the fibrewise smash product of symmetric $X$-spectra.
There is also a global version of this result, using the fibrewise suspension functors of Corollary \ref{cor:GlobalSigmaOmega}:
\begin{lemma}
\label{lem:SmashVSSuspend}
For $(X,Y), (X', Z)\in\mathrm{sSet}_{\dslash \mathrm{sSet}}$ and $k,l\geq 0$ there is a natural isomorphism
\[
\mathbf{\Sigma}^{\infty-k}_- (X, Y)\esmash \mathbf{\Sigma}^{\infty-l }_- (X',Z) \cong \mathbf{\Sigma}^{\infty- (k+l)}_- \big((X,Y)\esmash (X',Z)\big)
\]
of parametrised symmetric spectra covering the identity on $X\times X'$.
\end{lemma}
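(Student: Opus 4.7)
The plan is to reduce the claim to a coend computation at the level of symmetric sequences via the algebraic description of $\mathrm{Sp}^\Sigma_\mathrm{sSet}$ as $\mathbb{S}$-modules in $\mathrm{SymSeq}$.

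First, I would observe that the global shifted suspension functor factors as
\[
\mathbf{\Sigma}^{\infty-k}_-\;\cong\;\big(\mathbb{S}\otimes^\mathrm{Day}(-)\big)\circ s^\mathrm{sSet}_k,
\]
where $s^\mathrm{sSet}_k\colon \mathrm{sSet}_{\dslash\mathrm{sSet}}\to\mathrm{SymSeq}$ sends $(X,Y)$ to the symmetric sequence whose $k$-th term is $(X,(\Sigma_k)_+\owedge_X Y)$ and whose other terms are $(X,0_X)$; this is the global avatar of the two-step factorisation in Remark~\ref{rem:FreeSymSpec} and is pseudonatural in $X$ by Corollary~\ref{cor:GlobalSigmaOmega}.

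Next I would invoke the standard fact that for a commutative monoid $\mathbb{S}$ in a symmetric monoidal category with reflexive coequalisers, the free-module functor $\mathbb{S}\otimes^\mathrm{Day}(-)\colon(\mathrm{SymSeq},\otimes^\mathrm{Day})\to(\mathrm{Sp}^\Sigma_\mathrm{sSet},\vartriangle)$ is strong symmetric monoidal. (Indeed the coequaliser defining $\vartriangle$ in Section~\ref{SS:SymStabGlob} identifies with the coequaliser defining the tensor product of free $\mathbb{S}$-modules.) This reduces the lemma to producing a natural isomorphism
\[
s^\mathrm{sSet}_k(X,Y)\otimes^\mathrm{Day} s^\mathrm{sSet}_l(X',Z)\;\cong\; s^\mathrm{sSet}_{k+l}\big((X,Y)\esmash(X',Z)\big)
\]
in $\mathrm{SymSeq}$.

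The final step is a direct evaluation of the coend
\[
\int^{p,q\in\mathbf{\Sigma}}(\ast,\mathbf{\Sigma}(p+q,n)_+)\esmash s^\mathrm{sSet}_k(X,Y)(p)\esmash s^\mathrm{sSet}_l(X',Z)(q).
\]
Since $(X,0_X)\esmash(X',W)\cong(X\times X',0_{X\times X'})$ for every $(X',W)$, all summands with $(p,q)\neq(k,l)$ collapse to the zero object over $X\times X'$; only the $(k,l)$-summand contributes, and it is supported at $n=k+l$ because $\mathbf{\Sigma}(k+l,n)$ is empty otherwise. After using Lemma~\ref{lem:ExtSmashtoFib} to rewrite the external smash product as a fibrewise smash product over $X\times X'$, the residual quotient by the diagonal $\Sigma_k\times\Sigma_l$-action becomes the balanced smash product identification $(\Sigma_{k+l})_+\wedge_{(\Sigma_k\times\Sigma_l)_+}\!\big((\Sigma_k)_+\wedge(\Sigma_l)_+\big)\cong(\Sigma_{k+l})_+$, producing $s^\mathrm{sSet}_{k+l}((X,Y)\esmash(X',Z))$ on the nose. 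The bulk of the remaining work, and the main bookkeeping obstacle, is to verify that the residual $\Sigma_{k+l}$-actions agree on both sides; this is dictated by the construction of the Day convolution, where the $\Sigma_{k+l}$-action comes from left multiplication on $\mathbf{\Sigma}(k+l,k+l)_+=(\Sigma_{k+l})_+$ and is manifestly compatible with the free action on the target.
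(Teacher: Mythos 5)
Your proposal is correct, but it takes a genuinely different route from the paper. The paper's proof stays at the level of parametrised spectra: it checks that both sides vanish in degrees $n<k+l$, rewrites $\mathbf{\Sigma}^{\infty-k}_-(X,Y)(n)$ as $(\mathbf{\Sigma}^{\infty-k}S^0)(n)\esmash(X,Y)$, and then quotes the unparametrised identity \eqref{eqn:SmashVSSuspend} (Remark \ref{rem:FibSymSmash}, whose verification was left to the reader) to identify the $n$-th term for $n\geq k+l$ as $(\mathbf{\Sigma}^{\infty-(k+l)}S^0)(n)$ externally smashed with $(X,Y)\esmash(X',Z)$. You instead descend to $\mathrm{SymSeq}$: factoring $\mathbf{\Sigma}^{\infty-k}_-$ through the level-$k$ functor $s^{\mathrm{sSet}}_k$ followed by the free $\mathbb{S}$-module functor, using that the latter is strong symmetric monoidal from $(\mathrm{SymSeq},\otimes^{\mathrm{Day}})$ to $(\mathrm{Sp}^\Sigma_{\mathrm{sSet}},\esmash)$ (a split-coequaliser argument, valid since the Day convolution preserves colimits in each variable), and then computing the Day convolution of two concentrated sequences directly; since $\mathbf{\Sigma}$ is a groupoid with no morphisms between distinct objects, only the $(p,q)=(k,l)$ summand survives, and the balanced product $(\Sigma_{k+l})_+\wedge_{(\Sigma_k\times\Sigma_l)_+}\big((\Sigma_k)_+\wedge(\Sigma_l)_+\big)\cong(\Sigma_{k+l})_+$ with the residual left $\Sigma_{k+l}$-action gives exactly $s^{\mathrm{sSet}}_{k+l}\big((X,Y)\esmash(X',Z)\big)$. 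What your approach buys is self-containedness: you do not presuppose \eqref{eqn:SmashVSSuspend} but effectively reprove (and generalise) it, and the free-module formalism makes naturality and the monoidal coherences transparent; the cost is the equivariance bookkeeping you flag at the end, which the paper sidesteps by packaging it once and for all into \eqref{eqn:SmashVSSuspend}. What the paper's argument buys is brevity and the reassurance that the statement is literally the external-smash shadow of the familiar fact $\mathbf{\Sigma}^{\infty-k}S^0\wedge\mathbf{\Sigma}^{\infty-l}S^0\cong\mathbf{\Sigma}^{\infty-(k+l)}S^0$. Either way the isomorphism visibly covers the identity on $X\times X'$ since all identifications are compatible with the strongly monoidal projection to $\mathrm{sSet}$.
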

\begin{proof}
The fibrewise suspension spectrum $\mathbf{\Sigma}^{\infty-k}_- (X,Y) \cong \mathbf{\Sigma}^{\infty-k}_X Y$ is as in Remark \ref{rem:FreeSymSpec}.
For $n < k+l$ we thus have isomorphisms of retractive spaces
\[
\left[\mathbf{\Sigma}^{\infty-k}_- (X, Y)\esmash \mathbf{\Sigma}^{\infty-l }_- (X',Z)\right] (n) \cong 0_{X\times X' } \cong\left[ \mathbf{\Sigma}^{\infty- (k+l)}_- \big((X,Y)\esmash (X',Z)\big)\right](n)
\]
over $X\times X'$, since we always have at least one factor of the form $(X,0_X)$ or $(X', 0_{X'})$ in the coend expression defining the left-hand side.
For $n \geq k$, we have
\begin{align*}
\mathbf{\Sigma}^{\infty-k}_- (X, Y)(n) &\cong \left(X, \big((\Sigma_n)_+\owedge_X S^{n-k} \owedge_X Y\big)\big/ \Sigma_{n-k} \right)
\\
&\cong 
 \big(\ast, ((\Sigma_n)_+\wedge S^{n-k})/ \Sigma_{n-k}\big)\esmash (X,Y)
 \\
 &
 \cong (\mathbf{\Sigma}^{\infty-k} S^0)(n) \esmash (X,Y)\,,
\end{align*}
so that for $n\geq k+l$  the $n$-th term of $\mathbf{\Sigma}^{\infty-k}_- (X, Y)\esmash \mathbf{\Sigma}^{\infty-l }_- (X',Z)$ is the retractive space over $X\times X'$ obtained by externally smashing $(X,Y)\esmash (X',Z)$ with the pointed simplicial set
\[
\left[\mathbf{\Sigma}^{\infty-k} S^0 
\wedge \mathbf{\Sigma}^{\infty-l} S^0
\right](n) \overset{\text{\eqref{eqn:SmashVSSuspend}}}{\cong} (\mathbf{\Sigma}^{\infty-(k+l)} S^0)(n) \cong (\Sigma_{n})_+ \wedge S^{n-(k+l)}\big/ \Sigma_{n-(k+l)}\,.
\]
But this is precisely the $n$-th term of $\mathbf{\Sigma}^{\infty- (k+l)}_- \big((X,Y)\esmash (X',Z)\big)$.
\end{proof}
\begin{corollary}
\label{cor:SuspenVSSmashII}
The functor $\mathbf{\Sigma}^{\infty}_- \colon \mathrm{sSet}_{\dslash \mathrm{sSet}}\to \mathrm{Sp}^\Sigma_\mathrm{sSet}$ is strongly symmetric monoidal.
\end{corollary}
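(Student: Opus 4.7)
The plan is to deduce the corollary directly from Lemma \ref{lem:SmashVSSuspend} by specialising to $k=l=0$, together with an identification of the monoidal units. Recall that the monoidal unit of $(\mathrm{sSet}_{\dslash\mathrm{sSet}},\vartriangle)$ is $(\ast, S^0)$ and the monoidal unit of $(\mathrm{Sp}^\Sigma_\mathrm{sSet},\vartriangle)$ is the symmetric sphere spectrum $\mathbb{S}$, regarded as an object over $\ast$. By the formula in Remark \ref{rem:FreeSymSpec}, the parametrised symmetric spectrum $\mathbf{\Sigma}^{\infty}_- (\ast, S^0)$ is the symmetric sequence $n\mapsto S^n$ with its canonical $\Sigma_n$-action (and trivial base space), hence is precisely $\mathbb{S}$. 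This supplies the required unit isomorphism.

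For the binary structure isomorphism, I would simply take $k=l=0$ in Lemma \ref{lem:SmashVSSuspend} to obtain the natural isomorphism
\[
\mathbf{\Sigma}^{\infty}_-(X,Y)\esmash \mathbf{\Sigma}^{\infty}_-(X',Z) \xrightarrow{\;\cong\;} \mathbf{\Sigma}^{\infty}_-\bigl((X,Y)\esmash (X',Z)\bigr)
\]
covering the identity on $X\times X'$. Naturality in both variables, which was part of the statement of Lemma \ref{lem:SmashVSSuspend}, gives the naturality of this structure isomorphism.

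The remaining task is to verify the coherence conditions (associativity, unitality, and symmetry). The cleanest way to package this is to note that $\mathbf{\Sigma}^\infty_-$ factors through the inclusion $\mathrm{sSet}_{\dslash\mathrm{sSet}}\hookrightarrow \mathrm{SymSeq}$ sending $(X,Y)$ to the symmetric sequence concentrated in degree $0$, followed by the free $\mathbb{S}$-module functor $\mathbb{S}\otimes^{\mathrm{Day}}(-)\colon \mathrm{SymSeq}\to \mathrm{Sp}^\Sigma_\mathrm{sSet}$. The former is strongly symmetric monoidal from $(\mathrm{sSet}_{\dslash \mathrm{sSet}}, \vartriangle)$ into $(\mathrm{SymSeq}, \otimes^{\mathrm{Day}})$ by the definition of Day convolution, and the latter is strongly symmetric monoidal since $\mathbb{S}$ is a commutative monoid and $\vartriangle$ on $\mathrm{Sp}^\Sigma_\mathrm{sSet}$ is precisely the relative Day convolution over $\mathbb{S}$. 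The composite is therefore strongly symmetric monoidal, and unwinding the definitions identifies the composite structure isomorphism with the one produced by Lemma \ref{lem:SmashVSSuspend}. No genuine obstacle is anticipated: Lemma \ref{lem:SmashVSSuspend} does the hard work of matching the coends, and the coherences reduce to the coherences for Day convolution together with those for the external smash product on $\mathrm{sSet}_{\dslash\mathrm{sSet}}$ established earlier in the paper.
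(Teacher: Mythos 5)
Your proposal is correct and follows the same route the paper intends: the corollary is the $k=l=0$ case of Lemma \ref{lem:SmashVSSuspend} together with the evident identification $\mathbf{\Sigma}^\infty_-(\ast,S^0)\cong \mathbb{S}$ of monoidal units. Your factorisation of $\mathbf{\Sigma}^\infty_-$ through the degree-zero inclusion into $\mathrm{SymSeq}$ followed by the free $\mathbb{S}$-module functor is exactly how the external smash product on $\mathrm{Sp}^\Sigma_\mathrm{sSet}$ is constructed in the paper, and it correctly makes explicit the coherence checks that the paper leaves implicit.
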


We now come to the main result of this section.
\begin{theorem}
\label{thm:ExternalSmash}
$(\mathrm{Sp}^\Sigma_\mathrm{sSet}, \vartriangle)$ is a left proper combinatorial symmetric monoidal model category.
\end{theorem}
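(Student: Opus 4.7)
The plan is to mirror the two-step approach of Theorem \ref{thm:SeqSpecGlobalComb} for combinatoriality and left properness, then verify the pushout-product and unit axioms using Lemma \ref{lem:SmashVSSuspend}. Combinatoriality follows from local presentability (Corollary \ref{cor:GlobSymSpecLocPres}) once we exhibit generating sets. Applying the Grothendieck construction to the pseudofunctor $X \mapsto (\mathrm{Sp}^\Sigma_X)_\mathrm{proj}$ yields a left proper combinatorial \emph{global projective} model structure on $\mathrm{Sp}^\Sigma_\mathrm{sSet}$ with generating (acyclic) cofibrations
\[
\mathcal{I}_{\Sigma\mathrm{-Sp}} := \bigcup_{k\geq 0}\mathbf{\Sigma}^{\infty-k}_-(\mathcal{I}^\mathrm{Kan}_\mathrm{sSet})\,, \qquad
\mathcal{J}'_{\Sigma\mathrm{-Sp}} := \bigcup_{k\geq 0}\mathbf{\Sigma}^{\infty-k}_-(\mathcal{J}^\mathrm{Kan}_\mathrm{sSet})\,,
\]
with $\mathcal{I}^\mathrm{Kan}_\mathrm{sSet},\mathcal{J}^\mathrm{Kan}_\mathrm{sSet}$ the sets of Lemma \ref{lem:GlobRetSpaceComb}. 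The global stable model structure then coincides with the left Bousfield localisation at
\[
S_\Sigma := \big\{\xi_{k,\Delta^n}((\mathrm{id}|_{\partial\Delta^n})_{+\Delta^n}),\; \xi_{k,\Delta^n}(\mathrm{id}_{+\Delta^n})\big\}_{n,k\geq 0}\,,
\]
as follows from the same argument as the Claim inside Theorem \ref{thm:SeqSpecGlobalComb}: one identifies the $S_\Sigma$-local fibrant objects with fibrant $\mathbf{\Omega}_X$-spectra over Kan complexes $X$ using Lemma \ref{lem:SymStabFib} in place of Lemma \ref{lem:SeqSpecFib}, whence the two model structures share homotopy function complexes and therefore weak equivalences. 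Left properness is inherited from the fibres by the same reasoning as in Lemma \ref{lem:GlobRetSpaceComb}, combining Theorem \ref{thm:SymSpecStructureTheorem} and Lemma \ref{lem:SpecPseudoFunAreProper}.

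The unit axiom is immediate: the monoidal unit $\mathbf{\Sigma}^\infty_-(\ast, S^0)$ is cofibrant because $(\ast, S^0)$ is cofibrant in $\mathrm{sSet}_{\dslash\mathrm{sSet}}$ (Theorem \ref{thm:RetSpExtSmash}) and $\mathbf{\Sigma}^\infty_-$ is left Quillen (Corollary \ref{cor:GlobalSigmaOmega}). For the pushout-product axiom, cofibrant generation reduces to checking generators drawn from $\mathcal{I}_{\Sigma\mathrm{-Sp}}$ and $\mathcal{J}_{\Sigma\mathrm{-Sp}} := \mathcal{J}'_{\Sigma\mathrm{-Sp}} \cup S_\Sigma$. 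For any morphisms $g,g'$ in $\mathrm{sSet}_{\dslash\mathrm{sSet}}$, Lemma \ref{lem:SmashVSSuspend} together with colimit-preservation of the shifted fibrewise suspension functors yields a natural isomorphism
\[
\mathbf{\Sigma}^{\infty-k}_-(g) \,\square\, \mathbf{\Sigma}^{\infty-l}_-(g') \cong \mathbf{\Sigma}^{\infty-(k+l)}_-\big(g \,\square\, g'\big)\,.
\]
Since $g \,\square\, g'$ is a cofibration in $\mathrm{sSet}_{\dslash\mathrm{sSet}}$ by Theorem \ref{thm:RetSpExtSmash} (and acyclic if either of $g$, $g'$ is so), and $\mathbf{\Sigma}^{\infty-(k+l)}_-$ is left Quillen, this handles pushout-products in $\mathcal{I}_{\Sigma\mathrm{-Sp}} \,\square\, \mathcal{I}_{\Sigma\mathrm{-Sp}}$ and in $\mathcal{I}_{\Sigma\mathrm{-Sp}} \,\square\, \mathcal{J}'_{\Sigma\mathrm{-Sp}}$.

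The main obstacle is the remaining case, a pushout-product $i \,\square\, \xi$ with $i \in \mathcal{I}_{\Sigma\mathrm{-Sp}}$ and $\xi \in S_\Sigma$, where the localisation must be accommodated. Modelled on the acyclic half of Theorem \ref{thm:SymSpecStructureTheorem}, the strategy is to show that for every cofibrant $C \in \mathrm{sSet}_{\dslash\mathrm{sSet}}$ (with underlying base $X_C$) the endofunctor $\mathbf{\Sigma}^{\infty-k}_-(C) \vartriangle (-)$ on $\mathrm{Sp}^\Sigma_\mathrm{sSet}$ is a left Quillen endofunctor of the stable model structure. Preservation of cofibrations follows from the projective pushout-product axiom established above, so it remains to verify that this functor carries $S_\Sigma$ into stable weak equivalences. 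Lemma \ref{lem:SmashVSSuspend} identifies $\mathbf{\Sigma}^{\infty-k}_-(C) \vartriangle \xi_{l,\Delta^n}(Y)$ with a map of the form $\xi_{k+l,\, X_C \times \Delta^n}(C \vartriangle (\Delta^n, Y))$ in the fibre category $\mathrm{Sp}^\Sigma_{X_C \times \Delta^n}$, which is a stable weak equivalence by Lemma \ref{lem:XiMapsAreStable}. Corollary \ref{cor:GlobalSpFibreInclusion} promotes this to a stable weak equivalence in $\mathrm{Sp}^\Sigma_\mathrm{sSet}$. With this in place, the standard pushout-and-two-out-of-three diagram from the final paragraph of Theorem \ref{thm:SymSpecStructureTheorem} shows that $i \,\square\, \xi$ is a stable acyclic cofibration, completing the verification of the pushout-product axiom.
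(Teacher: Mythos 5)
Your overall architecture coincides with the paper's: the same two-step identification of the global stable structure as a left Bousfield localisation (at $S_\Sigma$) of a global projective structure with generating sets $\mathcal{I}_{\Sigma\mathrm{-Sp}}$ and $\mathcal{J}'_{\Sigma\mathrm{-Sp}}$, the same use of Lemma \ref{lem:SmashVSSuspend} to dispose of pushout-products of projective generators, and the same identification $\mathbf{\Sigma}^{\infty-k}_-(C)\esmash \xi_{l,\Delta^n}(Y)\cong \xi_{k+l,\,X_C\times\Delta^n}\big(C\esmash(\Delta^n,Y)\big)$ showing that each $\mathbf{\Sigma}^{\infty-k}_-(C)\esmash(-)$ carries $S_\Sigma$ to stable equivalences and is therefore left Quillen for the localised structure. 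Up to that point your proposal is essentially the paper's proof.

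The gap is in the last step. The set $\mathcal{J}'_{\Sigma\mathrm{-Sp}}\cup S_\Sigma$ is \emph{not} a set of generating acyclic cofibrations for the stable model structure: the acyclic cofibrations of a left Bousfield localisation form a far larger class than the weak saturation of the projective generating acyclic cofibrations together with the localising set (no explicit generating set is available; this is exactly why one resorts to $S$-horns, which detect only local fibrant objects). Consequently, proving that $i\,\square\,\xi$ is a stable acyclic cofibration for $i\in\mathcal{I}_{\Sigma\mathrm{-Sp}}$ and $\xi\in S_\Sigma$ does not by itself verify the pushout-product axiom, and your opening claim that ``cofibrant generation reduces to checking generators drawn from $\mathcal{I}_{\Sigma\mathrm{-Sp}}$ and $\mathcal{J}'_{\Sigma\mathrm{-Sp}}\cup S_\Sigma$'' is not valid as stated. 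The repair is already contained in what you prove: since each $\mathbf{\Sigma}^{\infty-k}_-(C)\esmash(-)$ is a left Quillen endofunctor of the \emph{stable} structure, run the pushout-and-two-out-of-three diagram with $s$ an \emph{arbitrary} stable acyclic cofibration rather than an element of $S_\Sigma$; both maps $\mathbf{\Sigma}^{\infty-k}_-(K,Y)\esmash s$ and $\mathbf{\Sigma}^{\infty-k}_-(L,Z)\esmash s$ are then stable acyclic cofibrations, so two-out-of-three shows $\mathbf{\Sigma}^{\infty-k}_-(g)\,\square\,s$ is a stable weak equivalence, hence an acyclic cofibration by the projective pushout-product axiom; cofibrant generation is then invoked only in the cofibration variable to pass from generating cofibrations to all cofibrations. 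This is precisely how the paper concludes.
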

\begin{proof}
The model categorical considerations (combinatoriality and left properness) are proven along the same lines as Theorem \ref{thm:SeqSpecGlobalComb}, by showing that
\begin{enumerate}[label=(\roman*)]
  \item $\mathrm{Sp}^\Sigma_\mathrm{sSet}$ carries a global projective model structure with generating cofibrations and acyclic cofibrations given respectively by the sets
  \[
\mathcal{I}_{\Sigma\mathrm{-Sp}}:=
\bigcup_{k\geq 0}\mathbf{\Sigma}^{\infty-k}_-(\mathcal{I}^\mathrm{Kan}_{\mathrm{sSet}})
\;\;
\mbox{ and }
\;\;
\mathcal{J}'_{\Sigma\mathrm{-Sp}}:=
\bigcup_{k\geq 0}\mathbf{\Sigma}^{\infty-k}_-(\mathcal{J}^\mathrm{Kan}_{\mathrm{sSet}})\,.
\]
  \item The global model structure is the left Bousfield localisation at the set of morphisms
  \[
  S_{\Sigma} :=\left\{\xi_{k,\Delta^n } \big((\mathrm{id}|_{\partial\Delta^n})_{+\Delta^n}\big), \xi_{k,\Delta^n } \big(\mathrm{id}_{+\Delta^n}\big)\right\}_{n,k\geq 0}\,.
  \] 
\end{enumerate} 
Corollary \ref{cor:SuspenVSSmashII} and Theorem \ref{thm:RetSpExtSmash} together imply that the sets of pushout-products $\mathcal{I}_{\Sigma\mathrm{-Sp}}\,\square\, \mathcal{I}_{\Sigma\mathrm{-Sp}}$ and $\mathcal{I}_{\Sigma\mathrm{-Sp}}\, \square\, \mathcal{J}'_{\Sigma\mathrm{-Sp}}$ consist of cofibrations and acyclic cofibrations respectively, form which it follows so the external smash product is a Quillen bifunctor for the global projective model structure. 

To show that the external smash product determines a Quillen bifunctor on the localised model structure, we use the fact that for any $(X,Y)\in \mathrm{sSet}_{\dslash \mathrm{sSet}}$ and $n\geq 0$ the comparison map
\[
\xi_{n,X}(X,Y)\colon \mathbf{\Sigma}^{\infty-(n+1)}_- (X, \Sigma_X Y)\longrightarrow \mathbf{\Sigma}^{\infty-n}_- (X, Y)
\]
is a weak equivalence for the global model structure (Lemma \ref{lem:XiMapsAreStable}).
By Lemma \ref{lem:SmashVSSuspend} there is an isomorphism
\[
\Sigma^{\infty-k}_- (X,Y) \esmash \xi_{l,X'}(X',B)\cong \xi_{k+l,X\times X'}(X,A)\esmash (X',B)\,.
\]
Applied to the set $S_{\Sigma}$, this  implies that the functors $\Sigma^{\infty-k}_- (X, Y)\esmash(-)$ are left Quillen for the global model structure for all $(X,Y)\in \mathrm{sSet}_{\dslash \mathrm{sSet}}$ and $k\geq 0$. 
If $s\colon (X,A)\to (X',B)$ is an acyclic cofibration for the global model structure and $i\colon (K,Y)\to (L,Z)$ is a cofibration in $\mathrm{sSet}_{\dslash \mathrm{sSet}}$, the diagram of cofibrations 
\[
 \begin{tikzcd}
    \mathbf{\Sigma}^{\infty-k}_- (K, Y) \esmash (X,A)
    \ar[r]\ar[d, "{\sim}"'] 
    & 
    \mathbf{\Sigma}^{\infty-k}_- (L, Z) \esmash (X,A)
    \ar[dr, "{\sim}", bend left=15]
    \ar[d, "{\sim}"'] &
    \\
    \mathbf{\Sigma}^{\infty-k}_- (K, Y) \esmash (X',B) \ar[r]
    &
    P
    \ar[r, "{i\,\square\, s}"]
    &
    \mathbf{\Sigma}^{\infty-k}_- (L, Z) \esmash (X',B) \,,
 \end{tikzcd}
 \]
has weak equivalences (for the global model structure on $\mathrm{Sp}^\Sigma_\mathrm{sSet}$) as marked, so that $i\,\square\, s$ is an acylic cofibration.
Invoking cofibrant generation once more completes the proof.
\end{proof}

\begin{corollary}
\label{cor:ExtSmashMonoidalFun}
Each of the functors
\[
\begin{tikzcd}
  \mathrm{sSet}_{\dslash \mathrm{sSet}}
  \ar[r, "{\mathbf{\Sigma}^{\infty}_-}"]
  &
  \mathrm{Sp}^\Sigma_\mathrm{sSet}
  \ar[r, "{p}"]
  &
  \mathrm{sSet}
\end{tikzcd}
\]
is strongly monoidal left Quillen.
\end{corollary}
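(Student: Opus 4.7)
The plan is to verify each of the two required properties separately for each of the two functors, in every case by assembling results already established earlier in Section~\ref{S:ParamSpec}. There is essentially no new work to do; the corollary is a packaging statement.

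First, for the projection functor $p\colon\mathrm{Sp}^\Sigma_\mathrm{sSet}\to\mathrm{sSet}$. That $p$ is both left and right Quillen was recorded in the corollary immediately following Theorem~\ref{thm:GlobParamSpec} (the two-sided adjoint being $X\mapsto(X,0_X)$). Strong closed monoidality of $p$ with respect to $(\vartriangle,F_\vartriangle)$ on the source and $(\times,[-,-])$ on the target is the corollary immediately preceding Theorem~\ref{thm:ExternalSmash}, whose proof mirrors Lemma~\ref{lem:RetSpProjClosedMonoidal} and uses that the zero-object functor is a two-sided adjoint to $p$. Combining these two facts gives the claim for~$p$.

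Next, for $\mathbf{\Sigma}^\infty_-\colon\mathrm{sSet}_{\dslash\mathrm{sSet}}\to\mathrm{Sp}^\Sigma_\mathrm{sSet}$. The fact that $\mathbf{\Sigma}^\infty_-$ is left Quillen for the global model structures is the special case $\bullet=\Sigma$, $k=0$ of Corollary~\ref{cor:GlobalSigmaOmega}: the fibrewise infinite suspension functors $\mathbf{\Sigma}^{\infty-k}_X$ form the components of a pseudonatural transformation $\mathrm{sSet}_{\dslash-}\Rightarrow\mathrm{Sp}^\Sigma_-$ of pseudofunctors $\mathrm{sSet}\to\mathbf{Model}$, so the Grothendieck construction machinery of \cite{harpaz_grothendieck_2015} assembles them into a left Quillen functor at the global level. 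Strong symmetric monoidality of $\mathbf{\Sigma}^\infty_-$ with respect to the external smash products is Corollary~\ref{cor:SuspenVSSmashII}, which is itself the $k=l=0$ instance of the isomorphism of Lemma~\ref{lem:SmashVSSuspend}. Putting these together yields the claim for $\mathbf{\Sigma}^\infty_-$.

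No step presents a genuine obstacle; the only minor subtlety worth recording is that in verifying the monoidal structures one should confirm the isomorphism $\mathbf{\Sigma}^\infty_-\bigl((\ast,S^0)\bigr)\cong\mathbb{S}$ (the monoidal unit of $\mathrm{Sp}^\Sigma_\mathrm{sSet}$) and $p(\mathbb{S})=\ast$ (the monoidal unit of $\mathrm{sSet}$), both of which are immediate from the explicit descriptions given in Remark~\ref{rem:FreeSymSpec} and the construction of the external smash product via Day convolution on $\mathrm{SymSeq}$. With these unit identifications noted, the coherence isomorphisms required for strong monoidality are already built into Corollaries~\ref{cor:SuspenVSSmashII} and the closed monoidality statement for $p$.
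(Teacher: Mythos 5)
Your proposal is correct and matches what the paper intends: the corollary is stated without proof precisely because it is the conjunction of the left Quillen statements (Corollary \ref{cor:GlobalSigmaOmega} for $\mathbf{\Sigma}^\infty_-$ and the corollary following Theorem \ref{thm:GlobParamSpec} for $p$) with the strong monoidality statements (Corollary \ref{cor:SuspenVSSmashII} and the strong closed monoidality of $p$ established before Theorem \ref{thm:ExternalSmash}), in the context of the monoidal model structure of Theorem \ref{thm:ExternalSmash}. Your unit identifications $\mathbf{\Sigma}^\infty_-(\ast,S^0)\cong(\ast,\mathbb{S})$ and $p(\ast,\mathbb{S})=\ast$ are the right (routine) checks, so there is nothing to add.
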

\begin{corollary}
\label{cor:GlobSymSpecRetStEnrich}
$\mathrm{Sp}^\Sigma_\mathrm{sSet}$ is a $\mathrm{sSet}_{\dslash \mathrm{sSet}}$-model category.
\end{corollary}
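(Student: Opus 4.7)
The plan is to transport the $\mathrm{Sp}^\Sigma_\mathrm{sSet}$-self-enrichment along the strongly monoidal left Quillen functor $\mathbf{\Sigma}^\infty_-\colon \mathrm{sSet}_{\dslash \mathrm{sSet}}\to \mathrm{Sp}^\Sigma_\mathrm{sSet}$ provided by Corollary \ref{cor:ExtSmashMonoidalFun}. Concretely, I would define the $\mathrm{sSet}_{\dslash \mathrm{sSet}}$-tensoring on $\mathrm{Sp}^\Sigma_\mathrm{sSet}$ as the composite bifunctor
\[
\mathrm{sSet}_{\dslash \mathrm{sSet}}\times \mathrm{Sp}^\Sigma_\mathrm{sSet}\xrightarrow{\;\mathbf{\Sigma}^\infty_-\times \mathrm{id}\;}
\mathrm{Sp}^\Sigma_\mathrm{sSet}\times \mathrm{Sp}^\Sigma_\mathrm{sSet}\xrightarrow{\;\vartriangle\;}
\mathrm{Sp}^\Sigma_\mathrm{sSet},
\]
so that $(X,Y)\otimes (X',A) := \mathbf{\Sigma}^\infty_-(X,Y)\vartriangle (X',A)$.

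First I would check that this bifunctor preserves colimits separately in each variable: this is immediate, since $\mathbf{\Sigma}^\infty_-$ is a left adjoint and $\vartriangle$ preserves colimits in each variable (as established in the course of Theorem \ref{thm:ExternalSmash}). Combined with local presentability of $\mathrm{Sp}^\Sigma_\mathrm{sSet}$ (Corollary \ref{cor:GlobSymSpecLocPres}) and of $\mathrm{sSet}_{\dslash\mathrm{sSet}}$ (Lemma \ref{lem:GlobRetSpaceLocPres}), the adjoint functor theorem supplies the requisite $\mathrm{sSet}_{\dslash\mathrm{sSet}}$-enriched hom and cotensor functors, making $\mathrm{Sp}^\Sigma_\mathrm{sSet}$ into a tensored and cotensored $\mathrm{sSet}_{\dslash\mathrm{sSet}}$-category. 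The coherence isomorphisms needed for the enrichment are inherited from the symmetric monoidal structure on $\vartriangle$ and the strong monoidality of $\mathbf{\Sigma}^\infty_-$.

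The pushout-product axiom then follows by a two-step composition argument. Given a cofibration $i$ in $\mathrm{sSet}_{\dslash\mathrm{sSet}}$ and a cofibration $j$ in $\mathrm{Sp}^\Sigma_\mathrm{sSet}$, the morphism $\mathbf{\Sigma}^\infty_-(i)$ is a cofibration in $\mathrm{Sp}^\Sigma_\mathrm{sSet}$ since $\mathbf{\Sigma}^\infty_-$ is left Quillen, and is acyclic whenever $i$ is. Applying the pushout-product axiom for $(\mathrm{Sp}^\Sigma_\mathrm{sSet},\vartriangle)$ from Theorem \ref{thm:ExternalSmash}, we conclude that $i\,\square\, j = \mathbf{\Sigma}^\infty_-(i)\,\square_\vartriangle\, j$ is a cofibration, and is acyclic if either $i$ or $j$ is.

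Finally, the unit axiom is essentially automatic: strong monoidality of $\mathbf{\Sigma}^\infty_-$ identifies the image of the monoidal unit $(\ast, S^0)\in \mathrm{sSet}_{\dslash\mathrm{sSet}}$ with the monoidal unit $(\ast,\mathbb{S})\in \mathrm{Sp}^\Sigma_\mathrm{sSet}$, both of which are cofibrant, so tensoring with a cofibrant replacement of $(\ast, S^0)$ is weakly equivalent to tensoring with $(\ast, S^0)$ itself. There is no real obstacle here; the only thing demanding care is the verification that the enrichment actually arises from the tensoring in the $\mathrm{sSet}_{\dslash\mathrm{sSet}}$-enriched sense (rather than just as a bifunctor with nice formal properties), but this is a formal consequence of the strong monoidality of $\mathbf{\Sigma}^\infty_-$ combined with the closed symmetric monoidal structure on $\mathrm{Sp}^\Sigma_\mathrm{sSet}$.
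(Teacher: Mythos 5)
Your proposal is correct and is exactly the argument the paper intends: Corollary \ref{cor:GlobSymSpecRetStEnrich} is stated as an immediate consequence of Theorem \ref{thm:ExternalSmash} and Corollary \ref{cor:ExtSmashMonoidalFun}, namely restricting the symmetric monoidal model structure $(\mathrm{Sp}^\Sigma_\mathrm{sSet},\vartriangle)$ along the strongly monoidal left Quillen functor $\mathbf{\Sigma}^\infty_-$. Your elaboration of the pushout-product and unit checks (using that $\mathbf{\Sigma}^\infty_-$ preserves colimits, cofibrations and acyclic cofibrations, and that the unit $(\ast,S^0)$ is cofibrant) fills in precisely the routine verifications the paper leaves implicit.
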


\begin{corollary}
\label{cor:GlobSymSpecSpEnrich}
$\mathrm{Sp}^\Sigma_\mathrm{sSet}$ is a $\mathrm{Sp}^\Sigma$-model category.
\end{corollary}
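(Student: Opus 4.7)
The plan is to exhibit $\mathrm{Sp}^\Sigma$ as acting on $\mathrm{Sp}^\Sigma_\mathrm{sSet}$ through the fibre inclusion and the external smash product, and to transfer the monoidal model structure of Theorem \ref{thm:ExternalSmash} to this action.

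First, I would identify the candidate $\mathrm{Sp}^\Sigma$-tensoring. The fibre inclusion $\imath_\ast \colon \mathrm{Sp}^\Sigma \to \mathrm{Sp}^\Sigma_\mathrm{sSet}$ sends $E\mapsto (\ast, E)$. Using the defining coend for $\vartriangle$ on $\mathrm{SymSeq}$, there is a canonical isomorphism $(\ast, E)\vartriangle (\ast, F)\cong (\ast, E\otimes F)$, and the monoidal unit of $(\mathrm{Sp}^\Sigma_\mathrm{sSet},\vartriangle)$ is precisely $(\ast, \mathbb{S}) = \imath_\ast\mathbb{S}$. So $\imath_\ast$ is strongly symmetric monoidal, and by Corollary \ref{cor:GlobalSpFibreInclusion} it is also left Quillen. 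Define the action bifunctor
\[
\mathrm{Sp}^\Sigma\times \mathrm{Sp}^\Sigma_\mathrm{sSet} \longrightarrow \mathrm{Sp}^\Sigma_\mathrm{sSet}, \qquad (E, (X, A))\longmapsto \imath_\ast(E)\vartriangle (X, A),
\]
with associator and unitor inherited from $\vartriangle$ via strong monoidality of $\imath_\ast$. Its right adjoints (the $\mathrm{Sp}^\Sigma$-enrichment and cotensoring) are obtained by composing the internal hom of $\vartriangle$ (which exists by Theorem \ref{thm:ExternalSmash}) with the right adjoint to $\imath_\ast$ (which exists by Corollary \ref{cor:GlobalSpFibreInclusion}).

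Second, I would verify the pushout-product axiom. Given cofibrations $i\colon E\to F$ in $\mathrm{Sp}^\Sigma$ and $j\colon (X, A)\to (X', B)$ in $\mathrm{Sp}^\Sigma_\mathrm{sSet}$, the induced pushout-product
\[
\imath_\ast(i)\,\square\, j \colon \big(\imath_\ast F\vartriangle (X, A)\big)\coprod_{\imath_\ast E\vartriangle (X, A)} \big(\imath_\ast E\vartriangle (X', B)\big) \longrightarrow \imath_\ast F\vartriangle (X', B)
\]
is a cofibration, since $\imath_\ast(i)$ is a cofibration (as $\imath_\ast$ is left Quillen) and $\vartriangle$ satisfies the pushout-product axiom on $\mathrm{Sp}^\Sigma_\mathrm{sSet}$ by Theorem \ref{thm:ExternalSmash}. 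The same argument shows the pushout-product is acyclic whenever $i$ or $j$ is, since $\imath_\ast$ preserves acyclic cofibrations. Finally, the unit axiom for a $\mathrm{Sp}^\Sigma$-model category amounts to checking that for a cofibrant replacement $q\colon \mathbb{S}^c\to \mathbb{S}$ of the symmetric sphere spectrum, the induced map $\imath_\ast(\mathbb{S}^c)\vartriangle (X, A)\to \imath_\ast(\mathbb{S})\vartriangle (X, A)\cong (X, A)$ is a weak equivalence on cofibrant $(X, A)$; this follows because $q$ is a weak equivalence, $\imath_\ast$ preserves weak equivalences between cofibrant objects by Ken Brown's lemma, and $\vartriangle$ is a left Quillen bifunctor.

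There is no real obstacle, just bookkeeping: the proof is a formal transfer of structure along $\imath_\ast$ once Theorem \ref{thm:ExternalSmash} is in hand. The most delicate part is the unit axiom, but since $\mathbb{S}$ is already cofibrant in $\mathrm{Sp}^\Sigma$ (for the stable model structure, via Theorem \ref{thm:SymSpecStructureTheorem} with $X=\ast$), one may take $\mathbb{S}^c = \mathbb{S}$ and the axiom becomes trivial.
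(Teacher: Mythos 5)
Your proposal is correct and is essentially the argument the paper intends: the corollary is stated without proof precisely because it follows formally from Theorem \ref{thm:ExternalSmash} by restricting the external smash product along the strongly monoidal left Quillen fibre inclusion $E\mapsto(\ast,E)$, exactly as in the earlier Corollary \ref{cor:GlobalRetSpPSSet}, with the resulting enriched hom identified (as you do) with global sections of the internal hom, cf.\ Remarks \ref{rem:RAtoInisGlobSect} and \ref{rem:EnrichSymSpec}. Your handling of the pushout-product and unit axioms (using cofibrancy of $\mathbb{S}=\mathbf{\Sigma}^\infty S^0$) is sound and matches the paper's setup.
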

\begin{remark}[Enrichments of $\mathrm{Sp}^\Sigma_\mathrm{sSet}$]
\label{rem:EnrichSymSpec}
The $\mathrm{Sp}^\Sigma$-enriched hom-spaces of Corollary \ref{cor:GlobSymSpecSpEnrich} compute global sections of fibrewise mapping spectra.
For parametrised symmetric spectra $(X,A)$ and $(X', B)$ (cofibrant and fibrant respectively) we write $\underline{F}\{(X,A), (X',B)\}$ and $F_\vartriangle \{(X,A), (X',B)\}$ for the $\mathrm{Sp}^\Sigma$-enriched hom-space and the internal hom respectively, where $F_\vartriangle \{(X,A), (X',B)\}$ is parametrised by $[X,X']$.
Then for any (cofibrant) symmetric spectrum $E$:
\begin{align*}
\mathrm{Sp}^\Sigma\Big( E, \underline{F}\{ (X,A), (X',B)\}\Big) 
&\cong 
\mathrm{Sp}^\Sigma_\mathrm{sSet}\Big((\ast, E)\esmash(X,A), (X',B)\Big)
\\
&
\cong 
\mathrm{Sp}^{\Sigma}_\mathrm{sSet}\Big(\imath E , F_\vartriangle\left\{(X,A), (X',B)\right\}\Big)
\\
&\cong
\mathrm{Sp}^\Sigma\Big( E, [X,X']_\ast F_\vartriangle \{(X,A), (X',B)\}
 \Big) \qquad\qquad\text{(Remark \ref{rem:RAtoInisGlobSect})}
\end{align*}
so that $\underline{F}\{(X,A), (X',B)\}\cong [X,X']_\ast F_\vartriangle \{(X,A), (X',B)\}$ is the global section spectrum. 
Similarly, the $\mathrm{sSet}_{\dslash \mathrm{sSet}}$-enriched hom-spaces of Corollary \ref{cor:GlobSymSpecRetStEnrich} compute the fibrewise infinite loop space o $F_\vartriangle \{(X,A), (X',B)\}$ as a retractive space over $[X,X']$.
Via the two symmetric monoidal Quillen functors $\mathrm{sSet}\to \mathrm{sSet}_{\dslash \mathrm{sSet}}$
\[
K \longmapsto (\ast, K_+)   \qquad \mbox{and}
\qquad
K\longmapsto (K, K^\ast S^0)\qquad\qquad \text{(Corollary \ref{cor:GlobRetSpBaseEnrich})}
\]
we obtain \lq\lq fibrewise'' and \lq\lq basewise'' simplicial enrichments of $\mathrm{Sp}^\Sigma_\mathrm{sSet}$ respectively.
\end{remark}

The final result of this section is a consistency check, which shows that the external smash product of parametrised symmetric spectra has the correct behaviour on homotopy fibre spectra.
In the following we write $F_x A := \mathbf{R}x^\ast A$ for the homotopy fibre spectrum of $A\in \mathrm{Sp}^\Sigma_X$ at $x\colon \ast \to X$ and $(X, A)\esmash(Y,B) = (X\times Y, A\esmash B)$ for the components of the external smash product.
\begin{lemma}
\label{lem:ESmash=FSmash}
For cofibrant parametrised symmetric spectra $(X,A), (Y, B)\in \mathrm{Sp}^\Sigma_\mathrm{sSet}$, there is a stable equivalence $F_x A\wedge F_y B \cong F_{(x,y)} (A\esmash B)$ for all $(x,y)\in X\times Y$.
\end{lemma}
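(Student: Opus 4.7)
The plan is to reduce to a strict identification of fibres via the Day convolution description of the external smash and the strong monoidality of pullback functors, then to handle the derived aspects by working with cofibrant–fibrant replacements.

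First I would choose trivial cofibrations $A \hookrightarrow A'$ and $B \hookrightarrow B'$ into stably fibrant $\mathbf{\Omega}_X$- and $\mathbf{\Omega}_Y$-spectra respectively; cofibrancy is preserved under trivial cofibration, so $A', B'$ are cofibrant–fibrant. By the pushout–product axiom for the external smash (Theorem \ref{thm:ExternalSmash}), the induced map $A \vartriangle B \to A' \vartriangle B'$ is a trivial cofibration in $\mathrm{Sp}^\Sigma_{X \times Y}$, hence $F_{(x,y)}(A \vartriangle B) \simeq F_{(x,y)}(A' \vartriangle B')$ in $\mathrm{Ho}(\mathrm{Sp}^\Sigma)$. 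Next, I would prolong Lemma \ref{lem:ExtSmashtoFib} from retractive spaces to parametrised symmetric spectra in order to obtain a natural isomorphism $A' \vartriangle B' \cong p_X^\ast A' \otimes_{X \times Y} p_Y^\ast B'$; the prolongation follows from the Day convolution definition of $\vartriangle$ on $\mathrm{Sp}^\Sigma_\mathrm{sSet}$ combined with Corollary \ref{cor:ExtSmashMonoidalFun}. Applying $(x,y)^\ast$, using that pullback functors are strongly symmetric monoidal for the fibrewise smash (Lemma \ref{lem:PBisStonglyClosed}, prolonged to $\mathrm{Sp}^\Sigma$ as in Remark \ref{rem:FibSymSmash}) and that $p_X \circ (x,y) = x$, $p_Y \circ (x,y) = y$, then yields a strict isomorphism $(x,y)^\ast(A' \vartriangle B') \cong x^\ast A' \wedge y^\ast B'$ of ordinary symmetric spectra.

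Since $A'$ is stably fibrant and $x^\ast$ is right Quillen (Theorem \ref{thm:SymSpecStructureTheorem}), $x^\ast A'$ is a fibrant $\mathbf{\Omega}$-spectrum and hence represents $F_x A$ in $\mathrm{Ho}(\mathrm{Sp}^\Sigma)$; likewise $y^\ast B' \simeq F_y B$. Consequently, the right-hand side of the strict isomorphism is a genuine representative of $F_x A \wedge F_y B$.

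The main obstacle is to justify that the strict pullback $(x,y)^\ast(A' \vartriangle B')$ already represents the derived fibre $F_{(x,y)}(A' \vartriangle B')$, since $A' \vartriangle B'$ itself is not automatically stably fibrant over $X \times Y$. I would resolve this using the same factorisation $A' \vartriangle B' \cong p_X^\ast A' \otimes_{X \times Y} p_Y^\ast B'$: because $p_X, p_Y$ are projections to the factors of a product, both $p_X^\ast$ and $p_Y^\ast$ are right Quillen by Theorem \ref{thm:SymSpecStructureTheorem}, and so $p_X^\ast A'$ and $p_Y^\ast B'$ are both stably fibrant $\mathbf{\Omega}_{X \times Y}$-spectra. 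Prolonging Lemma \ref{lem:FibrantRetSpaceSmash} to parametrised symmetric spectra---fibrewise smash with a stably fibrant object preserves stable weak equivalences---would then imply that comparing $A' \vartriangle B'$ with a stable fibrant replacement and applying $(x,y)^\ast$ still yields a stable equivalence, so the strict pullback computes the derived fibre. Assembling the chain $F_{(x,y)}(A \vartriangle B) \simeq F_{(x,y)}(A' \vartriangle B') \simeq (x,y)^\ast(A' \vartriangle B') \cong x^\ast A' \wedge y^\ast B' \simeq F_x A \wedge F_y B$ will then complete the proof.
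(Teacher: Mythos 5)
Most of your chain is sound: the passage to stably fibrant models $A'$, $B'$, the observation that $A\esmash B\to A'\esmash B'$ is a stable equivalence over $X\times Y$, the prolongation of Lemma \ref{lem:ExtSmashtoFib}, and the strict identification $(x,y)^\ast(A'\esmash B')\cong x^\ast A'\wedge y^\ast B'$ via strong monoidality of pullbacks are all fine. The gap is precisely at the step you yourself flag as the main obstacle, and your proposed fix does not close it. Stable fibrancy of $p_X^\ast A'$ and $p_Y^\ast B'$ does not make $p_X^\ast A'\otimes_{X\times Y}p_Y^\ast B'$ stably fibrant, nor even levelwise fibrant: the fibrewise smash is a quotient of the fibre product by the sections, and such quotients of fibrations over $X\times Y$ are not fibrations. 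The prolongation of Lemma \ref{lem:FibrantRetSpaceSmash} you invoke only says that $D\otimes_{X\times Y}(-)$ preserves stable equivalences when $D$ is fibrant; that controls maps obtained by varying one smash factor, but it gives no control over the comparison $(x,y)^\ast C\to (x,y)^\ast C^{f}$ for a stably fibrant replacement $C\to C^{f}$, and $(x,y)^\ast$, being pullback along a point inclusion rather than a fibration, does not preserve arbitrary stable equivalences. So the crucial identification of the strict pullback $(x,y)^\ast(A'\esmash B')$ with the derived fibre $F_{(x,y)}(A'\esmash B')$ is asserted rather than proved. A secondary issue: $x^\ast A'\wedge y^\ast B'$ is a point-set smash of fibrant but not necessarily cofibrant symmetric spectra, so even granting the previous step you still owe a cofibrancy/flatness argument before it can be declared a representative of the derived smash $F_xA\wedge F_yB$.

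For contrast, the paper avoids fibrant replacement entirely and stays on the left-adjoint side, where the external/fibrewise smash is homotopically well behaved: for cofibrant inputs it computes $F_{(x,y)}(A\esmash B)$ as $(P_xX\times P_yY)_!\,(p_X\times p_Y)^\ast(A\esmash B)$ via the path-fibration zig-zag \eqref{eqn:FibreSpecZigZag} of Lemma \ref{lem:SeqSpecisStable}, then uses pullback-stability of colimits to rewrite $(p_X\times p_Y)^\ast(A\esmash B)\cong p_X^\ast A\esmash p_Y^\ast B$ and the preservation of $\mathrm{Sp}^\Sigma$-tensors by pushforwards to split the pushforward as $(P_xX_!\,p_X^\ast A)\wedge(P_yY_!\,p_Y^\ast B)$, a smash of cofibrant models of $F_xA$ and $F_yB$. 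If you want to salvage your fibrant-replacement strategy, you would need a separate argument (for instance the same path-fibration comparison) showing that for objects of the special form $p_X^\ast A'\otimes_{X\times Y}p_Y^\ast B'$ the strict fibre over $(x,y)$ agrees with the derived fibre; as written, that step fails.
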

\begin{proof}
By the cofibrancy assumption, the external smash product $(X, A)\esmash (Y, B) = (X\times Y, A\esmash B)$ is itself cofibrant.
In particular, this implies that $A\esmash B\in \mathrm{Sp}^\Sigma_{X\times Y}$ is cofibrant for the symmetric $(X\times Y)$-stable model structure.

Factor the maps $x\colon \ast \to X$ and $y\colon \ast \to Y$ into acyclic cofibrations followed by fibrations
\[
\begin{tikzcd}
  \ast
  \ar[r, "{\sim}", rightarrowtail]
  &
  P_x X
  \ar[r, two heads, "{p_X}"]
  &
  X
\end{tikzcd}
  \qquad
  \mbox{ and }
  \qquad
\begin{tikzcd}
  \ast
  \ar[r, "{\sim}", rightarrowtail]
  &
  P_y Y
  \ar[r, "{p_Y}", two heads]
  &
  Y
\end{tikzcd}
\]
so that the fibration $p_x\times p_y\colon P_x X\times P_y Y \to X\times Y$ is a model for the path fibration of $X\times Y$ at $(x,y)$.
Arguing as in Lemma \ref{lem:SeqSpecisStable}, see \eqref{eqn:FibreSpecZigZag}, the homotopy fibre spectrum $F_{(x,y)} (A\esmash B)$ is seen to be stably equivalent to the cofibrant symmetric spectrum $(P_xX\times P_yY)_! (p_X\times p_Y)^\ast (A\esmash B)$.
Pullback-stability of colimits in $\mathrm{sSet}$ applied to the definition of the external smash product $\vartriangle$ yields an isomorphism of symmetric $P_xX\times P_y Y$-spectra
\[
(p_X\times p_Y)^\ast (A\esmash B) \cong p_X^\ast A\esmash p_Y^\ast B\,.
\]
Commutation of colimits amongst themselves implies an isomorphism of symmetric $P_yY$-spectra
\[
(P_xX\times \mathrm{id}_{P_yY})_! \big(p_X^\ast A\esmash p_Y^\ast B\big)\cong 
(P_x X_! p_X^\ast A ) \esmash p_Y^\ast B\,,
\]
where the right-hand side coincides with the $\mathrm{Sp}^\Sigma$-tensoring of $p_Y^\ast B$ by the (unparametrised) symmetric spectrum $P_x X_! p_X^\ast A$.
Pushing forward along the terminal map $P_y Y\to \ast$ preserves $\mathrm{Sp}^\Sigma$-tensors, so that 
\[
(P_xX\times P_yY)_! \big(p_X^\ast A\esmash p_Y^\ast B\big)\cong P_y Y_! (P_xX\times \mathrm{id}_{P_yY})_! \big(p_X^\ast A\esmash p_Y^\ast B\big)
\cong (P_x X_! p_X^\ast A ) \wedge (P_y Y_! p_Y^\ast B )\,.
\]
The terms in the smash product on the right-hand side are stably equivalent to $F_x A$ and $F_y B$, which completes the proof.
\end{proof}

\section{Combinatorial models for tangent $\infty$-categories}
\label{S:CombTangent}
In the final part of this article, we apply our results to obtain combinatorial models for tangent $\infty$-categories.
Our main result is an extension of Simpson's theorem, characterising the tangent $\infty$-categories of presentable $\infty$-categories as accessible localisations of $\infty$-categories of presheaves of parametrised spectra.
As an example of the utility of this result, in Section \ref{SS:TwistDiffCoh} we specialise to the $\infty$-topos $\mathbf{H}$ of smooth $\infty$-stacks and show that twisted differential cohomology theories are classified by sheaves of parametrised spectra.

We begin with some general remarks on tangent categories, which the reader may find more familiar than their $\infty$-categorical siblings.
A common theme in mathematics is the dichotomy between linearity and non-linearity.
Broadly speaking, \lq\lq linear'' structures such as vector spaces and chain complexes are often much easier to study and analyse than \lq\lq non-linear'' structures such as groups  and spaces.  
Non-linear objects are often effectively studied by a process of linearisation; for example studying groups in terms of their categories of modules, or studying spaces by their (co)homological invariants.
One way to make this more concrete is the following: for an object $c$ of a locally presentable category $C$ (of \lq\lq non-linear structures''), a \emph{Beck module} over $c$ is an abelian group object of the slice $C_{/c}$.
For each $c\in C$ there is a free-forgetful adjunction
\[
\begin{tikzcd}
C_{/c}
\ar[rr, shift left=1.1ex, "\mathrm{free}"]
\ar[rr, leftarrow, shift left=-1.1ex, "\bot", ""']
&&
\mathrm{Ab}(C_{/c})\,,
\end{tikzcd}
\]
with the left adjoint is interpreted as \lq\lq linearisation''. For example, if $C=\mathrm{CRing}$ is the category of commutative rings, then $\mathrm{Ab}(\mathrm{CRing}_{/R})$ is equivalent to the category of (left) $R$-modules and $\mathrm{CRing}_{/R}\to \mathrm{Ab}(\mathrm{CRing}_{/R})$ sends $S\to R$ to the $R$-module of relative K\"{a}hler differentials $\Omega_{R/S}$.
The tangent bundle of $C$ is the categorical fibration $TC\to C$ whose fibre at $c$ is $\mathrm{Ab}(C_{/c})$.

Working with $\infty$-categories, it is more natural to replace abelianisation by \emph{stabilisation}.
More precisely, for an $\infty$-category $\mathcal{C}$ the \emph{tangent $\infty$-bundle} is, if it exists, the functor $p\colon T\mathcal{C}\to \mathcal{C}$ obtained by applying Lurie's unstraightening construction \cite[Chapter 2]{lurie_higher_2009} to
\begin{align*}
T_{-}\mathcal{C}\colon \mathcal{C}
&
\longrightarrow \widehat{\mathrm{Cat}}_\infty\\
c&\longmapsto \mathrm{Sp}(\mathcal{C}_{\dslash c})\cong \mathrm{Sp}(\mathcal{C}_{/c})\,,
\end{align*}
each object $c$ being sent to the stable $\infty$-category $\mathrm{Sp}(\mathcal{C}_{\dslash c})$ of \emph{$c$-parametrised spectra} (see \cite[Section 7.3.1]{lurie_higher_2017} or, for $\mathcal{C}$ an $\infty$-topos, \cite[Section 5]{ando_parametrized_2018}).
The domain of the tangent $\infty$-bundle is the \emph{tangent $\infty$-category} $T\mathcal{C}$.
The tangent $\infty$-category of a presentable $\infty$-category always exists and is itself presentable.
In this case, there is in addition to the projection $p\colon T\mathcal{C}\to \mathcal{C}$ a functor $q\colon T\mathcal{C}\to \mathcal{C}$ corresponding to 
\[
c\longmapsto \big(\!
\begin{tikzcd}
  \mathrm{Sp}(\mathcal{C}_{\dslash c})
  \ar[r, "{\Omega^\infty}"]
  &
  \mathcal{C}_{\dslash c}
  \ar[r]
  &
  \mathcal{C}
\end{tikzcd}
\! 
\big)
\]
under (un)straightening.
This functor has a left adjoint 
$\mathbb{L}\colon \mathcal{C}\to T\mathcal{C}$
sending each object of $\mathcal{C}$ to its \emph{cotangent complex}.
 The cotangent complex $\mathbb{L}$ is a section of the projection functor, that is $p\circ \mathbb{L}\cong \mathrm{id}_\mathcal{C}$.

\begin{remark}
The tangent $\infty$-category ought to be thought of as parametrising first-order infinitesimal thickenings of objects of $\mathcal{C}$.
For example, if $\mathcal{C}\cong \mathrm{Alg}_\mathcal{P}(\mathcal{D})$ is the $\infty$-category of algebras over an $\infty$-operad $\mathcal{P}$ in a stable presentable $\infty$-category $\mathcal{D}$, there are identifications of the fibre $T_A \mathcal{C}$  with the $\infty$-category of $A$-modules and of $q|_{T_A\mathcal{C}}\colon T_A \mathcal{C}\to  \mathcal{C}$ with the functor that sends an $A$-module $M$ to the square-zero extension of $A$ by $M$ \cite{lurie_higher_2017}.
\end{remark}

\begin{remark}[Goodwillie calculus and deformations]
\label{rem:Goo}
For a fixed presentable $\infty$-category $\mathcal{C}$, the tangent $\infty$-category is the first stage in a tower of $\infty$-categories
\begin{equation}
\label{eqn:GoodwillieTower}
  \mathcal{C}^{\Delta^1}
\longrightarrow
  \dotsb
\longrightarrow
  J^n\mathcal{C}
\longrightarrow
  \dotsb
\longrightarrow
  J^2 \mathcal{C}
\longrightarrow
  T \mathcal{C}
\longrightarrow
  \mathcal{C}
\end{equation}
factoring the codomain projection functor $\mathrm{ev}_{[1]} \colon \mathcal{C}^{\Delta^1}\to \mathcal{C}$.
In this tower, the \emph{$n$-th jet $\infty$-bundle} $p^n\colon J^{n} \mathcal{C}\to \mathcal{C}$ is the result of unstraightening the functor
$c\mapsto \mathcal{P}_n (\mathcal{C}_{\dslash c})$ that
sends $c\in \mathcal{C}$ to the $n$-th excisive approximation of the $\infty$-category $\mathcal{C}_{\dslash c}$ (the general theory of excisive approximations of $\infty$-categories was worked out by Heuts in \cite{heuts_goodwillie_2015}).
The first excisive approximation corresponds to passing to the stabilisation, and the various excisive approximations fit together into a tower of left adjoints
\[
\mathcal{C}_{\dslash c}
\longrightarrow
\dotsb
\longrightarrow
\mathcal{P}_n(\mathcal{C}_{\dslash c})
\longrightarrow
\dotsb
\longrightarrow
\mathcal{P}_2(\mathcal{C}_{\dslash c})
\longrightarrow
\mathcal{P}_1(\mathcal{C}_{\dslash c})\cong \mathrm{Sp}(\mathcal{C}_{\dslash c})
\]
interpolating between $\mathcal{C}_{\dslash c}$ and its stabilisation.
The projection functors $p^n\colon J^n \mathcal{C}\to \mathcal{C}$ are compatible in the obvious sense and for each $n$ there is a functor $q^n\colon J^n\mathcal{C}\to \mathcal{C}$ generalising the functor $q\colon T\mathcal{C}\to\mathcal{C}$.
Each of the $q^n$ has a left adjoint $\mathbb{L}^n$ such that $p^n\circ \mathbb{L}^n\cong\mathrm{id}_{\mathcal{C}}$, and there is a diagram of left adjoints commuting up to natural isomorphism:
\[
\begin{tikzcd}
  \mathcal{C}
  \ar[r]
  \ar[rr, bend left=15, "{\mathbb{L}^n}"]
  \ar[rrrr, bend right=15,"{\mathbb{L}^2}"']
  \ar[rrrrr, bend left= 28 ,"{\mathbb{L}}"]
  &
  \dotsb
  \ar[r]
  &
  J^n\mathcal{C}
  \ar[r]
  &
  \dotsb
  \ar[r]
  &
  J^2 \mathcal{C}
  \ar[r]
  &
  T\mathcal{C}\,.
\end{tikzcd}
\]
Passing to the limit of the tower \eqref{eqn:GoodwillieTower}, we get a diagram of left adjoint functors
\[
\begin{tikzcd}
  \mathcal{C}
  \ar[r, "{\mathbb{L}^{\infty}}"]
  \ar[rr, "{\mathbb{L}}"', bend right= 20]
  &
  J^{\infty}\mathcal{C} := \underset{\longleftarrow}{\mathrm{lim}}\, J^n \mathcal{C}
  \ar[r]
  &
  T\mathcal{C}\,,
\end{tikzcd}
\]
in which the \emph{jet $\infty$-bundle} $p^{\infty}\colon J^{\infty}\mathcal{C}\to \mathcal{C}$ is interpreted as parametrising infinitesimal thickenings of objects of $\mathcal{C}$ to all orders.
By results of Lurie \cite[Section 6.3]{lurie_higher_2017}, objects in the fibre of $p^{\infty}$ over $c\in \mathcal{C}$ are coalgebras over the Goodwillie coderivatives of the identity $\partial^\ast(\mathrm{id}_{\mathcal{C}_{\dslash c}})$.
In particular, $\mathbb{L}^{\infty} c$ is a coalgebraic lift of the cotangent complex $\mathbb{L}c$ through the forgetful functor $J^{\infty}\mathcal{C}\to T\mathcal{C}$.

Armed with a suitable notion of Koszul  duality for stable $\infty$-operads (such as has been worked out by Ching \cite{ching_bar_2012} for operads and cooperads in spectra), the coalgebraic structure of $\mathbb{L}^\infty c$ ought to give rise to an algebra $\mathbb{T}^\infty c$  over the Goodwillie derivatives of the identity $\partial_\ast(\mathrm{id}_{\mathcal{C}_{\dslash c}})$.
Provided that this can be made precise, we expect $\mathbb{T}^\infty c$ to play a central role in the deformation theory of $c$ inside $\mathcal{C}$, analogous to the Lie algebras controlling derived deformations problems in derived algebraic geometry over a field of characteristic zero (we recommend Lurie's ICM address \cite{lurie_moduli_2010} for a compelling overview of this perspective on formal moduli problems).
This analogy is borne out for the $\infty$-category of 1-connected rational spaces, where for a 1-connected rational space $X$, $\mathbb{T}^\infty X$ is the rational Whitehead Lie algebra \cite{quillen_rational_1969, heuts_goodwillie_2015}.
We intend to undertake a detailed study of these ideas in future work.
\end{remark}

Our first result of this section is the observation that $\mathrm{Sp}^\mathbb{N}_\mathrm{sSet}$ is a presentation of the tangent $\infty$-category of spaces:
\begin{lemma}
\label{lem:PresentTS}
There are equivalences of $\infty$-categories $T\mathcal{S}\cong (\mathrm{Sp}^\mathbb{N}_\mathrm{sSet})^\infty \cong (\mathrm{Sp}^\Sigma_\mathrm{sSet})^\infty$.
\end{lemma}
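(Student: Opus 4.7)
The plan is to deduce the lemma from the local comparison (Theorem~\ref{thm:FibStabisooStab}) by invoking compatibility of the Grothendieck construction with $\infty$-categorical localization. First, recall that the tangent $\infty$-category $T\mathcal{S}$ is obtained by unstraightening the functor $\mathcal{S}\to\widehat{\mathrm{Cat}}_\infty$, $X\mapsto \mathrm{Sp}(\mathcal{S}_{\dslash X})$, which pseudonaturally assigns base change left adjoints $f_!\colon T_X\mathcal{S}\to T_{X'}\mathcal{S}$ to maps $f\colon X\to X'$ in $\mathcal{S}$. On the other hand, $\mathrm{Sp}^\mathbb{N}_\mathrm{sSet}$ was constructed as the model-categorical Grothendieck construction of the pseudofunctor $\mathrm{Sp}^\mathbb{N}_{-}\colon \mathrm{sSet}\to\mathbf{Model}$, sending $X\mapsto \mathrm{Sp}^\mathbb{N}_X$ and acting by pushforward on morphisms.

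The second step is to match these two assignments at the $\infty$-categorical level. By Theorem~\ref{thm:FibStabisooStab}, for every $X$ there is an equivalence $\kappa_X\colon \mathrm{Sp}(\mathrm{sSet}_{\dslash X}^\infty)\xrightarrow{\sim} (\mathrm{Sp}^\mathbb{N}_X)^\infty$, and the proof of that theorem already supplies the pseudonatural coherence data: the universal property of stabilisation produces canonical equivalences $\kappa_{X'}\circ \mathrm{Sp}(f_!)\simeq f_!^\infty\circ \kappa_X$ for every $f\colon X\to X'$. Combined with the equivalence $\mathrm{sSet}_{\dslash X}^\infty\simeq \mathcal{S}_{\dslash X}$, this upgrades to an equivalence of functors $\mathcal{S}\to \widehat{\mathrm{Cat}}_\infty$ between $X\mapsto T_X\mathcal{S}$ and $X\mapsto (\mathrm{Sp}^\mathbb{N}_X)^\infty$, viewed as pointing out the same object via straightening.

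The third step is to identify $(\mathrm{Sp}^\mathbb{N}_\mathrm{sSet})^\infty$ with the $\infty$-categorical unstraightening of the latter functor. For this I would invoke the fact, established for the integral model structure in \cite{harpaz_grothendieck_2015}, that inverting the weak equivalences of a relative and proper Grothendieck construction recovers the $\infty$-categorical Grothendieck construction of the associated functor of $\infty$-categories. Applied to the pseudofunctor $\mathrm{Sp}^\mathbb{N}_-$, which is relative and proper by Lemma~\ref{lem:SpecPseudoFunAreProper}, this gives a canonical equivalence $(\mathrm{Sp}^\mathbb{N}_\mathrm{sSet})^\infty\simeq \int_{X\in\mathcal{S}} (\mathrm{Sp}^\mathbb{N}_X)^\infty$, and then pseudonatural comparison from the previous paragraph yields $\int_{X\in\mathcal{S}} (\mathrm{Sp}^\mathbb{N}_X)^\infty\simeq \int_{X\in\mathcal{S}} T_X\mathcal{S} = T\mathcal{S}$.

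Finally, the symmetric case follows without further work: the global Quillen equivalence $\mathrm{Sp}^\Sigma_\mathrm{sSet}\simeq \mathrm{Sp}^\mathcal{D}_\mathrm{sSet}\simeq \mathrm{Sp}^\mathbb{N}_\mathrm{sSet}$ of Theorem~\ref{thm:GlobParamSpec} descends to an equivalence of underlying $\infty$-categories. The main obstacle I anticipate is the third step, namely articulating cleanly that $\infty$-categorical localization commutes with the Grothendieck construction in this setting; the pseudonaturality bookkeeping needed to promote the fiberwise equivalences $\kappa_X$ to a morphism of straightenings requires care, but the hard content is already present in the universal property used to construct $\kappa_X$ and in the integration theorem of \cite{harpaz_grothendieck_2015}.
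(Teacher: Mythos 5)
Your proposal is correct and follows essentially the same route as the paper: the paper likewise combines the fibrewise natural equivalences of Theorem \ref{thm:FibStabisooStab} with the fact (citing \cite[Proposition 3.1.2]{harpaz_grothendieck_2015}, for a proper relative pseudofunctor as in Lemma \ref{lem:SpecPseudoFunAreProper}) that the model-categorical Grothendieck construction models Lurie's unstraightening after inverting weak equivalences, which is exactly the step you flagged as the main obstacle. The symmetric case is handled the same way, via the global Quillen equivalences.
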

\begin{proof}
This is essentially the statement of Theorem \ref{thm:FibStabisooStab}, using that the Grothendieck construction of model categories models Lurie's unstraightening construction for $\infty$-categories.
In more detail: given a model category $M$ and a  proper relative pseudofunctor $F\colon M\to \mathbf{Model}$, there is an associated functor of $\infty$-categories
\[
F^\infty\colon M^\infty\longrightarrow \widehat{\mathrm{Cat}}_\infty\,,
\]
with $\widehat{\mathrm{Cat}}_\infty$ the large $\infty$-category of $\infty$-categories (with respect to some choices of Grothendieck universes, which we do not specify but regard as fixed throughout). 
Lurie's unstraightening construction assigns a co-Cartesian fibration
\[
\int_{M^\infty} F^\infty\longrightarrow M^\infty\,,
\]
and there is an equivalence of $\infty$-categories
\[
\begin{tikzcd}[column sep=small]
  \displaystyle{\Big(\int_{M} F\Big){\vphantom{\big)}}^\infty}
  \ar[rr, "{\cong}"]
  \ar[dr]
  &&
  \displaystyle\int_{M^\infty} F^\infty
  \ar[dl]
  \\
  &M^\infty
\end{tikzcd}
\]
fibred over $M^\infty$ \cite[Proposition 3.1.2]{harpaz_grothendieck_2015}.
Applying this argument to the pseudofunctors pseudofunctors $\mathrm{Sp}^\mathbb{N}_-, \mathrm{Sp}^\Sigma_-\colon \mathrm{sSet}\to \mathbf{Model}$ in view of Theorem \ref{thm:FibStabisooStab} completes the proof.
\end{proof}

We are now in a position to prove the main result of this section:
\begin{theorem}
\label{thm:Tangent}
For any presentable $\infty$-category $\mathcal{C}$, there is a small $\infty$-category $\mathcal{K}$ such that the tangent $\infty$-category $T\mathcal{C}$ is an accessible localisation of the $\infty$-category $\mathrm{Fun}(\mathcal{K}^\mathrm{op},T\mathcal{S})$ of $T\mathcal{S}$-valued presheaves.
\end{theorem}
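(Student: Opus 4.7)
The strategy is to present $T\mathcal{C}$ using a global tangent model category construction, and then identify this presentation as a left Bousfield localization of a presheaf category valued in $\mathrm{Sp}^\mathbb{N}_\mathrm{sSet}$. By Simpson's theorem, I would first choose a small simplicial category $\mathcal{K}$ together with a set $S$ of morphisms so that $\mathcal{C}$ is presented by the left Bousfield localization $M := L_S \mathrm{Fun}(\mathcal{K}^\mathrm{op}, \mathrm{sSet})_\mathrm{proj}$, exhibiting $\mathcal{C}$ as a reflective accessible subcategory of $\mathcal{P}(N_\Delta\mathcal{K})$.

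Next, for any left proper combinatorial simplicial model category $M$ I would construct a tangent model category $TM$ by mimicking the global construction of $\mathrm{Sp}^\mathbb{N}_\mathrm{sSet}$ carried out in Section \ref{S:ParamSpec}. Concretely: the assignment $m \mapsto \mathrm{Sp}^\mathbb{N}(M_{\dslash m})$, where the stabilisation is taken with respect to the fibrewise suspension $\Sigma_m = S^1 \owedge_m (-)$, yields a pseudofunctor $M \to \mathbf{Model}$. The arguments of Lemmas \ref{lem:RetSpacPropRelative} and \ref{lem:SpecPseudoFunAreProper} generalise to show that this pseudofunctor is proper and relative, so that the Grothendieck construction of \cite{harpaz_grothendieck_2015} delivers a combinatorial, left proper, simplicial model category $TM$. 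Using Lurie's model for unstraightening and Theorem \ref{thm:FibStabisooStab} fibrewise, one checks that $(TM)^\infty \simeq T(M^\infty)$, giving a combinatorial counterpart to \cite{harpaz_tangent_2018}. In the special case $M = \mathrm{sSet}$ this reproduces $\mathrm{Sp}^\mathbb{N}_\mathrm{sSet}$, hence presents $T\mathcal{S}$ by Lemma \ref{lem:PresentTS}.

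I would then apply the construction to the unlocalized projective model category $M_0 := \mathrm{Fun}(\mathcal{K}^\mathrm{op}, \mathrm{sSet})_\mathrm{proj}$. Since slice categories, retractive objects, and sequential stabilisation in $M_0$ are all computed pointwise over $\mathcal{K}^\mathrm{op}$, there is a natural Quillen equivalence
\[
TM_0 \;\simeq\; \mathrm{Fun}(\mathcal{K}^\mathrm{op}, \mathrm{Sp}^\mathbb{N}_\mathrm{sSet})_\mathrm{proj},
\]
where the right-hand side carries the projective model structure built from the basewise simplicial enrichment of Remark \ref{rem:SeqSpecBasewiseEnrich}. Passing to underlying $\infty$-categories identifies $(TM_0)^\infty$ with $\mathrm{Fun}(N_\Delta\mathcal{K}^\mathrm{op}, T\mathcal{S})$.

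Finally, I would invoke the functoriality of the tangent construction under reflective accessible localisations: if $L \dashv \iota$ exhibits $\mathcal{C}$ as a reflective accessible subcategory of $\mathcal{D} := \mathcal{P}(N_\Delta \mathcal{K})$, then the induced localisations on slices stabilise to an adjunction $TL \dashv T\iota$ exhibiting $T\mathcal{C}$ as a reflective accessible subcategory of $T\mathcal{D}$, the reflector being computed fibrewise. In model-categorical terms, $M = L_S M_0$ passes to a left Bousfield localisation $TM = L_{TS} (TM_0)$ at the image of $S$ under the fibrewise suspension spectrum functor (together with the maps witnessing $\Omega$-spectrum structure). Combining these steps exhibits $T\mathcal{C}$ as an accessible localisation of $\mathrm{Fun}(N_\Delta\mathcal{K}^\mathrm{op}, T\mathcal{S})$.

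The main obstacle will be the second step: constructing $TM$ carefully enough to verify combinatoriality, left properness, simpliciality, and the identification $(TM)^\infty \simeq T(M^\infty)$, since the verification requires gluing together fibrewise stabilisations using the Grothendieck construction in a way compatible with both the simplicial enrichment and Lurie's unstraightening. The functoriality under reflective localisations and the pointwise identification over presheaf categories are essentially formal once $TM$ has been set up.
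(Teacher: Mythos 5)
Your skeleton overlaps substantially with the paper's actual proof (Simpson presentation of $\mathcal{C}$, presheaves valued in the global model category $\mathrm{Sp}^\mathbb{N}_\mathrm{sSet}$ with the basewise enrichment, localisation at a prolongation of $S$), but two of your steps contain genuine gaps. First, your step 2 does not work in the stated generality, and its hardest part is waved through by citation. Relativity of the pseudofunctor $m\mapsto \mathrm{Sp}^\mathbb{N}(M_{\dslash m})$ --- the hypothesis needed to apply the Grothendieck construction of \cite{harpaz_grothendieck_2015} --- means that every weak equivalence of $M$ induces a Quillen equivalence of retractive (hence spectrum) categories; the paper's proof of this for $\mathrm{sSet}$ (Theorem \ref{thm:RetSpStructureTheorem}(i), feeding into Lemmas \ref{lem:RetSpacPropRelative} and \ref{lem:SpecPseudoFunAreProper}) uses \emph{right} properness of $\mathrm{sSet}$ and cofibrancy of all objects of $\mathrm{sSet}_{\dslash X}$, and for slice categories relativity for all weak equivalences in fact characterises right properness, so it genuinely fails for a merely left proper $M$. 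Likewise $(TM)^\infty\simeq T(M^\infty)$ cannot be obtained by ``applying Theorem \ref{thm:FibStabisooStab} fibrewise'': that theorem is proved only for $\mathrm{sSet}_{\dslash X}$, and redoing it for $M_{\dslash m}$ is a substantial piece of work, not a citation. This matters doubly because the localised model category $L_S M_0$ presenting $\mathcal{C}$ is typically \emph{not} right proper, so even a corrected step 2 cannot be applied to it; you implicitly concede this by \emph{defining} $TM:=L_{TS}(TM_0)$, but then the equivalence $(TM)^\infty\simeq T(M^\infty)$, available only for Grothendieck-constructed $TM$ over proper bases, no longer says anything about $L_{TS}(TM_0)$.

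Second, the step you call ``essentially formal'' --- that $T$ carries the accessible localisation $\mathcal{C}\subset \mathcal{P}(N_\Delta K)$ to an accessible localisation $T\mathcal{C}\subset T\mathcal{P}(N_\Delta K)$ with fibrewise reflector, equivalently that the fibre of $L_{TS}(TM_0)$ over an $S$-local base $X$ presents $\mathrm{Sp}(\mathcal{C}_{/X})$ --- is exactly the non-formal heart of the theorem, and you give no argument for it. In the paper this is carried by the tangent prolongation of Definition \ref{defn:ProlongLoc} together with the identification of local fibrant objects in Lemmas \ref{lem:PSheafFibEquiv}, \ref{lem:PSheafStabFib} and \ref{lem:PSheafUStabFib}, proved by explicit $S$-horn lifting: $TS$-local fibrant objects are precisely fibrant $\Omega$-spectrum objects in $S_{/X}$-local retractive presheaves over $S$-local bases, which is what lets one recognise the fibres as stabilisations of $\mathcal{C}_{\dslash X}$. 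Note also that your localising set is incomplete: the prolongation must contain the base-direction maps $0_-(S)$ in addition to $\bigcup_{k\geq 0}\Sigma^{\infty-k}_+(S)$, since locality against fibrewise suspension spectra of maps in $S$ does not by itself force the base presheaf to be $S$-local (the ``$\Omega$-spectrum'' maps, by contrast, are already encoded in the stable global structure on $\mathrm{Sp}^\mathbb{N}_\mathrm{sSet}$). Your steps 1 and 3 are sound and coincide with the paper's (the pointwise identification of step 3 is essentially Lemma \ref{lem:PSheafFibEquiv} combined with Lemma \ref{lem:PresentTS}), but as written the proposal defers the two essential verifications rather than supplying them.
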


\begin{remark}
Let $M$ be a left proper combinatorial model category (so that $M^\infty$ is presentable) and let $(\mathbb{N}\times \mathbb{N})_\star$ be the category obtained by freely adjoining a zero object.
The main result of \cite{harpaz_tangent_2018} shows that the tangent $\infty$-category $TM^\infty$ is presented by a left Bousfield localisation of the Reedy model structure on functors $(\mathbb{N}\times \mathbb{N})_\star\to M$.
In \cite{harpaz_tangent_2016}, this construction of the tangent $\infty$-category is used prove an equivalence between parametrised spectra in the $\infty$-category of algebras over an $\infty$-operad and operadic modules.
\end{remark}

Fixing a presentable $\infty$-category $\mathcal{C}$, we first find a (small) simplicial category $K$ together with a set $S$ of morphisms of $\mathrm{Fun}(K^\mathrm{op}, \mathrm{sSet})$ such that $\mathcal{C}$ is presented by the left Bousfield localisation
\[
\begin{tikzcd}
L_S \mathrm{Fun}_\Delta(K^\mathrm{op}, \mathrm{sSet})
\ar[rr, leftarrow, shift left=1.1ex, ""]
\ar[rr, hookrightarrow, shift left=-1.1ex, "\bot", ""']
&&
\mathrm{Fun}_\Delta(K^\mathrm{op}, \mathrm{sSet})
\end{tikzcd}
\]
of the enriched category of simplicial presheaves equipped with the projective model structure \cite[Proposition A.3.7.6]{lurie_higher_2009}.
We may suppose without loss of generality that the domains and codomains of all morphisms in $S$ are cofibrant.
We prove Theorem \ref{thm:Tangent} by showing that the tangent $\infty$-bundle $p\colon T\mathcal{C}\to \mathcal{C}$ is presented by a simplicial Quillen functor between (left Bousfield localisations of) enriched functor categories
\[
L_{TS} \mathrm{Fun}_\Delta \big(K^\mathrm{op}, \mathrm{Sp}^\mathbb{N}_\mathrm{sSet}\big)
\longrightarrow
L_S \mathrm{Fun}_\Delta (K^\mathrm{op}, \mathrm{sSet})\,,
\]
with the prolonged set of morphisms $TS$ (Definition \ref{defn:ProlongLoc}). 
Throughout, $\mathrm{Sp}^\mathbb{N}_\mathrm{sSet}$ is regarded as a simplicial category with respect to the \lq\lq basewise'' simplicial enrichment (Remark \ref{rem:SeqSpecBasewiseEnrich}), which guarantees that the base projection adjunction
\begin{equation}
\label{eqn:BaseProjectPSheaves}
\begin{tikzcd}
 \mathrm{Fun}_\Delta \big(K^\mathrm{op}, \mathrm{Sp}^\mathbb{N}_\mathrm{sSet}\big)
\ar[rr, shift left=1.2ex, "p"]
\ar[rr, leftarrow, shift left=-1.1ex, "\bot/\top", "0_-"']
&&
\mathrm{Fun}_\Delta(K^\mathrm{op}, \mathrm{sSet})
\end{tikzcd}
\end{equation}
is simplicial. 
For $k\geq 0$ let $\Sigma^{\infty-k}_+$ be the composite of simplicial functors
\begin{equation}
\label{eqn:PSheafProlongation}
  \mathrm{Fun}_\Delta(K^\mathrm{op},\mathrm{sSet})
  \xrightarrow{(\mathtt{0})_{+(\mathtt{0})}}
  \mathrm{Fun}_\Delta(K^\mathrm{op},\mathrm{sSet}_{\dslash \mathrm{sSet}})
  \xrightarrow{\Sigma^{\infty-k}_-}
  \mathrm{Fun}_\Delta(K^\mathrm{op},\mathrm{Sp}^\mathbb{N}_\mathrm{sSet})\,,
\end{equation}
with
$(\mathtt{0})_{+(\mathtt{0})}\colon X \to (X, X_{+X})$ and $\Sigma^{\infty-k}_- \colon (X,Z) \mapsto (X, \Sigma^{\infty-k}_X Z)$, defined objectwise in $k\in K$.
There are simplicial adjunctions
\begin{equation}
\label{eqn:PSheavesSuspendSpec}
(\Sigma^{\infty-k}_+, \widetilde{\Omega}^{\infty-k}_+)\colon
\begin{tikzcd}
\mathrm{Fun}_\Delta(K^\mathrm{op}, \mathrm{sSet})
\ar[rr, shift left=1.1ex, ""]
\ar[rr, leftarrow, shift left=-1.1ex, "\bot", ""']
&&
\mathrm{Fun}_\Delta(K^\mathrm{op},\mathrm{Sp}^\mathbb{N}_\mathrm{sSet})
\end{tikzcd}
\end{equation}
of enriched functor categories.
\begin{lemma}
\label{lem:PSheafFibEquiv}
The fibre of $p\colon \mathrm{Fun}_\Delta \big(K^\mathrm{op}, \mathrm{Sp}^\mathbb{N}_\mathrm{sSet}\big)
\to
\mathrm{Fun}_\Delta(K^\mathrm{op}, \mathrm{sSet})$ at $X$ is equivalent to the category $\mathrm{Sp}^\mathbb{N}\big(\mathrm{Fun}_\Delta(K^\mathrm{op}, \mathrm{sSet})_{/X}\big)$ of sequential spectrum objects in the slice category over $X$.
\end{lemma}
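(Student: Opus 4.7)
The plan is to exhibit the equivalence by unpacking the data on both sides in a way that makes them manifestly the same. First I would observe that, by the Grothendieck construction structure of $\mathrm{Sp}^\mathbb{N}_\mathrm{sSet} \to \mathrm{sSet}$, a functor $F\colon K^\mathrm{op} \to \mathrm{Sp}^\mathbb{N}_\mathrm{sSet}$ whose image under $p$ is $X$ amounts to the following data: for each $k \in K$ a sequential $X(k)$-spectrum $F(k)$, and for each morphism $\phi\colon k \to k'$ in $K$ a morphism $F(k') \to X(\phi)^\ast F(k)$ in $\mathrm{Sp}^\mathbb{N}_{X(k')}$, obtained by taking the $(X(\phi)_!\dashv X(\phi)^\ast)$-adjunct of the underlying morphism of $\mathrm{Sp}^\mathbb{N}_\mathrm{sSet}$ (cf.~Remark \ref{rem:SeqSpecGlobalMorphisms}). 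These data are subject to the evident associativity and unitality constraints inherited from the strict 1-functoriality of pullback on retractive spaces.

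Next, I would identify this same data on the right-hand side. Interpreting $\mathrm{Sp}^\mathbb{N}(\mathrm{Fun}_\Delta(K^\mathrm{op},\mathrm{sSet})_{/X})$ as sequential spectrum objects in the category of retractive presheaves over $X$ with respect to the objectwise fibrewise suspension, an object is a sequence $\{Z_n\}_{n\geq 0}$ of retractive presheaves over $X$ together with structure maps $\Sigma_X Z_n \to Z_{n+1}$ computed pointwise. Evaluating at each $k$ yields a sequential $X(k)$-spectrum $Z_\bullet(k)$, and the presheaf functoriality in $K$ produces, for every $\phi\colon k\to k'$, exactly the morphisms $Z_\bullet(k')\to X(\phi)^\ast Z_\bullet(k)$ of the preceding paragraph. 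Conversely, any such collection reassembles into a sequential spectrum in $\mathrm{Fun}_\Delta(K^\mathrm{op},\mathrm{sSet})_{\dslash X}$ by taking the $n$-th terms $k\mapsto F(k)_n$ and using the base-change morphisms to define the presheaf structure on each level. The two assignments are mutually inverse on objects and morphisms.

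Finally, I would check that the simplicial enrichments agree. The basewise enrichment on $\mathrm{Sp}^\mathbb{N}_\mathrm{sSet}$ of Remark \ref{rem:SeqSpecBasewiseEnrich} tensors a simplicial set $L$ with $(Y,A)$ by forming $(L\times Y, p_2^\ast A)$; restricted to the fibre over $X\in \mathrm{Fun}_\Delta(K^\mathrm{op},\mathrm{sSet})$ and computed objectwise, this is exactly the tensoring on presheaves of retractive spaces over $X$, and likewise passes through to sequential spectrum objects there. The main obstacle is therefore purely bookkeeping: one has to track adjuncts of base-change morphisms and verify that the simplicial composition on the two sides match, but this is automatic since the pullback cleavage is strict and the pushforward--pullback adjunctions are $\mathrm{sSet}_\ast$-enriched (Corollary \ref{cor:sSetastEnrichmentFacts}, Theorem \ref{thm:SeqSpecStructureTheorem}). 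With these checks the desired equivalence of simplicial categories follows.
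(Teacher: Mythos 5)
Your proposal follows essentially the same route as the paper: unwind the fibre objectwise, identify it with presheaves of retractive spaces over $X$ carrying levelwise spectrum structure, and check compatibility of the simplicial (basewise) enrichment; the enrichment check via $(p_2)_!p_2^\ast A\cong \Delta^n_+\owedge_X A$ is exactly the right observation. The one step I would not let stand as written is the appeal to ``the strict 1-functoriality of pullback on retractive spaces'' to handle associativity and unitality of the data $F(k')\to X(\phi)^\ast F(k)$: pullback of simplicial sets is only pseudofunctorial (the canonical isomorphisms $X(\phi)^\ast X(\psi)^\ast\cong X(\psi\phi)^\ast$ are not identities for any standard choice of pullbacks), so encoding a strict functor $K^\mathrm{op}\to\mathrm{Sp}^\mathbb{N}_\mathrm{sSet}$ by adjunct maps into pullbacks with strict compatibilities does not literally work without either choosing and tracking a cleavage or reformulating. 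The paper sidesteps this entirely by using the strict diagrammatic presentation of the global category (Construction \ref{cons:GlobRetSpaceAlg}, Remarks \ref{rem:GlobalMorphisms} and \ref{rem:SeqSpecGlobalMorphisms}): a morphism of $\mathrm{Sp}^\mathbb{N}_\mathrm{sSet}$ covering a base map is literally a commuting diagram of simplicial sets compatible with the structure maps, so a simplicial functor lying over $X$ \emph{is} a presheaf of retractive spaces over $X$ with levelwise spectrum structure, with no adjuncts or coherence isomorphisms appearing. If you replace your cleavage-style bookkeeping with that identification, the rest of your argument goes through and agrees with the paper's proof.
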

\begin{proof}
We first analyse the fibre $F_X$ of $ \mathrm{Fun}_\Delta(K^\mathrm{op},\mathrm{sSet}_{\dslash \mathrm{sSet}})\to \mathrm{Fun}_\Delta (K^\mathrm{op},\mathrm{sSet})$ at $X$.
Objects of the fibre category are identified with simplicial functors $Z\colon K^\mathrm{op}\to \mathrm{sSet}_{\mathrm{sSet}}$ such that each $Z(k)$ is a retractive space over $X(k)$, this identification being compatible with the $\mathrm{sSet}$-enrichment. 
This identification of objects gives rise to an equivalence of simplicial categories between the fibre category $F_X$ and $\mathrm{Fun}_\Delta (K^\mathrm{op}, \mathrm{sSet})_{/X}$.
This equivalence is manifestly compatible with (fibrewise) $\mathrm{sSet}_\ast$-tensors, so that passing to underlying sequences of $\mathrm{sSet}_{\dslash \mathrm{sSet}}$-presheaves and unwinding the definitions proves the result.
\end{proof}

\begin{remark}
For a simplicial presheaf $X\in \mathrm{Fun}_\Delta (K^\mathrm{op}, \mathrm{sSet})$, its \emph{enriched category of elements} $\underline{\mathrm{el}}(X)$ is the full simplicial subcategory of $\mathrm{Fun}_{\Delta}(K^\mathrm{op}, \mathrm{sSet})_{/X}$ on those morphisms with domain in the image of the simplicial Yoneda embedding $\mathcal{Y}\colon K\hookrightarrow \mathrm{Fun}_\Delta(K^\mathrm{op},\mathrm{sSet})$.
There is an equivalence of simplicial categories between $ 
\mathrm{Fun}_\Delta(\underline{\mathrm{el}}(X)^\mathrm{op},\mathrm{sSet})$ and the fibre of $ \mathrm{Fun}_\Delta(K^\mathrm{op},\mathrm{sSet}_{\dslash \mathrm{sSet}})\to \mathrm{Fun}_\Delta (K^\mathrm{op},\mathrm{sSet})$ at $X$, and
upon taking sequential spectrum objects there is an equivalence of simplicial categories of 
$\mathrm{Fun}_\Delta\big(\underline{\mathrm{el}}(X)^\mathrm{op},\mathrm{Sp}^\mathbb{N}\big)$ with the fibre of $\mathrm{Fun}_\Delta(K^\mathrm{op},\mathrm{Sp}^\mathbb{N}_\mathrm{sSet})$ at $X$.
Working with enriched categories of elements in this manner is tantamount to regarding parametrised spectra as spectrum-valued local coefficient system, which is the perspective adopted in \cite{ando_parametrized_2018}.
\end{remark}

We now turn to the study of model structures on $\mathrm{Fun}_\Delta(K^\mathrm{op},\mathrm{Sp}^\mathbb{N}_\mathrm{sSet})$ and its fibres.
\begin{lemma}
\label{lem:EnrichedProjective}
The enriched functor category $\mathrm{Fun}_\Delta \big(K^\mathrm{op},\mathrm{Sp}^\mathbb{N}_\mathrm{sSet}\big)$ carries a \emph{projective model structure} with respect to which the fibrations and weak equivalences are objectwise.
The projective model structure is left proper, combinatorial and simplicial.
\end{lemma}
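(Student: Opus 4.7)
The plan is to invoke the standard existence theorem for projective model structures on enriched functor categories valued in a combinatorial simplicial model category. The input data are (i) that $K$ is a small simplicial category, and (ii) that $\mathrm{Sp}^\mathbb{N}_\mathrm{sSet}$, equipped with the basewise simplicial enrichment of Remark \ref{rem:SeqSpecBasewiseEnrich}, is a left proper combinatorial $\mathrm{sSet}$-model category by Theorem \ref{thm:SeqSpecGlobalComb}. Applying \cite[Proposition A.3.3.2]{lurie_higher_2009} (or the analogous result in \cite{hirschhorn_model_2003}) then produces the required projective model structure with the stated objectwise weak equivalences and fibrations.

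Concretely, for each $k\in K$ the evaluation functor $\mathrm{ev}_k\colon \mathrm{Fun}_\Delta(K^\mathrm{op},\mathrm{Sp}^\mathbb{N}_\mathrm{sSet}) \to \mathrm{Sp}^\mathbb{N}_\mathrm{sSet}$ admits a simplicial left adjoint $F_k$, given by enriched left Kan extension along $\{k\}\hookrightarrow K$. The sets $\bigcup_{k\in K} F_k(\mathcal{I}_{\mathbb{N}\mathrm{-Sp}})$ and $\bigcup_{k\in K} F_k(\mathcal{J}_{\mathbb{N}\mathrm{-Sp}})$ obtained by applying the $F_k$ to the generating sets from Theorem \ref{thm:SeqSpecGlobalComb} serve as generating cofibrations and generating acyclic cofibrations respectively, which exhibits the projective model structure as combinatorial.

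Left properness follows from objectwise computation: pushouts and weak equivalences in $\mathrm{Fun}_\Delta(K^\mathrm{op},\mathrm{Sp}^\mathbb{N}_\mathrm{sSet})$ are formed objectwise, and projective cofibrations are retracts of transfinite compositions of cobase changes of maps in $\bigcup_{k\in K} F_k(\mathcal{I}_{\mathbb{N}\mathrm{-Sp}})$, each of which is objectwise a cofibration in $\mathrm{Sp}^\mathbb{N}_\mathrm{sSet}$ (this uses that the corepresented functors $\mathrm{map}_K(k,-)$ take values in cofibrant simplicial sets, which is automatic since all simplicial sets are cofibrant). Left properness of $\mathrm{Sp}^\mathbb{N}_\mathrm{sSet}$ then transfers objectwise to the functor category.

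For the simplicial enrichment, the $\mathrm{sSet}$-tensors and cotensors on $\mathrm{Fun}_\Delta(K^\mathrm{op},\mathrm{Sp}^\mathbb{N}_\mathrm{sSet})$ are computed objectwise from those on $\mathrm{Sp}^\mathbb{N}_\mathrm{sSet}$, and the pushout--product axiom for the projective model structure reduces objectwise to the basewise simplicial pushout--product axiom for $\mathrm{Sp}^\mathbb{N}_\mathrm{sSet}$ established in Remark \ref{rem:SeqSpecBasewiseEnrich}. There is no real obstacle here: once the correct simplicial enrichment on the target has been identified (the basewise one, which ensures compatibility with the base projection \eqref{eqn:BaseProjectPSheaves}), every verification is a routine application of the combinatorial enriched diagram machinery.
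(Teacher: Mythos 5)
Your proposal is correct and follows essentially the same route as the paper: the existence, combinatoriality and simpliciality of the projective model structure are obtained by citing \cite[Proposition A.3.3.2]{lurie_higher_2009}, and left properness is deduced objectwise from the fact that projective cofibrations are in particular objectwise cofibrations. The extra details you supply (generating sets $F_k(\mathcal{I}_{\mathbb{N}\mathrm{-Sp}})$, $F_k(\mathcal{J}_{\mathbb{N}\mathrm{-Sp}})$ and the objectwise pushout--product check) are consistent with, and merely expand upon, the paper's brief argument.
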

\begin{proof}
The projective model structure on functors  exists by \cite[Proposition A.3.3.2]{lurie_higher_2009}.
Projective cofibrations are in particular objectwise cofibrations, so that left properness is inherited objectwise from $\mathrm{sSet}$.
\end{proof}
As in Section \ref{sss:SequentialStabFibrewise}, \cite{hovey_spectra_2001}, we can equip each fibre category $\mathrm{Sp}^\mathbb{N}\big(\mathrm{Fun}_\Delta(K^\mathrm{op},\mathrm{sSet})_{/X}\big)$ with a stable $\mathrm{sSet}_\ast$-model structure that is left proper and  combinatorial.
The fibre inclusion functor
\[
\mathrm{Sp}^\mathbb{N}\big(\mathrm{Fun}_\Delta(K^\mathrm{op},\mathrm{sSet})_{/X}\big)\longrightarrow 
\mathrm{Fun}_\Delta\big(K^\mathrm{op}, \mathrm{Sp}^\mathbb{N}_\mathrm{sSet}\big)
\]
induced by Lemma \ref{lem:PSheafFibEquiv}
preserves cofibrations, fibrations, weak equivalences and $\mathrm{sSet}_\ast$-tensors.
\begin{definition}
\label{defn:ProlongLoc}
For a (small) simplicial category $K$, the \emph{tangent prolongation} of a set of morphisms $S$ in $\mathrm{Fun}_\Delta (K^\mathrm{op},\mathrm{sSet})$ is the set of morphisms
\begin{equation}
TS :=\bigcup_{k\geq 0} \Sigma^{\infty-k}_+ (S) \cup
0_- (S) \subset \mathrm{Mor}\big(\mathrm{Fun}_\Delta(K^\mathrm{op},\mathrm{Sp}^\mathbb{N}_\mathrm{sSet})\big)
\end{equation}
obtained using the simplicial functors of \eqref{eqn:BaseProjectPSheaves} and \eqref{eqn:PSheavesSuspendSpec}.
The \emph{retractive prolongation} of $S$ is the set of morphisms
\begin{equation}
S_{\dslash S} := \big((\mathtt{0})_{+(\mathtt{0})}\big) (S)\subset \mathrm{Mor}\big(\mathrm{Fun}_\Delta(K^\mathrm{op},\mathrm{sSet}_{\dslash \mathrm{sSet}})\big)\,.
\end{equation}
For a simplicial functor $X\colon K^\mathrm{op}\to \mathrm{sSet}$, the \emph{fibrewise stabilisation} of $S$ over $X$ is the set of morphisms 
\begin{equation}
\mathrm{Sp}(S)_{/X} := \bigcup_{k\geq 0} \Sigma^{\infty-k}_X (S_{/X})\subset \mathrm{Mor}\Big(\mathrm{Sp}^\mathbb{N}\big(
  \mathrm{Fun}_\Delta(K^\mathrm{op},\mathrm{sSet})_{/X}\big)\Big)
\end{equation}
obtained by applying the functors 
\[
\begin{tikzcd}
  \mathrm{Fun}_\Delta(K^\mathrm{op},\mathrm{sSet})_{/X}
  \ar[r, "{(-)_{+X}}"]
  &
  \mathrm{Fun}_\Delta(K^\mathrm{op},\mathrm{sSet})_{\dslash X}
  \ar[r, "{\Sigma^{\infty-k}_X}"]
  &
  \mathrm{Sp}^\mathbb{N}\big(
  \mathrm{Fun}_\Delta(K^\mathrm{op},\mathrm{sSet})_{/X}\big)
\end{tikzcd}
\]
to the set of commuting diagrams
\[
S_{/X}:=
\left\{
\!
\begin{tikzcd}[sep=small]
  K\ar[dr]\ar[rr, "\in S"]&& L
  \ar[dl]
  \\
  &X
\end{tikzcd}
\!\right\}
\] 
viewed as morphisms in $\mathrm{Fun}_\Delta (K^\mathrm{op},\mathrm{sSet})_{/X}$.
Finally, the \emph{fibrewise prolongation} of $S$ over $X$ is the set of morphisms 
\begin{equation}
S_{\dslash X} := (-)_{+X}\big(S_{/X}\big)
\end{equation}
in the functor category $\mathrm{Fun}_\Delta(K^\mathrm{op}, \mathrm{sSet})_{/X}$.
\end{definition}
\begin{lemma}
\label{lem:PSheafStabFib}
Let $X\in \mathrm{Fun}_\Delta(K^\mathrm{op}, \mathrm{sSet})$ be an $S$-local fibrant simplicial functor.
Under the equivalence of Lemma \ref{lem:PSheafFibEquiv}, the $\mathrm{Sp}(S)_{/X}$-local fibrant objects of $\mathrm{Sp}^\mathbb{N}(\mathrm{Fun}_\Delta(K^\mathrm{op},\mathrm{sSet})_{/X})$ are identified with the $TS$-local fibrant objects of the fibre of $p\colon \mathrm{Fun}_\Delta(K^\mathrm{op}, \mathrm{Sp}^\mathbb{N}_\mathrm{sSet})\to \mathrm{Fun}_\Delta(K^\mathrm{op}, \mathrm{sSet})$ over $X$.
\end{lemma}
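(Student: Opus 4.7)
The plan is to exploit the natural splitting $TS = \bigl(\bigcup_{k\geq 0}\Sigma^{\infty-k}_+(S)\bigr)\cup 0_-(S)$ of the prolonged set into two pieces. Under the equivalence of Lemma \ref{lem:PSheafFibEquiv}, the fibre over $X$ inherits a model structure from the global projective-then-$TS$-local structure on $\mathrm{Fun}_\Delta(K^\mathrm{op},\mathrm{Sp}^\mathbb{N}_\mathrm{sSet})$; I will check that a $Z$ in the fibre is $TS$-local fibrant if and only if it is $\mathrm{Sp}(S)_{/X}$-local fibrant when viewed in $\mathrm{Sp}^\mathbb{N}\bigl(\mathrm{Fun}_\Delta(K^\mathrm{op},\mathrm{sSet})_{/X}\bigr)$. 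The levelwise projective/stable fibrancy is transparent from the definitions of the two model structures (weak equivalences and fibrations being created objectwise, and the stabilisations both performed with respect to the same simplicial $\Sigma$-endofunctor), so the content of the lemma is a comparison of the two locality conditions.

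For the $0_-(S)$ piece, I would use that the base projection $p$ of \eqref{eqn:BaseProjectPSheaves} is simplicially right adjoint (in fact two-sided adjoint) to $0_-$; hence for any $s\colon K'\to L'$ in $S$ one has a natural weak equivalence
\[
\mathrm{map}_\mathrm{global}\bigl(0_-s,\,Z\bigr)\;\simeq\;\mathrm{map}\bigl(s,\,pZ\bigr)=\mathrm{map}(s,X).
\]
Because $X$ is $S$-local by assumption, this mapping space is already a weak equivalence, so the $0_-(S)$-locality half of $TS$ is automatic. This step is where the hypothesis that $X$ is $S$-local is used.

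For the $\Sigma^{\infty-k}_+(S)$ piece, the key is to unwind $\mathrm{map}_\mathrm{global}(\Sigma^{\infty-k}_+ s,Z)$ using the co-Cartesian structure of the Grothendieck construction. Applying the adjunctions of Corollary \ref{cor:GlobalSigmaOmega} together with the adjunctions $(f_!\dashv f^\ast)$ attached to morphisms $f\colon K'\to X$ (and using that $\Sigma^{\infty-k}_+$ is defined by first freely adjoining a section and then taking shifted fibrewise suspension spectrum, so commutes with $f_!$), one obtains a homotopy fibre sequence
\[
\mathrm{map}_\mathrm{global}\bigl(\Sigma^{\infty-k}_+ s,\,(X,Z)\bigr)\longrightarrow \mathrm{map}(L',X)\longrightarrow \mathrm{map}(K',X)
\]
whose fibre over a given $\sigma\colon L'\to X$ (inducing the triangle $g\colon s\to X$ in $S_{/X}$) is naturally identified with $\mathrm{map}_{\mathrm{Sp}^\mathbb{N}(\mathrm{Fun}_\Delta(K^\mathrm{op},\mathrm{sSet})_{/X})}\bigl(\Sigma^{\infty-k}_X g_{+X},\,Z\bigr)$. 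Since the base $\mathrm{map}(s,X)$ is already a weak equivalence by $S$-locality of $X$, the total space is a weak equivalence iff each of these fibres is. Running over all $s\in S$, $k\geq 0$ and $\sigma$, the collection of fibre-mapping spaces is precisely the one that detects $\mathrm{Sp}(S)_{/X}$-locality.

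The main obstacle is the third step: the mapping-space fibre sequence in the global category must be justified carefully, since morphisms from $\Sigma^{\infty-k}_+ s$ simultaneously change the base (from $K'$ or $L'$ to $X$) and act on the fibrewise spectrum. I would handle this by passing through the retractive prolongation $S_{\dslash S}$ and establishing the analogous fibre sequence first for maps from $(\mathtt{0})_{+(\mathtt{0})}(s)$ in $\mathrm{Fun}_\Delta(K^\mathrm{op},\mathrm{sSet}_{\dslash\mathrm{sSet}})$ (where it is essentially the statement that projection $p\colon\mathrm{sSet}_{\dslash\mathrm{sSet}}\to\mathrm{sSet}$ is a two-sided Quillen adjoint, Corollary \ref{cor:RetSpBase}, applied objectwise and fibrewise), and then stabilising with $\Sigma^{\infty-k}_-$, which is compatible with base change by Corollary \ref{cor:GlobalSigmaOmega}. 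Once this identification is in place, comparison of the two locality conditions is immediate and the lemma follows.
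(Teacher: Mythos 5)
Your proposal is correct in substance and uses the same decomposition as the paper: split $TS$ into $0_-(S)$ and $\bigcup_{k\geq 0}\Sigma^{\infty-k}_+(S)$, dispose of the first piece via the simplicial adjunction $0_-\dashv p$ together with $S$-locality of $X$, and convert the second piece into the fibrewise condition using base change and the compatibility of $\Sigma^{\infty-k}$ with pushforward. Where you differ is the technical vehicle. The paper never computes homotopy function complexes here: it invokes the characterisation of local fibrant objects by the right lifting property against the set of $S'$-horns \cite[Proposition 4.2.4]{hirschhorn_model_2003}, and then the pushforward identification of morphisms $(K',\Sigma^{\infty-k}_{K'}L'_{+K'})\to(X,A)$ with morphisms over $X$ turns the two lifting problems into literally the same lifting problem, so the comparison is a bijection of squares and nothing homotopical needs to be verified. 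Your route goes through mapping spaces fibred over the base mapping spaces; this is exactly the technique the paper itself uses for the Claim inside the proof of Theorem \ref{thm:SeqSpecGlobalComb}, so it certainly works, but it obliges you to check two things you currently only gesture at: that $\mathrm{map}(\Sigma^{\infty-k}_+L',(X,A))\to\mathrm{map}(L',X)$ is a Kan fibration (e.g.\ it is $\mathrm{map}(\Sigma^{\infty-k}_+L',-)$ applied to the global fibration $(X,A)\to(X,0_X)$, using that $\mathrm{map}((Y,B),(X,0_X))\cong\mathrm{map}(Y,X)$ because $0_X$ is a zero object), and that its fibre over a vertex $\sigma\colon L'\to X$ is the fibrewise mapping space $\mathrm{map}(\Sigma^{\infty-k}_X\sigma_{+X},A)$, which requires the projection-formula isomorphism $(\sigma p_2)_!\,p_2^\ast B\cong \Delta^n_+\owedge_X\sigma_!B$ relating the basewise tensoring to the fibrewise one. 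The lifting-property route buys you freedom from these verifications; your route has the advantage of making visible exactly which fibrewise mapping spaces detect locality and why the vertices of $\mathrm{map}(L',X)$ enumerate $S_{/X}$.

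One local flaw: the displayed ``homotopy fibre sequence'' is not one as written --- its first term is a map of spaces rather than a space, and the remaining two terms are the base mapping spaces. What your subsequent discussion actually uses, and what you should state, is the commuting square whose verticals are the fibrations $\mathrm{map}(\Sigma^{\infty-k}_+L',(X,A))\to\mathrm{map}(L',X)$ and $\mathrm{map}(\Sigma^{\infty-k}_+K',(X,A))\to\mathrm{map}(K',X)$ and whose bottom arrow $\mathrm{map}(s,X)$ is a weak equivalence by $S$-locality of $X$; then the top arrow is a weak equivalence if and only if the induced maps on fibres over all vertices $\sigma$ are, and these fibre maps are precisely the maps $\mathrm{map}(\Sigma^{\infty-k}_X(g_{+X}),A)$ for $g\in S_{/X}$ with top arrow $s$. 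With that correction, your argument goes through.
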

\begin{proof}
The proof is an elaboration of the method used in Lemma \ref{lem:GlobRetSpaceComb} to compare fibrewise and global model structures.
We use the following characterisation of local fibrant objects: for a set $S'$ of morphisms in a simplicial model category $M$, a fibrant object $m\in M$ is $S'$-local precisely if the terminal morphism $m\to \ast$ has the right lifting property with respect to the set of $S'$-horns
$
\Lambda(S') := \{ \Delta^n \otimes K\coprod_{\,\partial\Delta^n \otimes K} \partial \Delta^n \otimes L \to \Delta^n \otimes L \mid (f\colon K\to L) \in S', \; n\geq 0\}$
\cite[Proposition 4.2.4]{hirschhorn_model_2003}.
Suppose $(X,A)\in \mathrm{Fun}_\Delta(K^\mathrm{op},\mathrm{Sp}^\mathbb{N}_\mathrm{sSet})$ is projectively fibrant.
The set of $TS$-horns splits as
\[
\Lambda (TS) = \Lambda \Bigg(\bigcup_{k\geq 0} \Sigma^{\infty-k}_+ (S)\Bigg) \cup \Lambda \big(0_-(S)\big)\,,
\]
where the hypothesis on $X$ guarantees that $(X,A)\to (\ast, 0_\ast)$ has the right lifting property with respect to the set $\Lambda(0_-(S))$.
As in the proof of Lemma \ref{lem:GlobRetSpaceComb}, by pushing forward along the map of base functors the morphism $(\sigma, \Psi)\colon (K, \Sigma^{\infty-k}_K L_{+K})\to (X,A)$ is equivalent to the datum of a morphism $\psi\colon \Sigma^{\infty-k}_X L_{+X} \to A$ covering the identity on $X$ (see also Remark \ref{rem:SeqSpecGlobalMorphisms}).
Under the simplicial equivalence of categories of Lemma \ref{lem:PSheafFibEquiv}, we have that $(X,A)\to (\ast, 0_\ast)$ has the right lifting property with respect to  $\Lambda (\bigcup_{k\geq 0} \Sigma^{\infty-k}_+ (S))$ precisely if the terminal morphism $A\to \ast$  in $ \mathrm{Sp}^\mathbb{N}(\mathrm{Fun}_\Delta(K^\mathrm{op},\mathrm{sSet})_{/X})$ has the right lifting property with respect to $\Lambda(\mathrm{Sp}(S)_{/X})$.
\end{proof}

We also require the unstable analogue of the last result, which has a similar (but easier) proof.
\begin{lemma}
\label{lem:PSheafUStabFib}
Let $X\in \mathrm{Fun}_\Delta(K^\mathrm{op},\mathrm{sSet})$ be an $S$-local fibrant simplicial functor.
Under the equivalence of Lemma \ref{lem:PSheafFibEquiv}, the $S_{/X}$-local fibrant objects of $\mathrm{Fun}_\Delta (K^\mathrm{op}, \mathrm{sSet})_{/X}$ are identified with the $S_{\dslash S}$-local fibrant objects of the fibre of $\mathrm{Fun}_\Delta(K^\mathrm{op},\mathrm{sSet}_{\dslash \mathrm{sSet}})\to \mathrm{Fun}_\Delta (K^\mathrm{op},\mathrm{sSet})$ over $X$.
\end{lemma}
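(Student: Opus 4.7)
The plan is to adapt the strategy of Lemma \ref{lem:PSheafStabFib} \emph{mutatis mutandis}, simplified by removing the stabilisation step. Throughout I use the horn characterisation of local fibrant objects (\cite[Proposition 4.2.4]{hirschhorn_model_2003}): a projectively fibrant object $m$ of a simplicial model category is $S'$-local if and only if the terminal map out of $m$ has the right lifting property with respect to the set of $S'$-horns $\Lambda(S')$.

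First, I would unpack the structure of $S_{\dslash S}$. For each $f\colon K\to L$ in $S$, the morphism $\big((\mathtt{0})_{+(\mathtt{0})}\big)(f)\colon (K, K_{+K})\to (L, L_{+L})$ covers $f$ under the base projection $p\colon \mathrm{Fun}_\Delta(K^\mathrm{op},\mathrm{sSet}_{\dslash\mathrm{sSet}})\to \mathrm{Fun}_\Delta(K^\mathrm{op},\mathrm{sSet})$. Since $p$ is simplicial and left Quillen, the associated $S_{\dslash S}$-horns project to $S$-horns. Thus for a projectively fibrant $(X,Y)$ sitting in the fibre over $X$, any lifting problem against an $S_{\dslash S}$-horn projects down to a lifting problem for an $S$-horn with codomain $X$; by the standing hypothesis that $X$ is $S$-local fibrant, the latter admits a solution, and one may fix such a base lift once and for all.

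Next, I would convert the remaining fibre-level lifting problem into a slice-category lifting problem. The key tool is the free-forgetful adjunction $\big((-)_{+(-)} \dashv u\big)\colon \mathrm{Fun}_\Delta(K^\mathrm{op},\mathrm{sSet})_{/X}\to \mathrm{Fun}_\Delta(K^\mathrm{op},\mathrm{sSet})_{\dslash X}$, which is simplicial. Given a chosen base lift $\sigma$, a map of retractive presheaves $K'_{+K'}\to Y$ over $\sigma\colon K'\to X$ corresponds (by adjunction) to a map $K'\to Y$ in $\mathrm{Fun}_\Delta(K^\mathrm{op},\mathrm{sSet})_{/X}$, where the structure over $X$ comes from composing with $Y\to X$. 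Applied termwise to the domains and codomains of the horns, this identifies lifting problems for $\Lambda(S_{\dslash S})$ in the fibre over $X$ (after fixing base lifts) with lifting problems for $\Lambda(S_{/X})$ in the slice $\mathrm{Fun}_\Delta(K^\mathrm{op},\mathrm{sSet})_{/X}$.

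Combining these two steps, the right lifting property against $\Lambda(S_{\dslash S})$ for the terminal map out of $(X,Y)$ is equivalent to the right lifting property against $\Lambda(S_{/X})$ for the terminal map of $Y\to X$ in the slice. Invoking the horn characterisation in reverse yields the claimed identification of local fibrant objects. The only real obstacle is the careful bookkeeping between horns built out of pushout-products with $\partial\Delta^n\hookrightarrow \Delta^n$ in the two settings; this is straightforward once one notes that both the base projection $p$ and the free section functor $(-)_{+(-)}$ preserve simplicial tensors, so pushout-products commute with them up to canonical isomorphism.
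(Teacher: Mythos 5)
Your overall plan (horn characterisation plus transposition along simplicial left adjoints, with the hypothesis on $X$ entering through the base) is in the spirit of the paper's proof of the stable analogue, but the central identification in your third paragraph has a genuine gap. The horns $\Lambda(S_{\dslash S})$ are formed with the \emph{basewise} tensoring of $\mathrm{Fun}_\Delta(K^\mathrm{op},\mathrm{sSet}_{\dslash\mathrm{sSet}})$, so after you solve the base problem the remaining extension problem asks for a map $\Delta^n\times L\to uY$ lying over an \emph{arbitrary} base lift $\beta\colon \Delta^n\times L\to X$ extending the given map on $\Delta^n\times K\cup_{\partial\Delta^n\times K}\partial\Delta^n\times L$. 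By contrast, the horns $\Lambda(S_{/X})$ are formed with the slice tensoring, for which the structure map of $\Delta^n\otimes(L\to X)$ is $\sigma_L\circ\mathrm{pr}_L$, i.e.\ factors through the projection to $L$. These two classes of problems do not match: the problem you obtain after \lq\lq fixing a base lift'' is in general not an instance of a $\Lambda(S_{/X})$-horn (its structure maps need not factor through the projections), and in the other direction a global lift of the problem associated to a $\Lambda(S_{/X})$-horn has a base component that need not equal the prescribed product-form structure map, so it does not solve the slice problem. Preservation of simplicial tensors by $p$ and the free-section functors does not repair this; the discrepancy is exactly the difference between absolute and fibrewise locality, which cannot be settled by horn bookkeeping alone.

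What the transposition honestly gives is the following: since $(\mathtt{0})_{+(\mathtt{0})}$ is a simplicial left adjoint whose right adjoint sends $(X,Y)$ to the total presheaf $uY$, the right lifting property against $\Lambda(S_{\dslash S})$ for projectively fibrant $(X,Y)$ in the fibre over $X$ is equivalent to $uY$ being $S$-local. The substantive remaining step---and the place where the hypothesis that $X$ is $S$-local fibrant is genuinely used, not merely to produce base lifts---is to show that for an objectwise fibration $Y\to X$ over an $S$-local fibrant $X$, $S$-locality of $uY$ is equivalent to $S_{/X}$-locality of $Y$. This follows by comparing, for each $f\colon K\to L$ in $S$ equipped with structure maps over $X$, the fibre sequences of homotopy function complexes $\mathrm{map}_{/X}(L,Y)\to\mathrm{map}(L,uY)\to\mathrm{map}(L,X)$ and $\mathrm{map}_{/X}(K,Y)\to\mathrm{map}(K,uY)\to\mathrm{map}(K,X)$: the map of bases is a weak equivalence because $X$ is $S$-local, so the maps on total spaces are equivalences if and only if the maps on fibres over all compatible basepoints are (five lemma, with the usual care about components). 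Your proposal as written omits this comparison, and without it the asserted equivalence of lifting properties does not hold.
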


We have thus far obtained a relative Quillen adjunction of localised simplicial model categories:
\[
\begin{tikzcd}
  L_{TS} \mathrm{Fun}_{\Delta}(K^\mathrm{op},\mathrm{Sp}^\mathbb{N}_\mathrm{sSet})
  \ar[r, shift left=1.1ex, "{\widetilde{\Omega}_-^{\infty}}", "{\top}"']
  \ar[r, leftarrow, shift left=-1.1ex,"{\Sigma^{\infty}_-}"']
  \ar[dr, bend right=15] &
  L_{S_{\dslash S}}\mathrm{Fun}_{\Delta}(K^\mathrm{op},\mathrm{sSet}_{\dslash\mathrm{sSet}})
  \ar[d]
  \\
  &
  L_S \mathrm{Fun}_\Delta(K^\mathrm{op},\mathrm{sSet})\,.
\end{tikzcd}
\]
By Lemmas \ref{lem:PSheafFibEquiv}, \ref{lem:PSheafStabFib}, and \ref{lem:PSheafUStabFib} (together with the recognition principle for simplicial Quillen adjunctions \cite[Corollary A.3.7.2]{lurie_higher_2009}) imply that the adjunction on fibres over $X\colon K^\mathrm{op}\to \mathrm{sSet}$ is presented by the simplicial Quillen adjunction
\[
\begin{tikzcd}
   L_{\mathrm{Sp}(S)_{/X}}\mathrm{Sp}^\mathbb{N}\big(\mathrm{Fun}_{\Delta}(K^\mathrm{op},\mathrm{sSet})_{/X}\big)
  \ar[rr, "{\widetilde{\Omega}_X^{\infty}}", "{\top}"', shift left =1.1ex]
  \ar[rr, leftarrow, "{\Sigma^{\infty}_X}"', shift left=-1.1ex]
  &&
  L_{S_{\dslash X}}\mathrm{Fun}_{\Delta}(K^\mathrm{op},\mathrm{sSet})_{\dslash X}
\end{tikzcd}
\]
exhibiting $L_{\mathrm{Sp}(S)_{/X}}\mathrm{Sp}^\mathbb{N}\big(\mathrm{Fun}_{\Delta}(K^\mathrm{op},\mathrm{sSet})_{/X}\big)$ as the stabilisation of $L_{S_{\dslash X}}\mathrm{Fun}_{\Delta}(K^\mathrm{op},\mathrm{sSet})_{\dslash X}$.

Recalling that $\mathcal{C}\cong L_S \mathrm{Fun}_\Delta (K^\mathrm{op},\mathrm{sSet})^\infty$, for a cofibrant-fibrant object $X\in L_S \mathrm{Fun}_\Delta(K^\mathrm{op}, \mathrm{sSet})$ an easy argument shows that
\[
\mathcal{C}_{\dslash X}  \cong L_{S_{\dslash X}}\mathrm{Fun}_\Delta (K^\mathrm{op}, \mathrm{sSet})_{\dslash X}^\infty\,,
\]
and hence that the restriction of the adjunction
\[
\begin{tikzcd}
   L_{TS} \mathrm{Fun}_{\Delta}(K^\mathrm{op},\mathrm{Sp}^\mathbb{N}_\mathrm{sSet})^\infty
  \ar[rr, "{\Omega_-^{\infty}}", "{\top}"', shift left =1.1ex]
  \ar[rr, leftarrow, "{\Sigma^{\infty}_-}"', shift left=-1.1ex]
  &&
  L_{S_{\dslash S}}\mathrm{Fun}_{\Delta}(K^\mathrm{op},\mathrm{sSet}_{\dslash\mathrm{sSet}})^\infty
\end{tikzcd}
\]
to fibres over $X$ is equivalent to the stabilisation $(\Sigma^\infty_X\dashv \Omega^\infty_X)\colon \mathcal{C}_{\dslash X}\to \mathrm{Sp}(\mathcal{C}_{\dslash X})\cong \mathrm{Sp}(\mathcal{C}_{/X})$.
From this it follows that the simplicial left Quillen functor  $p\colon L_{TS}\mathrm{Fun}_\Delta(K^\mathrm{op}, \mathrm{Sp}^\mathbb{N}_\mathrm{sSet})\to L_S \mathrm{Fun}_\Delta(K^\mathrm{op},\mathrm{sSet})$ presents the tangent $\infty$-bundle $T\mathcal{C}\to \mathcal{C}$.
In particular, $T\mathcal{C}$ is presented in terms of simplicial model categories as the left Bousfield localisation
\[
\begin{tikzcd}
  L_{TS} \mathrm{Fun}_\Delta(K^\mathrm{op}, \mathrm{Sp}^\mathbb{N}_\mathrm{sSet})
  \ar[rr, leftarrow, shift left =1.1ex]
  \ar[rr, hookrightarrow, shift left=-1.1ex, "{\bot}"]
  &&
 \mathrm{Fun}_\Delta(K^\mathrm{op}, \mathrm{Sp}^\mathbb{N}_\mathrm{sSet})\,.
\end{tikzcd}
\]
This completes the proof of Theorem \ref{thm:Tangent}.\qed
\begin{remark}
\label{rem:TCatSymorSeq}
Any presentable $\infty$-category $\mathcal{C}$ has finite limits and so determines a Cartesian symmetric monoidal $\infty$-category $\mathcal{C}^\times$.
This in turn induces the structure of a  symmetric monoidal $\infty$-category $T\mathcal{C}^\otimes$  on $T\mathcal{C}$ as well a symmetric monoidal functor $p^\otimes \colon T\mathcal{C}^\otimes\to \mathcal{C}^\times$ \cite{lurie_higher_2017}.
Generalising the case for spaces, the symmetric monoidal structure on $T\mathcal{C}^\otimes$ encodes external smash products of parametrised spectrum objects in $\mathcal{C}$.

Throughout this section we have been using sequential spectra to model $T\mathcal{S}$, but the same arguments carry over \emph{mutatis mutandis} with $\mathrm{Sp}^\Sigma_\mathrm{sSet}$ in place of $\mathrm{Sp}^\mathbb{N}_\mathrm{sSet}$.
In favourable situations, working with symmetric spectra enables us to present $p^\otimes \colon T\mathcal{C}^\otimes\to \mathcal{C}^\times$ in terms of symmetric monoidal model categories, for example in Theorem \ref{thm:SmoothESmash} below.
\end{remark}

\subsection{Twisted differential cohomology}
\label{SS:TwistDiffCoh}
In this brief epilogue, we apply our results to the smooth $\infty$-topos $\mathbf{H}$ to obtain model categories for twisted differential cohomology.
The objects of $\mathbf{H}$ are the \emph{smooth $\infty$-stacks}; homotopical presheaves of spaces
\[
\mathrm{CartSp}^\mathrm{op}\longrightarrow \mathcal{S}
\]
on the category $\mathrm{CartSp}$ of smooth Cartesian spaces $\mathbb{R}^n$ ($n\geq 0$) satisfying homotopical descent with respect to good open covers.
The $\infty$-topos $\mathbf{H}$ combines differential geometry with homotopy theory, providing a robust framework for handling many of the higher structures of quantum field theory and string theory \cite{schreiber_differential_2017}.
Smooth $\infty$-stacks are naturally interpreted as functors-of-points of \emph{smooth $\infty$-groupoids}; classifying objects for higher structures on manifolds.
In the same vein, the objects of $\mathrm{Sp}(\mathbf{H})$ are \emph{smooth spectra} representing differential cohomology theories on manifolds (\cite{bunke_differential_2016}, based on earlier observations of Schreiber \cite{schreiber_differential_2017}).
Subsuming both of these, the tangent $\infty$-category $T\mathbf{H}$ is the $\infty$-category of \emph{smooth parametrised spectra}, representing \emph{twisted} differential cohomology (see Definition \ref{defn:TwistDiffCoh} below).

\begin{construction}[The smooth $\infty$-topos]
For a Cartesian space $U\cong \mathbb{R}^n$ and good open cover $\mathcal{U}= \{U_i \to U\}_{i\in \mathcal{I}}$, we write $U_{i_0\dotsc i_k}:=U_{i_0}\cap \dotsb \cap U_{i_k}$ for $i_0, \dotsc, i_k \in \mathcal{I}$.
By the goodness of $\mathcal{U}$, each $U_{i_0\dotsc i_k}$ is either empty or a Cartesian space diffeomorphic to $U$. 
Let $\underline{U}$ denote the image of $U$ under the Yoneda embedding, then the \emph{\v{C}ech nerve of $\mathcal{U}$} is the simplicial presheaf
\[
\text{\v{C}}_\mathcal{U} \colon [k]\longmapsto \coprod_{i_0, \dotsc, i_k} \underline{U}_{i_0\dotsc i_k}\,,
\]
with simplicial structure maps induced by inclusions and coprojections.
The inclusions $U_i \to U$ induce a map of simplicial presheaves $\text{\v{c}}_\mathcal{U}\colon \text{\v{C}}_\mathcal{U}\to \underline{U}$ and we write
\[
\check{C} :=\big\{\text{\v{c}}_\mathcal{U}\colon \text{\v{C}}_\mathcal{U}\to \underline{U}\,\big|\, \text{$\mathcal{U}$ is a good open cover of $U\in \mathrm{CartSp}$} \big\} \subset \mathrm{Mor}\big(\mathrm{Fun}(\mathrm{CartSp}^\mathrm{op},\mathrm{sSet})\big)
\]
for the set of all such maps.

A model categorical presentation for the smooth $\infty$-topos $\mathbf{H}$ is given as by the left Bousfield localisation
\[
L_{\check{C}} \mathrm{Fun}(\mathrm{CartSp}^\mathrm{op},\mathrm{sSet})
\]
of the projective model structure on simplicial presheaves at the set of \v{C}ech nerves. 
The fibrant objects for the \v{C}ech-local model structure, namely the \emph{smooth $\infty$-stacks}, are precisely the simplicial presheaves $X\colon \mathrm{CartSp}^\mathrm{op}\to \mathrm{sSet}$ such that
\begin{itemize}
  \item $X$ takes values in Kan complexes; and
  \item $X$ satisfies \v{C}ech descent---for any Cartesian space $U$ and good open cover $\mathcal{U}=\{U_i \to U\}_{i\in \mathcal{I}}$, the canonical morphism
  \[
  X(U) 
  \longrightarrow 
  \mathrm{holim}
  \left(
    \!
     \begin{tikzcd}
  \displaystyle\prod_{i} X(U_i)
  \ar[r, shift left= 0.8ex]
  \ar[r, shift left= -0.8ex]
  &
  \displaystyle\prod_{i,j}X(U_{ij})
  \ar[r]
  \ar[r, shift left= 1.6ex]
  \ar[r, shift left= -1.6ex]
  &
  \;\dotsb
  \end{tikzcd}
    \!
  \right)
  \]
  is a weak equivalence.
\end{itemize}
The projectively cofibrant objects are generally more difficult to characterise, however degreewise coproducts of representables whose degenerate cells split off are projectively cofibrant \cite[Corollary 9.4]{dugger_universal_2001}.
In particular for a (paracompact) manifold $M$, which defines a simplicial presheaf via the external Yoneda embedding
$
\underline{M}\colon U\mapsto\mathrm{Map}(U,M)
$,
and any good open cover $\mathcal{V}$ of $M$,
the natural map of simplicial presheaves $\text{\v{C}}_\mathcal{V}\to \underline{M}$ exhibits the \v{C}ech nerve $\text{\v{C}}_\mathcal{V}$ as a cofibrant resolution of $\underline{M}$ with respect to the \v{C}ech-local model structure.
\end{construction}

\begin{theorem}
\label{thm:SmoothTangentooTopos}
The smooth tangent $\infty$-bundle $T\mathbf{H}\to \mathbf{H}$ is presented by the simplicial Quillen functor
\[
p\colon
L_{T\check{C}}\mathrm{Fun}(\mathrm{CartSp}^\mathrm{op}, \mathrm{Sp}^\mathbb{N}_\mathrm{sSet})
\longrightarrow 
L_{\check{C}} \mathrm{Fun}(\mathrm{CartSp}^\mathrm{op},\mathrm{sSet})\,.
\]
\end{theorem}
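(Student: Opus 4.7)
The plan is to derive this result as a direct specialisation of Theorem \ref{thm:Tangent} to the presentable $\infty$-category $\mathcal{C}=\mathbf{H}$, with the particular presentation $K=\mathrm{CartSp}$ (regarded as a discretely enriched simplicial category, so that $\mathrm{Fun}_\Delta(K^\mathrm{op},-)$ coincides with the ordinary functor category $\mathrm{Fun}(K^\mathrm{op},-)$) and $S=\check{C}$. By the definition of $\mathbf{H}$ given in the construction preceding the theorem, we have $\mathbf{H}\cong L_{\check{C}}\mathrm{Fun}(\mathrm{CartSp}^\mathrm{op},\mathrm{sSet})^\infty$, so the first hypothesis of Theorem \ref{thm:Tangent} is fulfilled.

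The second thing to verify is the cofibrancy assumption in the proof of Theorem \ref{thm:Tangent}, namely that the domains and codomains of all morphisms in $\check{C}$ are cofibrant for the projective model structure on simplicial presheaves. The codomains $\underline{U}$ are representables and therefore projectively cofibrant. The \v{C}ech nerves $\check{C}_\mathcal{U}$ are, in each simplicial degree, coproducts of representables (the nonempty intersections $\underline{U}_{i_0\cdots i_k}$), and the degenerate cells split off as direct summands because $\mathrm{CartSp}$ is a 1-category with the discrete simplicial enrichment. Dugger's criterion \cite[Corollary 9.4]{dugger_universal_2001} then guarantees projective cofibrancy.

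With these hypotheses checked, the construction in the proof of Theorem \ref{thm:Tangent} applies verbatim: the base projection functor $p\colon L_{TS}\mathrm{Fun}_\Delta(K^\mathrm{op},\mathrm{Sp}^\mathbb{N}_\mathrm{sSet})\to L_S \mathrm{Fun}_\Delta(K^\mathrm{op},\mathrm{sSet})$ is a simplicial Quillen functor (by Lemmas \ref{lem:PSheafStabFib} and \ref{lem:PSheafUStabFib}) that, on passage to underlying $\infty$-categories, presents the tangent $\infty$-bundle. Substituting $K=\mathrm{CartSp}$ and $S=\check{C}$ and unpacking Definition \ref{defn:ProlongLoc}, the tangent prolongation $TS$ is precisely the set $T\check{C}$ implicit in the statement of the theorem, so the displayed Quillen functor presents $T\mathbf{H}\to \mathbf{H}$.

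Since this is purely a specialisation of a result already established, no new obstacle is expected; the only substantive point is the cofibrancy check on the \v{C}ech nerves, which is a standard application of Dugger's criterion and the goodness of the covers. One may additionally remark that the same argument, applied to the symmetric variant noted in Remark \ref{rem:TCatSymorSeq}, gives the analogous presentation using $\mathrm{Sp}^\Sigma_\mathrm{sSet}$ in place of $\mathrm{Sp}^\mathbb{N}_\mathrm{sSet}$, which is what is required subsequently to equip the smooth tangent $\infty$-category with its external smash product in Theorem \ref{thm:SmoothESmash}.
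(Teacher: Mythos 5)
Your proposal is correct and takes essentially the same route as the paper, whose proof of this theorem simply invokes the proof of Theorem \ref{thm:Tangent} with $K=\mathrm{CartSp}$ and $S=\check{C}$. Your added verification that the \v{C}ech nerves and representables are projectively cofibrant (via Dugger's criterion) is a worthwhile explicit check of the standing cofibrancy hypothesis, and it matches the remarks already made in the paper's construction of the smooth $\infty$-topos.
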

\begin{proof}
The follows from the proof of Theorem \ref{thm:Tangent}.
\end{proof}
\begin{definition}
The fibrant objects for the $T\check{C}$-local model structure are called \emph{smooth parametrised spectra}.
For a smooth parametrised spectrum $(\mathcal{X},\mathcal{A})\colon \mathrm{CartSp}^\mathrm{op}\to \mathrm{Sp}^\mathbb{N}_\mathrm{sSet}$, its  \emph{moduli $\infty$-stack of twists} is the smooth $\infty$-stack $p(\mathcal{X},\mathcal{A}) =\mathcal{X}$.
\end{definition}
By a straightforward adjointness argument, we find that the smooth parametrised spectra are precisely the functors $(\mathcal{X},\mathcal{A})\colon \mathrm{CartSp}^\mathrm{op}\to \mathrm{Sp}^\mathbb{N}_\mathrm{sSet}$ for which
\begin{itemize}
  \item $(\mathcal{X}(U), \mathcal{A}(U))\in \mathrm{Sp}^\mathbb{N}_\mathrm{sSet}$ is fibrant for each Cartesian space $U$; and
  \item for each good open cover $\mathcal{U}=\{U_i\hookrightarrow U\}$  of a Cartesian space $U$ and for each $n\geq 0$, the induced maps of spaces
  \[
\mathcal{X}(U)\longrightarrow  
\mathrm{holim}
  \left(
    \!
     \begin{tikzcd}
  \displaystyle\prod_{i} \mathcal{X}(U_i)
  \ar[r, shift left= 0.8ex]
  \ar[r, shift left= -0.8ex]
  &
  \displaystyle\prod_{i,j}\mathcal{X}(U_{ij})
  \ar[r]
  \ar[r, shift left= 1.6ex]
  \ar[r, shift left= -1.6ex]
  &
  \;\dotsb
  \end{tikzcd}
    \!
  \right)
\]
and
\[
\mathcal{A}_n(U)\longrightarrow
\mathrm{holim}
  \left(
  \!
   \begin{tikzcd}
  \displaystyle\prod_{i} \mathcal{A}_n(U_i)
  \ar[r, shift left= 0.8ex]
  \ar[r, shift left= -0.8ex]
  &
  \displaystyle\prod_{i,j}\mathcal{A}_n(U_{ij})
  \ar[r]
  \ar[r, shift left= 1.6ex]
  \ar[r, shift left= -1.6ex]
  &
  \;\dotsb
  \end{tikzcd}
  \!
  \right)
\]
are weak equivalences.
\end{itemize}
Fix a manifold $M$ and a smooth parametrised spectrum $(\mathcal{X},\mathcal{A})$.
Choosing a good open cover $\mathcal{U}= \{U_i \hookrightarrow M\}_{i\in \mathcal{I}}$, we set
\[
\mathcal{X}(M) :=
\mathrm{holim}
  \left(\!
 \begin{tikzcd}
  \displaystyle\prod_{i} \mathcal{X}(U_i)
  \ar[r, shift left= 0.8ex]
  \ar[r, shift left= -0.8ex]
  &
  \displaystyle\prod_{i,j}\mathcal{X}(U_{ij})
  \ar[r]
  \ar[r, shift left= 1.6ex]
  \ar[r, shift left= -1.6ex]
  &
  \;\dotsb
  \end{tikzcd}
  \!
  \right)
\]
and for $n\geq 0$
\[
\mathcal{A}_n(M):=
\mathrm{holim}
  \left(
  \!
   \begin{tikzcd}
  \displaystyle\prod_{i} \mathcal{A}_n(U_i)
  \ar[r, shift left= 0.8ex]
  \ar[r, shift left= -0.8ex]
  &
  \displaystyle\prod_{i,j}\mathcal{A}_n(U_{ij})
  \ar[r]
  \ar[r, shift left= 1.6ex]
  \ar[r, shift left= -1.6ex]
  &
  \;\dotsb
  \end{tikzcd}
    \!
  \right)\,.
\]
The spaces $\mathcal{X}(M)$ and $\mathcal{A}_n(M)$ are independent of the choice of good open cover $\mathcal{U}$ up to weak equivalence;
each $\mathcal{A}_n(M)$ is a retractive space over $\mathcal{X}(M)$ and together they assemble into a fibrant $\Omega_{\mathcal{X}(M)}$-spectrum.
\begin{remark}
For a smooth manifold $M$ and smooth $\infty$-stack $\mathcal{X}$, the space $\mathcal{X}(M)$ defined above is weakly equivalent to the $\infty$-categorical hom-space $\mathbf{H}(\underline{M}, \mathcal{X})$.
By definition, the homotopy groups $\pi_\ast \mathbf{H}(\underline{M}, \mathcal{X})$ compute the non-abelian differential $\mathcal{X}$-cohomology of $M$.
\end{remark}
\begin{definition}
\label{defn:TwistDiffCoh}
For a manifold $M$ and smooth parametrised spectrum $(\mathcal{X},\mathcal{A})$, fix an element $\tau\colon \ast \to \mathcal{X}(M)$ (equivalently a map of smooth $\infty$-stacks $\tau\colon \underline{M}\to \mathcal{X}$).
The abelian groups
\[
\mathcal{A}^{\tau + \bullet}(M) := \pi^\mathrm{st}_{-\bullet} \big(\tau^\ast \mathcal{A}(M)\big)
\]
are the \emph{$\tau$-twisted differential $\mathcal{A}$-cohomology groups of $M$}.
\end{definition}
\begin{remark}
$\tau^\ast\mathcal{A}$ is a fibrant $\Omega$-spectrum, so 
$\mathcal{A}^{\tau+n} (M) \cong \pi_{k-n} (\tau^\ast \mathcal{A}_{k}(M))$
for  $\min\{k-n,k\} \geq 0$.
\end{remark}

\begin{lemma}
\label{lem:ChangeofTwist}
Suppose that $\tau, \xi\colon M\to \mathcal{X}$ represent the same class in non-abelian differential cohomology, that is $[\tau]= [\xi]\in \pi_0\mathbf{H}(\underline{M}, \mathcal{X})$.
Then there is a (non-canonical) isomorphism
\[
\mathcal{A}^{\tau+\bullet}(M) \cong \mathcal{A}^{\xi+\bullet}(M)
\]
of twisted differential $\mathcal{A}$-cohomology groups.
\end{lemma}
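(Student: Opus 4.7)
The plan is to realise the equality $[\tau]=[\xi]$ concretely as a $1$-simplex in the Kan complex $\mathcal{X}(M)$, pull $\mathcal{A}(M)$ back along this simplex, and then compare the two endpoint fibre spectra using the base change machinery of Theorem \ref{thm:SeqSpecStructureTheorem}. Since $\mathcal{X}(M)$ is a Kan complex and $\tau,\xi$ lie in the same path component, there is a $1$-simplex $\gamma\colon \Delta^1\to \mathcal{X}(M)$ with $\gamma\circ i_0=\tau$ and $\gamma\circ i_1=\xi$, where $i_0,i_1\colon \ast \to \Delta^1$ are the vertex inclusions. The whole argument then reduces to checking that the two endpoint pullbacks of $\gamma^*\mathcal{A}(M)$ are stably equivalent as (unparametrised) spectra.

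First I would verify that $\gamma^*\mathcal{A}(M)\in \mathrm{Sp}^\mathbb{N}_{\Delta^1}$ is again a fibrant $\Omega_{\Delta^1}$-spectrum: each level $\gamma^*\mathcal{A}(M)_n \to \Delta^1$ is a Kan fibration as a pullback of a fibration, and by Lemma \ref{lem:PBisStonglyClosed} the functor $\gamma^*$ is strongly closed monoidal, so it commutes with the loop space functors and preserves the condition that the adjoint structure maps are weak equivalences. Next, the terminal map $\pi\colon \Delta^1 \to \ast$ is an acyclic Kan fibration, so by Theorem \ref{thm:SeqSpecStructureTheorem}(iii) the adjunction $(\pi^*\dashv \pi_*)$ is a Quillen equivalence between $\mathrm{Sp}^\mathbb{N}$ and $\mathrm{Sp}^\mathbb{N}_{\Delta^1}$. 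Setting $P:=\pi_*\gamma^*\mathcal{A}(M)\in \mathrm{Sp}^\mathbb{N}$, the derived counit $\pi^* P \to \gamma^*\mathcal{A}(M)$ is therefore a stable weak equivalence between stably fibrant objects of $\mathrm{Sp}^\mathbb{N}_{\Delta^1}$.

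The final step is to apply $(i_j)^*$ for $j=0,1$. Each $i_j$ is a weak equivalence (both $\ast$ and $\Delta^1$ are weakly contractible), so by Theorem \ref{thm:SeqSpecStructureTheorem}(i) the adjunction $((i_j)_!\dashv (i_j)^*)$ is a Quillen equivalence; in particular the right Quillen functor $(i_j)^*$ preserves stable weak equivalences between stably fibrant objects. Using $\pi\circ i_j=\mathrm{id}_\ast$, applying $(i_j)^*$ to the counit equivalence produces stable weak equivalences
\[
P\;\xrightarrow{\;\sim\;}\;(i_0)^*\gamma^*\mathcal{A}(M)=\tau^*\mathcal{A}(M)
\quad\text{and}\quad
P\;\xrightarrow{\;\sim\;}\;(i_1)^*\gamma^*\mathcal{A}(M)=\xi^*\mathcal{A}(M),
\]
yielding a zig-zag of stable equivalences between $\tau^*\mathcal{A}(M)$ and $\xi^*\mathcal{A}(M)$ and hence the desired isomorphism $\mathcal{A}^{\tau+\bullet}(M)\cong \mathcal{A}^{\xi+\bullet}(M)$ on stable homotopy groups. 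The non-canonicity of this isomorphism reflects the choice of the path $\gamma$ representing the equality $[\tau]=[\xi]$.

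The argument is a formal manipulation of base change adjunctions; the only point needing real care is the verification that $\gamma^*\mathcal{A}(M)$ is stably fibrant \emph{before} $(i_j)^*$ is applied, so that the right Quillen property of $(i_j)^*$ supplied by Theorem \ref{thm:SeqSpecStructureTheorem}(i) can be exploited without first taking a fibrant replacement (which would disrupt the symmetric treatment of the two endpoints). Once this is in hand, there is no further obstacle.
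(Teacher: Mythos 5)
Your overall strategy is the same as the paper's: realise $[\tau]=[\xi]$ by a $1$-simplex $\gamma\colon\Delta^1\to\mathcal{X}(M)$, pull $\mathcal{A}(M)$ back to a fibrant $\Omega_{\Delta^1}$-spectrum, and compare the two vertex fibres by base change along $i_0,i_1\colon\ast\to\Delta^1$. The endpoint analysis is fine: $(i_j)^\ast$ is right Quillen for the Quillen equivalence $((i_j)_!\dashv(i_j)^\ast)$, hence preserves stable weak equivalences between stably fibrant objects, and $(i_j)^\ast\pi^\ast\cong\mathrm{id}$, $(i_j)^\ast\gamma^\ast\mathcal{A}(M)\cong\tau^\ast\mathcal{A}(M)$ resp.\ $\xi^\ast\mathcal{A}(M)$.

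The gap is in the middle step. The terminal map $\pi\colon\Delta^1\to\ast$ is \emph{not} an acyclic Kan fibration: $\Delta^1$ is not a Kan complex (its $1$-simplices are order-preserving maps $[1]\to[1]$, so for instance the outer horn $\Lambda^2_0\to\Delta^1$ sending the $\{0,1\}$-edge to the nondegenerate edge and the $\{0,2\}$-edge to the degenerate edge at $0$ has no filler), so a map to the point with domain $\Delta^1$ is not a fibration. Hence the clause of Theorem \ref{thm:SeqSpecStructureTheorem} you cite (that $(\pi^\ast\dashv\pi_\ast)$ is a Quillen equivalence when $\pi$ is an acyclic fibration) does not apply, and your use of $P=\pi_\ast\gamma^\ast\mathcal{A}(M)$ as the common fibre is unjustified as written. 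The step can be repaired: $\pi$ is a projection to a factor of a product and a weak equivalence, so $(\pi^\ast\dashv\pi_\ast)$ is Quillen and $\pi^\ast$ preserves \emph{all} stable weak equivalences; this gives $\mathbf{L}\pi^\ast\cong\mathbf{R}\pi^\ast$ (and also legitimises your underived counit $\pi^\ast P\to\gamma^\ast\mathcal{A}(M)$ standing in for the derived one without cofibrantly replacing $P$), and since $(\pi_!\dashv\pi^\ast)$ is a Quillen equivalence because $\pi$ is a weak equivalence, uniqueness of adjoints yields $\mathbf{R}\pi_\ast\cong\mathbf{L}\pi_!$ and hence that $(\pi^\ast\dashv\pi_\ast)$ is a Quillen equivalence --- exactly the argument used in the proof of Theorem \ref{thm:SeqSpecStructureTheorem} for acyclic fibrations, which goes through verbatim for weakly contractible projections. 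Alternatively, avoid $\pi_\ast$ altogether as the paper does: apply $\pi_!$, which preserves all stable weak equivalences by Lemma \ref{lem:SpecPseudoFunAreProper}, to the $((i_0)_!\dashv i_0^\ast)$-counit $(i_0)_!\tau^\ast\mathcal{A}(M)\to\gamma^\ast\mathcal{A}(M)$, take a fibrant replacement of $\pi_!\gamma^\ast\mathcal{A}(M)$, and compare with $\gamma^\ast\mathcal{A}(M)$ via the unit before restricting to the vertex $1$.
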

\begin{proof}
Since $X(M) \cong \mathbf{H}(\underline{M},\mathcal{X})$ is a Kan complex, there is a $1$-simplex $\sigma\colon \Delta^1 \to \mathbf{H}(\underline{M}, \mathcal{X})$ restricting to $\tau$ and $\xi$ at the vertices $0$ and $1$ respectively.
Writing $i_0, i_1\colon \ast \to \Delta^1$ for the vertex inclusions and $p\colon \Delta^1 \to \ast$ for the terminal map, by Theorem \ref{thm:SeqSpecStructureTheorem} we have Quillen equivalences 
\[
\begin{tikzcd}
\mathrm{Sp}^\mathbb{N}
\ar[rr, shift left=1.1ex, "(i_{0/1})_!"]
\ar[rr, leftarrow, shift left=-1.1ex, "\bot", "(i_{0/1})^\ast"']
&&
\mathrm{Sp}^\mathbb{N}_{\Delta^1}
\end{tikzcd}
  \qquad
  \mbox{ and }
  \qquad
\begin{tikzcd}
\mathrm{Sp}^\mathbb{N}
\ar[rr, leftarrow, shift left=1.1ex, "p_!"]
\ar[rr, shift left=-1.1ex, "\bot", "p^\ast"']
&&
\mathrm{Sp}^\mathbb{N}_{\Delta^1}
\end{tikzcd}
\]
in which all left adjoints preserve stable weak equivalences (by Lemma \ref{lem:SpecPseudoFunAreProper}).
The $((i_0)_!\dashv i_0^\ast)$-counit
\[
(i_0)_! \tau^\ast \mathcal{A}\cong (i_0)_! i_0^\ast \sigma^\ast \mathcal{A}\longrightarrow \sigma^\ast \mathcal{A}
\]
is a stable weak equivalence of sequential $\Delta^1$-spectra, and applying $p_!$ gives a stable weak equivalence $\tau^\ast \mathcal{A}\to p_! \sigma^\ast \mathcal{A}$.
Choosing a fibrant replacement $p_! \sigma^\ast \mathcal{A}\to (p_! \sigma^\ast \mathcal{A})^f$, there is thus a zig-zag of stable weak equivalences of fibrant $\Omega_{\Delta^1}$-spectra
\[
p^\ast \tau^\ast \mathcal{A}\xrightarrow{\;\hphantom{\text{unit}}\;} 
p^\ast (p_! \sigma^\ast \mathcal{A})^f \xleftarrow{\;\text{unit}\;}
\sigma^\ast \mathcal{A}\,.
\]
Under the pullback functor $i_1^\ast$, this becomes a zig-zag of stable weak equivalences 
\[
\tau^\ast \mathcal{A}
\longrightarrow (p_! \sigma^\ast \mathcal{A})^f
\longleftarrow
\xi^\ast \mathcal{A}
\]
of fibrant $\Omega$-spectra.
Passing to stable homotopy groups completes the proof.
\end{proof}

\begin{remark}[Descent spectral sequences]
\label{rem:DescSS}
Fix a choice of smooth parametrised spectrum $(\mathcal{X},\mathcal{A})$, a manifold $M$ with good open cover $\mathcal{U}=\{U_i\hookrightarrow M\}_{i\in \mathcal{I}}$, and a twist $\tau\colon \ast \to\mathcal{X}(M)$.
Then the cosimplicial simplicial sets
\[
 \begin{tikzcd}
  \displaystyle\prod_{i} \mathcal{X}(U_i)
  \ar[r, shift left= 0.8ex]
  \ar[r, shift left= -0.8ex]
  &
  \displaystyle\prod_{i,j}\mathcal{X}(U_{ij})
  \ar[r]
  \ar[r, shift left= 1.6ex]
  \ar[r, shift left= -1.6ex]
  &
  \;\dotsb
  \end{tikzcd}
\qquad\mbox{ and }
\qquad
 \begin{tikzcd}
  \displaystyle\prod_{i} \mathcal{A}_n(U_i)
  \ar[r, shift left= 0.8ex]
  \ar[r, shift left= -0.8ex]
  &
  \displaystyle\prod_{i,j} \mathcal{A}_n(U_{ij})
  \ar[r]
  \ar[r, shift left= 1.6ex]
  \ar[r, shift left= -1.6ex]
  &
  \;\dotsb
  \end{tikzcd}
\]
are Reedy fibrant for all $n\geq 0$.
The inclusions $U_{i_0\dotsb i_k}\hookrightarrow M$ induce a compatible family of twists $\tau_{i_0\dotsb i_k}\in \mathcal{X}(U_{i_0\dotsb i_k})$ and after taking fibres we get weak equivalences
\[
\tau^\ast \mathcal{A}_n(M)
\longrightarrow
\mathrm{holim}
  \left(
  \!
  \begin{tikzcd}
  \displaystyle\prod_{i} \tau_i^\ast \mathcal{A}_n(U_i)
  \ar[r, shift left= 0.8ex]
  \ar[r, shift left= -0.8ex]
  &
  \displaystyle\prod_{i,j} \tau^\ast_{ij}\mathcal{A}_n(U_{ij})
  \ar[r]
  \ar[r, shift left= 1.6ex]
  \ar[r, shift left= -1.6ex]
  &
  \;\dotsb
  \end{tikzcd}
  \right),
  \qquad n\geq 0\,.
\]
Reedy fibrancy implies that each of these homotopy limits is computed by the totalisation functor, so assuming that the indexing set $\mathcal{I}$ is finite there are convergent second octant Bousfield--Kan spectral sequences (see, e.g.~\cite[VIII Section 1]{goerss_simplicial_2009})
\begin{equation}
\label{eqn:DescSS}
E^{s,t}_2 \cong \check{H}^s \big(M;  \underline{\mathcal{A}}^{\tau+n-t} \big) \Longrightarrow  \mathcal{A}^{\tau+ n+s-t} (M)\,,\qquad t\geq s\geq 0,\;\; n\geq 0\,,
\end{equation}
with $E_2$-terms given by \v{C}ech cohomology of the presheaves
$
\underline{\mathcal{A}}^{\tau+n-t} \colon U \mapsto \mathcal{A}^{\tau_U +n-t}(U)
$.
\end{remark}

\begin{remark}[Cohesion]
\label{rem:Cohesion}
The $\infty$-topos $\mathbf{H}$ is \emph{cohesive}, meaning that there is a quadruple of adjoint functors
\[
(\Pi\dashv \mathrm{Disc}\dashv \Gamma \dashv \mathrm{coDisc})
\colon
\begin{tikzcd}
\mathbf{H}
\ar[rr, shift left=3.3ex]
\ar[rr, shift left=1.1ex, "\bot", hookleftarrow]
\ar[rr, shift left=-1.1ex, "\bot"]
\ar[rr, shift left=-3.3ex, "\bot", hookleftarrow]
&&
\mathcal{S}\,.
\end{tikzcd}
\]
The induced idempotent (co)modalities $\smallint:= \mathrm{Disc}\circ \Pi$, $\flat := \mathrm{Disc}\circ \Gamma$ and $\sharp := \mathrm{coDisc}\circ \Gamma$ equip $\mathbf{H}$ with a rich notion of intrinsic higher geometry (see \cite{schreiber_differential_2017} for a detailed account).
The fully-faithful functor $\mathrm{Disc}$ sends a space $K$ to the discrete $\infty$-stack $U\mapsto K$, whereas $\Pi$ and $\Gamma$ respectively compute ($\infty$-) colimits and limits of smooth $\infty$-stacks  over $\mathrm{CartSp}^\mathrm{op}$.
The site $\mathrm{CartSp}$ is cosifted (it has finite products), hence $\Gamma X \cong X(\ast)$ is the global sections functor and $\Pi$ preserves finite products.
The colimit functor $\Pi$ is interpreted as geometric realisation, and indeed for any manifold $M$ viewed as a smooth $\infty$-stack, Borsuk's nerve theorem implies a homotopy equivalence between $\Pi\underline{M}$ and the underlying topological space $uM$ of $M$.

The cohesion adjunctions for  $\mathbf{H}$ prolong to a quadruple of functors
\[
(T\Pi\dashv T\mathrm{Disc}\dashv T\Gamma \dashv T\mathrm{coDisc})
\colon
\begin{tikzcd}
T\mathbf{H}
\ar[rr, shift left=3.3ex]
\ar[rr, shift left=1.1ex, "\bot", hookleftarrow]
\ar[rr, shift left=-1.1ex, "\bot"]
\ar[rr, shift left=-3.3ex, "\bot", hookleftarrow]
&&
T\mathcal{S}
\end{tikzcd}
\]
covering the  corresponding functors for $\mathbf{H}$.
Via Theorem \ref{thm:SmoothTangentooTopos}, the functor $T\mathrm{Disc}$ is presented by the simplicial functor
\begin{align*}
\mathrm{const}\colon \mathrm{Sp}^\mathbb{N}_\mathrm{sSet}
&
\longrightarrow  
L_{T\check{C}}\mathrm{Fun}(\mathrm{CartSp}^\mathrm{op}, \mathrm{Sp}^\mathbb{N}_\mathrm{sSet})
\\
K&\longmapsto\left[U\mapsto K\right]\,,
\end{align*}
which is both left and right Quillen (with left and right adjoints given by taking, respectively, colimits and limits over $\mathrm{CartSp}^\mathrm{op}$).
Any parametrised spectrum $(X,A)\in \mathrm{Sp}^\mathbb{N}_\mathrm{sSet}$ thus presents a (discrete) twisted differential cohomology theory.
\end{remark}

\begin{example}
Let $X$ be a Kan complex and $A$ a fibrant $\Omega_X$-spectrum, so that $\mathrm{const}(X,A)$ is a smooth parametrised spectrum.
Write $uM$ for the underlying space of the manifold $M$, then the natural weak equivalences 
$
X(M) \cong \mathbf{H}(\underline{M}, \mathrm{Disc}X)\cong \mathcal{S}(\Pi M, X) \cong \mathcal{S}(uM, X)
$
imply that a twist $\tau\in X(M)$ is equivalently a map of spaces $\tau\colon uM \to X$.
Pulling back $A$, we define the local coefficient system
$
\mathcal{L}_\bullet (A, \tau)\colon m\mapsto \pi^\mathrm{st}_\bullet (\tau(m)^\ast A)
$
on $uM$.
When $M$ (hence $uM$) is compact, the descent spectral sequence \eqref{eqn:DescSS} recovers the twisted Serre spectral sequence \cite[Section 20.4]{may_parametrized_2006}
\[
E^{p,q}_2 \cong H^p\big(uM; \mathcal{L}_{-q}(A, \tau)\big)\Longrightarrow  A^{\tau +p+q}(uM)\,.
\]
\end{example}

\begin{example}[Smooth spectral $\widehat{R}$-line bundles]
\label{ex:SmoothSpecLineBun}
Working in the $\infty$-categorical setting, Bunke and Nikolaus \cite{bunke_twisted_2014} have defined a family of examples of twisted differential cohomology theories by twisting by smooth Picard $\infty$-groupoids.
We briefly recall this approach and outline how these examples define smooth parametrised spectra (that is, objects of $T\mathbf{H}$).

For a smooth spectrum $E\in \mathrm{Sp}(\mathbf{H})$ we consider the cofibre sequence $\flat E\to E\to \flat_\mathrm{dR} E$ generated by the $\flat$-counit (Remark \ref{rem:Cohesion}).
Cohesion implies that the naturality square of the $\smallint$-unit
\begin{equation}
\label{eqn:CohesionFracture}
\begin{tikzcd}
  E \ar[r]\ar[d] & \flat_\mathrm{dR} E \ar[d]
  \\
  \smallint E\ar[r] & \smallint\flat_\mathrm{dR}E
\end{tikzcd}
\end{equation}
is a pullback.
In this case, we say that $E$ is a \emph{differential refinement} of the spectrum $\Pi E$ (regarded as a discrete smooth spectrum via $\smallint E = (\mathrm{Disc}\circ \Pi) E$).
If $E$ is a (commutative) algebra object in $\mathrm{Sp}(\mathbf{H})$ then so are $\smallint E$, $\flat E$, $\flat_\mathrm{dR} E$, and \eqref{eqn:CohesionFracture} becomes a pullback diagram of (commutative) algebra objects.

For an ordinary commutative ring spectrum $R$, the Picard $\infty$-groupoid $\mathrm{Pic}_R$ of invertible $R$-module spectra is equipped with a canonical functor of $\infty$-categories
\begin{align*}
\ell_R \colon \mathrm{Pic}_R& \longrightarrow \mathrm{Sp}(\mathcal{S})\\
M &\longmapsto M\,.
\end{align*}
Under Lurie's unstraightening construction this is equivalent to a $\mathrm{Pic}_R$-parametrised spectrum $\mathrm{Line}_R$.
Fixing a space $X$, then the \emph{space of $R$-twists on $X$} is the mapping space $[X, \mathrm{Pic}_R]$.
For an $R$-twist $\tau\colon X\to \mathrm{Pic}_R$ the composite
\[
\begin{tikzcd}
  X
  \ar[r, "\tau"]
  &
  \mathrm{Pic}_R
  \ar[r, "\ell_R"]
  &
  \mathrm{Sp}(\mathcal{S})
\end{tikzcd}
\]
is equivalent to the $X$-parametrised spectrum $\tau^\ast \mathrm{Line}_R$ under unstraightening (see \cite{ando_parametrized_2018} for more details).
The $\tau$-twisted $R$-cohomology groups of $X$ are then
$R^{\tau+n}(X) := \pi^\mathrm{st}_{-n} (X_! \tau^\ast \mathrm{Line}_R)$ or, equivalently, as 
\[
R^{\tau+n}(X):= 
\pi^\mathrm{st}_{-n}\left[
\mathrm{lim}\Big(\!
\begin{tikzcd}
  X
  \ar[r,"\tau"]
  &
  \mathrm{Pic}_R
  \ar[r, "\ell_R"]
  &
  \mathrm{Sp}(\mathcal{S})
\end{tikzcd}\!
\Big)\right]\,.
\]
In mimicking this construction for twisted differential cohomology, 
Bunke--Nikolaus focus on differential refinements for which $\flat_{\mathrm{dR}}\widehat{R}$ is equivalent to a sheaf of commutative differential graded algebras (CDGAs) of the form
\[
\Omega A\colon U\longmapsto \Omega^\bullet(U) \otimes A\,,
\]
for a CDGA $A$ over $\mathbb{R}$.
CDGAs over $\mathbb{Z}$ are equivalent commutative $H\mathbb{Z}$-algebra spectra by the stable Dold--Kan correspondence $H\colon \mathrm{CDGA}_\mathbb{Z}\to \mathrm{CAlg}(H\mathbb{Z}\mathrm{-Mod})$ \cite{shipley_HZ_2007} and for \eqref{eqn:CohesionFracture} to be a pullback we must have an equivalence of commutative ring spectra $HA \cong R\wedge H\mathbb{R}$.
Bunke--Nikolaus then construct the \emph{$\infty$-stack of differential $\widehat{R}$-twists} as a pullback
\[
\begin{tikzcd}
  \mathrm{Tw}_{\widehat{R}}
  \ar[r]
  \ar[d]
  &
  \mathrm{Pic}^{1, \mathrm{fl,wlc}}_{\Omega A}
  \ar[d]
  \\
  \mathrm{Disc}\big(\mathrm{Pic}_R\big)
  \ar[r]
  &
  \mathrm{Disc}\big(\mathrm{Pic}_{HA}\big)\,,
\end{tikzcd}
\]
where $\mathrm{Pic}^{1, \mathrm{fl,wlc}}_{\Omega A}$ is the Picard $1$-stack of flat invertible $\Omega A$-modules $M$ that are moreover \emph{weakly locally constant} in the sense that for each Cartesian space $U$ and $x\in U$, there is a neighbourhood $V$ of $x$ such that $M|_V$ is weakly equivalent to a constant sheaf on $V$.
The bottom horizontal arrow in the above diagram is induced by smashing with $H\mathbb{R}$ using $HA\cong R\wedge H\mathbb{R}$; the right-hand vertical arrow is abstractly defined using the quasi-isomorphisms $\Omega^\bullet(U) \otimes A \cong A$ for each Cartesian space $U$ together with the weak local constancy condition.

For any Cartesian space $U$, there is a functor 
\[
\ell_{\Omega A(U)}\colon \mathrm{Pic}^{1,\mathrm{fl,wlc}}_{\Omega A}(U)\longrightarrow \mathrm{Sp}(\mathcal{S})
\] 
sending flat invertible $\Omega A(U)$-modules $N(U)\mapsto HN(U)$ to their underlying spectra under the stable Dold--Kan correspondence.
Taking pullbacks with $\ell_R$ and $\ell_{HA}$ we get a functor of $\infty$-categories
\[
\ell_{\widehat{R}}(U)\colon \mathrm{Tw}_{\widehat{R}}(U) \longrightarrow \mathrm{Sp}(\mathcal{S})\,,
\]
which, after unstraightening, defines a $\mathrm{Tw}_{\widehat{R}}(U)$-parametrised spectrum $\mathrm{Line}_{\widehat{R}}(U)$.
These data are natural in $U$ and together define a smooth parametrised spectrum $(\mathrm{Tw}_{\widehat{R}}, \mathrm{Line}_{\widehat{R}})\in T\mathbf{H}$.
For any manifold $M$ we thus have a $\mathrm{Tw}_{\widehat{R}}(M)$-parametrised spectrum $\mathrm{Line}_{\widehat{R}}(M)$, which becomes a functor of $\infty$-categories
\[
\ell_{\widehat{R}}(M)\colon \mathrm{Tw}_{\widehat{R}}(M) \longrightarrow \mathrm{Sp}(\mathcal{S})
\]
after straightening.
A \emph{differential $\widehat{R}$-twist} is a choice
$
\widehat{\tau}\in 
\mathrm{Tw}_{\widehat{R}}(M)\cong \mathbf{H}(\underline{M},\mathrm{Tw}_{\widehat{R}})
$; the corresponding \emph{$\widehat{\tau}$-twisted differential $\widehat{R}$-cohomology of $M$}\footnote{Beware the minor conflict of terminology with Definition \ref{defn:TwistDiffCoh}.} is then computed as the stable homotopy groups of the spectrum
$\widehat{\tau}^\ast \mathrm{Line}_{\widehat{R}}$ or, equivalently, of the spectrum
\[
\begin{tikzcd}
  \ast
  \ar[r, "\widehat{\tau}"]
  &
  \mathrm{Tw}_{\widehat{R}}(M)
  \ar[r, "\ell_{\widehat{R}}(M)"]
  &
  \mathrm{Sp}(\mathcal{S})\,.
\end{tikzcd}
\]
The former is in line with the global perspective taken in this article, whereas Bunke--Nikolaus adopt the latter approach.

In order to deal with homotopy coherence issues, the construction outlined above makes heavy use of $\infty$-categorical machinery.
Nevertheless, by Theorem \ref{thm:SmoothTangentooTopos} each of the smooth parametrised spectra $(\mathrm{Tw}_{\widehat{R}}, \mathrm{Line}_{\widehat{R}})$ is presented by a projectively fibrant $T\check{C}$-local $\mathrm{Sp}^\mathbb{N}_\mathrm{sSet}$-valued presheaf on smooth Cartesian spaces---though practical explicit models are generally difficult to get one's hands on.
Furthermore, for smooth parametrised spectra of the form $(\mathrm{Tw}_{\widehat{R}},\mathrm{Line}_{\widehat{R}})$ the descent spectral sequence of Remark \ref{rem:DescSS}, \eqref{eqn:DescSS}, coincides with the twisted Atiyah--Hirzebruch spectral sequence studied in \cite{grady_twisted_2017}.
\end{example}

Replacing $\mathrm{Sp}^\mathbb{N}_\mathrm{sSet}$ by $\mathrm{sSet}^\Sigma_{\mathrm{sSet}}$ gives another model for the tangent $\infty$-bundle $p\colon T\mathbf{H}\to \mathbf{H}$
\[
p_\Sigma\colon L_{T\check{C}}\mathrm{Fun} (\mathrm{CartSp}^\mathrm{op}, \mathrm{Sp}^\Sigma_\mathrm{sSet})\longrightarrow L_{\check{C}} \mathrm{Fun}(\mathrm{CartSp}^\mathrm{op},\mathrm{sSet})
\]
(Remark \ref{rem:TCatSymorSeq}).
The next result shows that $L_{T\check{C}}\mathrm{Fun} (\mathrm{CartSp}^\mathrm{op}, \mathrm{Sp}^\Sigma_\mathrm{sSet})$ is a simplicial symmetric monoidal model category (in the sense of \cite[Definition 4.1.7.8]{lurie_higher_2017}) with respect to the objectwise external smash product and, moreover, that $p_\Sigma$ is a symmetric monoidal simplicial Quillen functor.

\begin{theorem}
\label{thm:SmoothESmash}
The smooth tangent $\infty$-bundle $p^\otimes\colon T\mathbf{H}^\otimes \to \mathbf{H}^\times$ is presented by the symmetric monoidal simplicial Quillen functor
\[
p_\Sigma\colon L_{T\check{C}}\mathrm{Fun} (\mathrm{CartSp}^\mathrm{op}, \mathrm{Sp}^\Sigma_\mathrm{sSet})\longrightarrow L_{\check{C}} \mathrm{Fun}(\mathrm{CartSp}^\mathrm{op},\mathrm{sSet})\,.
\] 
\end{theorem}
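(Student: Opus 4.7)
The plan is to equip $\mathrm{Fun}(\mathrm{CartSp}^\mathrm{op}, \mathrm{Sp}^\Sigma_\mathrm{sSet})$ with the pointwise external smash product, promote the global projective model structure to a symmetric monoidal model structure, verify that the Bousfield localization at $T\check{C}$ respects the monoidal structure, and finally identify the resulting symmetric monoidal $\infty$-category with $T\mathbf{H}^\otimes \to \mathbf{H}^\times$.

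First I would install the global projective model structure on $\mathrm{Fun}(\mathrm{CartSp}^\mathrm{op}, \mathrm{Sp}^\Sigma_\mathrm{sSet})$, which is combinatorial, left proper and simplicial by the argument of Lemma \ref{lem:EnrichedProjective} applied to Theorem \ref{thm:ExternalSmash}. The pointwise external smash $(\mathcal{F} \vartriangle \mathcal{G})(U) := \mathcal{F}(U) \vartriangle \mathcal{G}(U)$, with monoidal unit the constant presheaf at $(\ast, \mathbb{S})$, preserves colimits in each variable. The pushout-product axiom reduces via cofibrant generation and the representable form of the generators to the pushout-product axiom for $\vartriangle$ on $\mathrm{Sp}^\Sigma_\mathrm{sSet}$, which is the content of Theorem \ref{thm:ExternalSmash}; the global projective model structure is therefore symmetric monoidal.

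The main technical hurdle is monoidality of the Bousfield localization at $T\check{C}$: I must show that $\mathcal{F} \vartriangle f$ is a $T\check{C}$-local equivalence for every $f \in T\check{C}$ and every cofibrant $\mathcal{F}$. Recalling $T\check{C} = \bigcup_{k\geq 0}\Sigma^{\infty-k}_+(\check{C}) \cup 0_-(\check{C})$, Lemma \ref{lem:SmashVSSuspend} together with the strong symmetric monoidality of $\Sigma^\infty_-$ (Corollary \ref{cor:ExtSmashMonoidalFun}) let us commute $\vartriangle$ past the stabilisation operations, reducing the question to monoidality of the \v{C}ech-local presheaf category of retractive spaces. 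The crucial combinatorial input is that the product of good open covers of $U$ and $V$ is a good open cover of $U\times V$, yielding a canonical isomorphism $\text{\v{C}}_\mathcal{U} \times \text{\v{C}}_\mathcal{V} \cong \text{\v{C}}_{\mathcal{U}\times\mathcal{V}}$. Combined with the fact that $p$ sends $\vartriangle$ to $\times$ (Corollary \ref{cor:ExtSmashMonoidalFun}), this is the known monoidality of the \v{C}ech-local projective model structure on $\mathrm{Fun}(\mathrm{CartSp}^\mathrm{op}, \mathrm{sSet})$ with respect to Cartesian product, lifted fibrewise to retractive spaces via the pullback-monoidality of Lemma \ref{lem:PBisStonglyClosed}; propagating through $\Sigma^{\infty-k}_+$ then gives monoidality of the $T\check{C}$-localized model structure.

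Once monoidality of the localization is established, $p_\Sigma$ is strongly symmetric monoidal at the point-set level (it is the pointwise prolongation of the strongly monoidal projection $p\colon \mathrm{Sp}^\Sigma_\mathrm{sSet} \to \mathrm{sSet}$ of Corollary \ref{cor:ExtSmashMonoidalFun}, which preserves all colimits) and left Quillen by the symmetric-spectrum analogue of Theorem \ref{thm:SmoothTangentooTopos} (cf.~Remark \ref{rem:TCatSymorSeq}); it is therefore a symmetric monoidal Quillen functor. To identify the induced functor of symmetric monoidal $\infty$-categories with $p^\otimes\colon T\mathbf{H}^\otimes \to \mathbf{H}^\times$ in the sense of \cite[Chapter 7]{lurie_higher_2017}, I would argue fibrewise: for a cofibrant-fibrant $\mathcal{X} \in \mathbf{H}$, Lemma \ref{lem:ESmash=FSmash} applied pointwise identifies the induced monoidal structure on the fibre with the canonical one on $\mathrm{Sp}(\mathbf{H}_{/\mathcal{X}})$ coming from the Cartesian monoidal structure on $\mathbf{H}_{/\mathcal{X}}$. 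Since $p^\otimes$ is characterised up to equivalence by the collection of fibrewise symmetric monoidal structures together with functoriality in base change (which $\vartriangle$ visibly satisfies), the identification follows.
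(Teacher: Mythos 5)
Your overall architecture (monoidality of the objectwise external smash for the projective structure via the generators $\underline{U}\otimes i$ and Theorem \ref{thm:ExternalSmash}, then monoidality of the $T\check{C}$-localisation, then passage to $\infty$-categories) matches the paper, and the first step is fine. The genuine gap is in the localisation step. After using Lemma \ref{lem:SmashVSSuspend} to rewrite the relevant pushout-product factors, e.g.
\[
\big(\underline{U}\otimes \mathbf{\Sigma}^{\infty-k}_X Y\big)\esmash \mathbf{\Sigma}^{\infty-l}_+\text{\v{C}}_{\mathcal{V}}
\cong
\mathbf{\Sigma}^{\infty-(k+l)}_+\text{\v{C}}_{U\times\mathcal{V}}\esmash \mathbf{\Sigma}^{\infty}_X Y\,,
\]
you still have to show that smashing the $T\check{C}$-morphism $\mathbf{\Sigma}^{\infty-(k+l)}_+\text{\v{c}}_{U\times\mathcal{V}}$ with the constant factor $\mathbf{\Sigma}^\infty_X Y$ yields a $T\check{C}$-local equivalence --- which is an instance of exactly the statement you are trying to prove. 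Your appeal to ``monoidality of the \v{C}ech-local presheaf category of retractive spaces, lifted fibrewise via Lemma \ref{lem:PBisStonglyClosed}'' does not supply this: Lemma \ref{lem:PBisStonglyClosed} is a point-set statement about the fibrewise smash over a fixed base and pullback functors, and says nothing about preservation of $\check{C}$- or $T\check{C}$-local equivalences; moreover the \v{C}ech-local monoidality of retractive-space-valued presheaves is not established anywhere and would itself need the same kind of argument. The paper breaks this circle by an adjointness argument: since $\mathbf{\Sigma}^\infty_X Y$ is the suspension spectrum of a cofibrant retractive space, the objectwise internal hom $F_\vartriangle\{\mathbf{\Sigma}^\infty_X Y,(\mathcal{X},\mathcal{A})\}$ of any smooth parametrised spectrum is again a smooth parametrised spectrum (objectwise fibrancy and the descent homotopy limits are preserved), so mapping out of a $T\check{C}$-generator into such internal homs detects the required local equivalence; left properness and $2$-out-of-$3$ then finish the pushout-product. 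That internal-hom/descent-preservation observation is the missing idea in your sketch, and without it (or a substitute locality argument) the key step is asserted rather than proved.

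Your final identification paragraph is also off track: the symmetric monoidal structure on $T\mathbf{H}^\otimes$ is not ``a collection of fibrewise symmetric monoidal structures'', because the external smash changes the base, sending the fibres over $\mathcal{X}$ and $\mathcal{X}'$ to the fibre over $\mathcal{X}\times\mathcal{X}'$; and Lemma \ref{lem:ESmash=FSmash} concerns homotopy fibre spectra at points, not a monoidal structure on a fibre. The paper's route is simpler and is the one you should take: having shown both localised model categories are simplicial symmetric monoidal and that $p_\Sigma$ is a strongly monoidal left Quillen functor, apply the operadic nerve (\cite[Proposition 4.1.3.10]{lurie_higher_2017}) to obtain $p^\otimes\colon T\mathbf{H}^\otimes\to\mathbf{H}^\times$, and identify the underlying functor with the smooth tangent $\infty$-bundle via the symmetric-spectrum version of Theorem \ref{thm:Tangent} (cf.\ Remark \ref{rem:TCatSymorSeq}).
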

\begin{proof}
It suffices to show that $L_{T\check{C}}\mathrm{Fun} (\mathrm{CartSp}^\mathrm{op}, \mathrm{Sp}^\Sigma_\mathrm{sSet})$ and $L_{\check{C}} \mathrm{Fun}(\mathrm{CartSp}^\mathrm{op},\mathrm{sSet})$ are simplicial symmetric monoidal model categories with respect to the objectwise external smash and Cartesian products respectively.
The main goal is therefore to check that the objectwise symmetric monoidal structures on $\mathrm{sSet}$- and $\mathrm{Sp}^\Sigma_\mathrm{sSet}$-valued presheaves are Quillen bifunctors for the projective model structure and remain Quillen bifunctors after localisation (at the set of \v{C}ech nerves and its tangent prolongation, respectively).

For a Cartesian space $U\in \mathrm{CartSp}^\mathrm{op}$ and $(X,A)\in \mathrm{Sp}^\Sigma_\mathrm{sSet}$, write $\underline{U}\otimes (X,A)$ for the presheaf
\[
\underline{U}\otimes (X,A)\colon  V\longmapsto C^\infty(V,U) \otimes (X,A) \,,
\]
with $\otimes$ on the right-hand side denoting the basewise simplicial tensoring of $\mathrm{Sp}^\Sigma_\mathrm{sSet}$ (Remark \ref{rem:EnrichSymSpec}); since each $C^\infty(V,U)$ is  discrete this is the same as forming $C^\infty(V,U)$-indexed coproducts.
Choose sets $\mathcal{I}_{\Sigma\mathrm{-Sp}}$ and $\mathcal{J}_{\Sigma\mathrm{-Sp}}$ of generating cofibrations and acyclic cofibrations for $\mathrm{Sp}^\Sigma_\mathrm{sSet}$, then
\[
\mathcal{I}^{\mathrm{proj}}:=\big\{\underline{U}\otimes i \,\big|\, U\in \mathrm{CartSp}, \, i\in \mathcal{I}_{\Sigma\mathrm{-Sp}}\big\}
\;\;
\mbox{ and }
\;\;
\mathcal{J}^{\mathrm{proj}}:=\big\{\underline{U}\otimes j \,\big|\, U\in \mathrm{CartSp}, \, j\in \mathcal{J}_{\Sigma\mathrm{-Sp}}\big\}
\]
are generating sets of cofibrations and acyclic cofibrations for the projective model structure on $\mathrm{Fun}(\mathrm{CartSp}^\mathrm{op},\mathrm{Sp}^\Sigma_\mathrm{sSet})$ \cite[Theorem 11.6.1]{hirschhorn_model_2003}.
For any Cartesian space $U$, the simplicial functor $\underline{U}\otimes (-)$ is left Quillen.
Fixing a pair of morphisms
\begin{align*}
\underline{U}\otimes i\colon \underline{U}\otimes (X_{i,0}, A_{i,0})&\longrightarrow \underline{U}\otimes (X_{i,1}, A_{i,1})
\\
\underline{V}\otimes j\colon \underline{V}\otimes (Y_{j,0}, B_{j,0})&\longrightarrow \underline{V}\otimes(Y_{j,1}, B_{j,1})\,,
\end{align*}
then since $\mathrm{CartSp}$ is closed under finite products we have
$
(\underline{U}\otimes i)\,\square\,(\underline{V}\otimes j)
\cong 
\underline{U\times V}\otimes (i\,\square\, j)
$.
It now follows that the objectwise external smash product is a Quillen bifunctor with respect to the projective model structure.
The Cartesian product is a Quillen bifunctor for the projective model structure on $\mathrm{Fun}(\mathrm{CartSp}^\mathrm{op},\mathrm{sSet})$ by a similar argument.
In both cases the fact that $\mathrm{CartSp}$ has a terminal object implies that the monoidal unit is cofibrant so that we have symmetric monoidal model categories, and the monoidal and simplicial structures are clearly compatible in the sense of \cite[Definition 4.1.7.7]{lurie_higher_2017}.

We now show that $L_{\check{C}}\mathrm{Fun}(\mathrm{CartSp}^\mathrm{op},\mathrm{sSet})$ is a simplicial symmetric monoidal model category.
Fix a cofibration of simplicial sets $i\colon K\hookrightarrow L$ together with Cartesian spaces $U, V$ and a good open cover $\mathcal{V}=\{V_i\hookrightarrow V\}$ of $V$.
Then there are natural isomorphisms of simplicial presheaves
\[
\big(\underline{U}\otimes K\big)\times \text{\v{C}}_\mathcal{V}\cong \big(\underline{U}\times \text{\v{C}}_\mathcal{V}\big)\otimes K
\cong  
\text{\v{C}}_{U\times \mathcal{V}} \otimes K\,,
\]
where $U\times \mathcal{V}=\{U\times V_i\hookrightarrow U\times V\}$ is a good open cover of $U\times V$.
The pushout-product of $\text{\v{c}}_\mathcal{V}\colon \text{\v{C}}_\mathcal{V}\to \underline{V}$ and $\underline{U}\otimes i$ is thus naturally isomorphic to the morphism $\rho$ in the diagram
\[
 \begin{tikzcd}
    \text{\v{C}}_{U\times \mathcal{V}} \otimes K
    \ar[r]\ar[d, "{\text{\v{c}}_{U\times \mathcal{V}}\otimes K}"'] & 
    \text{\v{C}}_{U\times \mathcal{V}} \otimes L
    \ar[dr, bend left=15, "{\text{\v{c}}_{U\times \mathcal{V}}\otimes L}"]
    \ar[d] &
    \\
    \underline{U\times V} \otimes K
    \ar[r]
    &
    P
    \ar[r, "\rho"]
    &
    \underline{U\times V} \otimes L\,,
  \end{tikzcd}
 \] 
where $P$ is the pushout.
The morphisms $\text{\v{c}}_{U\times \mathcal{V}}\otimes K$ and $\text{\v{c}}_{U\times \mathcal{V}}\otimes K$ are clearly $\check{C}$-local equivalences. 
As $\text{\v{C}}_{U\times\mathcal{V}}\otimes K\to \text{\v{C}}_{U\times\mathcal{V}}\otimes L$ is a cofibration, left properness of the $\check{C}$-local model structure implies that $\check{C}_{U\times \mathcal{V}}\otimes L\to P$, too, is a $\check{C}$-local equivalence.
It follows that $\rho$, and hence $\text{\v{c}}_{\mathcal{V}}\,\square\,(\underline{U}\otimes i)$ is a $\check{C}$-local equivalence.
This shows that the Cartesian product is a Quillen bifunctor for the $\check{C}$-local model structure on simplicial presheaves, so that $L_{\check{C}}\mathrm{Fun}(\mathrm{CartSp}^\mathrm{op}, \mathrm{sSet})$ is a symmetric monoidal model category.

Returning to presheaves of parametrised symmetric spectra, to show that the objectwise smash product is a Quillen bifunctor for the $T\check{C}$-local model structure it is sufficient to show that the set $\mathcal{I}^\mathrm{proj}\,\square\, T\check{C}$ consists of $T\check{C}$-local equivalences.
In fact, we need only consider pushout-products of morphisms of the form 
\begin{equation}
\label{eqn:THESmashMaps}
i:= \underline{U}\otimes \mathbf{\Sigma}^{\infty-k}_{\Delta^n} \Big(\partial\Delta^n_{+\Delta^n}
\longrightarrow
\Delta^n_{+\Delta^n} \Big)
\in\mathcal{I}^\mathrm{proj}
\quad
\mbox{ and }
\quad
\check{c}:= \mathbf{\Sigma}^{\infty-l}_+ \text{\v{c}}_{\mathcal{V}}\colon
\mathbf{\Sigma}^{\infty-l}_+ \text{\v{C}}_{\mathcal{V}}
\longrightarrow 
\mathbf{\Sigma}^{\infty-l}_+ \underline{V}\in T\check{C}
\end{equation}
as, since any external smash product in which at least one factor is a parametrised zero spectrum is itself a parametrised zero spectrum, all other morphisms in $\mathcal{I}^\mathrm{proj}\,\square\, T\check{C}$ lie the image of the left Quillen functor 
$
0_- \colon L_{\check{C}}\mathrm{Fun}(\mathrm{CartSp}^\mathrm{op},\mathrm{sSet})\to L_{T\check{C}}\mathrm{Fun}(\mathrm{CartSp}^\mathrm{op},\mathrm{Sp}^\Sigma_\mathrm{sSet})$.
For any retractive space $(X,Y)\in \mathrm{sSet}_{\dslash \mathrm{sSet}}$ (necessarily cofibrant), by  Lemma \ref{lem:SmashVSSuspend} there are natural isomorphisms of presheaves
\begin{align*}
\big(\underline{U}\otimes \mathbf{\Sigma}^{\infty-k}_X Y\big)
\esmash
\big(\mathbf{\Sigma}_+^{\infty-l}
\text{\v{C}}_\mathcal{V}\big)
&\cong
\mathbf{\Sigma}^{\infty-(k+l)}_{+} \text{\v{C}}_{U\times\mathcal{V}} \;\esmash \;\mathbf{\Sigma}^\infty_X Y
\\
\big(\underline{U}\otimes \mathbf{\Sigma}^{\infty-k}_X Y\big)
\esmash
\big(\mathbf{\Sigma}_+^{\infty-l}
\underline{V}\big)
&\cong
\mathbf{\Sigma}^{\infty-(k+l)}_{+} \underline{U\times V} \;\esmash \;\mathbf{\Sigma}^\infty_X Y\,,
\end{align*}
where $\mathbf{\Sigma}^\infty_X Y$ on the right-hand side is a constant presheaf.
Note that any smooth parametrised spectrum $(\mathcal{X},\mathcal{A})$, the $\mathrm{Sp}^\Sigma_\mathrm{sSet}$-valued presheaf given by taking objectwise internal homs
\[
F_\vartriangle\{\mathbf{\Sigma}^\infty_X Y, (\mathcal{X},\mathcal{A})\}\colon U\longmapsto F_\vartriangle \big\{\mathbf{\Sigma}^{\infty}_X Y, (\mathcal{X}(U), \mathcal{A}(U)) \big\}
\]
is also a smooth parametrised spectrum. 
From this, it follows by an adjointness argument that 
\[
\mathbf{\Sigma}^{\infty-(k+l)}_+ \text{\v{c}}_{U\times\mathcal{V}}\esmash \mathbf{\Sigma}^\infty_X Y
\colon
\mathbf{\Sigma}^{\infty-(k+l)}_{+} \text{\v{C}}_{U\times\mathcal{V}} \;\esmash \;\mathbf{\Sigma}^\infty_X Y
\longrightarrow
\mathbf{\Sigma}^{\infty-(k+l)}_{+} \underline{U\times V} \;\esmash \;\mathbf{\Sigma}^\infty_X Y
\]
 is a $T\check{C}$-local equivalence.
The pushout-product of the maps $i$ and $\check{c}$ of \eqref{eqn:THESmashMaps} is naturally isomorphic to the map denoted $\varrho$ in the diagram
\[
\begin{tikzcd}[column sep= small]
    \mathbf{\Sigma}^{\infty-(k+l)}_{+} \text{\v{C}}_{U\times\mathcal{V}} \;\esmash \;\mathbf{\Sigma}^\infty_{\Delta^n}\partial\Delta^n_{+\Delta^n}
    \ar[r]\ar[d, "\sim"'] 
    & 
    \mathbf{\Sigma}^{\infty-(k+l)}_{+} \text{\v{C}}_{U\times\mathcal{V}} \;\esmash \;\mathbf{\Sigma}^\infty_{\Delta^n}\Delta^n_{+\Delta^n}
    \ar[dr, bend left=15, "\sim"]
    \ar[d,"\sim"] &
    \\
    \mathbf{\Sigma}^{\infty-(k+l)}_{+} \underline{U\times V} \;\esmash \;\mathbf{\Sigma}^\infty_{\Delta^n}\partial\Delta^n_{+\Delta^n}
    \ar[r]
    &
    P
    \ar[r, "\varrho"]
    &
    \mathbf{\Sigma}^{\infty-(k+l)}_{+} \underline{U\times V} \;\esmash \;\mathbf{\Sigma}^\infty_{\Delta^n}\Delta^n_{+\Delta^n}\,,
\end{tikzcd}
\]
where $P$ is the pushout.
The outer vertical arrows are $T\check{C}$-local equivalences by the above argument, whereas the middle vertical arrow is a $T\check{C}$-local equivalence by left properness, using that the top horizontal arrow is a cofibration.
By the $2$-out-of-$3$ property $\varrho$, and hence $i\,\square\,\check{c}$, is a $T\check{C}$-local equivalence.

We conclude that $p_\Sigma$ is a symmetric monoidal Quillen functor, so that applying the operadic nerve $N^\otimes$ yields a symmetric monoidal functor $p^\otimes \colon T\mathbf{H}^\otimes \to \mathbf{H}^\times$ \cite[Proposition 4.1.3.10]{lurie_higher_2017}. 
Note that the underlying functor of $p^\otimes$ is indeed equivalent to the smooth tangent $\infty$-bundle $p\colon T\mathbf{H}\to \mathbf{H}$ by (the symmetric spectrum version of) Theorem \ref{thm:Tangent}.
\end{proof}

\begin{corollary}
There is a commuting diagram of symmetric monoidal functors 
\[
\begin{tikzcd}
  T\mathcal{S}
  \ar[d, "p^\otimes"]
  \ar[r,"T\mathrm{Disc}^\otimes"] &
  T\mathbf{H}^\otimes
  \ar[d, "p^\otimes"]
  \\
  \mathcal{S}^\times
  \ar[r, "\mathrm{Disc}^\times"]
  &
  \mathbf{H}^\times\,.
\end{tikzcd}
\]
\end{corollary}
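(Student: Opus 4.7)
The plan is to lift the entire diagram to the level of symmetric monoidal simplicial model categories and then invoke the operadic nerve. Concretely, since $\mathrm{CartSp}$ has a terminal object (the point $\ast$), evaluation at $\ast$ admits a two-sided simplicial adjoint $\mathrm{const}$ sending $m\mapsto (U\mapsto m)$; this applies uniformly to produce functors
\[
\mathrm{const}\colon \mathrm{Sp}^\Sigma_\mathrm{sSet}\longrightarrow \mathrm{Fun}(\mathrm{CartSp}^\mathrm{op},\mathrm{Sp}^\Sigma_\mathrm{sSet})
\qquad
\text{and}
\qquad
\mathrm{const}\colon \mathrm{sSet}\longrightarrow \mathrm{Fun}(\mathrm{CartSp}^\mathrm{op},\mathrm{sSet})\,,
\]
compatible with the respective base projections in the sense that $p_\Sigma\circ \mathrm{const}= \mathrm{const}\circ p$ as simplicial functors (a constant presheaf of parametrised spectra projects, objectwise, to the constant presheaf of its base).

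First I would verify that both constant functors are strongly symmetric monoidal left Quillen for the projective model structures. Strong monoidality is immediate because the external smash product (resp.\ the Cartesian product) of presheaves is computed objectwise, and constant presheaves are closed under objectwise monoidal products; the monoidal units $(\ast,S^0)$ and $\ast$ are preserved on the nose. That $\mathrm{const}$ is left Quillen for projective structures is a consequence of evaluation at $\ast$ being right Quillen in each case. Next I would show that $\mathrm{const}$ descends to the localised model structures $L_{T\check{C}}$ and $L_{\check{C}}$: a constant presheaf trivially satisfies \v{C}ech descent (the relevant homotopy limit of constant diagrams indexed by \v{C}ech nerves collapses to the constant value), hence $\mathrm{const}$ sends fibrant objects to fibrant objects in the localised structures. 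Combined with the already-established fact that $\mathrm{const}$ preserves cofibrations and weak equivalences, we obtain simplicial left Quillen functors between the localised symmetric monoidal model categories of Theorem~\ref{thm:SmoothESmash}.

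Applying the operadic nerve $N^\otimes$ to the resulting square of symmetric monoidal simplicial Quillen functors produces a commuting square of symmetric monoidal functors of $\infty$-categories \cite[Proposition~4.1.3.10]{lurie_higher_2017}. By Lemma~\ref{lem:PresentTS} and Theorem~\ref{thm:SmoothTangentooTopos}, the vertical arrows are identified with $p^\otimes\colon T\mathcal{S}^\otimes\to \mathcal{S}^\times$ and $p^\otimes\colon T\mathbf{H}^\otimes\to \mathbf{H}^\times$. To identify the top horizontal arrow with $T\mathrm{Disc}^\otimes$, I would use the naturality of the tangent construction: the functor of $\infty$-categories induced by $\mathrm{const}$ on underlying categories is equivalent to $\mathrm{Disc}\colon \mathcal{S}\to\mathbf{H}$ (this is essentially the definition of cohesion, cf.~Remark~\ref{rem:Cohesion}), and the universal property of the tangent $\infty$-category (applied to a left adjoint between presentable $\infty$-categories) then forces the prolongation to agree with $T\mathrm{Disc}$. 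The analogous observation for the bottom arrow is immediate.

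The only step that requires any genuine care is this last identification: one must check that the model-categorical $\mathrm{const}$ truly presents the $\infty$-categorical cohesive inclusion $\mathrm{Disc}$ \emph{and} that its prolongation over fibres presents the action of $T\mathrm{Disc}$ on parametrised spectrum objects. Both assertions follow from the pseudonaturality statements appearing in the Grothendieck construction for model categories (Theorem~\ref{thm:GlobParamSpec} together with \cite[Theorem~4.1.3]{harpaz_grothendieck_2015}), applied fibrewise in the parameter variable; this is the only step where one uses anything beyond bookkeeping.
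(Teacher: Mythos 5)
Your proposal is correct and follows essentially the same route as the paper: exhibit the square formed by the constant-presheaf functors and the base projections as symmetric monoidal simplicial Quillen functors between the localised model categories of Theorem \ref{thm:SmoothESmash}, apply the operadic nerve $N^\otimes$, and identify $\mathrm{const}$ with $T\mathrm{Disc}$ (respectively $\mathrm{Disc}$) as in Remark \ref{rem:Cohesion}. The only slip is the phrase describing $\mathrm{const}$ as a \emph{two-sided} adjoint of evaluation at $\ast$: it is the left adjoint of $\mathrm{ev}_\ast$ (which computes the limit over $\mathrm{CartSp}^\mathrm{op}$) and the right adjoint of the colimit functor, but this does not affect the argument, since only the adjunction $\mathrm{const}\dashv \mathrm{ev}_\ast$ and strong monoidality are used.
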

\begin{proof}
There is a commutative diagram of symmetric monoidal simplicial (left and right) Quillen functors
\[
\begin{tikzcd}
  \mathrm{Sp}^\Sigma_\mathrm{sSet}
  \ar[r, "\mathrm{const}"]
  \ar[d, "p_\Sigma"]
  &
  L_{T\check{C}} \mathrm{Fun}(\mathrm{CartSp}^\mathrm{op},\mathrm{Sp}^\Sigma_\mathrm{sSet})
  \ar[d, "p_\Sigma"]
  \\
  \mathrm{sSet}
  \ar[r, "\mathrm{const}"]
  &
  L_{\check{C}}\mathrm{Fun}(\mathrm{CartSp}^\mathrm{op}, \mathrm{sSet})\,,
\end{tikzcd}
\]
in which both horizontal functors compute constant presheaves.
The result is obtained by applying the operadic nerve $N^\otimes$.
\end{proof}

\begin{remark}
\label{rem:TwistDiffPairings}
The external smash product of smooth parametrised spectra enables us to define pairings on twisted differential cohomology groups.
This is easiest to see in the $\infty$-categorical setting; for an $\mathbb{E}_n$-algebra object $(\mathcal{X},\mathcal{A})\in \mathrm{Alg}_{\mathbb{E}_n}(T\mathbf{H}^\otimes)$ with multiplication map
\[
(\mu, \nu) \colon (\mathcal{X},\mathcal{A})\esmash (\mathcal{X},\mathcal{A})\longrightarrow (\mathcal{X},\mathcal{A})\,,
\]
the $\infty$-stack of twists $\mathcal{X}$ is an $\mathbb{E}_n$-algebra object of $\mathbf{H}$ with multiplication $\mu$.
For a manifold $M$, the product of twists $\tau, \xi\in \mathcal{X}(M) \cong \mathbf{H}(\underline{M},\mathcal{X})$ is then the twist 
\[
\mu(\tau,\xi)\colon
\ast
\xrightarrow{\;\;(\tau, \xi)\;\;}
\mathbf{H}(\underline{M},\mathcal{X})\times
\mathbf{H}(\underline{M},\mathcal{X}) 
\cong \mathbf{H}(\underline{M},\mathcal{X}\times \mathcal{X})
\xrightarrow{\;\;\mathbf{H}(\underline{M},\mu)\;\;}
\mathbf{H}(\underline{M}, \mathcal{X})\,.
\]
Taking fibres at $\tau$, $\xi$ and $\mu(\tau, \xi)$, in view of Lemma \ref{lem:ESmash=FSmash}
the multiplication map $\nu$ induces a map of spectra
$
\tau^\ast \mathcal{A}(M) \wedge \xi^\ast \mathcal{A}(M) \to \mu(\tau,\xi)^\ast \mathcal{A}(M)
$
.
Passing to stable homotopy groups then gives a pairing on twisted differential cohomology groups:
\[
\begin{tikzcd}[row sep =small]
\mathcal{A}^{\tau +k}(M)\otimes \mathcal{A}^{\xi+l}(M)
\ar[rr]
  \ar[d, equal]
  &&
  \mathcal{A}^{\mu(\tau,\xi)+k+l}(M)
  \ar[d, equal]
  \\
  \pi^\mathrm{st}_{-k}\tau^\ast \mathcal{A}(M) \otimes \pi^\mathrm{st}_{-l}\xi^\ast \mathcal{A}(M)
  \ar[r]
  &
  \pi^\mathrm{st}_{-(k+l)}\big(\tau^\ast \mathcal{A}(M) \wedge \xi^\ast \mathcal{A}(M)\big)
  \ar[r] 
  &  
  \pi^\mathrm{st}_{-(k+l)}\mu(\tau,\xi)^\ast \mathcal{A}(M)
\end{tikzcd}
\]
The smooth spectral $\widehat{R}$-line bundles of Example \ref{ex:SmoothSpecLineBun} are $\mathbb{E}_\infty$-algebra objects of $T\mathbf{H}^\otimes$. The induced pairings on twisted differential $\widehat{R}$-cohomology have been studied in \cite{bunke_twisted_2014}.
\end{remark}

\begin{remark}
The model categories of \cite{hebestreit_multiplicative_2019} can be used to give an alternative equivalent construction of a symmetric monoidal model category modelling the smooth tangent $\infty$-topos $T\mathbf{H}^\otimes$.
Working with this latter model ought to allow for convenient constructions of smooth parametrised $\mathbb{E}_\infty$-ring spectra encoding commutative products in twisted differential cohomology.
We intend to address this question in future work.
\end{remark}


\begin{thebibliography}{10}
\addcontentsline{toc}{section}{References}

\bibitem[ABG18]{ando_parametrized_2018}
M.~Ando, A.~Blumberg, and D.~Gepner.
\newblock \emph{Parametrized spectra, multiplicative {Thom} spectra and the
  twisted umkehr map}.
\newblock Geometry \& Topology, \textbf{22}(7):3761--3825 (2018).

\bibitem[AR94]{adamek_locally_1994}
J.~Ad\'{a}mek and J.~Rosicky.
\newblock \emph{Locally presentable and accessible categories}.
\newblock Cambridge University Press, Cambridge (1994).

\bibitem[BM19]{braunack-mayer_strict_2019}
V.~Braunack-Mayer.
\newblock \emph{Strict algebraic models for rational parametrised spectra
  I} (2019), \href{https://arxiv.org/abs/1910.14608}{\tt arXiv:1910.14608}.

\bibitem[BM20]{braunack-mayer_strict_2019-1}
V.~Braunack-Mayer.
\newblock \emph{Strict algebraic models for rational parametrised spectra
  II} (2020), in preparation.

\bibitem[BN19]{bunke_twisted_2014}
U.~Bunke and T.~Nikolaus.
\newblock \emph{Twisted differential cohomology}.
\newblock Algebraic \& Geometric Topology, \textbf{19}(4):1631--1710 (2019).

\bibitem[BNV16]{bunke_differential_2016}
U.~Bunke, T.~Nikolaus, and M.~Völkl.
\newblock \emph{Differential cohomology theories as sheaves of spectra}.
\newblock Journal of Homotopy and Related Structures, \textbf{11}(1):1--66
  (2016).
  
\bibitem[Chi12]{ching_bar_2012} 
M.~Ching.
\newblock \emph{Bar-cobar duality for operads in stable homotopy theory}.
\newblock Journal of Topology, 
\textbf{5}(1):39--80 (2012).

\bibitem[CR14]{ching_coalgebraic_2014}
M.~Ching and E.~Riehl.
\newblock \emph{Coalgebraic models for combinatorial model categories}.
\newblock Homology, Homotopy and Applications, \textbf{16}(2):171--184 (2014).

\bibitem[Dug01]{dugger_universal_2001}
D.~Dugger.
\newblock \emph{Universal homotopy theories}.
\newblock Advances in Mathematics, \textbf{164}(1):144--176 (2001).

\bibitem[GJ09]{goerss_simplicial_2009}
P.~Goerss and J.~Jardine.
\newblock \emph{Simplicial homotopy theory}.
\newblock Birkhäuser Basel, Basel (2009).

\bibitem[GS17]{grady_twisted_2017}
D.~Grady and H.~Sati.
\newblock \emph{Twisted differential generalized cohomology theories and their
  {Atiyah}-{Hirzebruch} spectral sequence}.
  Algebraic \& Geometric Topology, \textbf{19}(6):2899--2960 (2019).

\bibitem[HP15]{harpaz_grothendieck_2015}
Y.~Harpaz and M.~Prasma.
\newblock \emph{The {Grothendieck} construction for model categories}.
\newblock Advances in Mathematics, \textbf{281}:1306--1363 (2015).

\bibitem[HNP16]{harpaz_tangent_2016}
Y.~Harpaz, J.~Nuiten, and M.~Prasma.
\newblock \emph{Tangent categories of algebras over operads} (2016), \href{https://arxiv.org/abs/1612.02607}{\tt arXiv:1612.02607}.

\bibitem[HNP18]{harpaz_tangent_2018}
Y.~Harpaz, J.~Nuiten, and M.~Prasma.
\newblock \emph{The tangent bundle of a model category} (2018), \href{https://arxiv.org/abs/1802.08031}{\tt arXiv:1802.08031}.


\bibitem[HSS19]{hebestreit_multiplicative_2019}
F.~Hebestreit, S.~Sagave, and C.~Schlichtkrull.
\newblock \emph{Multiplicative parametrized homotopy theory via symmetric spectra in retractive spaces} (2019), \href{https://arxiv.org/abs/1904.01824}{\tt arXiv:1904.01824}.

\bibitem[HS16]{hess_waldhausen_2016}
K.~Hess and B.~Shipley.
\newblock \emph{Waldhausen {K}-theory of spaces via comodules}.
\newblock Advances in Mathematics, \textbf{290}:1079--1137 (2016).

\bibitem[Heu15]{heuts_goodwillie_2015}
G.~Heuts.
\newblock \emph{Goodwillie approximations to higher categories} (2015), \href{https://arxiv.org/abs/1510.03304}{\tt arXiv:1510.03304}.


\bibitem[Hir03]{hirschhorn_model_2003}
P.~Hirschhorn.
\newblock \emph{Model categories and their localizations}.
\newblock American
  Mathematical Society, Providence, RI (2003).


\bibitem[Hov99]{hovey_model_1999}
M.~Hovey.
\newblock \emph{Model categories}.
\newblock American
  Mathematical Society, Providence, RI (1999).

\bibitem[Hov01]{hovey_spectra_2001}
M.~Hovey.
\newblock \emph{Spectra and symmetric spectra in general model categories}.
\newblock Journal of Pure and Applied Algebra, \textbf{165}(1):63--127 (2001).

\bibitem[HSS00]{hovey_symmetric_2000}
M.~Hovey, B.~Shipley, and J.~Smith.
\newblock \emph{Symmetric spectra}.
\newblock Journal of the American Mathematical Society,
  \textbf{13}(01):149--209 (2000).

\bibitem[Lur09]{lurie_higher_2009}
J.~Lurie.
\newblock \emph{Higher topos theory}.
\newblock Princeton University Press, Princeton (2009).

\bibitem[Lur10]{lurie_moduli_2010}
J.~Lurie.
\newblock \emph{Moduli problems for ring spectra}.
\newblock International Congress of Mathematicians 2010, Hyderabad, India (19–27 August 2010).
\newblock Notes available at \href{http://www.math.harvard.edu/~lurie/}{\tt math.harvard.edu/lurie}.

\bibitem[Lur17]{lurie_higher_2017}
J.~Lurie.
\newblock \emph{Higher algebra} (2017), manuscript available at \href{http://www.math.harvard.edu/~lurie/}{\tt math.harvard.edu/lurie}.

\bibitem[MS06]{may_parametrized_2006}
J.~May and J.~Sigurdsson.
\newblock \emph{Parametrized homotopy theory}.
\newblock American Mathematical Society, Providence, RI (2006).

\bibitem[Qui69]{quillen_rational_1969}
D.~Quillen.
\newblock \emph{Rational homotopy theory}.
\newblock The Annals of Mathematics, \textbf{90}(2):205 (1969).

\bibitem[Rob12]{robalo_noncommutative_2012}
M.~Robalo.
\newblock \emph{Noncommutative motives {I}: a universal characterization of the motivic stable homotopy theory of schemes} (2012), \href{https://arxiv.org/abs/1206.3645}{\tt arXiv:1206.3645}.

\bibitem[Sch17]{schreiber_differential_2017}
U.~Schreiber.
\newblock \emph{Differential cohomology in a cohesive $\infty$-topos} (2017), manuscript available at \href{https://ncatlab.org/schreiber/}{\tt ncatlab.org/schreiber}.

\bibitem[SS03]{schwede_stable_2003}
S.~Schwede and B.~Shipley.
\newblock \emph{Stable model categories are categories of modules}.
\newblock Topology, \textbf{42}(1):103--153 (2003).


\bibitem[Shi07]{shipley_HZ_2007}
B.~Shipley.
\newblock \emph{${H}\mathbb{Z}$-algebra spectra are differential graded
  algebras}.
\newblock American Journal of Mathematics, \textbf{129}(2):351--379 (2007).

\end{thebibliography}


\end{document}